\def\supp{\mathrm{supp}}
\numberwithin{equation}{section}
\newtheorem{theo}{Theorem}[section]
\newtheorem{pro}[theo]{Proposition}
\newtheorem{lem}[theo]{Lemma}
\newtheorem{cor}[theo]{Corollary}
\newtheorem{defi}[theo]{Definition}
\newtheorem{exa}[theo]{Example}
\theoremstyle{definition}
\newtheorem{rem}[theo]{Remark}
\def\normOrd#1{\mathop{:}\nolimits\!#1\!\mathop{:}\nolimits}
\def\Vir{\mathfrak{Vir}}
\def\de{\delta}
\def\c{{\bf c}}
\newcommand{\M}{\mathbb{M}}
\newcommand{\R}{{\mathcal R}}
\newcommand{\N}{{\mathbb N}}
\newcommand{\Z}{{\mathcal Z}}
\newcommand{\mi}{\mathbf{i}}
\newcommand{\mk}{\mathbf{k}}
\def\bN{{\mathbb N}}
\def\bZ{{\mathbb Z}}
\def\bR{{\mathbb R}}
\def\bC{{\mathbb C}}
\def\z{\mathbb Z}
\def\cc{\mathbb C}
\def\mh{\mathfrak{H}}
\def\mg{\mathfrak{L}}
\def\beq{\begin{eqnarray}}
\def\eeq{\end{eqnarray}}
\def\beqs{\begin{eqnarray*}}
\def\eeqs{\end{eqnarray*}}
\def\bfz{{\mathbf{0}}}
\def\bfi{{\mathbf{i}}}
\def\bfj{{\mathbf{j}}}
\def\bfk{{\mathbf{k}}}
\def\bfl{{\mathbf{l}}}
\def\h{\mathfrak{h}}
\def\GG{\mathcal{G}}
\def\UU{\mathcal{U}}
\def\DD{\mathcal{L}}
\def\mh{\mathcal{H}}
\def\DD{\mathfrak{D}}
\def\RR{\mathcal{R}}
\def\ad{\mbox{\rm ad}}
\def\Ind{\text{\rm Ind}}
\newcommand{\nc}{\newcommand}
\nc{\km}[1]{\textcolor{red}{Kaiming: #1}}
\nc{\yf}[1]{\textcolor{blue}{Yufeng: #1}}
\nc{\dong}[1]{\textcolor{blue}{Dong: #1}}
\nc{\lm}[1]{\textcolor{blue}{Limeng: #1}}
\nc{\da}[1]{\textcolor{teal}{Drazen: #1}}
\begin{document}
	\title[Heisenberg-Virasoro vertex operator algebras]
	{Simple restricted modules over the   Heisenberg-Virasoro algebra as   VOA modules}
	


	\author{Haijun Tan}

\address{Tan: School of Mathematics and Statistics, Northeast Normal University, Changchun, Jilin, 130024, P. R. China.}
\email{tanhj9999@163.com}

\author{Yufeng Yao$^\dagger$}

\address{Yao: Department of Mathematics, Shanghai Maritime University,
 Shanghai, 201306, China.}\email{yfyao@shmtu.edu.cn}

\author{Kaiming Zhao}

\address{Zhao: Department of Mathematics, Wilfrid
		Laurier University, Waterloo, ON, Canada N2L 3C5,  and School of
		Mathematical Science, Hebei Normal (Teachers) University, Shijiazhuang, Hebei, 050024 P. R. China.}
	\email{kzhao@wlu.ca}
	
	\thanks{$^\dagger$the corresponding author}
	
	
	\begin{abstract}   In this paper,  we determine all simple restricted modules over  the mirror Heisenberg-Virasoro algebra ${\mathfrak{D}}$, and  the twisted Heisenberg-Virasoro algebra $\bar \DD$  with nonzero level.  As applications, we
characterize simple Whittaker modules and simple highest weight modules over  ${\mathfrak{D}}$. A vertex-algebraic interpretation of our result is the classification of  simple weak  twisted and untwisted modules over the   Heisenberg-Virasoro vertex operator algebras $\mathcal V^{c} \cong V_{Vir} ^{c} \otimes M(1)$.
We also present a few examples of simple restricted ${\mathfrak{D}}$-modules and $\bar \DD$-modules  induced from simple modules over  finite dimensional solvable Lie algebras, that are not    tensor product modules of Virasoro modules and Heisenberg modules.  This is very different from the case of   simple highest weight modules over $\DD$ and $\bar \DD$ which are  always tensor products of  simple  Virasoro modules and simple Heisenberg modules.

	\end{abstract}
	\subjclass[2010]{17B10, 17B65, 17B66, 17B68, 17B69, 17B81}
	
	\keywords{The Virasoro algebra,  the mirror Heisenberg-Virasoro algebra,    restricted module, Heisenberg-Virasoro vertex operator algebra, tensor product module}
	\maketitle
	

\setcounter{tocdepth}{1}\tableofcontents
\begin{center}
\end{center}

\section{Introduction}
		
Throughout the paper we denote by $\bZ, \bZ^*,\bN,\bZ_+, \bZ_{\leq 0}, \bR, \bC,$ and  $\bC^*$ the sets of  integers, nonzero integers, non-negative integers,  positive integers, non-positive integers, real numbers, complex numbers,   and nonzero complex numbers, respectively. All vector spaces and Lie algebras are assumed to be over $\bC$.
For a Lie algebra $\mathcal{G}$, the universal algebra of $\mathcal{G}$ is denoted by $\UU(\mathcal{G})$.

The Virasoro algebra $ \Vir$  and the Heisenberg algebra $\mh$ are infinite-dimensional Lie algebras with bases $\{{\bf c}, d_n: n\in\bZ\}$ and $\{{\bf l}, h_n: n\in\bZ\}$, respectively.  Their Lie brackets are given by
$$[\Vir,{\bf c}]=0,\,[d_m,d_n]=(m-n)d_{m+n}+\frac{m^3-m}{12}\delta_{m+n,0}{\bf c},\,m,n\in\bZ,$$
and
$$[\mh,{\bf l}]=0,\,\,\,[h_m,h_n]=m\delta_{m+n,0}{\bf l},\,\,\,m,n\in\bZ,$$
respectively. The twisted Heisenberg-Virasoro algebra  $ \bar \DD$ is the universal central extension of the Lie algebra
$$\Big\{ f(t)\frac{d}{dt}+g(t) : f,g\in\bC[t,t^{-1}]\Big\}$$ of differential operators of order at most one on the Laurent polynomial algebra $\bC [t,t^{-t}]$. Since the Lie algebra $\bar \DD$ contains the Virasoro algebra $\Vir$ and the Heisenberg algebra $\mh$ as subalgebras (but not the semi-direct product of the two subalgebras), many properties of $\bar \DD$ are closely related to the algebras $\Vir$ and $\mh$.

The Virasoro algebra $\Vir$, the Heisenberg algebra $\mh$ and the  twisted Heisenberg-Virasoro algebra  $ \bar \DD$ are very important infinite-dimensional Lie algebras in mathematics and in mathematical physics  because of its beautiful representation theory (see \cite{IK, KRR}),  and its widespread applications to vertex operator algebras (see \cite{DMZ,FZ}), quantum physics (see \cite{GO}), conformal field theory (see \cite{DMS}), and so on. Many other interesting and important algebras contain  the Virasoro algebra as a subalgebra, such as the  Schr{\"o}dinger-Virasoro algebra (see \cite{H,H1}), the mirror  Heisenberg-Virasoro algebra $\DD$ (see \cite{B,GZ,LPXZ}) and so on. These Lie algebras have nice structures and perfect theory on simple Harish-Chandra modules.  The mirror  Heisenberg-Virasoro algebra $\DD$   is the even part of the mirror  $N=2$ superconformal algebra (see \cite{B}), and
 is the semi-direct product of the Virasoro algebra and the twisted Heisenberg algebra (see Definition \ref{def.1}).
\subsection{Connection with representation theory of Lie algebras} Representation theory of Lie algebras has attracted a lot of attentions of mathematicians and physicists. For a Lie algebra $\GG$ with a  triangular decomposition $\GG=\GG_+\oplus \h\oplus \GG_-$ in the sense of \cite{MP}, one can study its weight and non-weight representation theory. For weight representation approach,
to some extent, Harish-Chandra modules are well understood for many infinite-dimensional Lie algebras, for example,
the affine Kac-Moody algebras in \cite{CP, MP}, the Virasoro algebra in \cite{FF,KRR,M},
the twisted Heisenberg-Virasoro algebra  in \cite{ACKP, LZ1},
the Schr{\"o}dinger-Virasoro algebra (partial results) in \cite{H, H1, LS},
and the mirror  Heisenberg-Virasoro algebra in \cite{LPXZ}.
There are also some researches about weight modules with infinite-dimensional weight spaces
(see \cite{BBFK,CGZ,LZ3}).

Recently, non-weight module theory over   Lie algebras $\GG$ attracts more attentions from mathematicians. In particular,  $\UU(\h)$-free $\GG$-modules, Whittaker modules, and restricted modules have been widely studied for many Lie algebras.
The notation of $\UU(\h)$-free modules was first introduced by  Nilsson \cite{N} for the simple Lie algebra $\mathfrak{sl}_{n+1}$. At the same time
these modules were introduced in a very different approach in the paper \cite{TZ}.
Later, $\UU(\h)$-free modules for many infinite-dimensional Lie algebras are determined, for example,
the Kac-Moody algebras in \cite{C,CTZ, GZ1}, the Virasoro algebra in \cite{LGZ,LZ2,MZ2},
the Witt algebra in \cite{TZ}, the twisted Heisenberg-Virasoro algebra and $W(2,2)$ algebra in \cite{CHSY,CG, LZ3}, and so on.

Whittaker modules  for $\mathfrak{sl}_2(\bC)$ were first constructed by Arnal and Pinzcon (see \cite{AP}).
Whittaker modules  for arbitrary finite-dimensional complex semisimple Lie algebra $\mg$ were  introduced and systematically studied by Kostant in \cite{Ko},
where he proved that these modules
with a fixed regular Whittaker function (Lie homomorphism) on a nilpotent
radical are (up to isomorphism) in bijective correspondence with
central characters of $\UU(\mg)$.
In recent years,   Whittaker modules for many other Lie algebras have been investigated
(see \cite{AHPY,ALZ,BM,BO,C,MD1,MD2}).

\subsection{Restricted modules}
The restricted modules for a $\bZ$-graded Lie algebra are the modules in which any vector can be annihilated by sufficiently large positive part of the Lie algebra. Whittaker modules and highest weight modules are restricted modules, and, in some sense, restricted modules can be seen as generalization of Whittaker modules and highest weight modules. Understanding restricted modules for an infinite-dimensional Lie algebra with a $\bZ$-gradation is one of core topics in Lie theory, for
this class of modules are closely connected with the modules for corresponding vertex operator algebras. The first step of studying restricted modules is to classify all restricted modules for a Lie algebra. But this is a difficult challenge.
Up to now
all  simple restricted modules for the Virasoro algebra are classified in \cite{MZ2}. There are some partial results of  simple  restricted modules for other Lie algebras. Some   simple  restricted modules for twisted Heisenberg-Virasoro algebra and  mirror Heisenberg-Virasoro algebra with level $0$  were constructed in \cite{CG1, G, LPXZ}.  Rudakov investigated  a class of simple  modules  over Lie algebras of Cartan type $W, S, H$ in
\cite{Ru1,Ru2}, and these modules are restricted modules over the Cartan type Lie algebras of the formal power series.

\subsection{Vertex algebraic approach}
For many infinite-dimensional $\mathbb{Z}$-graded Lie algebras and superalgebras $\mathcal G$, one can construct the associated (unversal) vertex algebra $\mathcal V_{\mathcal G}$  with the property:
\begin{itemize}
\item Any restricted $\mathcal G$-module is a  weak $\mathcal V_{\mathcal G}$-module;
\item Any weak module for the vertex algebra $\mathcal V_{\mathcal G}$ has the structure of a restricted $\mathcal G$-module.
\end{itemize}
This approach is very prominent for the following cases:
\begin{itemize}
\item Affine Kac-Moody algebra of type $X_n ^{(1)}$, when the associated vertex algebra is the universal affine vertex algebra $V^k(\mathfrak{g})$ for  certain simple Lie algebra $\mathfrak{g}$. This approach was used in \cite{ALZ} for studying Whittaker modules.
\item Virasoro Lie algebra, when the associated vertex algebra is the universal Virasoro vertex algebra $V_{Vir} ^c$ (cf. \cite{LL})
\item Heisenberg vertex algebra,  when the associated vertex algebra is $M(1)$ (cf. \cite{LL}).
\item Heisenberg-Virasoro algebra; super conformal algebras, etc.
\end{itemize}

From the vertex-algebraic point of view, the twisted Heisenberg-Virasoro algebra and its untwisted modules were investigated in \cite{AR,  GW}.

The restricted representations of nonzero level for the twisted Heisenberg-Virasoro  algebra corresponds to representations of the   Heisenberg-Virasoro vertex operator algebra $\mathcal V^{c}=V_{Vir} ^c \otimes M(1)$ ,
 where $V_{Vir} ^c$ is the universal Virasoro vertex operator algebra of central charge   $c= \ell_1-1$, and
 $M(1)$
 is the Heisenberg vertex operator algebra of level $1$. (Since  $M(\ell_2) \cong M(1)$ (cf. \cite{LL}), we usually assume that the level $\ell_2=1$.)

Moreover, the  restricted representations of the  mirror Heisenberg-Virasoro algebra $\DD$ can be treated as twisted modules for the Heisenberg-Virasoro vertex operator algebra $\mathcal V^{c}=V_{Vir} ^c \otimes M(1)$.

  We summarize:

  \begin{itemize}
  \item The category of restricted $\bar{\DD}$-modules of level $1$  is equivalent to the category of weak (untwisted) modules for the vertex operator algebra $\mathcal V^{c}$;

   \item The category of restricted $ {\DD}$-modules of level $1$  is equivalent to the category of weak twisted  modules for the vertex operator algebra $\mathcal V^{c}$.
  \end{itemize}

\subsection{Main results} In this paper, our main  goal is to classify all    simple restricted modules  for mirror Heisenberg-Virasoro algebra  $\DD$, and  classify      simple restricted modules with non-zero level for the twisted Heisenberg-Virasoro algebra $\bar \DD$.  As applications, we   describe the  simple untwisted and  twisted modules for    Heisenberg-Virasoro vertex operator algebras $\mathcal V^{c}$. The main results are  the following theorems:

\noindent {\bf Main theorem A}  (Theorem \ref{mainthm})
{\it Let $S$ be a simple restricted module over the mirror Heisenberg-Virasoro algebra $\DD$ with level $\ell \ne 0$. Then

{\rm(i)} $S\cong H^{\DD}$ where $H$ is a simple  restricted module over the Heisenberg algebra $\mh$, or

{\rm(ii)} $S$ is an induced $\DD$-module from a  simple restricted $\DD^{(0,-n)}$-module, or

{\rm(iii)} $S\cong U^{\DD}\otimes H^{\DD}$ where $U$ is a  simple  restricted $\Vir$-module, and $H$ is a  simple restricted module over the Heisenberg algebra $\mh$.
}

\noindent {\bf Main theorem B } (Theorem \ref{mainthm'})
{\it Let $M$ be a simple restricted module over the twisted Heisenberg-Virasoro algebra $\bar \DD$ with level $\ell \ne 0$. Then

{\rm(i)} $M\cong K(z)^{\bar \DD}$ where $K$ is a simple  restricted $\bar \mh$-module and $z\in\bC$, or

{\rm(ii)} $M$ is an induced $\bar \DD$-module from a  simple restricted $\bar \DD^{(0,-n)}$-module for some $n\in\bZ_+$, or

{\rm(iii)} $M\cong K(z)^{\bar \DD}\otimes U^{\bar \DD}$ where  $z\in\bC$,  $K$ is a  simple restricted $\bar \mh$-module  and $U$ is a  simple  restricted $\Vir$-module.
}

These simple restricted modules over the (mirror) Heisenberg-Virasoro algebra  are actually all simple weak  ($\theta$-twisted) modules over Heisenberg-Virasoro vertex operator algebras $\mathcal V^{c}$ (where the involution $\theta$ is defined in Section \ref{voasection}, see Theorem \ref{5.3-untw}, Theorem \ref{5.3}). As a consequence, we obtain the classification of twisted and untwisted simple modules for   the Heisenberg-Virasoro vertex operator algebra $\mathcal V^{c}$, i.e., we obtain all weak simple  $\mathcal V^{c}$-modules and all weak simple $\theta$-twisted $\mathcal V^{c}$-modules.

 It is important to notice that certain weak modules induced from  simple restricted $\DD^{(0,-n)}$, as a (twisted) modules for  $V_{Vir} ^c \otimes M(\ell_2)$, do not have the form $M_1 \otimes M_2$ (see Section \ref{examples}). This is interesting, since in the category of ordinary  (twisted) modules for the vertex operator alegbras, such modules don't exist (see  \cite[Theorem 4.7.4]{FHL} and its twisted analogs).

\subsection{Organization of the paper} The present paper is organized as follows. In Section 2, we recall  notations related to the algebras $\DD$ and $\bar\DD$,
collect some known results, and establish a general result  for a simple module to be a tensor product module over a class of Lie algebras (Theorem \ref{generalize}). In Section 3, we construct a class of induced simple $\DD$-modules (Theorem \ref{thmmain1.1}). In Section 4, we completely determine all  simple restricted modules over the mirror Heisenberg-Virasoro algebra $\DD$  (Theorems \ref{mainthm}, \ref{MT}). In Section 5, we use a similar method as in Section 4 to classify the simple restricted modules of level nonzero over the twisted Heisenberg-Virasoro algebra $\bar \DD$ (see Theorem \ref{mainthm'}). In Section 6, we apply  Theorem \ref{mainthm} to generalize the result in \cite{MZ1} to the algebra  ${\mathfrak{D}}$, i.e., we give a new characterization  of  simple  highest weight modules  over ${\mathfrak{D}}$ (Theorem \ref{thmmain}). We also characterize simple Whittaker modules over  ${\mathfrak{D}}$ (Theorem \ref{thmmain17}).
 In Section 7,  we  present a few examples of simple restricted ${\mathfrak{D}}$-modules  and $\bar \DD$-modules  induced from simple modules over  finite dimensional solvable Lie algebras, that are not    tensor product modules of Virasoro modules and Heisenberg modules.  This is very different from the case of   simple highest weight modules over $\DD$  and $\bar \DD$ which are  always tensor products of  simple  Virasoro modules and simple Heisenberg modules.  In Appendix A,   we  apply  Theorems \ref{mainthm} and \ref{mainthm'} to  classify
 simple weak  twisted and untwisted modules over the   Heisenberg-Virasoro vertex operator algebras $\mathcal V^{c} \cong V_{Vir} ^{c} \otimes M(1)$  (Theorems \ref{5.3-untw},  \ref{5.3}).   The main method in this paper   is  the (twisted) weak module structure of the Heisenberg vertex operator algebra $M(1)$ on  simple restricted $\DD$-modules.

\section{Notations and preliminaries}

		In this section we recall some  notations and  known results related to  the    algebras  $\DD$  and   $\bar \DD$.

	\begin{defi}\label{def.1}
		The  {\bf twisted Heisenberg-Virasoro algebra} ${\bar {\mathfrak{D}}}$ is a Lie algebra with a basis
		$$\left\{d_{m},h_{r},\bar{\bf c}_1,\bar{\mathbf{c}}_2, \bar{{\bf c}}_3: m, r\in\bZ\right\}$$ and subject to the commutation relations
		\begin{equation}\label{thva}\begin{aligned}
			&[d_m, d_n]=(m-n)d_{m+n}+\de_{m+n,0}\frac{ m^3-m}{12}\bar{\c}_1,\\
			&[d_m,h_{r}]=-rh_{m+r}+\delta_{m+r,0}(m^2+m)\bar{\c}_2,\\
			&[h_{r},h_{s}]=r\de_{r+s,0}\bar{{\bf c}}_3,\\
			&[\bar{\c}_1,{\bar {\mathfrak{D}}}]=[\bar{\c}_2,{\bar {\mathfrak{D}}}]=[\bar{\c}_3,{\bar {\mathfrak{D}}}]=0,\end{aligned}
		\end{equation}
		for $m,n,r,s\in\bZ$.
	\end{defi}

 It is clear that ${\bar {\mathfrak{D}}}$ contains a copy of the Virasoro subalgebra $\Vir=\text{span}\{\bar{\bf c}_1, d_i: i\in\bZ\}$ and the   Heisenberg algebra $\bar {\mh}=\bigoplus_{r\in\bZ}\bC h_r\oplus\bC \bar{{\bf c}}_3$. So $\bar {\DD}$ has a quotient algebra that is isomorphic to a copy of {\bf Heisenberg-Virasoro algebra} $$\widetilde{\DD}=\text{span}_{\bC}\left\{d_{m},h_{r},\bar{\bf c}_1,\bar{{\bf c}}_3: m, r\in\bZ\right\}$$ whose relations are defined by (\ref{thva}) (but the second and fourth equalities are replaced by $[d_m,h_{r}]=-rh_{m+r}$ and $[\bar{\c}_1,\widetilde{\DD}]=[\bar{\c}_3,\widetilde{\DD}]=0$).

	Note that ${\bar {\mathfrak{D}}}$ is $\bZ$-graded and equipped with  a triangular decomposition:
	$
	{{\bar {\mathfrak{D}}}}={\bar {\mathfrak{D}}}^{+}\oplus {\mathfrak{h}}\oplus {\bar {\mathfrak{D}}}^{-},
	$
	where
	\begin{eqnarray*}
		&&{\bar {\mathfrak{D}}}^{\pm}=\bigoplus_{n, r\in\bZ_+}(\bC d_{\pm n}\oplus \bC h_{\pm r}),\quad  {\mathfrak{h}}=\bC d_0\oplus\bC h_{0}\oplus\bC \bar{\c}_1+\bC \bar{\c}_2+ \bC \bar{\c}_3.
	\end{eqnarray*}
Moreover, ${\bar {\mathfrak{D}}}=\oplus_{i\in\bZ}{\bar {\mathfrak{D}}}_i$ is $\bZ$-graded with ${\bar {\mathfrak{D}}}_i=\bC d_i\oplus\bC h_{i}$ for $i\in\bZ^*$, $\bar {\mathfrak{D}}_0={\mathfrak{h}}$.
	
	\begin{defi}\label{def.2}
		The  {\bf mirror Heisenberg-Virasoro algebra} ${\mathfrak{D}}$ is a Lie algebra with a basis
		$$\left\{d_{m},h_{r},{\bf c}_1,\mathbf{c}_2\mid m\in\bZ,r\in \frac{1}{2}+\bZ\right\}$$ and subject to the commutation relations
		\begin{eqnarray*}
			&&[d_m, d_n]=(m-n)d_{m+n}+\de_{m+n,0}\frac{ m^3-m}{12}\c_1,\\
			&&[d_m,h_{r}]=-rh_{m+r},\\
			&&[h_{r},h_{s}]=r\de_{r+s,0}{\bf c}_2,\\
			&&[\c_1,{\mathfrak{D}}]=[\mathbf{c}_2,{\mathfrak{D}}]=0,
		\end{eqnarray*}
		for $m,n\in\bZ, r,s\in \frac{1}{2}+\bZ$.
	\end{defi}
It is clear that ${\mathfrak{D}}$ is the semi-direct product of the Virasoro subalgebra $\Vir=\text{span}\{{\bf c}_1, d_i\mid i\in\bZ\}$ and the twisted Heisenberg algebra $\mh=\bigoplus_{r\in\frac{1}{2}+\bZ}\bC h_r\oplus\bC {\bf c}_2$. Note that ${\mathfrak{D}}$ is $\frac{1}{2}\bZ$-graded and equipped with  triangular decomposition:
	$
	{{\mathfrak{D}}}={\mathfrak{D}}^{+}\oplus {\mathfrak{D}}^{0}\oplus {\mathfrak{D}}^{-},
	$
	where
	\begin{eqnarray*}
		&&{\mathfrak{D}}^{\pm}=\bigoplus_{n\in\bZ_+}\bC d_{\pm n}\oplus \bigoplus_{r\in\frac{1}{2}+\bN}\bC h_{\pm r},\quad  {\mathfrak{D}}^{0}=\bC d_0\oplus\bC \c_1\oplus\bC \mathbf{c}_2.
	\end{eqnarray*}
Moreover, ${\mathfrak{D}}=\oplus_{i\in\bZ}{\mathfrak{D}}_i$ is $\bZ$-graded with ${\mathfrak{D}}_i=\bC d_i\oplus\bC h_{i+\frac{1}{2}}$ for $i\in\bZ^*\setminus\{-1\}$, ${\mathfrak{D}}_0=\bC d_0\oplus\bC h_{\frac{1}{2}}\oplus\bC \c_1$ and ${\mathfrak{D}}_{-1}=\bC d_{-1}\oplus\bC h_{-\frac{1}{2}}\oplus\bC \mathbf{c}_2$.

\begin{defi}
Let $\GG=\oplus_{i\in\bZ}{\GG}_{i}$ be a $\bZ$-graded Lie algebra.
A $\GG$-module $V$ is called  the $\bf{restricted}$ module if for any $v\in V$ there exists $n\in\bN$ such that $\GG_iv=0$,
for $i>n$. The  category of restricted modules over $\GG$ will be denoted as $\RR_{\GG}$.
\end{defi}

\begin{defi}
Let $\mathfrak{a}$ be a subalgebra of a Lie algebra $\GG$,  and $V$ be a $\GG$-module.  We denote
$${\rm Ann}_V(\mathfrak{a})=\{v\in V:\mathfrak{a}v=0\}.
$$
\end{defi}

\begin{defi}
Let $\GG$ be a  Lie algebra and $V$ a $\GG$-module and $x\in \GG$.
\begin{itemize}
\item[\rm (1)] If for any $v\in V$ there exists $n\in\z_+$ such that $x^nv=0$,  then we say that the action of  $x$  on $V$ is {\bf locally nilpotent}.
\item[\rm (2)] If for any $v\in V$ we have $\mathrm{dim}(\sum_{n\in\bN}\cc x^nv)<+\infty$, then   the action of $x$ on $V$ is  said to be {\bf locally finite}.
\item[\rm (3)] The action of $\GG$   on $V$  is said to be {\bf locally nilpotent} if for any $v\in V$ there exists an $n\in\z_+$ (depending on $v$) such that $x_1x_2\cdots x_nv=0$ for any $x_1,x_2,\cdots, x_n\in L$.
\item[\rm (4)] The action of $\GG$  on $V$  is said to be {\bf locally finite} 	if for any $v\in V$ there is a finite-dimensional $L$-submodule of $V$ containing $v$.
\end{itemize}
\end{defi}

	\begin{defi}
		If $W$ is a $\mathfrak D$-module ({\it resp.} $\bar {\DD}$-module) on which  ${\bf c}_1$ ({\it resp.} $\bar{\c}_1$) acts as complex
		scalar $c$,  we say that $W$ is of {\bf central charge} $c$. If $W$ is a $\mathfrak D$-module ({\it resp.} $\bar {\DD}$-module) on which  ${\bf c}_2$ ({\it resp.} $\bar{\c}_3$) acts as complex
		scalar $\ell$, we say that $W$ is of {\bf level} $\ell$.
	\end{defi}

Note that if $V$ is a $\Vir$-module,
then $V$ can be easily viewed as a $\DD$-module ({ resp.} $\bar {\DD}$-module) by defining $\mh V=0$ ({ resp.} $(\bar {\mh}+\bC \bar{\c}_2)V=0$),
the resulting  module is denoted by $V^{\DD}$({ resp.} $V^{\bar {\DD}}$).

Thanks to \cite{LPXZ}, for any $H\in\RR_{\mh}$ with the action of ${\bf c}_2$ as a nonzero scalar $\ell$, we can give $H$ a $\DD$-module structure
denoted by $H^{{\DD}}$ via the following map
\begin{align}
d_n&\mapsto L_n= \frac{1}{2\ell}\sum_{k\in\bZ+\frac{1}{2}}h_{n-k}h_k,\quad\forall n\in\bZ, n\neq0 ,\label{rep1}\\
d_0&\mapsto  L_0=\frac{1}{2\ell}\sum_{k\in\bZ+\frac{1}{2}}h_{-|k|}h_{|k|}+\frac{1}{16},\label{rep2}\\
h_r&\mapsto h_r,\quad\forall r\in\frac{1}{2}+\bZ,
\quad {\bf{c}_1} \mapsto 1,
\quad {\bf{c}_2}\mapsto \ell.\label{rep3}
\end{align}

\begin{rem}
The vertex operator algebra interpretation of the formula (\ref{rep1})-(\ref{rep2}) will be given in
Section \ref{voa-interp}. The operators $L_n$, $n \in {\bZ}$, will be represented as  components  of the field $L_{tw} ^{Heis} (z)$ in    (\ref{heis-tw-vir}).
\end{rem}

According to (9.4.13)  and (9.4.15) in  \cite{FLM},  we know that  for  all $m,n\in\bZ,   r\in\frac{1}{2}+\bZ$, we  have
\begin{equation}\label{rep4}
\aligned {}
[{L}_n,h_r]&=-rh_{n+r},\\ [{L}_m,{L}_n]&=(m-n){L}_{m+n}+\frac{m^3-m}{12}\delta_{m+n,0} .\endaligned
\end{equation}
Morover, since
$$[d_m,h_{n-k}h_k]=[d_m,h_{n-k}]h_k+h_{n-k}[d_m,h_k]=[{L}_m,h_{n-k}]h_k+h_{n-k}[{L}_m,h_k]=[{L}_m,h_{n-k}h_k],$$
we see  that
\begin{equation}\label{rep4'}[d_m,{L}_n]=[{L}_m,{L}_n]\end{equation}

By \cite{LZ3}, for  any $z\in\bC$ and  $H\in\RR_{\bar {\mh}}$ with the action of ${\bar{\c}}_3$ as a nonzero scalar $\ell$, we can give $H$ a $\bar {\DD}$-module structure
(denoted by $H(z)^{\bar {\DD}}$) via the following map
\begin{align}
d_n&\mapsto \bar{L}_n= \frac{1}{2\ell}\sum_{k\in\bZ}:h_{n-k}h_k:+\frac{(n+1)z}{\ell}h_n,\quad\forall n\in\bZ,\label{rep1-untw}\\
h_r&\mapsto h_r,\quad\forall r\in\bZ,
\quad {\bar{\c}_1} \mapsto 1-\frac{12z^2}{\ell}, \,\quad {\bar{\c}_2} \mapsto z,
\quad {\bar{\c}_3}\mapsto \ell,\label{rep2-untw}
\end{align}
where the normal order is define as
	$$\normOrd{h_{r}h_s}=\normOrd{h_{s}h_r}=h_{r}h_s,\text{ if } r\le s.$$

According to (8.7.9), (8.7.13) in  \cite{FLM} and by some simple computation,  we deduce  that  for  all $m,n,r\in\bZ$,
\begin{equation}\label{rep3-untw}
\aligned {}
[\bar{L}_m,h_r]&=-rh_{m+r}+\delta_{m+r,0}(m^2+m)z,   \\ [\bar{L}_m,\bar{L}_n]&=(m-n)\bar L_{m+n}+\frac{m^3-m}{12}\delta_{m+n,0}(1-\frac{12z^2}{\ell})      .\endaligned
\end{equation}
\begin{rem}  The vertex operator algebra interpretation of the operators (\ref{rep1-untw}) will be  given later  in (\ref{rep1-voa}). Then relations (\ref{rep3-untw})  can be obtained using commutator formula, similarly  as in \cite{AR}.
\end{rem}
Moreover, since
$$[d_m,h_{n-k}h_k]=[d_m,h_{n-k}]h_k+h_{n-k}[d_m,h_k]=[\bar{L}_m,h_{n-k}]h_k+h_{n-k}[\bar{L}_m,h_k]=[\bar{L}_m,h_{n-k}h_k],$$
we see  that
\begin{equation}\label{rep3'}[d_m,\bar{L}_n]=[\bar{L}_m,\bar{L}_n].\end{equation}


For convenience, we define the following subalgebras of $\DD$. For any $m\in\bN, n\in\bZ$, set
\begin{equation}\label{Natations}
\begin{split}
&\DD^{(m, n)}=\sum_{i\in\bN}\bC d_{m+i}\oplus \bC h_{n+i+\frac12}\oplus\bC{\bf c}_1\oplus \bC{\bf c}_2, \\
&\DD^{(m,-\infty)}=\sum_{i\in\bN}\bC d_{m+i}\oplus \sum_{i\in\bZ}\bC h_{i+\frac12}+\bC{\bf c}_1+\bC{\bf c}_2,\\
&\Vir^{(m)}= \sum_{i\in\bN}\bC d_{m+i}\oplus\bC{\bf c}_1,\\
&\Vir_{\geq m}= \sum_{i\in\bN}\bC d_{m+i},\\
&\Vir_{\leq 0}= \sum_{i\in\bN}\bC d_{-i},\\
 &\Vir_+=\text{span}\{{\bf c}_1, d_i: i\ge 0\},\\
 &\mh^{(n)}=\sum_{i\in\bN}\bC h_{n+i+\frac12}\oplus\bC{\bf c}_2,\\
 &\mh_{\ge n}=\sum_{i\in\bN}\bC h_{n+i+\frac12}.
\end{split}
\end{equation}

Similarly, we define the subalgebras of $\bar {\DD}$ as following: for $m\in\bN, n\in\bZ$, set
\begin{equation}\label{Natations'}
\begin{split}
&\bar {\DD}^{(m, n)}=\sum_{i\in\bN}\bC d_{m+i}\oplus \bC h_{n+i}\oplus\bC{\bar{\c}}_1\oplus \bC{\bar{\c}}_2+\bC{\bar{\c}}_3, \\
&\bar {\DD}^{(m,-\infty)}=\sum_{i\in\bN}\bC d_{m+i}\oplus \sum_{i\in\bZ}\bC h_{i}+\bC{\bar{\c}}_1\oplus \bC{\bar{\c}}_2+\bC{\bar{\c}}_3,\\
&\Vir^{(m)}= \sum_{i\in\bN}\bC d_{m+i}\oplus\bC{\bar{\c}}_1,\\
&\Vir_{\geq m}= \sum_{i\in\bN}\bC d_{m+i},\\
&\Vir_{\leq 0}= \sum_{i\in\bN}\bC d_{-i},\\
 &\Vir_+=\text{span}\{{\bar{\c}}_1, d_i: i\ge 0\},\\
 &\bar {\mh}^{(n)}=\sum_{i\in\bN}\bC h_{n+i}\oplus\bC{\bar{\c}}_3,\\
 &\bar {\mh}_{\ge n}=\sum_{i\in\bN}\bC h_{n+i}.
\end{split}
\end{equation}
Note that we use the same notations $\Vir^{(m)}, \Vir_{\geq m}, \Vir_{\leq 0}, \Vir_+$ to denote the subalgebras of $\DD$ and of  $\bar {\DD}$ since there will be no ambiguities in later contexts.
	

Denote by  $\mathbb{M}$ the set of all infinite vectors of the form $\mi:=(\ldots, i_2, i_1)$ with entries in $\mathbb N$,
satisfying the condition that the number of nonzero entries is finite.
We can make
 $(\mathbb{M}, +)$ a monoid by
 $$(\ldots, i_2, i_1)+(\ldots, j_2, j_1)=(\ldots, i_2+j_2, i_1+j_1).$$
 Let $\mathbf{0}$ denote the element $(\ldots, 0, 0)\in\mathbb{M}$ and
for
$i\in\z_+$ let
$\epsilon_i=(\ldots,0,1,0,\ldots,0)\in\mathbb{M}$,
where $1$ is
in the $i$'th  position from right.
For any $\mi\in\M$ we define
\begin{equation} {\rm w}(\mi)=\sum_{n\in\z_+}i_n\cdot n,\,\,\,
\label{length}\end{equation}


Let $\prec$ be the {\bf reverse lexicographic} total order on $\M$,
that is, for any $\bfi,\bfj\in \M$,
\begin{equation*}
\bfj \prec \bfi\Longleftrightarrow {\rm\ there\ exists\ } r\in\N {\rm\ such\ that\ } j_r<i_r {\rm\ and\ } j_s=i_s,\
\forall\ 1\leq s<r.
\end{equation*}

We extend the above total order on $\M \times \M$, that is,
for all $\bfi,\bfj,\bfk,\bfl\in \M$,
$$(\bfi,\bfj)  \prec (\bfk,\bfl)\iff \aligned
(\bfj,{\rm w}(\bfj), {\rm w}(\bfi)+{\rm w}(\bfj))&\prec(\bfl, {\rm w}(\bfl), {\rm w}(\bfk)+{\rm w}(\bfl)), \text{ or }\\
(\bfj,{\rm w}(\bfj), {\rm w}(\bfi)+{\rm w}(\bfj))&\prec(\bfl, {\rm w}(\bfl), {\rm w}(\bfk)+{\rm w}(\bfl)), \text{ and }
\bfi\prec\bfk
.\endaligned$$

Now we define another total order     $\prec'$  on $\M \times \M$:
for all $\bfi,\bfj,\bfk,\bfl\in \M$,
$$(\bfi,\bfj) \prec' (\bfk,\bfl)  \iff  (\bfj,\bfi) \prec (\bfl,\bfk) $$
The symbols $\preceq$ and $\preceq'$ have the obvious meanings.

It is not hard to verify  that
 $$(a, b) \preceq (c,d)\,\,\, \&\,\,\, (c',d')\prec(a',b') \implies (a-a', b-b') \prec (c-c',d-d'),$$
provided $(a, b) , (c,d)\,\,\, (c',d'),(a',b'), (a-a', b-b') , (c-c',d-d')\in \M \times \M,$ where the difference  is the corresponding entry difference.


For $n\in \bZ$, let $V$ be an  simple   ${\mathfrak{D}}^{(0,-n)}$-module.
According to the $\mathrm{PBW}$ Theorem, every nonzero element $v\in \mathrm{Ind}_{{\mathfrak{D}}^{(0,-n)}}^ {\mathfrak{D}}(V)$ can be uniquely written in the
following form
\begin{equation}\label{def2.1}
v=\sum_{\mi, \mk\in\mathbb{M}} h^{\mi}d^{\mk} v_{\mi, \mk},
\end{equation}
where
$$ h^{\mi}d^{\mk}=\ldots h_{-2-n+\frac{1}{2}}^{i_2} h_{-1-n+\frac{1}{2}}^{i_1}\ldots d_{-2}^{k_2} d_{-1}^{k_1}\in U(\mathfrak D^{-}),v_{\mi, \mk}\in V,$$
and only finitely many $v_{\mi, \mk}$ are nonzero. For any nonzero $v\in\mathrm{Ind}(V)$ as in  \eqref{def2.1}, we will use the following notations for   later use:

(1) Denote by $\mathrm{supp}(v)$ the set of all $(\mi,\mk)\in \mathbb{M}\times \mathbb{M}$  such that $v_{\mi, \mk}\neq0$.

(2) Denote by $$\mathrm{w}(v) =\mathrm{max}\{{\rm w}(\mi)+{\rm w}({\bf k}): (\mi, {\bf k})\in \mathrm{supp}(v)\}, $$ called the {\bf  length} of $v$.

(3) Denote by
$\deg(v)$ to be the largest element in $\mathrm{supp}(v)$ with respect to the total order $\prec$.

(4) Denote by
$\deg'(v)$ to be the largest element in $\mathrm{supp}(v)$ with respect to the total order $\prec'$.

We first recall from \cite{MZ2} the classification for  simple restricted $\Vir$-modules.

\begin{theo} \label{Vir-modules} Any   simple restricted $\Vir$-module is a highest weight module, or isomorphic to $ {\rm Ind}_{\Vir_+}^{\Vir}V$ for an  simple  $\Vir_+$-module $V$ such that for some $k\in \z_+$,
\begin{enumerate}[$($a$)$]
\item $d_k$ acts injectively on $V$;
\item$d_iV=0$ for all $i>k$.
\end{enumerate}
\end{theo}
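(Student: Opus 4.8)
The plan is to follow the strategy of \cite{MZ2}: pin down the central charge by Schur's lemma, then split according to whether the ``top'' of $M$ is nonzero. Since $M$ is a cyclic module over $U(\Vir)$, which has countable dimension over $\bC$, Schur's lemma gives $\End_\Vir(M)=\bC$; in particular ${\bf c}$ acts as a scalar $c$. For $n\in\bN$ put $A_n:=\{v\in M:d_iv=0\text{ for all }i>n\}={\rm Ann}_M(\Vir_{\ge n+1})$. Each $\Vir_{\ge n+1}$ ($n\ge 0$) is a subalgebra, so each $A_n$ makes sense; one has $A_0\subseteq A_1\subseteq\cdots$ and $\bigcup_{n\ge 0}A_n=M$ because $M$ is restricted. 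A one-line commutator computation shows every $A_n$ is $\Vir_+$-stable: for $v\in A_n$ and $j\ge 0$ we get $d_id_jv=(i-j)d_{i+j}v=0$ whenever $i>n$, since then $i+j>n$. The dichotomy is whether $A_0={\rm Ann}_M(\Vir_{\ge 1})$ is zero.

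\emph{Case 1: $A_0\ne 0$.} Here $A_0$ is a module over the abelian algebra $\Vir_+/\Vir_{\ge 1}\cong\bC d_0\oplus\bC{\bf c}$. I would first show, using simplicity of $M$, that $d_0$ acts locally finitely on $A_0$ (the point being that otherwise $M$ would acquire a proper nonzero submodule), so that $A_0$ contains a $d_0$-eigenvector $v_0$. Then $\Vir_{\ge 1}v_0=0$, $d_0v_0=hv_0$, ${\bf c}v_0=cv_0$, i.e.\ $v_0$ is a highest weight vector, and by simplicity $M=U(\Vir)v_0$ is the simple highest weight module of highest weight $h$ and central charge $c$.

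\emph{Case 2: $A_0=0$.} Let $k\ge 1$ be minimal with $A_k\ne 0$ and set $V:=A_k$, a nonzero $\Vir_+$-submodule of $M$. By definition $d_iV=0$ for $i>k$, which is (b); and if $d_kw=0$ for $w\in V$ then $d_iw=0$ for all $i\ge k$, so $w\in A_{k-1}=0$, giving (a). Consider the canonical $\Vir$-homomorphism $\Phi:\Ind_{\Vir_+}^{\Vir}V\to M$ extending $V\hookrightarrow M$; it is surjective ($M$ simple, $\Phi(1\otimes V)=V\ne 0$) and injective on $1\otimes V$, so $\Ker\Phi\cap(1\otimes V)=0$. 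The key claim is that this forces $\Ker\Phi=0$, i.e.\ that every nonzero $\Vir$-submodule $N$ of $\Ind_{\Vir_+}^{\Vir}V$ meets $1\otimes V$ nontrivially. To prove it I would take $0\ne v\in N$ of minimal length $\mathrm{w}(v)$ (as in \eqref{length} and the discussion after \eqref{def2.1}) and, among those, minimal $\deg(v)$, write $v=\sum_{\mk}d^{\mk}\otimes v_{\mk}$, and, assuming $\mathrm{w}(v)\ge 1$, apply suitably chosen elements of $\Vir_{\ge 1}$ — the choice dictated by $\deg$, $\deg'$ and the orders $\prec,\prec'$ of Section 2 — to produce a nonzero element of $N$ of strictly smaller length (or equal length but strictly smaller degree), contradicting minimality; here $d_iV=0$ for $i>k$ kills the unwanted terms and injectivity of $d_k$ on $V$ shows the surviving leading term is nonzero. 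Once $\Phi$ is an isomorphism, $M\cong\Ind_{\Vir_+}^{\Vir}V$; and since $\Ind_{\Vir_+}^{\Vir}$ of a proper nonzero $\Vir_+$-submodule $V'\subsetneq V$ would be a proper nonzero submodule of $M$ (its quotient being $\Ind_{\Vir_+}^{\Vir}(V/V')\ne 0$), simplicity of $M$ forces $V$ to be simple over $\Vir_+$.

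The main obstacle is precisely the leading-term computation in Case 2 — that every nonzero submodule of $\Ind_{\Vir_+}^{\Vir}V$ meets the bottom space $1\otimes V$. This is exactly the phenomenon the total orders $\prec,\prec'$ and the length function of Section 2 are built to control, and carrying it out rigorously (together with the local finiteness of $d_0$ used in Case 1) is the content of \cite{MZ2}; it is also the prototype for the more elaborate arguments this paper develops for $\DD$ and $\bar\DD$.
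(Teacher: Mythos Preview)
The paper does not prove this theorem at all: it is stated as a result recalled from \cite{MZ2} (see the sentence immediately preceding it, ``We first recall from \cite{MZ2} the classification for simple restricted $\Vir$-modules''), and no argument is given here. So there is no ``paper's own proof'' to compare against.

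Your outline is essentially the strategy of \cite{MZ2}, and the structure is right: pass to the filtration by the annihilators $A_n$, split on whether $A_0$ vanishes, and in the induced case run a leading-term/degree argument to show any nonzero submodule of $\Ind_{\Vir_+}^{\Vir}V$ hits $1\otimes V$. Two remarks. First, in Case~1 your claim that ``$d_0$ acts locally finitely on $A_0$'' is not quite how \cite{MZ2} proceeds, and as stated it is not obviously a consequence of simplicity alone; the argument there (and its analogue in this paper, cf.\ Lemma~\ref{lem4.11} and Proposition~\ref{eigenvalue prop}) goes instead through showing that if no $d_0$-eigenvector exists then the induced module from $A_0$ would already be simple, forcing $A_0$ to be a simple module over an algebra with $d_0$ central, whence $d_0$ acts as a scalar --- a contradiction. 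Second, in Case~2 the correct operator to apply is $d_{p+k}$ where $p$ is the smallest index appearing in $\deg(v)$; the injectivity of $d_k$ on $V$ is exactly what makes the new leading term survive, and conditions (a)--(b) are tailored to this. You have correctly identified that this computation is the heart of the matter and the model for the $\DD$- and $\bar\DD$-arguments in Sections~3--5.
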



 Simple restricted $\DD$-modules with level $0$ are classified in \cite{LPXZ} by the following two theorems.

 \begin{theo}\label{simple-theo}
	Let $V$ be a simple  ${\mathfrak{D}}^{(0,-n)}$-module for some $n\in \mathbb{Z}_+$ and $c\in \bC$ such that ${\bf c}_1v=cv, {\bf c}_2v=0$ for any $v\in V$. Assume that there exists an  integer $k\ge-n$ satisfying the following two conditions:
	\begin{itemize}
		\item[(a)] the action of  $h_{k+\frac12}$   on  $V$ is bijective;
		\item[(b)] $h_{m+\frac{1}{2}}V=0=d_{m+n}V$ for all $m> k$.
	\end{itemize}
	Then	 the induced    ${\mathfrak{D}}$-module $\mathrm{Ind}_{{\mathfrak{D}}^{(0,-n)}}^ {\mathfrak{D}} (V)$ is   simple.
\end{theo}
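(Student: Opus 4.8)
The plan is to show that any nonzero submodule $W$ of $M:=\mathrm{Ind}_{\DD^{(0,-n)}}^{\DD}(V)$ must contain $V$; since $V$ generates $M$ and $V$ is simple, this forces $W=M$. To this end, I would pick a nonzero $v\in W$ and argue by induction on the length $\mathrm{w}(v)$ (together with the secondary data $\deg(v)$ or $\deg'(v)$ in the appropriate total order) that, after acting by suitable elements of $U(\DD^{+})$, one can strictly decrease this statistic while staying inside $W$, eventually landing on a nonzero element of $V=\mathrm{Ann}_M(\DD^{+})\cap(\text{length }0\text{ part})$. Concretely, for $v$ written as in \eqref{def2.1}, let $(\mi,\mk)=\deg(v)$ (or $\deg'(v)$) be the leading term with respect to $\prec$. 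I would hit $v$ with operators of the form $h_{k+\frac12+s}$ and $d_{k+n+s}$ for $s\in\bN$: by the commutation relations these move each basis monomial $h^{\mi}d^{\mk}$ to monomials of strictly smaller length or to the ``same-shape'' monomial with one factor removed, the latter contributing the bijective operator $h_{k+\frac12}$ (via condition (a)) acting on the coefficient in $V$. The hypothesis (b) that $h_{m+\frac12}V=0=d_{m+n}V$ for $m>k$ guarantees that all the ``overshoot'' terms produced when a raising operator is pushed past the barrier into $V$ simply vanish, so no spurious lower-order contributions survive.

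The key computational lemma I would isolate first is the analogue of the standard Virasoro/Heisenberg ``lowering'' estimate: for a fixed monomial $h^{\mi}d^{\mk}$ and a raising operator $x\in\{h_{k+\frac12+s}, d_{k+n+s}\}$ with $s$ large, one has
\[
x\cdot h^{\mi}d^{\mk}v_{\mi,\mk}
= \bigl(\text{leading term of strictly smaller }(\mi',\mk')\bigr)
+ \bigl(\text{lower terms}\bigr),
\]
where the leading behavior is governed precisely by the order relations $\prec$, $\prec'$ defined on $\M\times\M$, and where the displayed compatibility property
\[
(a,b)\preceq(c,d)\ \&\ (c',d')\prec(a',b')\ \Longrightarrow\ (a-a',b-b')\prec(c-c',d-d')
\]
ensures the leading term of $x\cdot v$ is exactly the leading term of $x$ acting on the leading monomial of $v$ — all other monomials contribute strictly below. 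Choosing a monomial in the $h$'s (respectively $d$'s) occurring in $\deg(v)$ and annihilating it by a product of such $x$'s reduces the number of $h$-factors (respectively $d$-factors) one at a time, each step multiplying the surviving coefficient in $V$ by the bijective map $h_{k+\frac12}$, hence keeping it nonzero.

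The main obstacle — and the place where conditions (a) and (b) together with the precise choice of ordering really earn their keep — is controlling the interaction between the Virasoro part and the twisted Heisenberg part: when one applies $d_{k+n+s}$, the relation $[d_m,h_r]=-rh_{m+r}$ creates new $h$-terms, and when one applies $h_{k+\frac12+s}$, the coefficients $r\delta_{r+s,0}{\bf c}_2$ and the Virasoro cocycle can produce central terms. I would handle this by doing the reduction in two stages — first clearing all $d$-factors using $d_{k+n+s}$ (tracking that the $h$-terms thereby created have strictly smaller combined length, by the triple-comparison built into $\prec$), then clearing the remaining $h$-factors using $h_{k+\frac12+s}$ — and by verifying at each stage that the ``boundary'' terms land in $\DD^{+}V=0$ because of (b). A secondary technical point is the base case: once $\mathrm{w}(v)=0$, the element lies in $V\otimes(\text{constants})$, and one checks directly that $\DD^{+}$ annihilates it using (b) and the simplicity of $V$ as a $\DD^{(0,-n)}$-module, so $0\ne v\in W\cap V$ and we are done. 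I expect the bookkeeping in the two-stage reduction — in particular, proving that the compatibility of $\prec$ with subtraction really does make every auxiliary term strictly smaller — to be the most delicate part of the write-up, but it is routine given the order-theoretic setup already established in the preliminaries.
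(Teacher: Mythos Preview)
The paper does not itself prove this statement: Theorem~\ref{simple-theo} is quoted from \cite{LPXZ} as a known result about the level-zero case. The analogous nonzero-level result is the paper's own Theorem~\ref{thmmain1.1}, proved via Lemmas~\ref{main1'}--\ref{main4}. Your proposal is visibly modeled on that nonzero-level argument, and this is exactly where it breaks.

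The hypothesis here is ${\bf c}_2 v=0$, so ${\bf c}_2$ (being central) acts as zero on all of $\mathrm{Ind}(V)$, and hence $[h_r,h_s]=r\delta_{r+s,0}{\bf c}_2$ acts as zero on the whole induced module. Your Phase~2 --- ``clearing the remaining $h$-factors using $h_{k+\frac12+s}$'' --- therefore does nothing: $h_{k+\frac12+s}\, h^{\mi}v = h^{\mi}\, h_{k+\frac12+s}v$, which vanishes for $s\ge 1$ and leaves the $h$-degree unchanged for $s=0$. Your Phase~1 has a related problem: clearing $d$-factors with $d_{k+n+s}$ via $[d_{k+n+s},d_{-s}]$ deposits $d_{k+n}$ (not $h_{k+\frac12}$) on the coefficient in $V$, and nothing in (a) or (b) says $d_{k+n}$ is injective, so the coefficient may die. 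In short, neither stage of your reduction is guaranteed to produce a nonzero element of strictly smaller degree.

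The fix is to swap the roles of $h$ and $d$. At level zero, use $h_{k+\frac12+p}$ (with $p$ the smallest index in $\mk$) to clear $d$-factors: it commutes past $h^{\mi}$ for free, and $[h_{k+\frac12+p},d_{-p}]$ feeds the bijective $h_{k+\frac12}$ into $V$. Once $\mk=\mathbf{0}$, use $d_{k+n+q}$ (with $q$ the smallest index in $\mi$) to clear $h$-factors: $[d_{k+n+q},h_{-q-n+\frac12}]$ again produces $h_{k+\frac12}$, and $d_{k+n+q}V=0$ for $q\ge 1$. With this corrected assignment the degree-lowering bookkeeping you describe goes through; compare how in Lemmas~\ref{main1'}--\ref{main4} the nonzero level $\ell$ is exactly what allows $h$-operators to attack $h$-factors directly.
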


\begin{theo}\label{MT}
	Every simple restricted ${\mathfrak{D}}$-module of  level 0 is isomorphic to    a restricted $\Vir$-module with $\mh S=0$, or $S\cong\mathrm{Ind}_{{\mathfrak{D}}^{(0,-n)}}^ {\mathfrak{D}} (V)$ for   some $n\in \mathbb{N}$ and  a simple  ${\mathfrak{D}}^{(0,-n)}$-module $V$  as in Theorem \ref{simple-theo}.
\end{theo}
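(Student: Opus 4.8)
The plan is to analyse how the twisted Heisenberg subalgebra $\mh$ acts on $S$ and, in the nondegenerate case, to exhibit $S$ as an induced module and identify the inducing data. First two easy reductions: $\DD$ is countable-dimensional, so $\c_1$ acts on $S$ by a scalar (Schur's lemma), and $\c_2$ acts by $0$ since the level is $0$. The engine of the argument is a dichotomy for the operators $h_r$, $r\in\tfrac12+\bZ$: because $\c_2$ acts by $0$ we have $[h_r,h_s]=0$ and $(\ad h_r)^2(d_m)=0$, so for each fixed $r$ the subset $\{v\in S:\,h_r^Nv=0\text{ for some }N\in\bN\}$ is a $\DD$-submodule of $S$, hence $0$ or $S$; that is, each $h_r$ acts on $S$ either injectively or locally nilpotently. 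Since $S\neq 0$ is restricted, any fixed nonzero vector is killed by $h_r$ for $r\gg 0$, so $\{r:\,h_r\text{ acts injectively on }S\}$ is bounded above. Now suppose every $h_r$ acts locally nilpotently on $S$; then $\mh S=0$: since $\mh$ is abelian and $S$ is restricted, $\UU(\mh_{\ge 0})v$ is finite-dimensional for each $v$ (only finitely many of the locally nilpotent commuting generators act nontrivially on it), so $\mathrm{Ann}_S(\mh_{\ge 0})\neq 0$, and since $\mathrm{Ann}_S(\mh_{\ge 0})$ is stable under $\DD^{(0,-\infty)}$ while $\mathrm{Ann}_S(\mh)$ is a genuine $\DD$-submodule, a descent on the Heisenberg degree combined with simplicity upgrades this to $\mathrm{Ann}_S(\mh)=S$. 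In this case $S$ is a simple restricted $\Vir$-module with $\mh S=0$ --- the first alternative of the theorem, which is precisely the class described by Theorem \ref{Vir-modules}.

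From now on assume $\mh S\neq 0$. Then there is a largest $k\in\bZ$ with $h_{k+\frac12}$ acting injectively on $S$, and $h_r$ acts locally nilpotently on $S$ for every $r>k+\frac12$. Using restrictedness --- first a \emph{finite} Heisenberg-degree descent to see that $\mathrm{Ann}_S(\{h_r:\,r>k+\tfrac12\})\neq 0$, and then that this space is a restricted $\Vir_+$-module, so for $n$ large enough it contains nonzero vectors killed by all $d_j$ with $j>k+n$ --- one fixes $n\geq\max(-k,0)$ such that $M:=\mathrm{Ann}_S\big(\{d_j:\,j>k+n\}\cup\{h_r:\,r>k+\tfrac12\}\big)\neq 0$. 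Since $\DD^{(0,-n)}_+:=\sum_{j>k+n}\bC d_j+\sum_{r>k+\frac12}\bC h_r$ is an ideal of $\DD^{(0,-n)}$, the space $M$ is a $\DD^{(0,-n)}$-submodule of $S$ on which this ideal acts by $0$ and on which $h_{k+\frac12}$ acts injectively. The canonical $\DD$-homomorphism $\mathrm{Ind}_{\DD^{(0,-n)}}^{\DD}(M)\to S$, $u\otimes m\mapsto um$, is surjective because $\UU(\DD)M$ is a nonzero submodule of the simple module $S$. Granting that it is also injective (the main technical point, sketched next), we obtain $S\cong\mathrm{Ind}_{\DD^{(0,-n)}}^{\DD}(M)$; then $M$ is forced to be simple (a nonzero proper $\DD^{(0,-n)}$-submodule of $M$ would, by exactness of induction, yield a nonzero proper $\DD$-submodule of $S$), the operator $h_{k+\frac12}$ is automatically bijective on $M$ (its image is a nonzero submodule of the simple $M$), and $d_jM=h_rM=0$ for $j>k+n$ and $r>k+\frac12$ by construction. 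Thus $M$ is a simple $\DD^{(0,-n)}$-module satisfying exactly conditions (a)--(b) of Theorem \ref{simple-theo} with this $n$ and $k$, so $S$ is in the second alternative.

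It remains to prove injectivity, i.e.\ that a $\DD$-submodule $K\subseteq\mathrm{Ind}_{\DD^{(0,-n)}}^{\DD}(M)$ with $K\cap(1\otimes M)=0$ is zero; this is exactly what the combinatorial apparatus recalled above is for. Suppose $0\neq v\in K$ and write $v=\sum_{\mi,\mk}h^{\mi}d^{\mk}v_{\mi,\mk}$ in PBW normal form with $v_{\mi,\mk}\in M$; then $\mathrm{w}(v)>0$. One lowers $\mathrm{w}(v)$ step by step: at each stage one looks at the $\deg$- (or $\deg'$-) maximal pair $(\mi_0,\mk_0)$ in $\mathrm{supp}(v)$ and applies a matching monomial in the positive operators $d_j$, using the brackets $[d_m,d_l]=(m-l)d_{m+l}$ and $[d_m,h_s]=-sh_{m+s}$, whose key instances $[d_{k+n+j+1},h_{-n-j-\frac12}]=(n+j+\tfrac12)h_{k+\frac12}$ and $[d_{k+n},h_{-n+\frac12}]=(n-\tfrac12)h_{k+\frac12}$ convert the lowest generators of a monomial into the injective operator $h_{k+\frac12}$. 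The total orders $\prec$ and $\prec'$ on $\M\times\M$ are arranged precisely so that, after such an application, every resulting term other than the transform of the $(\mi_0,\mk_0)$-term is strictly smaller, while that transform is a nonzero scalar times an $M$-preserving operator applied to a suitable power of $h_{k+\frac12}$ on $v_{\mi_0,\mk_0}$, hence nonzero by injectivity of $h_{k+\frac12}$ on $M$. Iterating until the length reaches $0$ yields a nonzero element of $K\cap(1\otimes M)$, a contradiction; so $K=0$.

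The main obstacle is twofold. First is the reduction to these two cases: showing that local nilpotence of all of $\mh$ forces $\mh S=0$ is delicate because, a priori, the Heisenberg-degree descent need not terminate, so one genuinely needs restrictedness (to get finite-dimensionality of the cyclic $\mh_{\ge0}$-modules) together with simplicity of $S$. Second --- and this carries essentially all the computational weight --- is the injectivity of $\mathrm{Ind}_{\DD^{(0,-n)}}^{\DD}(M)\to S$: because $\DD$ is noncommutative, applying positive operators to a PBW monomial spawns many lower-order terms, and isolating the surviving leading contribution requires exactly the two-stage total order on $\M\times\M$ (first the Heisenberg multi-index, then the two length statistics, then the Virasoro multi-index) set up above.
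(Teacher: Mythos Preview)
The paper does not prove Theorem~\ref{MT}; it is quoted from \cite{LPXZ}, so there is no in-paper argument to compare against. On the merits of your proposal:

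Your dichotomy (each $h_r$ acts on $S$ either injectively or locally nilpotently, because $(\ad h_r)^2=0$ on $\DD$ when the level is $0$) is correct, and your treatment of the nondegenerate case is sound. Once $k$ is the largest index with $h_{k+\frac12}$ injective, the annihilator $M$ you build is a $\DD^{(0,-n)}$-module for suitable $n$, the PBW/total-order combinatorics shows that every nonzero $\DD$-submodule of $\mathrm{Ind}_{\DD^{(0,-n)}}^{\DD}(M)$ meets $1\otimes M$, the canonical map to $S$ is an isomorphism, and simplicity of $M$ together with bijectivity of $h_{k+\frac12}$ on $M$ follows a posteriori. This is exactly the mechanism behind Theorem~\ref{simple-theo}.

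The genuine gap is in the degenerate case. You assert that if every $h_r$ is locally nilpotent then $\mh S=0$, justified by ``a descent on the Heisenberg degree combined with simplicity.'' Your argument correctly produces $0\neq\mathrm{Ann}_S(\mh_{\ge 0})$ via a \emph{finite} descent (restrictedness bounds the positive indices). But to reach $\mathrm{Ann}_S(\mh)$ you must also kill all $h_{-s}$ with $s>0$, and here the descent is genuinely infinite: the chain $\mathrm{Ann}_S(\mh_{\ge 0})\supseteq\mathrm{Ann}_S(\mh_{\ge -1})\supseteq\cdots$ is a \emph{decreasing} sequence of nonzero subspaces, and nothing you have written forces the intersection to be nonzero. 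None of the intermediate annihilators $\mathrm{Ann}_S(\mh_{\ge -r})$ is a $\DD$-submodule (only $\mathrm{Ann}_S(\mh)$ itself is), so simplicity of $S$ cannot be invoked along the way; and infinitely many commuting locally nilpotent operators on a nonzero space need not have a common null vector in general. You need an additional argument---for instance, exploiting the $\Vir$-action to show that if any single $h_{r_0}$ acts nontrivially then it (or some shift of it) must act injectively, contradicting the hypothesis---to close this case. As written, the first alternative is not established.
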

	
 Actually the simple  ${\mathfrak{D}}^{(0,-n)}$-module $V$  can be considered as a simple   module over a finite dimensional solvable  Lie algebra
	${\mathfrak{D}}^{(0,-n)}/{\mathfrak{D}}^{(t+n+1, t-n)}$   for some $ t\in\bZ_+$ and injective action of $h_{t+\frac12}$ on $V$.

For simple  restricted $\bar \DD$-modules with level $0$, we know the following results from \cite{CG1}.

\begin{theo} Let $n\in\bN$ and $V$ be a  simple module  over $\bar \DD^{(0,-n)}$  or over $\bar \DD^{(0,-\infty)}$  with  $\ell =0$, $h_0=\mu, \bar\c_2=z$. If there exists $k\in\bN$ such that

(a)\begin{equation*}
\begin{cases}h_k \,\text{acts\, injectively \,on\, V},& \text{if}\, k\ne 0,\\
\mu+(1-r)z\neq 0,\forall r\in\bZ\setminus\{0\}, &\text{if}\,k=0;
\end{cases}
\end{equation*}

(b) $h_iV=d_jV=0$ for all $i>k$ and $j>k+n$.

\noindent then

(1) $\Ind(V)$ is a simple $\bar \DD$-module;

(2) $h_i, d_j$ act locally nilpotently on $\Ind(V)$ for all $i>k$ and $j>k+n$.

\end{theo}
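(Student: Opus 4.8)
The plan is to mirror the structure of Theorem \ref{simple-theo} and its $\bar\DD$-analogue: we must show (1) the induced module $\Ind(V)=\Ind_{\bar\DD^{(0,-n)}}^{\bar\DD}(V)$ (or $\Ind_{\bar\DD^{(0,-\infty)}}^{\bar\DD}(V)$) is simple, and (2) certain positive-degree operators act locally nilpotently on it. For (1), the key idea is the standard ``lowering'' argument via a total order on $\M\times\M$. Each nonzero $v\in\Ind(V)$ is uniquely a sum $v=\sum_{\mi,\mk}h^{\mi}d^{\mk}v_{\mi,\mk}$ by PBW; one wants to show that the $\bar\DD$-submodule generated by any nonzero $v$ contains a nonzero element of $V$, whence simplicity follows from simplicity of $V$ as a $\bar\DD^{(0,-n)}$-module (together with the fact that $\Ind(V)$ is generated by $V$). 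The mechanism: pick $\deg(v)$ or $\deg'(v)$, the maximal support element, and act by a carefully chosen monomial in $h_j,d_j$ with $j>k$ (resp. $j>k+n$) so that, using the bracket relations \eqref{thva}, the leading term is killed-then-replaced by something strictly smaller in the order while the coefficient stays nonzero — the injectivity hypothesis (a) on $h_k$ (or the genericity condition $\mu+(1-r)z\neq 0$ when $k=0$) is exactly what guarantees the surviving coefficient is nonzero. Iterating drives $v$ down to length $0$, i.e.\ into $V$.

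First I would set up notation: write $L_j$ for the action of $d_j$ and keep track of how $h_j,d_j$ with $j$ large act on a monomial $h^{\mi}d^{\mk}v_{\mi,\mk}$, commuting them to the right past the $h$'s and $d$'s. The crucial computation is that $h_{k+i}$ (for suitable $i\geq 0$) acting on $h^{\mi}$ produces, via $[h_r,h_s]=r\delta_{r+s,0}\bar\c_3$, a term $\ell\cdot(\text{stuff})\cdot h_{k}(\cdots)$ on the component where $i_1$ is the relevant exponent, lowering ${\rm w}(\mi)$; because $\ell\neq 0$ and because of condition (a), this term does not vanish. Hypothesis (b), $h_iV=d_jV=0$ for $i>k$, $j>k+n$, ensures that after commuting to the right everything hits $V$ and dies except the intended term — this is what makes the order-reduction clean. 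I would then invoke the monoid/order compatibility property displayed before Theorem \ref{Vir-modules} (namely $(a,b)\preceq(c,d)$ \& $(c',d')\prec(a',b') \implies (a-a',b-b')\prec(c-c',d-d')$) to see that the whole tail of $v$ stays below the new leading term. This is a routine but bookkeeping-heavy adaptation of the proof of Theorem \ref{simple-theo}, essentially identical after replacing half-integer indices $r\in\tfrac12+\bZ$ by integer indices $r\in\bZ$ and accounting for the extra central element $\bar\c_2=z$ and the shift by $\mu=h_0$.

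For part (2), local nilpotency, the argument is the familiar one: for $v\in\Ind(V)$ of length ${\rm w}(v)$, acting by $h_i$ with $i>k$ (or $d_j$ with $j>k+n$) on any monomial strictly decreases the length or annihilates it — because $h_i$ commutes to the right, kills $V$ by (b), and each commutator with an $h$ or $d$ factor drops ${\rm w}$ by at least one — so $h_i^{{\rm w}(v)+1}v=0$ and similarly for $d_j$. One should verify the length bound is uniform, i.e.\ that $h_i$ does not raise the length, which again follows from (b) and the grading.

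The main obstacle I anticipate is the $k=0$ (and the $\bar\DD^{(0,-\infty)}$) case, where $h_0$ is not nilpotent but acts as the scalar-plus-central combination $\mu$, and where $d_n$ for small $n$ interacts with $h_0$ through $[d_m,h_0]=\delta_{m,0}(m^2+m)\bar\c_2=0$ but $[d_m,h_{-m}]=mh_0+\delta_{m,0}\cdots$ can reintroduce $h_0$. Here the condition $\mu+(1-r)z\neq0$ for all $r\in\bZ\setminus\{0\}$ is precisely the nonvanishing needed so that when we commute $h_m$ ($m>0$) past $d_{-m}$ and then past the remaining factors, the emergent coefficient — a polynomial in $\mu$ and $z$ of the form $\prod(\mu+(1-r)z)$ — is nonzero; I would isolate this as a lemma and check that the relevant products that appear are exactly of this shape. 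The $\bar\DD^{(0,-\infty)}$ case is handled by the same order argument but with $\mk$ ranging over all of $\M$ and $\mi$ over a two-sided set; the ordering $\prec$ on $\M\times\M$ (with the ${\rm w}$-refinements built in) was designed to accommodate exactly this, so the proof goes through verbatim once one observes that only finitely many $h$-indices with index $>k$ actually occur in reducing a given $v$.
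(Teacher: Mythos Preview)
There is a genuine and fatal gap: you have inverted the central hypothesis. The theorem is explicitly about level $\ell=0$, yet your entire reduction mechanism hinges on ``because $\ell\neq 0$'' so that the Heisenberg bracket $[h_r,h_s]=r\delta_{r+s,0}\bar\c_3$ produces a nonzero scalar when commuting $h_{k+i}$ past $h^{\mi}$. At level zero the $h$'s commute with one another on the module, so $h_{k+i}\cdot h^{\mi}d^{\mk}v_{\mi,\mk}=h^{\mi}\cdot h_{k+i}d^{\mk}v_{\mi,\mk}$, and your ``lower the $h$-part by acting with a positive $h$'' step does nothing at all to the $h$-exponents. The analogy with Lemma~\ref{main1'} and Theorem~\ref{simple-theo} breaks down precisely here: those arguments are for the nonzero-level situation.

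The correct lowering mechanism at level zero must instead exploit the bracket $[d_m,h_r]=-rh_{m+r}+\delta_{m+r,0}(m^2+m)\bar\c_2$: one acts with suitable $d_j$ (and, for the $d$-part, with further $d$'s or with $h$'s) and tracks how $d$ moves $h$-indices up. In particular, when $k=0$ the reduction eventually produces coefficients of the form $-r(\mu+(1-r)z)$ from $[d_{-r},h_r]$ hitting $h_0=\mu$ and $\bar\c_2=z$, which is exactly why the genericity condition $\mu+(1-r)z\neq 0$ for all $r\in\bZ\setminus\{0\}$ is imposed---not the product-of-linear-factors coming from Heisenberg commutators that you describe. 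Your sketch of part~(2) is closer to correct, but even there you need to use $[d,h]$ rather than $[h,h]$ to see why $h_i$ with $i>k$ lowers length. Finally, note that the present paper does not supply a proof of this statement at all; it is quoted from \cite{CG1}, so any comparison is with the argument in that reference.
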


%
%

Now we generalize Theorem 12 in  \cite{LZ3} as follows.

Let ${\mathfrak g}={\mathfrak a}\ltimes{\mathfrak b}$ be a Lie algebra where ${\mathfrak a}$ is a Lie subalgebra  of ${\mathfrak g}$ and ${\mathfrak b}$ is an ideal of ${\mathfrak g}$. Let $M$ be a  ${\mathfrak g}$-module with a   ${\mathfrak b}$-submodule $H$ so that the ${\mathfrak b}$-submodule structure on $H$ can be extended to a ${\mathfrak g}$-module structure on $H$. We denote this ${\mathfrak g}$-module by $H^{\mathfrak g}$. For any ${\mathfrak a}$-module   $U$, we can make it into a ${\mathfrak g}$-module by ${\mathfrak b}U=0$. We denote this ${\mathfrak g}$-module by $U^{\mathfrak g}$.

\begin{theo}\label{generalize} Let ${\mathfrak g}={\mathfrak a}\ltimes{\mathfrak b}$ be a countable dimensional Lie algebra where ${\mathfrak a}$ is a Lie subalgebra  of ${\mathfrak g}$ and ${\mathfrak b}$ is an ideal of ${\mathfrak g}$.
Let $M$ be a  simple ${\mathfrak g}$-module
with a simple  ${\mathfrak b}$-submodule $H$ so that an $H^{\mathfrak g}$ exists.
Then $M\cong H^{\mathfrak{g}}\otimes U^{\mathfrak{g}}$ as $\mathfrak{g}$-modules for some  simple  ${\mathfrak a}$-module $U$.
\end{theo}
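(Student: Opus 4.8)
The plan is to realise $M$ as $H^{\mathfrak g}$ tensored with a suitable simple $\mathfrak a$-module, the latter arising from the space of $\mathfrak b$-module maps $H^{\mathfrak g}\to M$. First I would turn $\mathcal N:=\Hom_{\mathfrak b}(H^{\mathfrak g},M)$ into a $\mathfrak g$-module via $(x\cdot\phi)(h)=x\phi(h)-\phi(xh)$: because $\mathfrak b$ is an ideal, a direct computation shows that $\mathcal N$ is a $\mathfrak g$-submodule of $\Hom_{\mathbb C}(H^{\mathfrak g},M)$ and that $\mathfrak b$ annihilates $\mathcal N$, so $\mathcal N$ is a module over $\mathfrak g/\mathfrak b\cong\mathfrak a$. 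It is nonzero: since $H^{\mathfrak g}$ restricts to $H$ on $\mathfrak b$, the inclusion $\iota\colon H\hookrightarrow M$ is $\mathfrak b$-linear, hence $\iota\in\mathcal N$. The evaluation map $\pi\colon H^{\mathfrak g}\otimes\mathcal N\to M$, $h\otimes\phi\mapsto\phi(h)$, is $\mathfrak g$-linear, and $\pi(h\otimes\iota)=h$, so $\pi\neq0$; simplicity of $M$ then makes $\pi$ surjective.

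The heart of the matter is a \emph{tensor identity}: for any $\mathfrak a$-module $V$, regarded as a $\mathfrak g$-module with $\mathfrak bV=0$, the $\mathfrak g$-submodules of $H^{\mathfrak g}\otimes V$ are precisely the subspaces $H^{\mathfrak g}\otimes V'$ with $V'\subseteq V$ an $\mathfrak a$-submodule. To prove it I would use that $\mathfrak g$, hence $\mathfrak b$, hence $U(\mathfrak b)$, is countable-dimensional over $\mathbb C$, so Dixmier's form of Schur's lemma gives $\End_{\mathfrak b}(H)=\mathbb C$ and the Jacobson density theorem applies to the simple $U(\mathfrak b)$-module $H$. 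Given a $\mathfrak b$-submodule $W\subseteq H\otimes V$, set $V'=\{v\in V:H\otimes v\subseteq W\}$; for a nonzero $w\in W$ written as $w=\sum_{j=1}^{k}s_j\otimes v_j$ with $\{s_j\}$ linearly independent in $H$ and $\{v_j\}$ linearly independent in $V$ (always possible), density produces, for each $t\in H$, an element $a\in U(\mathfrak b)$ with $as_1=t$ and $as_j=0$ for $j\ge2$, whence $t\otimes v_1=a\cdot w\in W$; thus $H\otimes v_1\subseteq W$, i.e.\ $v_1\in V'$, and repeating gives each $v_j\in V'$, so $W=H\otimes V'$. Finally $W$ is $\mathfrak a$-stable iff $V'$ is, while $H^{\mathfrak g}\otimes V'$ is always a $\mathfrak g$-submodule; the same statement holds verbatim with any quotient of $\mathcal N$ in place of $V$, since these also carry the trivial $\mathfrak b$-action.

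Applying the tensor identity to $\Ker\pi\subseteq H^{\mathfrak g}\otimes\mathcal N$ gives $\Ker\pi=H^{\mathfrak g}\otimes\mathcal N_0$ for an $\mathfrak a$-submodule $\mathcal N_0\subsetneq\mathcal N$ (proper, since $\pi\neq0$), so $M\cong(H^{\mathfrak g}\otimes\mathcal N)/(H^{\mathfrak g}\otimes\mathcal N_0)\cong H^{\mathfrak g}\otimes U^{\mathfrak g}$ where $U:=\mathcal N/\mathcal N_0\neq0$ is an $\mathfrak a$-module with $\mathfrak bU=0$. Since $\mathfrak g$-submodules of $H^{\mathfrak g}\otimes U^{\mathfrak g}$ correspond bijectively to $\mathfrak a$-submodules of $U$ by the identity, the simplicity of $M$ forces $U$ to be a simple $\mathfrak a$-module, which proves the theorem. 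I expect the only real obstacle to be the tensor identity, and within it the single use of the countability hypothesis, namely $\End_{\mathfrak b}(H)=\mathbb C$: one justifies this by noting that $\End_{\mathfrak b}(H)$ embeds into the countable-dimensional space $H$ via evaluation at a cyclic vector and is a division algebra over $\mathbb C$, hence cannot be larger than $\mathbb C$. Everything else — that $\mathcal N$ is a $\mathfrak g$-module, that $\pi$ is surjective, and the density step for arbitrary $V$ — is routine and uses only that $\mathfrak b$ is an ideal.
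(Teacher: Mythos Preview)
Your proof is correct and self-contained; the paper takes a different, much shorter route that outsources the work to \cite{LZ3}. Concretely, the paper observes that $M$ is a simple quotient of $\Ind_{\mathfrak b}^{\mathfrak g}H$, uses a tensor--induction identity (Lemma~8 of \cite{LZ3}) to write $\Ind_{\mathfrak b}^{\mathfrak g}H\cong H^{\mathfrak g}\otimes\Ind_{\mathfrak b}^{\mathfrak g}\bC v_0\cong H^{\mathfrak g}\otimes W^{\mathfrak g}$ with $W=U(\mathfrak a)$, and then invokes Theorem~7 of \cite{LZ3} to say every simple quotient of such a tensor product is $H^{\mathfrak g}\otimes U^{\mathfrak g}$ with $U$ a simple quotient of $W$. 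Your approach instead constructs $\mathcal N=\Hom_{\mathfrak b}(H^{\mathfrak g},M)$ directly, maps $H^{\mathfrak g}\otimes\mathcal N$ onto $M$ by evaluation, and proves the submodule correspondence $W\leftrightarrow V'$ from scratch via Jacobson density and Dixmier's form of Schur's lemma. The core technical fact is the same in both proofs (submodules of $H^{\mathfrak g}\otimes V^{\mathfrak g}$ with $\mathfrak bV=0$ are of the form $H^{\mathfrak g}\otimes V'^{\mathfrak g}$), but the paper cites it while you prove it. Your version makes the role of the countable-dimension hypothesis completely explicit and avoids the detour through induced modules; the paper's version is terser and places the argument within the framework already set up in \cite{LZ3}.
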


\begin{proof}
Define the one-dimensional ${\mathfrak b}$-module $\bC v_0$ by ${\mathfrak b} v_0=0$. Then $H\cong H\otimes \bC v_0$ as ${\mathfrak b}$-modules.
Now from Lemma 8 in \cite{LZ3}, we have
$$\Ind_{{\mathfrak b}}^{\mathfrak{g}}H\cong \Ind_{{\mathfrak b}}^{\mathfrak{g}}\left(H\otimes \bC v_0\right)\cong H^{\mathfrak{g}}\otimes \Ind_{{\mathfrak b}}^{\mathfrak{g}}\bC v_0 .$$
Note that $\Ind_{{\mathfrak b}}^{\mathfrak{g}}\bC v_0 \cong W^{\mathfrak{g}}$ for the universal ${\mathfrak a}$-module $W$.
Since $M$ is a simple quotient of $\Ind_{{\mathfrak b}}^{\mathfrak{g}}H$, from Theorem 7 in \cite{LZ3} we know that there is a simple quotient ${\mathfrak a}$-module $U$ of $W$ such that $M\cong H^{\mathfrak{g}}\otimes U^{\mathfrak{g}}$ as $\mathfrak{g}$-modules.
Now the theorem follows.\end{proof}

{\bf Remark.} This theorem has particular meaning for ${\mathfrak g}={\mathfrak a}\oplus {\mathfrak b}$ since $H^{\mathfrak g}$ automatically exists (see for example \cite{Li}).
Also, Theorem \ref{generalize} holds for associative algebras.

Applying the above theorem to our  mirror Heisenberg-Virasoro algebra $\mathfrak{D}=\Vir\ltimes \mh$ and  twisted Heisenberg-Virasoro algebra $\bar {\DD}=\Vir\ltimes (\bar {\mh}+\bC \bar{\c}_2)$, we have the following results.

\begin{cor}\label{tensor} Let $V$ be a simple ${\mathfrak{D}}$-module with nonzero action of ${\bf c}_2$. Then $V\cong H^{\mathfrak{D}}\otimes U^{\mathfrak{D}}$ as a $\mathfrak{D}$-module for some simple module $H\in\R_{\mh}$ and some  simple $\Vir$-module $U$  if and only if $ V$ contains
 a simple $\mh$-submodule $H\in\R_{\mh}$.
 \end{cor}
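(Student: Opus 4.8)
The plan is to obtain this corollary as an immediate application of Theorem~\ref{generalize}, with $\mathfrak{g}=\DD$, $\mathfrak{a}=\Vir$ and $\mathfrak{b}=\mh$ the twisted Heisenberg subalgebra. One first records that $\DD=\Vir\ltimes\mh$ is genuinely a semidirect-product decomposition with $\mh$ an ideal: this is immediate from Definition~\ref{def.2}, since $[d_m,h_r]=-rh_{m+r}\in\mh$ and $[h_r,h_s]=r\delta_{r+s,0}{\bf c}_2\in\mh$; moreover $\DD$ is countable-dimensional, so the standing hypotheses on $\mathfrak{g}$ in Theorem~\ref{generalize} are satisfied.

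For the substantive (``if'') direction, assume that $V$ contains a simple $\mh$-submodule $H\in\R_{\mh}$. The one thing that must be verified before quoting Theorem~\ref{generalize} is that an $H^{\DD}$ exists, i.e.\ that the $\mh$-module structure on $H$ extends to a $\DD$-module structure. First I would note that ${\bf c}_2$, being central in $\mh$, acts on the simple $\mh$-module $H$ as a scalar by Schur's lemma; since $H\subseteq V$ and ${\bf c}_2$ acts on $V$ as the nonzero scalar $\ell$, this scalar equals $\ell\ne 0$. Then the construction of~\cite{LPXZ} recalled in \eqref{rep1}--\eqref{rep3} applies to $H$ itself: because $H\in\R_{\mh}$, each expression $\frac{1}{2\ell}\sum_{k\in\bZ+\frac12}h_{n-k}h_k$ acts as a well-defined operator on $H$, and the assignment $d_n\mapsto L_n$, $h_r\mapsto h_r$, ${\bf c}_1\mapsto 1$, ${\bf c}_2\mapsto\ell$ defines a $\DD$-module structure on $H$ (the relations \eqref{rep4} and \eqref{rep4'} are precisely what guarantees this is a Lie-algebra action, and in particular the bracket $[d_n,h_r]=-rh_{n+r}$ of $\DD$ is respected). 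Since $h_r\mapsto h_r$, this module $H^{\DD}$ restricts to the original $\mh$-module $H$. With $H^{\DD}$ in hand, Theorem~\ref{generalize} applies directly and yields $V\cong H^{\DD}\otimes U^{\DD}$ for some simple $\Vir$-module $U$, as required.

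The converse (``only if'') direction is formal: if $V\cong H^{\DD}\otimes U^{\DD}$ with $H\in\R_{\mh}$ simple, then, since $\mh$ annihilates $U^{\DD}$, the $\mh$-action on the tensor product is carried by the first factor alone; hence for any nonzero $u\in U$ the subspace $H\otimes u$ is an $\mh$-submodule of $V$ which is isomorphic as an $\mh$-module to $H$, and therefore is simple and lies in $\R_{\mh}$.

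I expect the only point of real substance to be the existence of $H^{\DD}$ --- concretely, the fact that the Heisenberg (Sugawara-type) realization \eqref{rep1}--\eqref{rep3} of the Virasoro operators converges on an arbitrary restricted $\mh$-module of nonzero level and \emph{extends}, rather than perturbs, the given $h_r$-action. This is exactly the input supplied by \cite{LPXZ}; once it is available, the corollary is purely a bookkeeping consequence of Theorem~\ref{generalize}, requiring no information about the internal structure of $V$, $H$, or $U$.
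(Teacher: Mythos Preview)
Your proposal is correct and follows essentially the same approach as the paper: apply Theorem~\ref{generalize} for the sufficiency, and observe that $H\otimes u$ is a simple $\mh$-submodule for nonzero $u\in U$ for the necessity. You spell out more carefully than the paper does the one nontrivial hypothesis in Theorem~\ref{generalize}, namely the existence of $H^{\DD}$, by invoking the construction \eqref{rep1}--\eqref{rep3}; the paper's proof leaves this implicit.
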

\begin{proof} The sufficiency follows from Theorem \ref{generalize}; and the necessity follows from that $H\otimes u$ is a simple $\mh$-submodule of $H^{\mathfrak{D}}\otimes U^{\mathfrak{D}}$ for any nonzero $u\in U$.
\end{proof}


\section{Induced modules over the mirror  Heisenberg-Virasoro algebra $\DD$}

In this section, we construct some simple restricted    $\DD$-modules induced from some simple ones over some subalgebras $\DD^{(0,-n)}$ for $n\in\bZ_+$.
For that, we need the following formulas in $U(\mathfrak{D})$ which can be shown by induction on $t$: let $i,j_s\in\bZ,1\le s\le t$ with $j_1\le j_2\le\cdots\le j_t$,
\begin{equation}\label{bracket3.1}
[h_{i-\frac{1}{2}},h_{j_1+\frac{1}{2}}h_{j_2+\frac{1}{2}}\cdots h_{j_t+\frac{1}{2}}]=\sum_{1\le s\le t}\delta_{i+j_s,0}(i-\frac{1}{2}) {\bf{c}}_2 h_{j_1+\frac{1}{2}}\cdots \hat{h}_{j_{s}+\frac{1}{2}}\cdots h_{j_t+\frac{1}{2}},
\end{equation}
\begin{equation}\label{bracket3.2}
\begin{aligned}
&[d_{i},h_{j_1+\frac{1}{2}}h_{j_2+\frac{1}{2}}\cdots h_{j_t+\frac{1}{2}}]=\sum_{1\le s\le t}(-j_s-\frac{1}{2}) h_{j_1+\frac{1}{2}}\cdots\hat{h}_{j_s+\frac{1}{2}} \cdots h_{j_t+\frac{1}{2}}{h_{i+j_{s}+\frac{1}{2}}}\\
&\hskip 20pt +\sum_{1\le s_1<s_2\le t}(-j_{s_1}-\frac{1}{2})(i+j_{s_1}+\frac{1}{2})\delta_{i+j_{s_1}+j_{s_2}+1,0}{\bf{c}}_2 h_{j_1+\frac{1}{2}}\cdots \hat{h}_{j_{s_1}+\frac{1}{2}}\cdots  \hat{h}_{j_{s_2}+\frac{1}{2}} \cdots h_{j_t+\frac{1}{2}},
\end{aligned}
\end{equation}
\begin{equation}\label{bracket3.3}
\begin{aligned}
&[h_{i-\frac{1}{2}},d_{j_1}d_{j_2}\cdots d_{j_t}]=\sum_{1\le s\le t}(i-\frac{1}{2})d_{j_1}\cdots \hat{d_{j_s}}\cdots d_{j_t}h_{i+j_s-\frac{1}{2}}\\
&\hskip 20pt +\sum_{1\le s_1<s_2\le t}a_{s_1,s_2}d_{j_1}\cdots \hat{d}_{j_{s_1}}\cdots\hat{d}_{j_{s_2}}\cdots d_{j_t}h_{i+j_{s_1}+j_{s_2}-\frac{1}{2}}+\cdots+ a_{1,2,\cdots,t}h_{i+j_1+j_2+\cdots+j_t-\frac{1}{2}},\end{aligned}
\end{equation}
\begin{equation}\label{bracket3.4}
\begin{aligned}
&[d_{i},d_{j_1}d_{j_2}\cdots d_{j_t}]=\sum_{1\le s\le t}(i-j_s)d_{j_1}\cdots \hat{d_{j_s}}\cdots d_{j_t}\tilde{d}_{i+j_s}\\
&\hskip 30pt +\sum_{1\le s_1<s_2\le t}b_{s_1,s_2}d_{j_1}\cdots \hat{d}_{j_{s_1}}\cdots\hat{d}_{j_{s_2}}\cdots d_{j_t}\tilde{d}_{i+j_{s_1}+j_{s_2}}+\cdots+ b_{1,2,\cdots,t}\tilde{d}_{i+j_1+j_2+\cdots+j_t},\end{aligned}
\end{equation}
where $\hat{h}_{j_{s}+\frac{1}{2}},\hat{d}_{j_s}$ mean that $h_{j_{s}+\frac{1}{2}}, d_{j_s}$ are deleted in the corresponding products, $a_{s_1,s_2},\cdots,$
$a_{1,2,\cdots,t},$
$b_{s_1,s_2},\cdots,b_{1,2,\cdots,t}\in \bC$, and $\tilde{d}_{i+j_1+\cdots+j_s}=d_{i+j_1+\cdots+j_s}+\frac{j_{s}^2-1}{24}\delta_{i+j_1+\cdots+j_s,0}{\bf{c}}_1,1\leq s\leq t.$

We are now in the position to present the following main result in this section.

\begin{theo}  \label{thmmain1.1}
Let $k\in\bZ_+$ and $ n\in\bZ$ with $k\ge n$. Let   $V$ be a simple
$\DD^{(0,-n)}$-module with level $\ell \not=0$ such that
 there exists $l\in\bN$ satisfying both conditions:
\begin{enumerate}[$(a)$]
\item $h_{k-\frac{1}{2}}$ acts injectively on $V$;
\item
$h_{i -\frac{1}{2}} V=d_j V=0$ for all $i>k$ and $j>l$. \end{enumerate}
Then $\Ind^{\DD}_{\DD^{(0,-n)}}(V)$ is a simple $\DD$-module if  one of the following conditions holds:
\begin{itemize}
\item[\rm(1)] $k=n$, $l\geq 2n$, and $d_l$  acts injectively on $V$ ;
\item[\rm(2)] $k>n$, $k+n\geq 2$, and $l=n+k-1$.
\end{itemize}
\end{theo}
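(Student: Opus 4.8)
The plan is to show that any nonzero submodule $W$ of $M:=\Ind^{\DD}_{\DD^{(0,-n)}}(V)$ contains $V$; since $V$ generates $M$ and is simple, this forces $W=M$. The standard device (as in \cite{MZ2,LPXZ,LZ3}) is to pick $0\neq v\in W$ of minimal length $\mathrm{w}(v)$, and, among those, with $\deg(v)$ or $\deg'(v)$ minimal with respect to the total orders $\prec,\prec'$ introduced above; then argue that if $v\notin V$ one can act by a suitable element of $\DD^{+}$ to produce a nonzero element of $W$ of strictly smaller length (or same length but strictly smaller degree), a contradiction. The essential inputs are the commutator formulas \eqref{bracket3.1}--\eqref{bracket3.4}, which control how $h_{i-\frac12}$ and $d_i$ (for $i>0$ large) move past a PBW monomial $h^{\mi}d^{\mk}$, together with the hypotheses (a), (b) and the extra injectivity/degree conditions in (1), (2) that guarantee the ``leading action'' does not vanish on $V$.

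First I would record the key reduction: write $v=\sum h^{\mi}d^{\mk}v_{\mi,\mk}$ as in \eqref{def2.1}, let $(\mi_0,\mk_0)=\deg'(v)$ (the order to use depends on which operator we apply), and compute $h_{i-\frac12}v$ for an appropriate $i>k$, resp. $d_j v$ for an appropriate $j>l$. Using \eqref{bracket3.1} and \eqref{bracket3.3} one sees that $h_{i-\frac12}$ acting on $h^{\mi}d^{\mk}v_{\mi,\mk}$ either lowers the length (the terms where $h_{i-\frac12}$ contracts against an $h$ from the monomial, picking up a $\mathbf{c}_2$-scalar, or against a $d$, shifting indices) or, in the ``top'' term, produces $h^{\mi'}d^{\mk}(h_{k-\frac12}v_{\mi_0,\mk_0})$-type contributions with $h_{k-\frac12}$ acting injectively by (a); one chooses $i$ so that $h_{i-\frac12}$ annihilates $V$ (possible by (b)) while the contraction it creates lands exactly on the injective slot $k-\frac12$. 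Symmetrically, \eqref{bracket3.2}, \eqref{bracket3.4} govern the $d_j$-action: in case (1), $k=n$, $l\ge 2n$, and acting with $d_l$ (which kills $V$ by (b) but acts injectively in the leading term by hypothesis) peels off $d_{-1},\dots$ factors and lowers length; in case (2), $k>n$, $l=n+k-1$, one plays $h_{i-\frac12}$ and $d_j$ off against each other, the inequality $k+n\ge2$ ensuring the relevant index never collapses to a central or forbidden slot. After finitely many such steps one reaches a nonzero element of $W$ lying in $V$, hence $W\supseteq V$.

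A secondary point to settle is that the leading coefficient one extracts is genuinely nonzero: this is where the precise choice of the total orders $\prec$, $\prec'$ (built from $(\mathrm{w}(\bfj),\mathrm{w}(\bfi)+\mathrm{w}(\bfj))$ and the lexicographic tiebreak) matters, since it guarantees that the dominant term after applying $h_{i-\frac12}$ or $d_j$ does not cancel against a sub-dominant term — this uses the monotonicity property $(a,b)\preceq(c,d)\ \&\ (c',d')\prec(a',b')\Rightarrow(a-a',b-b')\prec(c-c',d-d')$ recorded in the preliminaries. One also needs the compatibility relations \eqref{rep4'}, \eqref{rep3'} only implicitly, since here we work directly inside $U(\DD)$ rather than through the Heisenberg realization.

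The main obstacle, I expect, is the bookkeeping in case (2): unlike case (1), where a single injective operator $d_l$ on $V$ drives the induction, in case (2) neither $h_{k-\frac12}$ alone nor any single $d_j$ suffices, and one must alternate — first use $h_{i-\frac12}$ to reduce the $h$-part of $\deg'(v)$ down to a monomial supported on indices $\ge -n$ (so that the $h$-exponent vector becomes $\mathbf 0$ after hitting $V$), and only then use $d_{n+k-1}$-type elements, governed by \eqref{bracket3.4}, to strip the $d$-part; the delicate part is checking that during the $h$-reduction phase one never creates, via the $\delta_{i+j_{s_1}+j_{s_2}+1,0}\mathbf{c}_2$ terms in \eqref{bracket3.2}, a same-length term whose degree in $\prec'$ fails to drop, which is precisely what the constraint $l=n+k-1$ (rather than something larger) is engineered to prevent. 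Handling the borderline index collisions there, and the separate easy sub-case where $v\in V$ already but $W$ must still be shown to be all of $M$ by the PBW/simplicity argument, will be the parts requiring care.
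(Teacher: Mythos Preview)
Your overall scheme --- pick $0\ne v\in W$ of minimal degree and apply a carefully chosen positive element to strictly reduce the degree --- is exactly the paper's, and Case~(1) is essentially as you describe: Lemma~\ref{main1'} uses $h_{p+n-\frac12}$ (with $p=\min\{s:i_s\ne0\}$) to reduce the $h$-exponent via the central contraction $[h_{p+n-\frac12},h_{-p-n+\frac12}]=(p+n-\tfrac12)\mathbf{c}_2$, and then Lemma~\ref{main1} uses $d_{q+l}$ to reduce the $d$-exponent via the injective $d_l$. (Minor correction: the nonvanishing of the leading term in the $h$-step comes from $\ell\ne0$, not from $h_{k-\frac12}$ landing on $V$.)

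Your plan for Case~(2), however, has the mechanism backwards and will not go through as written. You propose to strip the $h$-exponents with $h$-operators and then the $d$-exponents with $d$-operators; but in Case~(2) no $d_j$ is assumed injective on $V$, so a $[d,d]$-reduction of the $d$-part via \eqref{bracket3.4} has nothing to anchor its leading term, and reducing $i_q$ by an $h$-on-$h$ contraction would require applying $h_{q+n-\frac12}$, which for $1\le q\le k-n$ does not annihilate $V$. The paper instead exploits the \emph{mixed} brackets, both of which land on the one injective operator available, namely $h_{k-\frac12}$. First (Lemma~\ref{main3}, order $\prec'$) one applies $h_{p+k-\frac12}$ with $p=\min\{s:j_s\ne0\}$, whose dominant contribution comes from $[h_{p+k-\frac12},d_{-p}]=(p+k-\tfrac12)h_{k-\frac12}$ and reduces $\bfj$ to $\bfj-\epsilon_p$ while leaving $\bfi$ fixed. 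Only once $\bfj=\mathbf{0}$ (Lemma~\ref{main4}) does one apply $d_{q+k+n-1}$ with $q=\min\{s:i_s\ne0\}$, whose dominant contribution comes from $[d_{q+k+n-1},h_{-q-n+\frac12}]=(q+n-\tfrac12)h_{k-\frac12}$ and reduces $\bfi$ --- with a residual subcase where certain same-weight terms interfere and one must revert to an $h$-operator. So the correct order is: $d$-part first (via $h$), then $h$-part (via $d$); the constraint $l=k+n-1$ is precisely what makes the index arithmetic in the second step land on $h_{k-\frac12}$.
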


Theorem \ref{thmmain1.1} follows from Lemmas \ref{main1'}-\ref{main4} directly.


\begin{lem}\label{main1'}Let $n\in\bZ_+$ and $V$ be a
$\DD^{(0,-n)}$-module such that
  $h_{n-\frac{1}{2}}$  acts injectively on $V$, and
$h_{i -\frac{1}{2}} V=0$ for all $i>n$.
For any $v\in \Ind(V)\setminus V$, let $\deg(v)=(\bfi,\bfj)$ .
 If $\bfi\not=\bf0$, then $\deg(h_{p+n-\frac{1}{2}}v)=(\bfi-\epsilon_p,\bfj)$ where $p=\min\{s:i_s\neq 0\}$.
\end{lem}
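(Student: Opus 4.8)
The statement is a computation about how the operator $h_{p+n-\frac12}$ acts on the leading term of $v$. The plan is to apply the commutator formula \eqref{bracket3.1} to push $h_{p+n-\frac12}$ past the monomial $h^{\bfi}d^{\bfj}$ appearing in each term of $v$, and then track carefully how the total order $\prec$ behaves under this operation. First I would write $v = \sum_{(\bfi',\bfj')\in\supp(v)} h^{\bfi'}d^{\bfj'} v_{\bfi',\bfj'}$ and compute $h_{p+n-\frac12}\cdot h^{\bfi'}d^{\bfj'}v_{\bfi',\bfj'}$ for a generic term. Since $h_{p+n-\frac12}$ does not commute with $d^{\bfj'}$ in general (the bracket $[h_{p+n-\frac12}, d_{j}]$ produces an $h$-term by \eqref{bracket3.3}), I would first move $h_{p+n-\frac12}$ to the right past $h^{\bfi'}$, picking up ${\bf c}_2$-terms via \eqref{bracket3.1} which \emph{lower} the $h$-degree, and then past $d^{\bfj'}$, picking up terms of the form $d^{\cdots}h_{p+n+j-\frac12}$ via \eqref{bracket3.3}; note that $h_{i-\frac12}V=0$ for $i>n$ combined with $p\ge 1$ means $h_{p+n-\frac12}V=0$, so the "pure" term where $h_{p+n-\frac12}$ lands on $V$ vanishes.

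The key bookkeeping step is to verify that among all the resulting terms, the one of maximal $\prec$-degree comes precisely from the leading term $(\bfi,\bfj)=\deg(v)$ by applying the commutator \eqref{bracket3.1} to annihilate the factor $h_{p+n-\frac12}$ against $h_{-p-n+\frac12}$ in $h^{\bfi}$ — this is exactly the delta-term $\delta_{i+j_s,0}$ with the index $p$ from the right — giving a term $h^{\bfi-\epsilon_p}d^{\bfj} v_{\bfi,\bfj}$ (up to the nonzero scalar $(p+n-\frac12){\bf c}_2 = (p+n-\frac12)\ell$, which is nonzero since $p\ge 1$, $n\ge 1$, $\ell\ne 0$), whose $\bfi$-component is $\bfi-\epsilon_p$. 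I would then argue that every other term produced is strictly smaller in the order $\prec$: terms from the $d$-part of the bracket \eqref{bracket3.3} increase the weight $\mathrm{w}$ only in a controlled way but they place an extra $h$-index that, combined with the definition of $\prec$ on $\M\times\M$ (which compares $(\bfj, \mathrm{w}(\bfj), \mathrm{w}(\bfi)+\mathrm{w}(\bfj))$ first), end up strictly below; and terms from non-leading $(\bfi',\bfj')\prec(\bfi,\bfj)$ stay below after the perturbation, using the monotonicity property $(a,b)\preceq(c,d) \ \&\ (c',d')\prec(a',b') \implies (a-a',b-b')\prec(c-c',d-d')$ recorded just before Theorem \ref{Vir-modules}.

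I expect the main obstacle to be the careful comparison of orders when $h_{p+n-\frac12}$ hits the $d$-part: the bracket \eqref{bracket3.3} with $i = p+n$ produces terms $d_{j_1}\cdots\hat{d}_{j_s}\cdots d_{j_t}\,h_{p+n+j_s-\frac12}$ (and iterated versions), and one must check that reducing the $d$-degree by one while adding a single $h$-factor of index $p+n+j_s-\frac12$ (which could be positive, hence killed on $V$, or could be $\le n-\frac12$ and survive) yields an element whose $\prec$-degree is genuinely smaller than $(\bfi-\epsilon_p,\bfj)$. The weight is preserved or could even nominally match in some edge cases, so the tie-breaking via the lexicographic comparison of $\bfj$-vectors, and then of $\bfi$-vectors, is where the real work lies. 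I would handle this by noting that any surviving $h$-index from \eqref{bracket3.3} is $\le n-\frac12$, i.e. it contributes to positions $\ge 1$ in $\bfi$ in a way that, since the leading term's $\bfj$-component is unchanged but the $d$-monomial is strictly shortened, forces a strictly smaller $\bfj$-vector in the reverse-lexicographic order — hence a strictly smaller pair. A final remark: the hypothesis $\bfi\ne\bf0$ is exactly what guarantees the index $p=\min\{s: i_s\ne 0\}$ is well-defined and that the annihilating delta-term in \eqref{bracket3.1} is actually present, so the conclusion $\deg(h_{p+n-\frac12}v)=(\bfi-\epsilon_p,\bfj)$ holds with the stated leading coefficient nonzero.
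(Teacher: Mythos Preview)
Your overall strategy matches the paper's: write $h_{p+n-\frac12}h^{\bfk}d^{\bfl}v_{\bfk,\bfl}=[h_{p+n-\frac12},h^{\bfk}]d^{\bfl}v_{\bfk,\bfl}+h^{\bfk}[h_{p+n-\frac12},d^{\bfl}]v_{\bfk,\bfl}$, read off the leading contribution $(\bfk-\epsilon_p,\bfl)$ from the first piece via \eqref{bracket3.1}, and bound everything coming from the second piece via \eqref{bracket3.3}.

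Your handling of the second piece, however, is muddled in a way that would derail a written proof. You assert that any surviving $h$-index from \eqref{bracket3.3} ``contributes to positions $\ge 1$ in $\bfi$''; this is false. Writing the terms from \eqref{bracket3.3} as $h^{\bfk}d^{\bfl_j}h_{p+n-\frac12-j}v_{\bfk,\bfl}$ with $j\in\bZ_+$ and $\mathrm{w}(\bfl_j)=\mathrm{w}(\bfl)-j$ (note the sign: the $d$-indices in $d^{\bfl}$ are negative, so your ``$h_{p+n+j-\frac12}$'' is only right if your $j_s$ are the negative indices themselves), the new index $p+n-\frac12-j$ falls into three regimes. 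For $j<p$ it exceeds $n-\frac12$ and annihilates $v_{\bfk,\bfl}$. For $p\le j<2n+p$ it lies in $[-n+\tfrac12,\,n-\tfrac12]$, so $h_{p+n-\frac12-j}\in\DD^{(0,-n)}$ and the result remains in $V$ without touching $\bfk$ at all. Only for $j\ge 2n+p$ does the $h$-factor actually move to the $h^{\bfi}$-side. The paper separates precisely these cases.

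The case you must isolate explicitly is $j=p$: the new index is $n-\frac12$, which acts injectively on $V$, and the resulting term has degree $(\bfk,\bfl_p)$ with the \emph{same} total weight as $(\bfi-\epsilon_p,\bfj)$. This is the equal-weight ``edge case'' you anticipate, and it is resolved exactly by the monotonicity trick you cite: with $\bfl_p=\bfl-\tau$ for some $\tau\in\M$ of weight $p$, one has $(\epsilon_p,\bfz)\prec(\bfz,\tau)$, so $(\bfk,\bfl_p)=(\bfk,\bfl)-(\bfz,\tau)\prec(\bfi,\bfj)-(\epsilon_p,\bfz)=(\bfi-\epsilon_p,\bfj)$. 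Your instinct to invoke that monotonicity is right; just aim it at this case rather than at the case where the $h$-factor augments $\bfk$.
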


\begin{proof} 
Write $v$ in the form of \eqref{def2.1} and let  $(\bfk,\bfl)\in\supp(v)$.




 Noticing that $h_{p+n-\frac{1}{2}}V=0$, we have
$$h_{p+n-\frac{1}{2}}h^{\bfk}d^{\bfl}v_{\bfk,\bfl}=[h_{p+n-\frac{1}{2}},h^{\bfk}]d^{\bfl}v_{\bfk,\bfl}+h^{\bfk}[h_{p+n-\frac{1}{2}},d^{\bfl}]v_{\bfk,\bfl}.$$

First we consider the term  $[h_{p+n-\frac{1}{2}},h^{\bfk}]d^{\bfl}v_{\bfk,\bfl}$ which is zero if $k_p=0$. In the case that  $k_p>0$, since the level $\ell\ne 0$, it follows from (\ref{bracket3.1}) that $[h_{p+n-\frac{1}{2}},h^{\bfk}]=\lambda h^{\bfk-\epsilon_p}$ for
some  $\lambda\in\bC^*$. So $$\text{deg}([h_{p+n-\frac{1}{2}},h^{\bfk}]d^{\bfl}v_{\bfk,\bfl})=(\bfk-\epsilon_p,\bfl)\preceq (\bfi-\epsilon_p,\bfj),$$
where the equality holds if and only if $(\bfk,\bfl)=(\bfi,\bfj)$.

Now we consider the term  $h^{\bfk}[h_{p
+n-\frac{1}{2}},d^{\bfl}]v_{\bfk,\bfl}$ which is by  (\ref{bracket3.3})  a linear combination of some vectors  in the form $h^{\bfk}d^{\bfl_j}h_{p+n-\frac{1}{2}-j}v_{\bfk,\bfl}$ with $j\in\bZ_+$ and ${\rm{w}}
(\bfl_j)={\rm{w}}(\bfl)-j$.  If $h^{\bfk}d^{\bfl_j}h_{p+n-\frac{1}{2}-j}v_{\bfk,\bfl}\ne0$,
we denote $\deg(h^{\bfk}d^{\bfl_j}h_{p+n-\frac{1}{2}-j}v_{\bfk,\bfl})=(\bfk^*,\bfl^*)$.
We will show that
\begin{equation}\label{Eq3.5}(\bfk^*,\bfl^*)
\prec (\bfi-\epsilon_p,\bfj).\end{equation} We have four   different cases to consider.

(a) $j<p$. Then $p+n-j>n$ and $h_{p+n-\frac{1}{2}-j}v_{\bfk,\bfl}=0$. Hence $h^{\bfk}d^{\bfl_j}h_{p+n-\frac{1}{2}-j}v_{\bfk,\bfl}=0$.

(b) $j=p$. Noting that $h_{n-\frac{1}{2}}$ acts injectively on $V$, we see  $(\bfk^*,\bfl^*)=(\bfk, \bfl_p)$ and  ${\rm{w}}(\bfk^*)+{\rm{w}(\bfl^*)=(\bfk)+\rm{w}(\bfl)-}p$ with ${\rm{w}}(\bfl_p)={\rm{w}(\bfl)}-p<\mathrm{w}(\bfl).$

If ${\rm{w}(\bfk)+\rm{w}(\bfl)}<{\rm{w}(\bfi)+\rm{w}(\bfj)}$, then $(\bfk^*,\bfl^*)\prec (\bfi-\epsilon_p,\bfj).$

If ${\rm{w}(\bfk)+\rm{w}(\bfl)}={\rm{w}(\bfi)+\rm{w}(\bfj)}$,
then there is $\tau\in \M$ such that ${\rm w}(\tau)=p$ and $\bfl_p=\bfl-\tau$. Since $ (\epsilon_p, \bf0)\prec (\bf0, \tau)$ and $(\bfk,\bfl)\preceq (\bfi,\bfj)$, we see that $$(\bfk^* ,\bfl^*)=(\bfk ,\bfl)-(\bf0, \tau) \prec (\bfi ,\bfj)- (\epsilon_p,\bf0)=
 (\bfi-\epsilon_p,\bfj).$$

In both cases, (\ref{Eq3.5}) holds.

(c) $p<j<2n+p$. Then $h_{p+n-\frac{1}{2}-j}\in \mathfrak{D}^{(0,-n)}$ and $h_{p+n-\frac{1}{2}-j}v_{\bfk,\bfl}\in V.$ So $$   {\rm{w}(\bfk^*)+\rm{w}(\bfl^*)}=  {\rm{w}(\bfk)+\rm{w}(\bfl)}-j<{\rm{w}(\bfk)+\rm{w}(\bfl)}-p$$
and (\ref{Eq3.5}) holds.

(d) $j\ge 2n+p$. Then $p+n-\frac{1}{2}-j<-n+\frac{1}{2}$. Assume $p+n-\frac{1}{2}-j=-s-n+\frac{1}{2}$ for some $s\in \bZ_+$, that is, $-j+s= -2n-p+1<-p$. Clearly, the corresponding vector $h^{\bfk}d^{\bfl_j}h_{p+n-\frac{1}{2}-j}v_{\bfk,\bfl}$ can be written in the form $$h^{\bfk}h_{-s-n+\frac{1}{2}}d^{\bfl_j}v_{\bfk,\bfl}+\text{lower\, terms},$$
which means 
$$ {\rm{w}(\bfk^*)+\rm{w}(\bfl^*)}= {\rm{w}(\bfk)+\rm{w}(\bfl)}-j+s<{\rm{w}}(\bfk)+{\rm{w}}(\bfl)-p,$$ and hence (\ref{Eq3.5}) holds.


In conclusion, $\text{deg}(h_{p+n-\frac{1}{2}}h^{\bfk}d^{\bfl}v_{\bfk,\bfl})
\preceq (\bfi-\epsilon_p,\bfj),$
where the equality holds if and only if $(\bfk,\bfl)=(\bfi,\bfj)$, that is,   $\deg(h_{p+n-\frac{1}{2}}v)=(\bfi-\epsilon_p,\bfj)$.\end{proof}

\begin{lem}\label{main1} Let $n\in\bZ_+$ and $V$ be a $\DD^{(0,-n)}$-module satisfying Conditions (a), (b) and (1) in Theorem \ref{thmmain1.1}.
If $v\in \Ind(V)\setminus V$ with  $\deg(v)=(\bf0,\bfj)$ , then $\deg(d_{q+l}v)=(\bfz,\bfj-\epsilon_q)$ where  $q=\min\{s:j_s\neq 0\}$.
\end{lem}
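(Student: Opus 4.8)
The strategy mirrors that of Lemma \ref{main1'}, but now we act by $d_{q+l}$ instead of $h_{p+n-\frac12}$, and we must track both the $h$-part and the $d$-part of the degree. First I would fix $v$ written in the form \eqref{def2.1} and pick an arbitrary $(\bfk,\bfl)\in\supp(v)$; since $\deg(v)=(\bfz,\bfj)$ with respect to $\prec$, every such $(\bfk,\bfl)$ satisfies $(\bfk,\bfl)\preceq(\bfz,\bfj)$, which in particular forces $\bfk=\bfz$ to be impossible to exceed, i.e. the $h$-indices can only go down. Using $d_{q+l}V=0$ (which holds because $q+l>l$ and Condition (b)), I split
$$d_{q+l}h^{\bfk}d^{\bfl}v_{\bfk,\bfl}=[d_{q+l},h^{\bfk}]d^{\bfl}v_{\bfk,\bfl}+h^{\bfk}[d_{q+l},d^{\bfl}]v_{\bfk,\bfl}.$$

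For the second term I would invoke \eqref{bracket3.4}: the top piece is $\sum_s (q+l-j_s)d^{\bfl-\epsilon_{\text{(pos)}}}\tilde d_{q+l+j_s}v_{\bfk,\bfl}$ plus strictly lower-weight corrections. When the index $q+l+j_s$ lands in $\DD^{(0,-n)}$, i.e. $>l$, the factor kills $v_{\bfk,\bfl}$; the surviving contribution comes from the term where we remove the slot of largest weight, namely $j_s=-q$, producing $(q+l-(-q))\,d^{\bfl-\epsilon_q}\cdot(\text{const})\,v_{\bfk,\bfl}$ up to lower terms, and here I need $l\ge 2n$ together with $d_l$ acting injectively on $V$ (Condition (1)) to guarantee the scalar $2q+l\neq 0$ and that the resulting vector is nonzero — this is exactly where hypothesis (1) is used. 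For the first term $[d_{q+l},h^{\bfk}]d^{\bfl}v_{\bfk,\bfl}$, formula \eqref{bracket3.2} shows it is a combination of $h^{\bfk-\epsilon_s}h_{q+l+k_s\text{-index}}$-type terms; since $\bfk$ appears with its given multiplicities and $q+l$ is large, most of these produce $h$ with index $>k$ and hence vanish on $V$, while those that survive have strictly smaller length, so they contribute only to lower-order terms. Throughout I would track the extended order $\prec$ on $\M\times\M$ using the monotonicity inequality displayed in Section 2 (the implication "$(a,b)\preceq(c,d)\ \&\ (c',d')\prec(a',b')\Rightarrow\cdots$") to conclude that for $(\bfk,\bfl)\neq(\bfz,\bfj)$ the degree of $d_{q+l}h^{\bfk}d^{\bfl}v_{\bfk,\bfl}$ is $\prec(\bfz,\bfj-\epsilon_q)$, while for $(\bfk,\bfl)=(\bfz,\bfj)$ it equals $(\bfz,\bfj-\epsilon_q)$.

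The main obstacle, as in Lemma \ref{main1'}, is the careful case analysis for the bracket $[d_{q+l},d^{\bfl}]$ in \eqref{bracket3.4}: one must check that when $q+l+j_s\le l$ but the index is negative enough to re-enter $\DD^-$ (so $\tilde d_{q+l+j_s}$ does not annihilate $v_{\bfk,\bfl}$ and instead commutes back past the $h^{\bfk}$ and $d^{\bfl}$ factors), the total weight strictly drops by more than $q$, so \eqref{Eq3.5}-type strict inequality still holds; and one must separately handle the central-charge corrections $\tfrac{j_s^2-1}{24}\delta_{\cdots,0}\bc_1$ hidden in $\tilde d$. I would organize this into the same four sub-cases by the size of the shifted index ($q+l+j_s>l$; the boundary value giving $\epsilon_q$; intermediate values staying in $\DD^{(0,-n)}$; and values small enough to produce a new $d$ in $\DD^-$), in each case comparing lengths and invoking injectivity of $d_l$ when the length is unchanged. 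The conclusion $\deg(d_{q+l}v)=(\bfz,\bfj-\epsilon_q)$ then follows by summing over $\supp(v)$ and noting the unique top term survives.
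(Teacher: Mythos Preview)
Your overall plan is sound and matches the paper's approach: split $d_{q+l}h^{\bfk}d^{\bfl}v_{\bfk,\bfl}$ into the two commutator pieces and run a four-case analysis on each. Your treatment of $h^{\bfk}[d_{q+l},d^{\bfl}]v_{\bfk,\bfl}$ via \eqref{bracket3.4} is essentially correct, and the four sub-cases you list at the end are the right ones.

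However, there is a genuine gap in your handling of $[d_{q+l},h^{\bfk}]d^{\bfl}v_{\bfk,\bfl}$. You write that the terms coming from \eqref{bracket3.2} ``produce $h$ with index $>k$ and hence vanish on $V$.'' But the new factor $h_{q+l-s-n+\frac12}$ sits to the \emph{left} of $d^{\bfl}$, not directly on $v_{\bfk,\bfl}$; it does not annihilate anything until you commute it past $d^{\bfl}$. The paper therefore splits
\[
h^{\bfk_s}h_{q+l-s-n+\frac12}d^{\bfl}v_{\bfk,\bfl}
= h^{\bfk_s}[h_{q+l-s-n+\frac12},d^{\bfl}]v_{\bfk,\bfl}
+ h^{\bfk_s}d^{\bfl}h_{q+l-s-n+\frac12}v_{\bfk,\bfl},
\]
and only the second summand vanishes when the index exceeds $n-\tfrac12$. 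The first summand requires a further (nontrivial) weight estimate via \eqref{bracket3.3}, broken into the ranges $1\le s\le q$ and $q+1\le s\le q+l$. You cannot skip this.

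Relatedly, you misplace the role of the hypothesis $l\ge 2n$. The scalar $2q+l$ is trivially positive, so it is not needed there. Where $l\ge 2n$ actually enters is in the $[d_{q+l},h^{\bfk}]$ analysis: first, the quadratic term from \eqref{bracket3.2} produces $h^{\bfk_{q+l+1-2n}}d^{\bfl}v_{\bfk,\bfl}$, and one needs $q+l+1-2n>q$ to force its weight strictly below $\mathrm{w}(\bfk)+\mathrm{w}(\bfl)-q$; second, in the sub-case $1\le s\le q$ above, the bound $s+p=q+l-2n+1\ge q+1$ (with $p$ coming from the commutator with $d^{\bfl}$) again uses $l\ge 2n$. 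Without identifying these uses your argument for the first commutator piece is incomplete.
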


\begin{proof} 
Write $v$ in the form of \eqref{def2.1} and let  $(\bfk,\bfl)\in\supp(v)$.




Since $d_{q+l}V=0$, we have $$d_{q+l}h^{\bfk}d^{\bfl}v_{\bfk,\bfl}=[d_{q+l},h^{\bfk}]d^{\bfl}v_{\bfk,\bfl}+h^{\bfk}[d_{q+l},d^{\bfl}]v_{\bfk,\bfl}.$$

We first consider the degree of $h^{\bfk}[d_{q+l},d^{\bfl}]v_{\bfk,\bfl}$
with $d_{q+l}h^{\bfk}d^{\bfl}v_{\bfk,\bfl}\neq 0$.  Clearly, by (\ref{bracket3.4}) we see that $h^{\bfk}[d_{q+l},d^{\bfl}]v_{\bfk,\bfl}$ is is a linear combination of some vectors of the forms $h^{\bfk}d^{\bfl_j}d_{q+l-j}v_{\bfk,\bfl}$ ,$j\in\bZ_+$ and  $h^{\bfk}d^{\bfl_{q+l}}v_{\bfk,\bfl}$ where ${\rm{w}}(\bfl_j)={\rm{w}}(\bfl)-j$.  Clearly, $\deg(h^{\bfk}d^{\bfl_{q+l}}v_{\bfk,\bfl})=(\bfk,\bfl_{q+l})$ has weight
$${\rm{w}(\bfk)+\rm{w}(\bfl)}-q-l<{\rm{w}(\bfk)+\rm{w}(\bfl)}-q\le {\rm{w}}(\bfj)-q,$$ so $\deg(h^{\bfk}d^{\bfl_{q+l}}v_{\bfk,\bfl})\prec (\bf0,\bfj-\epsilon_q)$. Then we need only to consider $h^{\bfk}d^{\bfl_j}d_{q+l-j}v_{\bfk,\bfl}$.
Denote $\deg(h^{\bfk}d^{\bfl_j}d_{q+l-j}v_{\bfk,\bfl}) $ by $({\bfk},{\bfl}^*)$.
We will show that
\begin{equation}\label{Eq3.7}({\bfk},{\bfl}^*)
\preceq (\bf0,\bfj-\epsilon_q),\end{equation} where the equality holds if and only if $(\bfk,\bfl)=(\bf0,\bfj)$.
We have four different   cases to consider.

(i) $j<q$.
Then $q+l-j>l$ and $h^{\bfk}d^{\bfl_j}d_{q+l-j}v_{\bfk,\bfl}=0$.

(ii) $j=q$. Then $q+l-j=l$. Since $d_l$ acts injectively on $V$, we see  $ ({\bfk},{\bfl}^*)=(\bfk,\bfl_q)$ and  $\mathrm{w}(\bfk)+\mathrm{w}(\bfl^*) =\mathrm{w}(\bfk)+\mathrm{w}(\bfl)-q$. If
 ${\rm{w}(\bfk)+\rm{w}(\bfl)}<{\rm{w}(\bf0)+\rm{w}(\bfj)}$, then
$ ({\bfk},{\bfl}^*)
\prec (\bf0,\bfj-\epsilon_q).$
If ${\rm{w}(\bfk)+\rm{w}(\bfl)}={\rm{w}(\bf0)+\rm{w}(\bfj)}$,
there is $\tau\in \M$ such that ${\rm w}(\tau)=q$ and $\bfl_q=\bfl-\tau$. Then $(\bf0,\epsilon_q)\preceq (\bf0, \tau)$. Since $(\bfk,\bfl)\preceq (\bf0,\bfj)$, we see that $$(\bfk ,\bfl^*)=(\bfk ,\bfl)-(\bf0, \tau) \preceq (\bf0 ,\bfj)- (\bf0,\epsilon_q )=
 (\bf0,\bfj-\epsilon_q).$$
In both cases we have that
$$({\bfk},{\bfl}^*)
\preceq (\bf0,\bfj-\epsilon_q),$$
where the equality holds if and only if $(\bfk,\bfl)=(\bf0,\bfj)$.

(iii) $q+1\le j\le q+l$. Then $0\le q+l-j\le l-1$ and $d_{q+l-j}v_{\bfk,\bfl}\in V$. So if $h^{\bfk}d^{\bfl_j}d_{q+l-j}v_{\bfk,\bfl}\neq 0$, then  $\mathrm{w}(\bfk)+\mathrm{w}(\bfl^*) =\mathrm{w}(\bfk)+\mathrm{w}(\bfl)-j<\mathrm{w}(\bfk)+\mathrm{w}(\bfl)-q$.

(iv) $j>q+l$. Then $ q+l-j<0$. Clearly, $\mathrm{w}(\bfl^*) =\mathrm{w}(\bfl_j)+(j-q-l)=\mathrm{w}(\bfl)  -q-l$, and hence
$$\mathrm{w}(\bfk)+\mathrm{w}(\bfl^*) =\mathrm{w}(\bfk)+\mathrm{w}(\bfl)-q-l<\mathrm{w}(\bfk)+\mathrm{w}(\bfl)-q.$$



  Therefore, we conclude that  (\ref{Eq3.7}) holds, i.e., $\sum_{(\bfk,\bfl)}h^{\bfk}[d_{q+l},d^{\bfl}]v_{\bfk,\bfl}$ has degree $({\bf0,\bfj}-\epsilon_q)$.

Next we consider the degree of the nonzero vector $[d_{q+l},h^{\bfk}]d^{\bfl}v_{\bfk,\bfl}$. By (\ref{bracket3.2}) we can see that
$[d_{q+l},h^{\bfk}]d^{\bfl}v_{\bfk,\bfl}$ is a linear combination of some vectors of the forms
 $h^{\bfk_s}h_{q+l-s-n+\frac{1}{2}}d^{\bfl}v_{\bfk,\bfl}, s\in\bZ_+$
and $h^{\bfk_{q+l+1-2n}}d^{\bfl}v_{\bfk,\bfl}$, where $\mathrm{w}(\bfk_s)=\mathrm{w}(\bfk)-s$. Noting that $l\geq 2n$, the degree of $h^{\bfk_{q+l+1-2n}}d^{\bfl}v_{\bfk,\bfl}$
has weight $$\mathrm{w}(\bfk)-(q+l+1-2n)+\mathrm{w}(\bfl)<\mathrm{w}(\bfk)+\mathrm{w}(\bfl)-q.$$
So
$$\deg(h^{\bfk_{q+l+1-2n}}d^{\bfl}v_{\bfk,\bfl})
\prec (\bf0,\bfj-\epsilon_q).$$

Next we will show that \begin{equation}\label{Eq3.8}\deg(h^{\bfk_s}h_{q+l-s-n+\frac{1}{2}}d^{\bfl}v_{\bfk,\bfl})
\prec (\bf0,\bfj-\epsilon_q),\end{equation}
We have two different   cases to consider.

(a) $s>q+l$. The degree of $h^{\bfk_s}h_{q+l-s-n+\frac{1}{2}}d^{\bfl}v_{\bfk,\bfl}$ has weight
 $$\mathrm{w}(\bfk_s)+(s-q-l)+\mathrm{w}(\bfl)=\mathrm{w}(\bfk)+\mathrm{w}(\bfl)-q-l<\mathrm{w}(\bfk)+\mathrm{w}(\bfl)-q.$$
 So, (\ref{Eq3.8}) holds in this case.

(b) $1\le s\le q+l$. We have $$h^{\bfk_s}h_{q+l-s-n+\frac{1}{2}}d^{\bfl}v_{\bfk,\bfl}=h^{\bfk_s}[h_{q+l-s-n+\frac{1}{2}}, d^{\bfl}]v_{\bfk,\bfl}+h^{\bfk_s}d^{\bfl}h_{q+l-s-n+\frac{1}{2}}v_{\bfk,\bfl}.$$
Noting that $h_{q+l-s-n+\frac{1}{2}}v_{\bfk,\bfl}\in V$ (in particular, $h_{q+l-s-n+\frac{1}{2}}v_{\bfk,\bfl}=0$ for $1\le s\le q+l-2n$), we see that if   $h^{\bfk_s}d^{\bfl}h_{q+l-s-n+\frac{1}{2}}v_{\bfk,\bfl}\neq 0$ for $q+l-2n+1\le s\le q+l$, its degree has weight
$$\mathrm{w}(\bfk_s)+\mathrm{w}(\bfl)=\mathrm{w}(\bfk)+\mathrm{w}(\bfl)-s<\mathrm{w}(\bfk)+\mathrm{w}(\bfl)-q.
$$
Now we consider $\deg(h^{\bfk_s}[h_{q+l-s-n+\frac{1}{2}}, d^{\bfl}]v_{\bfk,\bfl})$ which is denoted by $(\tilde{\bfk},\tilde{\bfl})$.

(b1) $1\le s\le q$, that is, $q+l-s-n\geq n$. Then $q+l-s-n+\frac{1}{2}=n+p-\frac{1}{2}$ for some $p\in\bZ_+$ and hence
$s+p=q+l-2n+1\geq q+1$. Thus, by the same arguments in proof of Lemma 3.2, we see 
$$ \mathrm{w}(\tilde{\bfk})+\mathrm{w}(\tilde{\bfl})
\le \mathrm{w}(\bfk_s)+\mathrm{w}(\bfl)-p=\mathrm{w}(\bfk)-s+\mathrm{w}(\bfl)-p
\leq \mathrm{w}(\bfk)+\mathrm{w}(\bfl)-q-1<\mathrm{w}(\bfk)+\mathrm{w}(\bfl)-q.$$
 So, (\ref{Eq3.8}) holds in this case.

(b2) $q+1\le s \le q+l$. Then by (\ref{bracket3.3}) and  the same arguments in proof of Lemma 3.2, we see 
$$\mathrm{w}(\tilde{\bfk})+\mathrm{w}(\tilde{\bfl})\le \mathrm{w}(\bfk_s)+\mathrm{w}(\bfl) =\mathrm{w}(\bfk)+\mathrm{w}(\bfl)-s\le \mathrm{w}(\bfk)+\mathrm{w}(\bfl)-q-1<\mathrm{w}(\bfk)+\mathrm{w}(\bfl)-q.$$
 So, (\ref{Eq3.8}) holds in this case as well.



 Therefore, $ \deg\big(d_{q+l}v)=(\bfz,\bfj-\epsilon_q)$, as desired.
\end{proof}

\begin{lem}\label{main3} Let $k\in\bZ_+, n\in\bZ$ with $  k\ge n$ and $k+n\ge2$, and let $V$ be a $\DD^{(0,-n)}$-module  such that  $h_{k-\frac{1}{2}}$  acts injectively on $V$, and
$h_{i -\frac{1}{2}} V=0$ for all $i>k$.
If $v\in \Ind(V)\setminus V$ with  $\deg'(v)=(\bfi,\bfj)$ and $\bfj\not=\bf0$,  then $\deg'(h_{p+k-\frac{1}{2}}v)=(\bfi,\bfj-\epsilon_p)$ where $p=\min\{s:j_s\neq 0\}$.
\end{lem}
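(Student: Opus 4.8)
The plan is to mimic the structure of the proof of Lemma \ref{main1'}, but now working with the total order $\prec'$ (equivalently, the order $\prec$ with the roles of the $h$-multi-index and $d$-multi-index swapped) and with the operator $h_{p+k-\frac12}$ in place of $h_{p+n-\frac12}$. Write $v$ in the PBW form \eqref{def2.1}, fix $(\bfk,\bfl)\in\supp(v)$, and use
$$h_{p+k-\frac{1}{2}}h^{\bfk}d^{\bfl}v_{\bfk,\bfl}=[h_{p+k-\frac{1}{2}},h^{\bfk}]d^{\bfl}v_{\bfk,\bfl}+h^{\bfk}[h_{p+k-\frac{1}{2}},d^{\bfl}]v_{\bfk,\bfl}+h^{\bfk}d^{\bfl}h_{p+k-\frac12}v_{\bfk,\bfl},$$
where the last term vanishes once $p+k>k$, i.e. always (here $p\ge 1$), using Condition (b) $h_{i-\frac12}V=0$ for $i>k$. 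This is the point where the hypothesis $k\ge n$ is used: it guarantees $h_{p+k-\frac12}\in\DD^{(0,-n)}$ only fails to act trivially in a controlled range, and more importantly that $h_{p+k-\frac12}$ lowers the $\bfj$-index rather than the $\bfl$-index in the leading term.

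First I would handle the term $[h_{p+k-\frac{1}{2}},h^{\bfk}]d^{\bfl}v_{\bfk,\bfl}$: by \eqref{bracket3.1} and $\ell\ne 0$, this is $\lambda h^{\bfk-\epsilon_p}d^{\bfl}v_{\bfk,\bfl}$ for some $\lambda\in\bC^*$ when $k_p>0$ (and $0$ otherwise). With respect to $\prec'$, since the $d$-index is unchanged and the $h$-index drops by $\epsilon_p$ in position $p$, one gets $\deg'\bigl([h_{p+k-\frac12},h^{\bfk}]d^{\bfl}v_{\bfk,\bfl}\bigr)=(\bfk-\epsilon_p,\bfl)\preceq'(\bfi-\epsilon_p,\bfj)$, with equality iff $(\bfk,\bfl)=(\bfi,\bfj)$. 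Wait—the claimed leading degree is $(\bfi,\bfj-\epsilon_p)$, not $(\bfi-\epsilon_p,\bfj)$, so the dominant contribution must instead come from $h^{\bfk}[h_{p+k-\frac12},d^{\bfl}]v_{\bfk,\bfl}$. So I would re-examine: by \eqref{bracket3.3}, $h^{\bfk}[h_{p+k-\frac12},d^{\bfl}]v_{\bfk,\bfl}$ is a linear combination of vectors $h^{\bfk}d^{\bfl_j}h_{p+k-j-\frac12}v_{\bfk,\bfl}$, $j\in\bZ_+$, with ${\rm w}(\bfl_j)={\rm w}(\bfl)-j$, plus a purely-$h$ tail; and in the special sub-case $j=p$, since $p+k-p-\frac12=k-\frac12$ and $h_{k-\frac12}$ acts injectively on $V$, the term $h^{\bfk}d^{\bfl_p}h_{k-\frac12}v_{\bfk,\bfl}$ survives and, in the $\prec'$-order, its degree is $(\bfi,\bfj-\epsilon_p)$ when $(\bfk,\bfl)=(\bfi,\bfj)$. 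The heart of the argument is then showing every other contribution is strictly $\prec'$-smaller.

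The case analysis, parallel to Lemma \ref{main1'}(a)--(d) but re-sorted by the order $\prec'$, splits on the size of $j$ relative to $p$ and relative to the window $[p, p+2k]$ (or similar): (a) $j<p$ forces $p+k-j>k$, so $h_{p+k-j-\frac12}v_{\bfk,\bfl}=0$; (b) $j=p$ is the leading case just discussed, handled via the injectivity of $h_{k-\frac12}$ and the weight/order bookkeeping $(\epsilon_p,\bf0)\prec'(\bf0,\tau)$ for ${\rm w}(\tau)=p$ together with $(\bfk,\bfl)\preceq'(\bfi,\bfj)$; (c) an intermediate range where $h_{p+k-j-\frac12}$ still lies in $\DD^{(0,-n)}$, handled by a strict drop in total length ${\rm w}(\bfk^*)+{\rm w}(\bfl^*)\le{\rm w}(\bfk)+{\rm w}(\bfl)-j<{\rm w}(\bfk)+{\rm w}(\bfl)-p$; and (d) $j$ so large that $p+k-j-\frac12$ becomes a positive-half-integer creation operator, where commuting it to the left past $h^{\bfk}$ and reordering again produces a strict total-length drop by $j-({\rm something})<-p$. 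I also need to absorb the purely-$h$ tail of \eqref{bracket3.3} and the tail of \eqref{bracket3.1}: these have the $d$-index either unchanged (from \eqref{bracket3.1}) with a strictly smaller $h$-index, hence $\prec'$-smaller, or a strictly shorter total length, hence (after checking the three-component comparison defining $\prec$) $\prec'$-smaller. The main obstacle, as in Lemma \ref{main1'}, is the combinatorial care in case (d): correctly tracking the reordering of $h$'s with indices that cross from the annihilating side to the creating side, and verifying the resulting monomial's $\prec'$-degree via the monotonicity property $(a,b)\preceq(c,d)\ \&\ (c',d')\prec(a',b')\implies(a-a',b-b')\prec(c-c',d-d')$ recorded earlier. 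Once all non-leading terms are shown to be $\prec'$-dominated by $(\bfi,\bfj-\epsilon_p)$, summing over $(\bfk,\bfl)\in\supp(v)$ and noting the leading term occurs with a nonzero coefficient (injectivity of $h_{k-\frac12}$) gives $\deg'(h_{p+k-\frac12}v)=(\bfi,\bfj-\epsilon_p)$, completing the proof.
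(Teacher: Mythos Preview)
Your overall strategy matches the paper's: split $h_{p+k-\frac12}h^{\bfk}d^{\bfl}v_{\bfk,\bfl}$ into the two commutator pieces, identify the $j=p$ sub-term of $h^{\bfk}[h_{p+k-\frac12},d^{\bfl}]v_{\bfk,\bfl}$ as the unique source of the leading contribution $(\bfi,\bfj-\epsilon_p)$ via injectivity of $h_{k-\frac12}$, and bound everything else strictly below. The case analysis (a)--(d) on $j$ is also the same in outline.

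However, there is a concrete computational error in your treatment of $[h_{p+k-\frac12},h^{\bfk}]$. Recall that the PBW monomial $h^{\bfk}$ is built from the factors $h_{-s-n+\frac12}$ (position $s$), so by \eqref{bracket3.1} the only nonzero bracket $[h_{p+k-\frac12},h_{-s-n+\frac12}]$ occurs when $p+k-\frac12=-(-s-n+\frac12)$, i.e.\ at $s=p+k-n=:p'$, not at $s=p$. Thus $[h_{p+k-\frac12},h^{\bfk}]=\lambda h^{\bfk-\epsilon_{p'}}$ (nonzero only when $k_{p'}>0$), giving $\deg'$ equal to $(\bfk-\epsilon_{p'},\bfl)$. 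The point of this computation is precisely that this term is \emph{strictly} $\prec'$-smaller than $(\bfi,\bfj-\epsilon_p)$: one checks $(\bf0,\epsilon_p)\prec'(\epsilon_{p'},\bf0)$ and then applies the monotonicity property to $(\bfk,\bfl)\preceq'(\bfi,\bfj)$. Your ``Wait'' paragraph notices the discrepancy with the target but never repairs the computation, and your later remark ``strictly smaller $h$-index, hence $\prec'$-smaller'' is not a valid step in the order $\prec'$ (where the $h$-index is secondary); the strictness really comes from the fact that the weight drops by $p'\ge p$, with the $\prec'$-comparison of $(\epsilon_{p'},\bf0)$ against $(\bf0,\epsilon_p)$ needed when $p'=p$.

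A second gap is in case (d). The hypothesis $k+n\ge 2$ is essential there and you never invoke it: writing $p+k-\frac12-j=-s-n+\frac12$ with $s\in\bZ_+$ gives $-j+s=-(k+n)-p+1$, and it is exactly $k+n\ge 2$ that forces $-j+s<-p$, which is what produces the strict total-weight drop you need. Your phrase ``positive-half-integer creation operator'' is also backwards; for large $j$ the index $p+k-j-\frac12$ is a large negative half-integer, and one must commute $h_{-s-n+\frac12}$ leftward through $d^{\bfl_j}$ (not through $h^{\bfk}$) to reach a PBW expression whose degree can be read off.
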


\begin{proof}As in  (\ref{def2.1}), write  $v=\sum_{(\bfk,\bfl)}h^{\bfk}d^{\bfl}v_{\bfk,\bfl}$. Consider $\text{deg}'(h_{p+k-\frac{1}{2}}h^{\bfk}d^{\bfl}v_{\bfk,\bfl})$ if $h_{p+k-\frac{1}{2}}h^{\bfk}d^{\bfl}v_{\bfk,\bfl}\ne 0.$ Noting that $h_{p+k-\frac{1}{2}}V=0,$ we see
$$h_{p+k-\frac{1}{2}}h^{\bfk}d^{\bfl}v_{\bfk,\bfl}=[h_{p+k-\frac{1}{2}},h^{\bfk}]d^{\bfl}v_{\bfk,\bfl}+h^{\bfk}[h_{p+k-\frac{1}{2}},d^{\bfl}]v_{\bfk,\bfl}.$$

First we consider the term  $[h_{p+k-\frac{1}{2}},h^{\bfk}]d^{\bfl}v_{\bfk,\bfl}$ which is zero if   $k_{p'}=0$ for $p':=p+k-n$. In the case that  $k_{p'}>0$, since the level $\ell\ne 0$, it follows from (\ref{bracket3.1}) that $[h_{p+k-\frac{1}{2}},h^{\bfk}]=\lambda h^{\bfk-\epsilon_{p'}}$ for
some  $\lambda\in\bC^*$. Note that
$(\bfk,\bfl)\preceq' (\bfi,\bfj),
 (\bf0, \epsilon_p)\prec'(\epsilon_{p'},\bf0).$
So $$\text{deg}'([h_{p+k-\frac{1}{2}},h^{\bfk}]d^{\bfl}v_{\bfk,\bfl})=(\bfk-\epsilon_{p'},\bfl)=(\bfk,\bfl)-
(\epsilon_{p'},\bf0)\prec'   (\bfi,\bfj)-
(\bf0,\epsilon_p)=(\bfi,\bfj-\epsilon_p).$$

Now we consider the term  $h^{\bfk}[h_{p
+k-\frac{1}{2}},d^{\bfl}]v_{\bfk,\bfl}$ which is by  (\ref{bracket3.3})  a linear combination of some vectors  in the form $h^{\bfk}d^{\bfl_j}h_{p+k-\frac{1}{2}-j}v_{\bfk,\bfl}$ with $j\in\bZ_+$ and ${\rm{w}}
(\bfl_j)={\rm{w}}(\bfl)-j$.   We will show that
\begin{equation}\label{Eq3.6}\deg'(h^{\bfk}d^{\bfl_j}h_{p+k-\frac{1}{2}-j}v_{\bfk,\bfl})=(\bfk^*,\bfl^*)
\preceq' (\bfi,\bfj-\epsilon_p),\end{equation}
 where the equality holds if and only if $(\bfk,\bfl)=(\bfi,\bfj)$.
 We have four   different cases to consider.

(a) $j<p$. Then $p+k-j>n$ and $h_{p+k-\frac{1}{2}-j}v_{\bfk,\bfl}=0$. Hence $h^{\bfk}d^{\bfl_j}h_{p+k-\frac{1}{2}-j}v_{\bfk,\bfl}=0$.

(b) $j=p$. Noting that $h_{k-\frac{1}{2}}$ acts injectively on $V$, we see  $(\bfk^*,\bfl^*)=(\bfk, \bfl_p)$ and  ${\rm{w}}(\bfk^*)+{\rm{w}(\bfl^*)={\rm{w}}(\bfk)+\rm{w}(\bfl)-}p$.

If ${\rm{w}(\bfk)+\rm{w}(\bfl)}<{\rm{w}(\bfi)+\rm{w}(\bfj)}$, then $(\bfk^*,\bfl^*)\preceq' (\bfi,\bfj-\epsilon_p).$

If ${\rm{w}(\bfk)+\rm{w}(\bfl)}={\rm{w}(\bfi)+\rm{w}(\bfj)}$,
then there is $\tau\in \M$ such that ${\rm w}(\tau)=p$ and $\bfl_p=\bfl-\tau$. Since $ (\bf0, \epsilon_p)\preceq' (\bf0, \tau)$ and $(\bfk,\bfl)\preceq' (\bfi,\bfj)$, we see that $$(\bfk^* ,\bfl^*)=(\bfk ,\bfl)-({\bf}0, \tau) \preceq' (\bfi ,\bfj)- ({\bf0},\epsilon_p)=
(\bfi,\bfj-\epsilon_p),$$
 where the equality holds if and only if $(\bfk,\bfl)=(\bfi,\bfj)$.


(c) $p<j<n+k+p$. Then $h_{p+k-\frac{1}{2}-j}\in \mathfrak{D}^{(0,-n)}$ and $h_{p+k-\frac{1}{2}-j}v_{\bfk,\bfl}\in V.$ So $$   {\rm{w}(\bfk^*)+\rm{w}(\bfl^*)}=  {\rm{w}(\bfk)+\rm{w}(\bfl)}-j<{\rm{w}(\bfk)+\rm{w}(\bfl)}-p$$
and $(\bfk^*,\bfl^*)
\prec' (\bfi,\bfj-\epsilon_p)$.

(d) $j\ge n+k+p$. Then $p+k-\frac{1}{2}-j<-n+\frac{1}{2}$. Assume $p+k-\frac{1}{2}-j=-s-n+\frac{1}{2}$ for some $s\in \bZ_+$, that is, $-j+s= -n-k-p+1<-p$ since $k+n\ge2$. Since the corresponding vector
$h^{\bfk}d^{\bfl_j}h_{p+k-\frac{1}{2}-j}v_{\bfk,\bfl}=h^{\bfk}h_{-s-n+\frac{1}{2}}d^{\bfl_j}v_{\bfk,\bfl}-h^{\bfk}[h_{-s-n+\frac{1}{2}},d^{\bfl_j}]v_{\bfk,\bfl},
$  by (\ref{bracket3.3}) and simple computations we see $h^{\bfk}d^{\bfl_j}h_{p+k-\frac{1}{2}-j}v_{\bfk,\bfl}$ can written as a linear combination of vectors in the form
$h^{\bfk}h_{-s'-s-n+\frac{1}{2}}d^{\bfl_{s'+j}}v_{\bfk,\bfl}
$ where $s'\in\bN$ and $\deg'(h^{\bfk}h_{-s'-s-n+\frac{1}{2}}d^{\bfl_{s'+j}}v_{\bfk,\bfl})
$ has weight $${\rm{w}}(\bfk)+s'+s+{\rm{w}}(\bfl_{s'+j})={\rm{w}}(\bfk)+{\rm{w}}(\bfl)+s-j.
$$
So
$$ {\rm{w}(\bfk^*)+\rm{w}(\bfl^*)}= {\rm{w}(\bfk)+\rm{w}(\bfl)}-j+s<{\rm{w}}(\bfk)+{\rm{w}}(\bfl)-p\le {\rm{w}}(\bfi)+{\rm{w}}(\bfj)-p,$$ and hence $(\bfk^*,\bfl^*)
\prec' (\bfi,\bfj-\epsilon_p)$.


In conclusion, $\text{deg}'(h_{p+k-\frac{1}{2}}h^{\bfk}d^{\bfl}v_{\bfk,\bfl})
\preceq' (\bfi,\bfj-\epsilon_p),$
where the equality holds if and only if $(\bfk,\bfl)=(\bfi,\bfj)$, that is,   $\deg'(h_{p+k-\frac{1}{2}}v)=(\bfi,\bfj-\epsilon_p)$.\end{proof}

%
%

\begin{lem}\label{main4}  Let $k\in\bZ_+, n\in\bZ$ such that $k>n$ and $k+n\geq 2$, and $V$ be a $\DD^{(0,-n)}$-module such that $h_{k-\frac{1}{2}}$ acts injectively on $V$, and $h_{i -\frac{1}{2}} V=d_jV=0$ for all $i>k$, $j>k+n-1$.
Assume that $v=\sum_{(\bfk,\bfl)}h^{\bfk}d^{\bfl}v_{\bfk,\bfl}\in \Ind(V)\setminus V$ with  $\deg'(v)=(\bfi, \bf0)$. Set $q=\min\{s: i_s\neq 0\}$.
\begin{enumerate}
\item[\rm(1)] If the sum  $\sum_{(\bfk,\bfl)}h^{\bfk}d^{\bfl}v_{\bfk,\bfl}$ does not contain terms $h^{\bfk}d^{\bfl}v_{\bfk,\bfl}$ satisfying
\begin{equation}\label{weight}
{\rm{w}(\bfk)}+{\rm{w}(\bfl)}={\rm{w}(\bfi)}, {\rm{w}(\bfi)}-q\le  {\rm{w}(\bfk)}<{\rm{w}(\bfi)},
\end{equation}
then $\deg'(d_{q+k+n-1}v)=(\bfi-\epsilon_q, \bf0)$;
\item[\rm(2)] Assume that  the sum $\sum_{(\bfk,\bfl)}h^{\bfk}d^{\bfl}v_{\bfk,\bfl}$ contain terms $h^{\bfk}d^{\bfl}v_{\bfk,\bfl}$ satisfying (\ref{weight}). Let
$v'=v-\sum_{\rm{w}(\bfk)={\rm{w}(\bfi)}}h^{\bfk}v_{\bfk,\bf0}$ and
$\deg'(v')=(\bfk^*, \bfl^*)$ with $t=min\{s: l^*_s\neq 0\}$. Then  $\deg'(h_{k+t-\frac{1}{2}}v)=(\bfk^*, \bfl^*-\epsilon_t)$.
\end{enumerate}
\end{lem}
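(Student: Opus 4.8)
The plan is to treat both statements by the method of Lemmas \ref{main1'}--\ref{main3}: apply one carefully chosen raising operator to $v$, expand every PBW monomial $h^{\bfk}d^{\bfl}v_{\bfk,\bfl}$ occurring in $v$ by means of the commutator identities (\ref{bracket3.1})--(\ref{bracket3.4}), and control the $\prec'$-degree of each resulting PBW monomial. One uses throughout that the chosen operator annihilates $V$, that $h_{k-\frac{1}{2}}$ acts injectively on $V$, that $h_{i-\frac{1}{2}}V=d_jV=0$ for $i>k$ and $j>k+n-1$, and the displayed ``difference'' rule for $\preceq'$. The quantitative point is that in $\prec'$ a strictly smaller total length $\mathrm{w}(\bfi)+\mathrm{w}(\bfj)$ always wins, and among monomials of equal total length a strictly smaller $h$-length $\mathrm{w}(\bfi)$ wins; so everything reduces to (i) exhibiting, out of the leading monomial of $v$, one monomial that realizes the \emph{minimal} drop of the total length, and (ii) showing that every other monomial gives something strictly $\prec'$-smaller.

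For part (1) I would apply $d_{q+k+n-1}$. Since $q\ge 1$ and $k+n\ge 2$ we have $q+k+n-1>k+n-1$, so $d_{q+k+n-1}V=0$ and
\[
d_{q+k+n-1}h^{\bfk}d^{\bfl}v_{\bfk,\bfl}=[d_{q+k+n-1},h^{\bfk}]d^{\bfl}v_{\bfk,\bfl}+h^{\bfk}[d_{q+k+n-1},d^{\bfl}]v_{\bfk,\bfl}.
\]
A four-case analysis of every newly created mode (index too large: kills $V$; index in the injective range: yields $h_{k-\frac{1}{2}}$; middle range: acts on $V$; very negative: a creation mode that must be reordered into the PBW word, the central corrections being of lower length) shows, exactly as in the proofs of Lemmas \ref{main1'}--\ref{main1}, that each resulting PBW monomial has total length $\le\mathrm{w}(\bfk)+\mathrm{w}(\bfl)-q$, and that equality can occur only in two shapes: either the appended mode is $h_{k-\frac{1}{2}}$, coming from the position-$q$ slot of $[d_{q+k+n-1},h^{\bfk}]$, so that the $h$-exponent becomes $\bfk-\epsilon_q$ (of $h$-length $\mathrm{w}(\bfk)-q$); or the drop of $q$ comes entirely from $[d_{q+k+n-1},d^{\bfl}]$, so that the $h$-exponent stays $\bfk$ (of $h$-length $\mathrm{w}(\bfk)$). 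Applied to the leading monomial $h^{\bfi}v_{\bfi,\bf0}$ only the first shape can occur, and it yields $h^{\bfi-\epsilon_q}(h_{k-\frac{1}{2}}v_{\bfi,\bf0})$, which is nonzero by injectivity and has $\prec'$-degree exactly $(\bfi-\epsilon_q,\bf0)$. For any other monomial $(\bfk,\bfl)$ of $v$ one has $\mathrm{w}(\bfk)+\mathrm{w}(\bfl)\le\mathrm{w}(\bfi)$; if this is strict then every contribution has total length $<\mathrm{w}(\bfi)-q$; if it is an equality, then in the first shape the $h$-length is $\mathrm{w}(\bfk)-q<\mathrm{w}(\bfi)-q$, while in the second shape it is $\mathrm{w}(\bfk)$, which can match or beat $(\bfi-\epsilon_q,\bf0)$ only when $\mathrm{w}(\bfi)-q\le\mathrm{w}(\bfk)<\mathrm{w}(\bfi)$ --- that is, only for the monomials excluded by hypothesis (\ref{weight}). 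Hence under that hypothesis the monomial $(\bfi-\epsilon_q,\bf0)$ is neither overtaken nor cancelled, and $\deg'(d_{q+k+n-1}v)=(\bfi-\epsilon_q,\bf0)$.

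For part (2), where monomials satisfying (\ref{weight}) do occur, write $v=v'+\sum_{\mathrm{w}(\bfk)=\mathrm{w}(\bfi)}h^{\bfk}v_{\bfk,\bf0}$. Since every monomial of $v$ with $\mathrm{w}(\bfk)=\mathrm{w}(\bfi)$ is automatically pure in $h$ (a nontrivial $d$-part would push the total length above $\mathrm{w}(\bfi)$), the block that is removed is precisely the pure-$h$ part of $v$ of maximal $h$-length; consequently the leading monomial of $v'$ carries a nonzero $d$-part --- it is realized by the monomials of (\ref{weight}), whose $d$-part has length in $(0,q]$ --- so indeed $\bfl^*\ne\bf0$ and $\deg'(v')=(\bfk^*,\bfl^*)$ makes sense with $t=\min\{s:l^*_s\ne0\}$, and $v'\in\Ind(V)\setminus V$. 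Now apply $h_{k+t-\frac{1}{2}}$. On $v'$ this is exactly the situation of Lemma \ref{main3} (whose hypotheses $k\ge n$, $k+n\ge 2$ we have), which gives $\deg'(h_{k+t-\frac{1}{2}}v')=(\bfk^*,\bfl^*-\epsilon_t)$, of total length $\mathrm{w}(\bfi)-t$. On the removed block, $h_{k+t-\frac{1}{2}}h^{\bfk}v_{\bfk,\bf0}=[h_{k+t-\frac{1}{2}},h^{\bfk}]v_{\bfk,\bf0}$ is, by (\ref{bracket3.1}), a pure-$h$ vector of total length $\mathrm{w}(\bfi)-(k+t-n)$, which is strictly smaller than $\mathrm{w}(\bfi)-t$ because $k-n\ge 1$; so this block is $\prec'(\bfk^*,\bfl^*-\epsilon_t)$ and cannot interfere. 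Therefore $\deg'(h_{k+t-\frac{1}{2}}v)=(\bfk^*,\bfl^*-\epsilon_t)$.

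The step I expect to be the real obstacle is the bookkeeping in part (1): one has to check uniformly --- across all of (\ref{bracket3.1})--(\ref{bracket3.4}) and across their single-slot and multiple-slot terms --- both that $d_{q+k+n-1}$ lowers the total length by at least $q$ and that the \emph{only} contributions achieving a drop of exactly $q$ are the two shapes above. It is precisely this that forces the raising index to be $q+k+n-1$ and the inequalities in (\ref{weight}) to have the stated form. Once that is in hand, the rest of the proof is a routine refinement of the length estimates already carried out in the proofs of Lemmas \ref{main1'}, \ref{main1} and \ref{main3}, supplemented in part (2) only by the device of first peeling off the top pure-$h$ block of $v$ so that Lemma \ref{main3} becomes applicable.
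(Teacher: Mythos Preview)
Your strategy matches the paper's proof exactly: for part (1) you act by $d_{q+k+n-1}$ and control the $\prec'$-degree of every PBW piece, and for part (2) you peel off the maximal-$h$-length pure-$h$ block and invoke Lemma \ref{main3} on $v'$, then check that $h_{k+t-\frac{1}{2}}$ on the removed block drops the total length by $k+t-n>t$. The paper does precisely this, with the same arithmetic.

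There is, however, a small oversimplification in your ``two shapes'' summary for part (1). Besides the direct position-$q$ contribution from $[d_{q+k+n-1},h^{\bfk}]$ and the pure $[d_{q+k+n-1},d^{\bfl}]$ contribution, there is a \emph{compound} contribution: for $j<q$ the commutator $[d_{q+k+n-1},h_{-j-n+\frac{1}{2}}]$ produces $h_{(q-j)+k-\frac{1}{2}}$, which then commutes through $d^{\bfl}$ and, when it hits a $d_{-(q-j)}$, yields an $h_{k-\frac{1}{2}}$ acting on $V$. This term has $h$-length $\mathrm{w}(\bfk)-j$ and $d$-length $\mathrm{w}(\bfl)-(q-j)$, so its total drop is also exactly $q$, but its $h$-length is $\mathrm{w}(\bfk)-j$, \emph{not} $\mathrm{w}(\bfk)-q$. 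This is the paper's case (a2), and it is a second source of potentially dangerous contributions: it can match or beat $(\bfi-\epsilon_q,\bf0)$ precisely when $\mathrm{w}(\bfk)+\mathrm{w}(\bfl)=\mathrm{w}(\bfi)$ and $\mathrm{w}(\bfk)-j\ge\mathrm{w}(\bfi)-q$, which again forces $\mathrm{w}(\bfi)-q\le\mathrm{w}(\bfk)<\mathrm{w}(\bfi)$, i.e.\ condition (\ref{weight}). So your hypothesis in (1) still disposes of it, and the conclusion stands; but your statement that ``in the first shape the $h$-length is $\mathrm{w}(\bfk)-q$'' is not literally correct, and a referee would want this compound case spelled out.
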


\begin{proof}
Consider $\text{deg}'(d_{q+k+n-1}h^{\bfk}d^{\bfl}v_{\bfk,\bfl})$ with $d_{q+k+n-1}h^{\bfk}d^{\bfl}v_{\bfk,\bfl}\ne 0.$ Noting that $d_{q+k+n-1}V=0,$ we see that
$$d_{q+k+n-1}h^{\bfk}d^{\bfl}v_{\bfk,\bfl}=[d_{q+k+n-1},h^{\bfk}]d^{\bfl}v_{\bfk,\bfl}+h^{\bfk}[d_{q+k+n-1},d^{\bfl}]v_{\bfk,\bfl}.$$

First we consider the term  $[d_{q+k+n-1},h^{\bfk}]d^{\bfl}v_{\bfk,\bfl}$. It follows from (\ref{bracket3.2}) that $[d_{q+k+n-1},h^{\bfk}]d^{\bfl}v_{\bfk,\bfl}$ is a linear combination of vectors in the forms $h^{\bfk_j}h_{(q-j)+k-\frac{1}{2}}d^{\bfl}v_{\bfk,\bfl}$ and $h^{\bf{s}}d^{\bfl}v_{\bfk, \bfl}$
where $\bfk_j=\bfk-\epsilon_j$, ${\rm{w}}({\bf{s}})
={\rm{w}}(\bfk)-(k+q-n)$.
If $\bfl=0$, it is not hard to  see that $\text{deg}'(d_{q+k+n-1}h^{\bfk}d^{\bfl}v_{\bfk,\bfl})\preceq' (\bfi-\epsilon_q,\bf0)$  where the equality holds if and only if $(\bfk, \bfl)=(\bfi,\bf0)$.

Next we assume that $\bfl\ne\bf0$, and continue to consider the term  $[d_{q+k+n-1},h^{\bfk}]d^{\bfl}v_{\bfk,\bfl}$.
We first consider the term $h^{\bfk_j}h_{(q-j)+k-\frac{1}{2}}d^{\bfl}v_{\bfk,\bfl}$. We break the arguments into to  four   different cases next.

(a) $j<q$. In this case, we have $h^{\bfk_j}h_{(q-j)+k-\frac{1}{2}}d^{\bfl}v_{\bfk,\bfl}=h^{\bfk_j}[h_{(q-j)+k-\frac{1}{2}}, d^{\bfl}]v_{\bfk,\bfl}$. Then it follows from (\ref{bracket3.3}) that $h^{\bfk_j}[h_{(q-j)+k-\frac{1}{2}}, d^{\bfl}]v_{\bfk,\bfl}$ is a linear combination of vectors  in the form
$h^{\bfk_j} d^{\bfl_s}h_{(q-j-s)+k-\frac{1}{2}}v_{\bfk,\bfl}$ where ${\rm{w}}(\bfl_s)={\rm{w}}(\bfl)-s$.

(a1) If $s<q-j$, then $h^{\bfk_j} d^{\bfl_s}h_{(q-j-s)+k-\frac{1}{2}}v_{\bfk,\bfl}=0.$

(a2) If $s=q-j$, then $\deg'(h^{\bfk_j} d^{\bfl_s}h_{k-\frac{1}{2}}v_{\bfk,\bfl})$ has weight
 $${\rm{w}}(\bfk_j)+{\rm{w}}(\bfl_s)={\rm{w}}(\bfk)+{\rm{w}}(\bfl)-j-s={\rm{w}}(\bfk)+{\rm{w}}(\bfl)-q.$$ If ${\rm{w}}(\bfk)+{\rm{w}}(\bfl)<{\rm{w}}(\bfi)$, or  ${\rm{w}}(\bfk)+{\rm{w}}(\bfl)={\rm{w}}(\bfi)$ and ${\rm{w}}(\bfk)<{\rm{w}}(\bfi)-q$, then $\deg'(h^{\bfk_j} d^{\bfl_s}h_{k-\frac{1}{2}}v_{\bfk,\bfl})\prec' (\bfi-\epsilon_q,\bf0).$ We will discuss the remaining cases that
  $(\bfk, \bfl)$ satisfies (3.9) in Case (2)  later.

(a3) If $q-j<s\le q+k+n-1-j$, then $ h_{(q-j-s)+k-\frac{1}{2}}v_{\bfk,\bfl}\in V$ and $\deg'(h^{\bfk_j} d^{\bfl_s}h_{k-\frac{1}{2}}v_{\bfk,\bfl})$ has weight
 $${\rm{w}}(\bfk_j)+{\rm{w}}(\bfl_s)={\rm{w}}(\bfk)+{\rm{w}}(\bfl)-j-s<{\rm{w}}(\bfk)+{\rm{w}}(\bfl)-q\le {\rm{w}}(\bfi)-q.$$ So
 $\deg'(h^{\bfk_j} d^{\bfl_s}h_{(q-j-s)+k-\frac{1}{2}}v_{\bfk,\bfl})\prec' (\bfi-\epsilon_q,\bf0).$

(a4) If $s>q+k+n-1-j$, then $q-j-s+k-\frac{1}{2}=-s'-n+\frac{1}{2} $ for some $s'\in\bZ_+$. It is easy to see
 $h^{\bfk_j} d^{\bfl_s}h_{(q-j-s)+k-\frac{1}{2}}v_{\bfk,\bfl}$ can be written as a linear combination of vectors of the form
$h^{\bfk_j} h_{-s'-s''-n+\frac{1}{2}}d^{\bfl_{s+s''}}v_{\bfk,\bfl}, 0\le s''\le {\rm{w}}(\bfl_s)$. Note that both
$\deg'(h^{\bfk_j} h_{-s'-s''-n+\frac{1}{2}}d^{\bfl_{s+s''}}v_{\bfk,\bfl})$ and $\deg'(h^{\bfk_j} h_{-s'-n+\frac{1}{2}}d^{\bfl_{s}}v_{\bfk,\bfl})$ have the same weight and   $-j-s+s'=-q-k-n+1<-q$, we see
 $\deg'(h^{\bfk_j} d^{\bfl_s}h_{(q-j-s)+k-\frac{1}{2}}v_{\bfk,\bfl})$ has weight
 $${\rm{w}}(\bfk_j)+{\rm{w}}(\bfl_s)+s'={\rm{w}}(\bfk)+{\rm{w}}(\bfl)-j-s+s'<{\rm{w}}(\bfk)+{\rm{w}}(\bfl)-q\le {\rm{w}}(\bfi)-q.$$So
 $\deg'(h^{\bfk_j} d^{\bfl_s}h_{(q-j-s)+k-\frac{1}{2}}v_{\bfk,\bfl})\prec' (\bfi-\epsilon_q,\bf0).$


(b) $j=q$. In this case, we have $h^{\bfk_q}h_{k-\frac{1}{2}}d^{\bfl}v_{\bfk,\bfl}=h^{\bfk_q}d^{\bfl}h_{k-\frac{1}{2}}v_{\bfk,\bfl}+h^{\bfk_q}[h_{k-\frac{1}{2}}, d^{\bfl}]v_{\bfk,\bfl}$.
Clearly, $\text{deg}'(h^{\bfk_q}d^{\bfl}h_{k-\frac{1}{2}}v_{\bfk,\bfl})=(\bfk_q,\bfl)\prec' (\bfi-\epsilon_q,\bf0)$ since $\bfl\ne\bf0$. By (\ref{bracket3.3}) and the similar arguments in Cases (a3) and (a4) we can deduce that
$\text{deg}'(h^{\bfk_q}[h_{k-\frac{1}{2}},d^{\bfl}]v_{\bfk,\bfl})\prec' (\bfi-\epsilon_q,\bf0)$. Hence
$\text{deg}'(h^{\bfk_q}h_{k-\frac{1}{2}}d^{\bfl}v_{\bfk,\bfl})\prec' (\bfi-\epsilon_q,\bf0)$.

(c) $q<j\leq q+k+n-1$. In this case, we have $h^{\bfk_j}h_{(q-j)+k-\frac{1}{2}}d^{\bfl}v_{\bfk,\bfl}=h^{\bfk_j}d^{\bfl}h_{(q-j)+k-\frac{1}{2}}v_{\bfk,\bfl}+h^{\bfk_j}[h_{(q-j)+k-\frac{1}{2}}, d^{\bfl}]v_{\bfk,\bfl}$. Clearly, $\deg'(h^{\bfk_j}d^{\bfl}h_{(q-j)+k-\frac{1}{2}}v_{\bfk,\bfl})= {\rm{w}}(\bfk)+{\rm{w}}(\bfl)-j<{\rm{w}}(\bfi)-q.$
Then by (\ref{bracket3.3}) and the similar arguments in Cases (a3) and (a4) we can deduce that
$\text{deg}'(h^{\bfk_q}[h_{(q-j)+k-\frac{1}{2}}, d^{\bfl}]v_{\bfk,\bfl})\prec' (\bfi-\epsilon_q,\bf0)$.
Hence, $\text{deg}'(h^{\bfk_j}h_{(q-j)+k-\frac{1}{2}}d^{\bfl}v_{\bfk,\bfl})\prec' (\bfi-\epsilon_q,\bf0)$.

(d) $j>q+k+n-1$. In this case, we have $h^{\bfk_j}h_{(q-j)+k-\frac{1}{2}}d^{\bfl}v_{\bfk,\bfl}=h^{\bfk_j}h_{-(j-(q+k+n-1))-n+\frac{1}{2}}d^{\bfl}v_{\bfk,\bfl}$.
Then  $\text{deg}'(h^{\bfk_j}h_{(q-j)+k-\frac{1}{2}}d^{\bfl}v_{\bfk,\bfl})=(\bfk^*,\bfl)$ with weight  ${\rm{w}}(\bfk^*)+{\rm{w}}(\bfl)= {\rm{w}}(\bfk)+{\rm{w}}(\bfl)
-(q+k+n-1)<{\rm{w}}(\bfi)-q$. Hence, $\text{deg}'(h^{\bfk_j}h_{(q-j)+k-\frac{1}{2}}d^{\bfl}v_{\bfk,\bfl})\prec' (\bfi-\epsilon_q,\bf0)$.

Next consider the term $h^{\bf{s}}d^{\bfl}v_{\bfk, \bfl}$. Since ${\rm{w}}(\text{deg}'(h^{\bf{s}}d^{\bfl}v_{\bfk, \bfl}))={\rm{w}}(\bf{s})+{\rm{w}}(\bfl)<
{\rm{w}}(\textbf{k})+{{\rm{w}}(\textbf{l})}$-$q\leq {\rm{w}}(\textbf{i})$-$q$, it follows that $\text{deg}'(h^{\bf{s}}d^{\bfl}v_{\bfk, \bfl})\prec' (\bfi-\epsilon_q,\bf0)$.

Thus, if  $h^{\bfk}d^{\bfl}v_{\bfk,\bfl}$ does not satisfy (\ref{weight}) we have
$$\deg'([d_{q+k+n-1},h^{\bfk}]d^{\bfl}v_{\bfk,\bfl})\preceq' (\bfi-\epsilon_q,\bf0)$$
where the equality holds if and only if $(\bfk, \bfl)=(\bfi,\bf0)$.

Now, consider the term $h^{\bfk}[d_{q+k+n-1},d^{\bfl}]v_{\bfk,\bfl}$ where we still assume that $\bfl\ne\bf0$. By (\ref{bracket3.4}) we see $h^{\bfk}[d_{q+k+n-1},d^{\bfl}]v_{\bfk,\bfl}$ is a linear combination of vectors
$h^{\bfk}d^{\bfl_j}d_{q+k+n-1-j}v_{\bfk,\bfl}$ and  $h^{\bfk}d^{\bfl_{q+k+n-1}}v_{\bfk,\bfl}$ where $\rm{w}({\bfl_j})=\rm{w}({\bfl})-j,j\in\bN$. Since $\deg'(h^{\bfk}d^{\bfl_{q+k+n-1}}v_{\bfk,\bfl})$ has weight
$${\rm{w}(\bfk)}+{\rm{w}}({\bfl})-(q+k+n-1)<{\rm{w}(\bfk)}+{\rm{w}}({\bfl})-q\le {\rm{w}}(\bfi)-q,$$ we see $\deg'(h^{\bfk}d^{\bfl_{q+k+n-1}}v_{\bfk,\bfl})\prec' (\bfi-\epsilon_q,\bf0)$. So we need only to consider the vectors $h^{\bfk}d^{\bfl_j}d_{q+k+n-1-j}v_{\bfk,\bfl}$. There are four different cases.

(i) $j<q$. Then $q+k+n-1-j>k+n-1$ and $h^{\bfk}d^{\bfl_j}d_{q+k+n-1-j}v_{\bfk,\bfl}=0$. In particular, for  ${\rm{w}(\bfl)}<q$ we have $h^{\bfk}d^{\bfl_j}d_{q+k+n-1-j}v_{\bfk,\bfl}=0$.

(ii) $j=q$. Then $q+k+n-1-q=k+n-1$ and hence  $\deg'(h^{\bfk}d^{\bfl_q}d_{k+n-1}v_{\bfk,\bfl})=(\bfk,\bfl_q)$ ( in the case $d_{k+n-1}v_{\bfk,\bfl}\ne 0$ ) with ${\rm{w}(\bfk)+\rm{w}(\bfl_q)=\rm{w}(\bfk)+\rm{w}(\bfl)}-q$.

If $\rm{w}(\bfk)+\rm{w}(\bfl)<\rm{w}(\bfi)$, or $\rm{w}(\bfk)+\rm{w}(\bfl)=\rm{w}(\bfi)$ and ${\rm{w}(\bfk)<\rm{w}(\bfi)}-q$,
then $(\bfk,\bfl_q)\prec' (\bfi-\epsilon_q,\bf0).$
We will discuss the remaining cases that
  $(\bfk, \bfl)$ satisfies (3.9) in Case (2)  later.

(iii) $q<j\le q+k+n-1$. Then $d_{q+k+n-1-j}v_{\bfk,\bfl}\in V$ and $h^{\bfk}d^{\bfl_j}d_{q+k+n-1-j}v_{\bfk,\bfl}=0$ or $\deg'(h^{\bfk}d^{\bfl_j}d_{q+k+n-1-j}v_{\bfk,\bfl})$ has weight
$${\rm{w}(\bfk)+\rm{w}(\bfl_j)=\rm{w}(\bfk)+\rm{w}(\bfl)}-j<{\rm{w}(\bfk)+\rm{w}(\bfl)}-q\le {\rm{w}}(\bfi)-q,$$
so $\deg'(h^{\bfk}d^{\bfl_j}d_{q+k+n-1-j}v_{\bfk,\bfl})\prec'  (\bfi-\epsilon_q,\bf0)$.

(iv) $j>q+k+n-1$. Then $q+k+n-1-j<0$. Assume $q+k+n-1-j=-j'$, $j'\in \bZ_+$. Then $-j+j'=-(q+k+n-1)<-q$. So $\deg'(h^{\bfk}d^{\bfl_j}d_{q+k+n-1-j}v_{\bfk,\bfl})$ has weight
$${\rm{w}(\bfk)+\rm{w}(\bfl_j)+j'=\rm{w}(\bfk)+\rm{w}(\bfl)}-j+j'={\rm{w}(\bfk)+\rm{w}(\bfl)}-(q+k+n-1)
< {\rm{w}}(\bfi)-q,$$
which means $\deg'(h^{\bfk}d^{\bfl_j}d_{q+k+n-1-j}v_{\bfk,\bfl}) \prec' (\bfi-\epsilon_q,\bf0).$

(1)  If $v=\sum_{(\bfk,\bfl)}h^{\bfk}d^{\bfl}v_{\bfk,\bfl}$ does not contain a term $h^{\bfk}d^{\bfl}v_{\bfk,\bfl}$ satisfying (\ref{weight}),
then  by the above arguments we see $\deg'(d_{q+k+n-1}v)=(\bfi-\epsilon_q, \bf0)$.

(2) If $v=\sum_{(\bfk,\bfl)}h^{\bfk}d^{\bfl}v_{\bfk,\bfl}$ contain  terms $h^{\bfk}d^{\bfl}v_{\bfk,\bfl}$ satisfying  (\ref{weight}), then we see $\deg'(v')=(\bfk^*,\bfl^*)$ with $${\rm{w}(\bfk^*)+\rm{w}(\bfl^*)=\rm{w}(\bfi), }\,
 {\rm{w}(\bfk^*)}\ge {\rm{w}(\bfi)}-q, \,1\le {\rm{w}}{(\bfl^*)}\le q.$$
 Then by Lemma \ref{main3} we see $\deg'(h_{t+k-\frac{1}{2}}v')=(\bfk^*, \bfl^*-\epsilon_t)$.

 Noticing that $k>n$, by (\ref{bracket3.1}) we see $h_{t+k-\frac{1}{2}}h^{\bfk}v_{\bfk,\bf0}=\bf0$ or $\lambda h^{\bfk_{t'}}v_{\bfk,\bf0},\lambda\in\bC^*$ with $t'=t+k-n>t$ and  ${\rm{w}(\bfk_{t'})=\rm{w}(\bfk)}-t'$, so
$\deg'(h_{t+k-\frac{1}{2}}(h^{\bfk}v_{\bfk,\bf0} ))=(\bfk_{t'},\bf0)$ has weight ${\rm{w}(\bfk_{t'})=\rm{w}(\bfk)}-t'<{\rm{w}(\bfk^*)+\rm{w}(\bfl^*)}-t={\rm{w}(\bfk^*)+\rm{w}}(\bfl^*-\epsilon_t)$. Hence
$$\deg'(h_{t+k-\frac{1}{2}}v)=\deg'\Big(h_{t+k-\frac{1}{2}}\Big(v-\sum_{\rm{w}(\bfk)={\rm{w}(\bfi)}}h^{\bfk}v_{\bfk,\bf0}\Big)\Big)=(\bfk^*, \bfl^*-\epsilon_t).$$
\end{proof}

\section{Simple restricted $\DD$-modules}\label{char}

In this section we will determine all simple restricted ${\mathfrak{D}}$-modules.
Based on Theorem \ref{MT}, we only need to determine all   simple  restricted ${\mathfrak{D}}$-modules $S$ of level $\ell\ne0$.


For a given simple restricted $\DD$-module $S$ with level $\ell \not=0$, we define the following invariants of $S$ as follows:
$$S(r)={\text{Ann}}_S(\mh^{(r)}), n_S=\min\{r\in \bZ:S(r)\ne0\}, W_0=S{(n_S)}, $$
and
$$U(r)={\text{Ann}}_{W_0}(\Vir^{(r)}), m_S=\min\{r\in \bZ:U(r)\ne0\}, U_0=U(m_S).$$

\begin{lem}\label{lem4.2} Let $S$ be a
simple restricted $\DD$-module  with level $\ell \not=0$.
\begin{enumerate}
\item[\rm(i)] $h_{n_S-\frac12}$ acts injectively on $W_0$,  $d_{m_S-1}$ acts injectively on $U_0$.
\item[\rm(ii)] $n_S, m_S\in\bN$.
\item[\rm(iii)] $W_0$ is a nonzero $\DD^{(0,-n_S)}$-module, and is invariant under the action of the operators $L_n$  defined in (\ref{rep1})-(\ref{rep3}) for $n\in\bN$.
\item[\rm(iv)] If $m_S\ge 2n_S$, then $U_0$ is a nonzero $\DD^{(0,-n_S)}$-submodule of $W_0$, and is invariant under the action of the operators $L_n$  defined in (\ref{rep1})-(\ref{rep3}) for $n\in\bN$.
\end{enumerate}
\end{lem}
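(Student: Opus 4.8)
The plan is to exploit the definitions of the invariants $n_S, m_S$ together with the simplicity of $S$ and the module structures available on the various annihilator subspaces. First I would establish (iii): by construction $W_0 = S(n_S) = \mathrm{Ann}_S(\mh^{(n_S)})$ is nonzero. To see it is a $\DD^{(0,-n_S)}$-module, I would check that each generator of $\DD^{(0,-n_S)}$ preserves the condition of being annihilated by $\mh^{(n_S)} = \sum_{i\in\bN}\bC h_{n_S+i+\frac12}\oplus \bC {\bf c}_2$. For $d_j$ with $j\ge 0$: if $h_{n_S+i+\frac12}w=0$ for all $i\in\bN$, then using $[d_j, h_r]=-rh_{j+r}$ one gets $h_{n_S+i+\frac12}(d_j w) = d_j h_{n_S+i+\frac12}w + [h_{n_S+i+\frac12}, d_j]w = (n_S+i+\frac12) h_{j+n_S+i+\frac12} w = 0$ since $j+n_S+i+\frac12 \in \mh^{(n_S)}$ (as $j\ge 0$). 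Similarly for $h_{n_S+i+\frac12}$ acting, using \eqref{bracket3.1}-type relations; and ${\bf c}_1, {\bf c}_2$ are central. Then the $L_n$-invariance ($n\in\bN$) follows immediately because $L_n$ for $n>0$ is, by \eqref{rep1}, a (locally finite on restricted modules) sum of products $h_{n-k}h_k$ with $k\in\frac12+\bZ$; on $W_0$ one checks the same annihilation condition is preserved since $[h_r, h_{n-k}h_k]$ lands in $\mh^{(n_S)}\cdot\UU(\mh)$ when $r$ is large (and $L_0$ likewise, noting the $\frac1{16}$ shift is a scalar). The restricted-ness of $S$ guarantees $\mh^{(r)}$ does annihilate enough: for any $v\in S$ there is $r$ with $\GG_i v = 0$ for $i>r$, which forces $v\in S(r)$, so some $S(r)\ne 0$, and hence $n_S$ is well-defined as an integer — but I still need $n_S\ge 0$.

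For (ii), that $n_S\in\bN$ (i.e. $n_S\ge 0$): suppose $n_S < 0$. Then $h_{n_S-\frac12+i}$ annihilates $W_0$ for \emph{all} $i\ge 0$, which in particular includes $h_{-\frac12}, h_{\frac12}, h_{\frac32},\dots$; combined with the fact (to be proven in part (i)) that $h_{n_S-\frac12}$ acts injectively on $W_0$ — but $h_{n_S - \frac12}\in \mh^{(n_S)}$ would then act as zero, contradicting injectivity unless $W_0=0$. So I would prove (i) first, or rather prove (i) and (ii) together: $W_0$ is annihilated by $h_{n_S+i+\frac12}$ for $i\ge 0$ but, by minimality of $n_S$, is \emph{not} annihilated by $\mh^{(n_S-1)}$, i.e. there is some $w\in W_0$ with $h_{n_S-\frac12}w\ne 0$. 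The standard argument (as in \cite{MZ2, LPXZ}) is: the operator $h_{n_S-\frac12}$ maps $W_0$ into $S(n_S)=W_0$ again (using $[h_{n_S-\frac12}, h_{n_S+i+\frac12}] = (n_S-\frac12)\delta_{2n_S+i,0}{\bf c}_2$, which is zero for $i\ge 0$ when $n_S\ge 0$, hence $h_{n_S-\frac12}W_0\subseteq W_0$ once $n_S\ge 0$; if $n_S\le 0$ one argues the image still lands in some $S(r)$ and derives a contradiction with minimality). Then $\Ker(h_{n_S-\frac12}|_{W_0})$ is easily checked to be a proper $\DD$-submodule-generating subspace — more precisely, one shows that if $h_{n_S-\frac12}w=0$ then $w$ generates a restricted submodule annihilated by $\mh^{(n_S-1)}$ contradicting minimality of $n_S$, unless $w=0$. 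Hence $h_{n_S-\frac12}$ is injective on $W_0$; and $n_S\ge 0$ follows because $n_S<0$ would put $h_{n_S-\frac12}$ itself (and even $h_{n_S+\frac12},\dots$) inside $\mh^{(n_S)}$, forcing it to act as zero. The $m_S$-part of (i)-(ii) is the same argument applied to the Virasoro invariant $U(r)=\mathrm{Ann}_{W_0}(\Vir^{(r)})$ inside $W_0$, which is exactly the Virasoro-module analysis from \cite{MZ2}: $d_{m_S-1}$ maps $U_0$ to $U_0$, its kernel would violate minimality of $m_S$, and $m_S<0$ is impossible.

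For (iv), assuming $m_S\ge 2n_S$: $U_0 = U(m_S) = \mathrm{Ann}_{W_0}(\Vir^{(m_S)})$ is nonzero by definition of $m_S$, and it is contained in $W_0$. I need to show $d_j U_0\subseteq U_0$ and $h_{n_S+i+\frac12}U_0\subseteq U_0$ for the relevant generators of $\DD^{(0,-n_S)}$, plus $L_n$-invariance for $n\ge 0$. Since $U_0\subseteq W_0$ and $W_0$ is already $\DD^{(0,-n_S)}$-stable (part iii), the only new condition is being annihilated by $\Vir^{(m_S)}$. For $d_j$ with $0\le j$: if $d_{m_S+t}u=0$ for all $t\ge 0$, then $d_{m_S+t}(d_j u) = d_j d_{m_S+t} u + [d_{m_S+t}, d_j]u = (m_S+t-j)d_{m_S+t+j}u + (\text{central}) = 0$ since $m_S+t+j\ge m_S$; the central term $\frac{(m_S+t)^3-(m_S+t)}{12}\delta_{m_S+t+j,0}{\bf c}_1$ vanishes as $m_S+t+j\ne 0$. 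For $h_{n_S+i+\frac12}$ acting on $u\in U_0$: I must check $d_{m_S+t}(h_{n_S+i+\frac12}u)=0$; using $[d_{m_S+t}, h_{n_S+i+\frac12}] = -(n_S+i+\frac12)h_{m_S+t+n_S+i+\frac12}$, this reduces to showing $h_{m_S+t+n_S+i+\frac12}u=0$, which holds because $u\in W_0$ is annihilated by $\mh^{(n_S)}$ and $m_S+t+n_S+i+\frac12 \ge n_S+\frac12$, i.e. lies in $\mh^{(n_S)}$ (here $m_S\ge 0$ suffices, but the hypothesis $m_S\ge 2n_S$ is what is actually needed for later lemmas — it guarantees that when we push negative-index $h$'s around they stay controlled; for this invariance statement $m_S\ge 0$ and $n_S\ge 0$ from (ii) is enough). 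Finally, $L_n$-invariance on $U_0$ for $n\ge 0$: again $L_n$ is a sum of $h$-bilinears; acting by $L_n$ on $u\in U_0$ and then by $d_{m_S+t}$, use \eqref{rep4'}/\eqref{rep4} to write $[d_{m_S+t}, L_n] = [L_{m_S+t}, L_n] = (m_S+t-n)L_{m_S+t+n} + (\text{scalar})\delta$, reducing to $L_{m_S+t+n}u$; since $m_S+t+n\ge m_S\ge 2n_S$, and $L_{m_S+t+n}$ is built from products $h_{m_S+t+n-k}h_k$ with $k\in\frac12+\bZ$, at least one factor in each term has index $\ge n_S+\frac12$ (this is precisely where $m_S\ge 2n_S$ enters: if $m_S+t+n\ge 2n_S$ then in $h_{a}h_b$ with $a+b = m_S+t+n$ one cannot have both $a<n_S+\frac12$ and $b<n_S+\frac12$), so $L_{m_S+t+n}u=0$.

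The main obstacle is the argument that $h_{n_S-\frac12}$ (resp.\ $d_{m_S-1}$) acts injectively — part (i). Injectivity is not formal: one must argue that the kernel, if nonzero, produces a vector generating a restricted $\DD$-submodule whose $\mh$-annihilator (resp.\ $\Vir$-annihilator inside $W_0$) violates the minimality defining $n_S$ (resp.\ $m_S$). This requires carefully verifying that $h_{n_S-\frac12}$ preserves $W_0$ (which uses $n_S\ge 0$, creating an apparent circularity with (ii) that must be untangled by a simultaneous induction / minimal-counterexample argument) and that the kernel is not just a subspace but behaves well under the grading. I would handle the circularity by first showing $W_0\ne 0$ and $n_S$ is a well-defined integer from restricted-ness, then showing any $w\in W_0$ with $h_{n_S-\frac12}w=0$ would satisfy $\mh^{(n_S-1)}\UU(\DD)w' = 0$ for suitable $w'$ in the cyclic submodule — forcing $w=0$ by simplicity and minimality — and finally deducing $n_S\ge 0$ as a consequence of injectivity. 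The $d$-part is then the Virasoro statement of \cite[Theorem \ref{Vir-modules}]{MZ2} transported to the subspace $W_0$.
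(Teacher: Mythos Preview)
Your plan has two genuine gaps.

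First, part (i) is a one-line consequence of the definitions, and you have substantially overcomplicated it. If $w\in W_0=S(n_S)$ satisfies $h_{n_S-\frac12}w=0$, then since $w$ is already annihilated by $h_{n_S+\frac12},h_{n_S+\frac32},\dots$, it lies in $S(n_S-1)=0$ by minimality of $n_S$. No simplicity, no submodule generation, no circularity with (ii): you do not need to know where $h_{n_S-\frac12}$ sends $W_0$, only that its kernel on $W_0$ lies in $S(n_S-1)$. The same argument gives the injectivity of $d_{m_S-1}$ on $U_0$. Your worry about ``untangling a simultaneous induction'' rests on the mistaken belief that injectivity requires $h_{n_S-\frac12}W_0\subseteq W_0$; it does not.

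Second, in part (iv) you check the wrong Heisenberg generators. The $h$-part of $\DD^{(0,-n_S)}$ is spanned by $h_{-n_S+j+\frac12}$ for $j\ge 0$, not by $h_{n_S+i+\frac12}$ (the latter already annihilate $W_0$, so there is nothing to check for them). With the correct generator, the relevant commutator is
\[
[d_{m_S+t},\,h_{-n_S+j+\frac12}] = -\bigl(-n_S+j+\tfrac12\bigr)\,h_{m_S+t-n_S+j+\frac12},
\]
and one needs $m_S+t-n_S+j\ge n_S$ for this to annihilate $u\in W_0$; since $t,j\ge 0$, this forces $m_S\ge 2n_S$. So the hypothesis $m_S\ge 2n_S$ is essential already for the $\DD^{(0,-n_S)}$-invariance of $U_0$, contrary to your parenthetical claim that ``$m_S\ge 0$ and $n_S\ge 0$ from (ii) is enough.''

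For (ii), the paper's arguments are also much cleaner than what you sketch. If $n_S<0$ (hence $n_S\le -1$), then both $h_{\frac12}$ and $h_{-\frac12}$ lie in $\mh^{(n_S)}$ and annihilate $W_0$, whence $\tfrac12\ell\cdot\mathrm{id}=[h_{\frac12},h_{-\frac12}]$ annihilates $W_0\ne 0$, a contradiction. If $m_S<0$ then $d_{-1}$ annihilates $U_0$; since $h_{n_S+\frac12}$ also does, so does $[d_{-1},h_{n_S+\frac12}]=-(n_S+\tfrac12)h_{n_S-\frac12}$, contradicting (i). Your claim that ``$h_{n_S-\frac12}\in\mh^{(n_S)}$'' when $n_S<0$ is false: $\mh^{(n_S)}$ begins at $h_{n_S+\frac12}$ regardless of the sign of $n_S$.
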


\begin{proof}
(i) follows from the definitions of $n_S$ and $m_S$.

(ii) Suppose $n_S<0$, take any nonzero $v\in W_0$, we then have
$$h_{\frac12}v=0=h_{-\frac12}v.$$
This implies that $\frac12\ell v=[h_{\frac12},h_{-\frac12}]v=0$, a contradiction.  Hence, $n_S\in\bN$.

Suppose $m_S<0$. Take any nonzero $v\in U_0$, we then have $d_{-1}v=0=h_{n_S+\frac{1}{2}}v$. Then
$$-(n_S+\frac{1}{2})h_{n_S-\frac{1}{2}}v=[d_{-1}, h_{n_S+\frac{1}{2}}]v=0,$$
a contradiction with (1).  Hence, $m_S\in\bN$.

(iii) It is obvious that $W_0\neq 0$ by definition. For any $w\in W_0$, $i, j, k\in\bN$, we have
$$h_{k+n_S+\frac{1}{2}}d_iw=d_ih_{k+n_S+\frac{1}{2}}w+(k+n_S+\frac{1}{2})h_{i+k+n_S+\frac{1}{2}}w=0,$$
and
$$h_{k+n_S+\frac{1}{2}}h_{j-n_S+\frac{1}{2}}w=h_{j-n_S+\frac{1}{2}}h_{k+n_S+\frac{1}{2}}w=0.$$
Hence,  $d_iu\in W_0$ and $h_{j-n_S+\frac{1}{2}}u\in W_0$, i.e., $W_0$ is a nonzero $\DD^{(0,-n_S)}$-module.

For $n\in\bN, i\in\bN$, $w\in W_0$, by (\ref{rep4}) we have
\begin{eqnarray*}
h_{i+n_S+\frac{1}{2}}L_nw&=\Big(L_nh_{i+n_S+\frac{1}{2}}-(i+n_S+\frac{1}{2})h_{n+i+n_S+\frac{1}{2}}\Big)w=0.
\end{eqnarray*}
This implies that $L_iw\in W_0$ for $i\in\bN$, that is, $W_0$ is invariant under the action of the operators $L_i$ for $i\in\bN$.

(iv) It is obvious that $0\neq U_0\subseteq W_0$. Suppose that $m_S\geq 2n_S$. For any $u\in U_0$, $i, j, k\in\bN$, it follows from (iii) that $d_iu\in W_0$ and $h_{j-n_S+\frac{1}{2}}u\in W_0$. Furthermore,
$$d_{k+m_S}d_iu=d_id_{k+m_S}u+(k-i-m_S)d_{k+i+m_S}u=0,$$
and
$$d_{k+m_S}h_{j-n_S+\frac{1}{2}}u=h_{j-n_S+\frac{1}{2}}d_{k+m_S}u-(j-n_S+\frac{1}{2})h_{k+j+m_S-n_S+\frac{1}{2}}u=0.$$
Hence, $d_iu\in U_0$ and $h_{j-n_S+\frac{1}{2}}u\in U_0$, i.e., $U_0$ is a nonzero $\DD^{(0,-n_S)}$ submodule of $W_0$.

Furthermore, if in addition $m_S>0$, then for $n, i\in\bN$, $u\in U_0$, it follows from (iii) that $L_nu\in W_0$. Moreover, for $n\in\bN$, using (\ref{rep1}-\ref{rep4'}) we have
\begin{eqnarray*}
d_{i+m_S}L_nu=L_nd_{i+m_S} u+[d_{i+m_S},L_n]u
=[d_{i+m_S},L_n]u
=(n-i-m_S)L_{i+n+m_S}u=0.
\end{eqnarray*}
This implies that $L_iu\in U_0$ for $i\in\bN$, that is, $U_0$ is invariant under the action of the operators $L_i$ for $i\in\bN$.
\end{proof}

\begin{pro}\label{prop4.3} Let $S$ be a
simple restricted $\DD$-module  with level $\ell \not=0$.
\begin{enumerate}
\item[\rm(i)]   If $n_S=0$, then  $S\cong H^{\mathfrak{D}}\otimes U^{\mathfrak{D}}$ as $\mathfrak{D}$-modules for some  simple modules $H\in \mathcal{R}_{\mh}$ and  $U\in \mathcal{R}_{\Vir}$.
\item[\rm(ii)] If  $m_S>2n_S>0$, then $S\cong \Ind^{\DD}_{\DD^{(0,-n_S)}}(U_0)$ and $U_0$ is a simple $\DD^{(0,-n_S)}$-module.
\item[\rm(iii)] If $m_S< 2n_S$, then $U_0$ is a nonzero $\DD^{(0,-(m_S-n_S))}$-submodule of $W_0$. Moreover,
\begin{enumerate}
\item[\rm(iii-1)] If $m_S\geq 2$, then  $S\cong \Ind^{\DD}_{\DD^{(0,-(m_S-n_S))}}(U_0)$  and $U_0$ is a simple  $\DD^{(0,-(m_S-n_S))}$-module.
\item[\rm(iii-2)] If $m_S=0$ or $1$,  and $n_S>1$, then $U(2)$ is a simple $\DD^{(0,-(2-n_S))}$-module, and $S\cong \Ind^{\DD}_{\DD^{(0,-(2-n_S))}}(U(2))$.
\end{enumerate}
\end{enumerate}
\end{pro}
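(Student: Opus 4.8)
\emph{Strategy.} The plan is to split along the three regimes and, in each, to recognize $S$ as an induced module over a subalgebra of the type handled in Theorem~\ref{thmmain1.1}, so that Lemmas~\ref{main1'}--\ref{main4} can be applied. The key observation is that Lemmas~\ref{main1'}--\ref{main4} never use simplicity of the inducing module — only the stated injectivity and annihilation hypotheses — so they still apply to $U_0$ (or $U(2)$) before one knows these spaces are simple. For case (i), $n_S=0$, I would pick a nonzero $w\in W_0$; since $n_S=0$ means $\mh_{\ge 1/2}$ annihilates $w$ while $\mathbf c_2$ acts by $\ell\ne 0$, PBW gives $U(\mh)w=U(\mh_{<0})w$, and the universal property of the level-$\ell$ Fock module $F_\ell$ produces a surjection $F_\ell\twoheadrightarrow U(\mh)w$ which, $F_\ell$ being simple, is an isomorphism. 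Thus $S$ contains a simple $\mh$-submodule in $\mathcal R_\mh$, and Corollary~\ref{tensor} yields $S\cong H^{\mathfrak D}\otimes U^{\mathfrak D}$ with $H\in\mathcal R_\mh$ simple and $U$ a simple $\Vir$-module; finally $U\in\mathcal R_{\Vir}$ because for $n\gg 0$ the operator $L_n$ of \eqref{rep1} kills $H$, so $d_n(h\otimes u)=h\otimes d_nu$ for $n\gg 0$, which vanishes since $S$ is restricted.

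\emph{Cases (ii) and (iii-1).} In case (ii) Lemma~\ref{lem4.2}(iv) already exhibits $U_0$ as a nonzero $\mathfrak D^{(0,-n_S)}$-submodule of $W_0$; in case (iii) I would first check, directly from the relations of $\mathfrak D$, that $d_iU_0\subseteq U_0$ and $h_{-(m_S-n_S)+i+\frac12}U_0\subseteq U_0$ for all $i\in\bN$ — the inequality $m_S<2n_S$ is exactly what forces the relevant Kronecker deltas and central terms to vanish — establishing that $U_0$ is a nonzero $\mathfrak D^{(0,-(m_S-n_S))}$-submodule of $W_0$ (the first assertion of (iii)). Writing $\mathfrak p=\mathfrak D^{(0,-n_S)}$ in (ii) and $\mathfrak p=\mathfrak D^{(0,-(m_S-n_S))}$ in (iii-1), I would verify that $U_0$, as a $\mathfrak p$-module, satisfies conditions (a), (b), and — in (ii) — (1), in (iii-1) (2) of Theorem~\ref{thmmain1.1}: injectivity of $h_{n_S-\frac12}$ and (in (ii)) of $d_{m_S-1}$ is Lemma~\ref{lem4.2}(i); the vanishing $h_{i-\frac12}U_0=0$ for $i>n_S$ and $d_jU_0=0$ for $j\ge m_S$ is part of the definitions of $W_0$ and $U_0$; and the required inequalities hold with $k=n_S$, $n\in\{n_S,\,m_S-n_S\}$, $l=m_S-1$ (in (ii) one needs $l\ge 2n_S$, i.e.\ $m_S>2n_S$; in (iii-1) one needs $k>n$ and $k+n=m_S\ge 2$, the standing hypotheses). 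The canonical $\mathfrak D$-map $\pi\colon\Ind^{\mathfrak D}_{\mathfrak p}(U_0)\to S$ is onto since $S$ is simple and $U_0\ne 0$. To see $\pi$ is injective I would rerun the argument behind Theorem~\ref{thmmain1.1}: given $0\ne x\in\Ind^{\mathfrak D}_{\mathfrak p}(U_0)\setminus U_0$, applying successively $h_{p+n_S-\frac12}$ (Lemma~\ref{main1'}) and then $d_{q+l}$ (Lemma~\ref{main1}) in case (ii), or $h_{p+k-\frac12}$ (Lemma~\ref{main3}) and then $d_{q+k+n-1}$ — or, in the exceptional situation of Lemma~\ref{main4}(2), the correction $v\mapsto v'$ followed by $h_{k+t-\frac12}$ — in case (iii-1), each step strictly decreasing $\mathrm w(x)$, terminates at a nonzero element of $U(\mathfrak D)x\cap U_0$. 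If $x\in\ker\pi$ this element lies in $\ker\pi\cap U_0=0$, a contradiction, so $\pi$ is an isomorphism. Since $U(\mathfrak D)$ is free over $U(\mathfrak p)$, the functor $\Ind^{\mathfrak D}_{\mathfrak p}$ is exact, so a proper nonzero $\mathfrak p$-submodule of $U_0$ would give a proper nonzero $\mathfrak D$-submodule of $S$; hence $U_0$ is simple.

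\emph{Case (iii-2): $m_S\in\{0,1\}$, $n_S>1$.} Here $\Vir^{(2)}\subseteq\Vir^{(m_S)}$ gives $U(2)\supseteq U_0\ne 0$, and the same bracket computation as above (with $2-n_S$ in place of $m_S-n_S$) shows $U(2)$ is a nonzero $\mathfrak D^{(0,-(2-n_S))}$-submodule of $W_0$ on which $h_{n_S-\frac12}$ acts injectively, with $h_{i-\frac12}U(2)=0$ for $i>n_S$ and $d_jU(2)=0$ for $j\ge 2$. Theorem~\ref{thmmain1.1}(2) then applies with $n=2-n_S$, $k=n_S$, $l=n+k-1=1$: $k>n\iff n_S>1$ and $k+n=2$, and this is precisely the reason one must pass to the larger space $U(2)$ rather than $U_0$, since for $m_S\le 1$ the value $l=m_S-1$ would be too small. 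Injectivity of $\pi\colon\Ind^{\mathfrak D}_{\mathfrak D^{(0,-(2-n_S))}}(U(2))\to S$ follows by the same reduction argument (Lemmas~\ref{main3}, \ref{main4}), giving $S\cong\Ind^{\mathfrak D}_{\mathfrak D^{(0,-(2-n_S))}}(U(2))$, and simplicity of $U(2)$ follows by exactness of the induction functor as before.

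\emph{Main obstacle.} The hard part is the injectivity of $\pi$ in (ii) and (iii): showing that from any nonzero vector of the induced module one reaches a nonzero vector of the inducing module by acting with $\mathfrak D$. This is the content of Lemmas~\ref{main1'}--\ref{main4}, and the delicate point is Lemma~\ref{main4}, where the terms satisfying \eqref{weight} obstruct the naive use of $d_{q+k+n-1}$ and must be peeled off (the passage to $v'$) before $\deg'$ can be made to drop. A secondary care point is the bookkeeping that selects $U_0$ versus $U(2)$ and the correct subalgebra $\mathfrak D^{(0,-m)}$ so that the numerical hypotheses of Theorem~\ref{thmmain1.1} are met.
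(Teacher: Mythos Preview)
Your proposal is correct and follows essentially the same route as the paper: in each case you identify the appropriate subalgebra $\mathfrak p$, verify the injectivity/annihilation hypotheses of Theorem~\ref{thmmain1.1} (with the same parameter choices $k=n_S$, $n\in\{n_S,\,m_S-n_S,\,2-n_S\}$, $l=m_S-1$ or $l=1$), and deduce that the canonical surjection $\Ind^{\mathfrak D}_{\mathfrak p}(U_0)\to S$ (or with $U(2)$) is an isomorphism, whence simplicity of the inducing module. Your explicit observation that Lemmas~\ref{main1'}--\ref{main4} do not require simplicity of $V$, and your verification that $U\in\mathcal R_{\Vir}$ in case~(i), make precise points the paper leaves implicit.
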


\begin{proof}
(i) Since  $n_S=0$, we  take any nonzero $v\in W_0$. Then $\bC v$ is a trivial $\mh^{(0)}$-module. Let $H=U(\mh)v$, the $\mh$-submodule of $S$ generated by $v$. It follows from representation theory of Heisenberg algebras  (or from the same arguments as in the proof of Lemma \ref{main1'}) that $\Ind^{\mh}_{\mh^{(0)}}(\bC v)$ is a simple $\mh$-module.  Consequently, the following surjective $\mh$-module homomorphism
\begin{eqnarray*}
\varphi:\, \Ind^{\mh}_{\mh^{(0)}}(\bC v) &\longrightarrow & H\\
\sum_{\mi\in\mathbb{M}}a_{\mi} h^{\mi}\otimes v&\mapsto &
\sum_{\mi\in\mathbb{M}} a_{\mi} h^{\mi} v
\end{eqnarray*}
is an isomorphism, that is, $H$ is a simple $\mh$-module, which is certainly restricted. Then the desired assertion follows directly from Corollary \ref{tensor}.

(ii)  By taking $V=U_0$, $k=n=n_S$ and $l=m_S-1$ in  Theorem \ref{thmmain1.1}(1) we see that  any nonzero $\DD$-submodule
of  $\Ind^{\DD}_{\DD^{(0,-n_S)}}(U_0)$ has a nonzero intersection with $U_0$.  Consequently, the surjective $\DD$-module homomorphism
\begin{eqnarray*}
\varphi:\, \Ind^{\DD}_{\DD^{(0,-n_S)}}(U_0) &\longrightarrow & S\\
\sum_{\mi, \mk\in\mathbb{M}} h^{\mi}d^{\mk}\otimes v_{\mi, \mk}&\mapsto &
\sum_{\mi, \mk\in\mathbb{M}} h^{\mi}d^{\mk} v_{\mi, \mk}
\end{eqnarray*}
is an isomorphism, i.e., $S\cong \Ind^{\DD}_{\DD^{(0,-n_S)}}(U_0)$. Since $S$ is simple, we see  $U_0$ is a simple $\DD^{(0,-n_S)}$-module.

(iii) Suppose that  $m_S< 2n_S$. For any $u\in U_0$, $i, j, k\in\bN$, it follows from Lemma \ref{lem4.2} (iii) that $d_iu\in W_0$ and $h_{j-(m_S-n_S)+\frac{1}{2}}u\in W_0$. Furthermore,
$$d_{k+m_S}d_iu=d_id_{k+m_S}u+(k-i+m_S)d_{k+i+m_S}u=0,$$
and
$$d_{k+m_S}h_{j-(m_S-n_S)+\frac{1}{2}}u=h_{j-(m_S-n_S)+\frac{1}{2}}d_{k+m_S}u-(j-(m_S-n_S)+\frac{1}{2})h_{k+j+n_S+\frac{1}{2}}u=0.$$
Hence, $d_iu\in U_0$ and $h_{j-(m_S-n_S)+\frac{1}{2}}u\in U_0$, i.e., $U_0$ is a nonzero $\DD^{(0,-(m_S-n_S))}$ submodule of $W_0$.

Now suppose $m_S\geq 2$. Then it follows from Theorem \ref{thmmain1.1}(2) that
 any nonzero $\DD$-submodule
of  $\Ind^{\DD}_{\DD^{(0,-(m_S-n_S))}}(U_0)$ has a nonzero intersection with $U_0$  by taking $k=n_S, n=m_S-n_S$ and $l=m_S-1$ therein. Consequently, $S\cong \Ind^{\DD}_{\DD^{(0,-(m_S-n_S))}}(U_0)$ by similar arguments as in (ii). Since $S$ is simple, we see  $U_0$ is a simple $\DD^{(0,-n_S)}$-module.

Suppose that $m_S=0$ or $1$,  and $n_S>1$. Then $\DD^{(0,-(2-n_S))}\subseteq \DD^{(0,-n_S)}$. Hence,  $W_0$ is a $\DD^{(0,-(2-n_S))}$-module. Moreover, for any $u\in U(2)$, $i, j\in\bN$, we have
$$d_{j+2}d_{i}u=d_{i}d_{j+2}u=0,$$
and
$$d_{j+2}h_{i-(2-n_S)+\frac{1}{2}}u=h_{i-(2-n_S)+\frac{1}{2}}d_{j+2}u+(2-n_S-i-\frac{1}{2})h_{i+j+n_S+\frac{1}{2}}u=0.$$
Therefore, $U(2)$ is a $\DD^{(0,-(2-n_S))}$-module. Then it follows from Theorem \ref{thmmain1.1}(2) that
 any nonzero $\DD$-submodule
of $\Ind^{\DD}_{\DD^{(0,-(2-n_S))}}(U(2))$ has a nonzero intersection with $U(2)$
 by taking $V=U(2)$, $k=n_S$, $n=2-n_S$ and $l=1$ therein.  Consequently, $S\cong \Ind^{\DD}_{\DD^{(0,-(2-n_S))}}(U(2))$ by similar arguments as in (ii). In particular, $U(2)$ is a simple $\DD^{(0,-(2-n_S))}$-module.
\end{proof}

From Proposition \ref{prop4.3}, what  remains to consider are the following two cases:
(1)  $m_S=2n_S>0$, (2) $m_S=0$ or $1$, and $n_S=1$.

Now we first consider Case (1): $m_S=2n_S>0$. For that, we define the operators $d_n'=d_n-L_n$ on $S$ for $n\in\bZ$.  Since $S$ is a restricted $\DD$-module, then $d_n'$ is well-defined for any $n\in\bZ$. By (\ref{rep4}) and (\ref{rep4'}), we have
\begin{equation}\label{vir-bracket}
[d_m',{\bf c}_1]=0,
[d_m',d_n']=(m-n)d_{m+n}'+\frac{m^3-m}{12}\delta_{m+n,0}(c-1), m,n\in\bZ,
\end{equation}where  ${\bf c}'_1={\bf c}_1-\text{id}_S$ and $c$ is the central charge of $S$. So the operator algebra
$$\Vir'=\bigoplus_{n\in\bZ}\bC d_n'\oplus \bC{\bf c}'_1$$
 is isomorphic to the Virasoro algebra $\Vir$.  Since $[d_n,h_{k+\frac{1}{2}}]=[L_n,h_{k+\frac{1}{2}}]=-({k+\frac{1}{2}})h_{n+k+\frac{1}{2}},$ we have $[d'_n,h_{k+\frac{1}{2}}]=0, n,k\in\bZ$ and hence  $[\Vir',\mh]=0$. Clearly, the operator algebra
 $\DD'=\Vir'\oplus  \mh$ is a direct sum, and $S=\UU(\DD)v=\UU(\DD')v, 0\ne v\in S$.
 Similar to (\ref{Natations}) we can define its subalgebras, $\DD'^{(m,n)} $ and the likes.

 Let
$$Y_n=\bigcap_{p\ge n}{\rm Ann}_{U_0}(d_p'),       r_S=\min\{n\in\bZ:Y_n\ne0\} , K_0=Y_{r_S}. $$
If $Y_n\ne0$ for any $n\in\bZ$, we define $r_S=-\infty$.
Denote by $K=U(\mh)K_0$.

\begin{lem}\label{lem4.4}
Let $S$ be a simple restricted $\DD$-module  with level $\ell \not=0$. Assume that $m_S=2n_S>0$. Then the following statements hold.
\begin{enumerate}
\item[\rm(i)] $-1\le r_S\le m_S$ or $r_S=-\infty$.
\item[\rm(ii)] $K_0$ is a $\DD^{(0,-n_S)}$-module  and $h_{n_S-\frac{1}{2}}$ acts injectively on $K_0$.
\item[\rm(iii)] $K$ is a $\DD^{(0,-\infty)}$-module and $K^{\DD}$ has a $\DD$-module structure by (\ref{rep1})-(\ref{rep3}).
\item[\rm{(iv)}] $K_0$ and $K$ are invariant under the action of $d_n'$ for $n\in\bN$.
\item[\rm(v)] If $r_S\ne -\infty$, then $d'_{r_S-1}$ acts injectively on $K_0$ and $K$.
\end{enumerate}
\end{lem}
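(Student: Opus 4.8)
The plan is to treat the five statements in order, exploiting the fact that, under the assumption $m_S=2n_S>0$, the operator algebra $\DD'=\Vir'\oplus\mh$ is a direct sum of a copy of the Virasoro algebra and the (twisted) Heisenberg algebra, with $[\Vir',\mh]=0$, and that $S=\UU(\DD')v$ for any nonzero $v$. The key point is that the objects $Y_n$, $r_S$, $K_0$, $K$ are built from the $\Vir'$-part only, so their behaviour mirrors the $\Vir$-side analysis already carried out for $W_0$, $U_0$ in Lemma \ref{lem4.2} and Proposition \ref{prop4.3}, while the compatibility with $h_r$ comes for free from $[\Vir',\mh]=0$.

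For (i): the upper bound $r_S\le m_S$ follows because $U_0\subseteq U(m_S)={\rm Ann}_{W_0}(\Vir^{(m_S)})$, and on $U_0$ one has $d'_p=d_p-L_p$; since $d_pU_0=0$ for $p\ge m_S$ by Lemma \ref{lem4.2}(iv) and $L_pU_0\subseteq U_0$ for $p\in\bN$ (again Lemma \ref{lem4.2}(iv), using $m_S>0$), the operators $d'_p$ with $p\ge m_S$ preserve $U_0$ and are nilpotent on finite-dimensional cyclic pieces — more precisely one checks $\bigcap_{p\ge m_S}{\rm Ann}_{U_0}(d'_p)\ne 0$ by a standard weight/grading argument on the restricted $\Vir'$-module $U_0$, exactly as in the proof that $m_S$ is well-defined. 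For the lower bound $r_S\ge-1$: if $r_S<-1$ then $K_0\ne0$ and for any nonzero $w\in K_0$ we would have $d'_{-1}w=d'_{-2}w=0$, whence $[d'_{-1},d'_{-2}]w=d'_{-3}w$ forces $d'_{-3}w=0$ too, and then $[d'_{-1},d'_{-3}]=2d'_{-4}$ etc. — but this alone is not yet a contradiction, so the real argument is: if $d'_pK_0=0$ for all $p\ge -2$, then in particular $d'_0$ acts as $0$, and together with $h_{n_S-\frac12}$ acting injectively (from $K_0\subseteq U_0\subseteq W_0$ and Lemma \ref{lem4.2}(i)) one derives a contradiction from the bracket $[d'_{-2},d'_{0}]$ or, more cleanly, from $[d_{-1},h_{n_S+\frac12}]$-type relations as in the proof of Lemma \ref{lem4.2}(ii) applied to $d'_{r_S-1}$ once (v) is in hand; so (i) and (v) should be proved together.

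For (ii): $K_0=Y_{r_S}\subseteq U_0$, and $U_0$ is a $\DD^{(0,-n_S)}$-module by Proposition \ref{prop4.3}; one must check $K_0$ is stable under $d_i$ and $h_{j-n_S+\frac12}$ for $i,j\in\bN$. Stability under $h_{j-n_S+\frac12}$ is immediate since $[d'_p,h_r]=0$ for all $p,r$, so $h_{j-n_S+\frac12}$ commutes with every $d'_p$, $p\ge r_S$, hence preserves $Y_{r_S}$; stability under $d_i$ uses $d_i=d'_i+L_i$, that $[d'_p,d'_i]=(p-i)d'_{p+i}+(\text{central})\delta$, and that $L_i$ commutes with all $d'_p$, so $d'_p(d_iw)=d_i(d'_pw)+[(p-i)d'_{p+i}]w+\dots$, which vanishes for $w\in Y_{r_S}$ and $p\ge r_S$ provided $p+i\ge r_S$ — true since $i\ge0$. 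Injectivity of $h_{n_S-\frac12}$ on $K_0$ is inherited from $W_0$ via Lemma \ref{lem4.2}(i). For (iii): $K=U(\mh)K_0$; since $K_0$ is a $\DD^{(0,-\infty)}$-relevant object and $[\Vir',\mh]=0$, one shows $K$ is a $\DD^{(0,-\infty)}$-module by the same bracket computations as in Lemma \ref{lem4.2}(iii) (the $h_r$ with $r$ of both signs act by the Heisenberg relations, the $d_i$ with $i\in\bN$ act via $d'_i+L_i$ and one checks $d_iK\subseteq K$), and then the formulas (\ref{rep1})--(\ref{rep3}) endow $K^{\DD}$ with a $\DD$-module structure since $\ell\ne0$, exactly as recorded after (\ref{rep3}). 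For (iv): $K_0$ and $K$ are $d'_n$-stable for $n\in\bN$ because $d'_n$ with $n\ge0\ge r_S$… — more carefully, for $w\in K_0$ and $p\ge r_S$, $d'_p(d'_nw)=d'_n(d'_pw)+(p-n)d'_{p+n}w+(\text{central})$, and $d'_pw=0$, $d'_{p+n}w=0$ as $p+n\ge r_S$; the central term contributes only when $p+n=0$, impossible for $p\ge r_S\ge-1$ and $n\ge1$ unless $(p,n)$ is excluded, which one handles directly; the same works for $K$ using $[\Vir',\mh]=0$. For (v): assuming $r_S\ne-\infty$, $d'_{r_S-1}$ is injective on $K_0$ by minimality of $r_S$ — if $d'_{r_S-1}w=0$ for some nonzero $w\in K_0$, then together with $d'_pw=0$ for all $p\ge r_S$ one gets $d'_pw=0$ for all $p\ge r_S-1$ by the bracket $[d'_{r_S-1},d'_{p-r_S+1}]$ generating $d'_p$ for $p\ge r_S-1$ (this needs $p-r_S+1\ge r_S$, i.e. $p\ge 2r_S-1$; the small cases $r_S-1\le p<2r_S-1$ need the explicit relations), contradicting minimality of $r_S$; injectivity on $K$ then follows because $h_r$ commutes with $d'_{r_S-1}$ and $K=U(\mh)K_0$ with $\mh$ acting freely-ish, so a kernel vector in $K$ projects to a kernel vector in $K_0$ — here one argues with the monomial order on $U(\mh)K_0$ as in Lemma \ref{main1'}.

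The main obstacle I expect is item (v) (and the intertwined lower bound in (i)): showing $d'_{r_S-1}$ is injective on all of $K=U(\mh)K_0$, not merely on $K_0$. The subtlety is that a priori $U(\mh)K_0$ is not free over $U(\mh)$ — elements of $K_0$ may satisfy relations under the $\mh$-action inside $S$ — so one cannot simply quote a PBW-basis argument. The fix is to use that $[\Vir',\mh]=0$: the operator $d'_{r_S-1}$ acts on $U(\mh)K_0$ "diagonally", i.e. $d'_{r_S-1}(u\cdot w)=u\cdot(d'_{r_S-1}w)$ for $u\in U(\mh)$, $w\in K_0$; combined with the fact that $\mh_{<0}$ (the negative part) acts injectively via the lowest-degree-term analysis of Lemma \ref{main1'}, a kernel element of $d'_{r_S-1}$ in $K$ can be pushed down to a kernel element in $K_0$ by peeling off lowest-weight Heisenberg monomials. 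I would present this as a short lemma-internal claim and reduce it to the already-established injectivity on $K_0$ plus the degree bookkeeping from Section 3.
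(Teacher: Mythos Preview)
Your overall framework is right, but you overcomplicate several points and genuinely miss the key observation in (i).

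\textbf{Part (i), upper bound.} You write that for $p\ge m_S$ one has $d_pU_0=0$ and $L_pU_0\subseteq U_0$, and then invoke a vague ``nilpotent on finite-dimensional cyclic pieces / standard weight-grading argument'' to get $Y_{m_S}\ne0$. This is both unnecessary and incorrect as stated: the crucial fact you miss is that $L_p$ actually \emph{annihilates} $U_0$ for $p\ge m_S=2n_S$, not merely preserves it. Indeed, each summand $h_{p-k}h_k$ in $L_p$ has $k+(p-k)=p\ge 2n_S$ with $k,p-k\in\tfrac12+\bZ$, so at least one of the two indices is $\ge n_S+\tfrac12$; commuting that factor to the right kills anything in $W_0\supseteq U_0$. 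Hence $d'_p|_{U_0}=0$ for $p\ge m_S$, so $Y_{m_S}=U_0\ne0$ and $r_S\le m_S$ immediately.

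\textbf{Part (i), lower bound.} You search for a contradiction assuming $r_S<-1$, get stuck (``this alone is not yet a contradiction''), and then defer to (v). The statement is a dichotomy, not a bound: you must show $r_S\le -2\Rightarrow r_S=-\infty$. If $Y_{-2}\ne0$, pick $0\ne w\in Y_{-2}$; then $d'_pw=0$ for all $p\ge -2$, and the Witt relations $[d'_{-1},d'_{-n}]=(n-1)d'_{-n-1}$ give inductively $d'_{-n}w=0$ for all $n\ge1$. Thus $\Vir'w=0$, so $w\in Y_n$ for every $n$ and $r_S=-\infty$. This is the entire argument; no interaction with $h_{n_S-\frac12}$ or (v) is needed.

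\textbf{Part (v) on $K_0$.} You propose generating $d'_p$ for $p\ge r_S-1$ from brackets $[d'_{r_S-1},d'_{p-r_S+1}]$ and worry about ``small cases''. None of this is needed: if $0\ne w\in K_0=Y_{r_S}$ satisfies $d'_{r_S-1}w=0$, then together with $d'_pw=0$ for $p\ge r_S$ (which holds by definition of $K_0$) you get $w\in Y_{r_S-1}$, contradicting minimality of $r_S$. One line.

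\textbf{Part (v) on $K$.} Here you correctly spot that the paper's ``follows directly from the definition of $r_S$ and $K$'' deserves a word, and your proposed fix is the right one. Since $[\Vir',\mh]=0$, the kernel of $d'_{r_S-1}$ on $K=U(\mh)K_0$ is $\mh$-stable; the Lemma~\ref{main1'}-type argument shows any nonzero $\mh^{(-n_S)}$-submodule of $K$ meets $K_0$, and then the $K_0$ case gives the contradiction. (In practice the paper only invokes (v) for $K$ in Proposition~\ref{prop4.6}, immediately after establishing $K\cong\Ind_{\mh^{(-n_S)}}^{\mh}K_0$, which makes this step transparent.)

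Parts (ii)--(iv) of your plan are essentially correct and match the paper's argument.
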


\begin{proof} (i)  Since $m_S=2n_S>0$, the operators $d_m$ and  $L_{m}=\frac{1}{2\ell}\sum_{k\in\bZ+\frac{1}{2}}h_{m-k}h_k$ act trivially on $U_0$ for any $m\geq m_S$. This implies that $Y_{m_S}=U_0\neq 0$. Consequently, $r_S\leq m_S$ by the definition of $r_S$.

If $Y_{-2}\ne 0$, then $d'_{-2}K_0=d'_{-1}K_0=0$. We deduce that $\Vir' K_0=0$ and hence $r_S=-\infty$.

If $Y_{-2}=0$, then $r_S\ge -1$ and hence  $-1\le r_S\le m_S$.

(ii) For any $0\ne v\in K_0$ and $x\in {\DD^{(0,-n_S)}}$, it follows from Lemma \ref{lem4.2}(iv) that $xv\in U_0$. We need to show that $d'_pxv=0, p\ge r_S$. Indeed, $d_p'h_{k+\frac{1}{2}}v=h_{k+\frac{1}{2}}d_p'v=0$ by (\ref{rep4}) for any $k\geq -n_S$. Moreover, it follows from (\ref{rep4'}) and  (\ref{vir-bracket}) that
$$d_p'd_nv=d_nd_p'v+[d_p', d_n]v=(p-n)d_{p+n}'v=0.$$
Hence, $d'_pxv=0, p\ge r_S$, that is, $xv\in K_0$, as desired.

Since $0\ne K_0  \subseteq U_0\subseteq W_0$, we see that  $h_{n_S-\frac{1}{2}}$ acts injectively on $K_0$ by Lemma \ref{lem4.2}(i).

(iii) follows from (ii).

(iv) It follows from Lemma \ref{lem4.2}(iv)   that $U_0$ is invariant under the action of $d_n'$ for $n\in\bN$, so is $K_0$ by (\ref{vir-bracket}). Moreover, since $[\Vir',\mh]=0$, $K$ is also is invariant under the action of $d_n'$ for $n\in\bN$.

(v) follows directly from the definition of $r_S$ and $K$.
\end{proof}

\begin{pro}\label{prop for -inf}Let $S$ be a simple restricted $\DD$-module with central charge $c$ and level $\ell \not=0$. Assume that $m_S=2n_S>0$. If $r_S=-\infty$, then $c=1$. Moreover,
$S= K^{\DD}$ and $K$ is a simple $\mh$-module.
\end{pro}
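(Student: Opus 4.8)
The plan is to first extract $c=1$ from a vector annihilated by the whole Virasoro copy $\Vir'$, and then to identify $S$ with the $\mh$-submodule $K$.

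First I would record that $r_S=-\infty$ means $Y_n\neq0$ for all $n\in\bZ$, so in particular $Y_{-2}\neq0$ and $K_0=Y_{-2}$. By the definition of $Y_{-2}$ we have $d'_pK_0=0$ for all $p\geq-2$; and since $[d'_{-m},d'_{-1}]=(1-m)d'_{-m-1}$ for $m\geq2$, starting from $d'_{-1}K_0=d'_{-2}K_0=0$ an immediate induction gives $d'_pK_0=0$ for \emph{every} $p\in\bZ$. Now pick $0\neq v\in K_0$ and apply \eqref{vir-bracket} with $m=2$, $n=-2$: since $d'_{\pm2}v=d'_0v=0$ this yields $0=[d'_2,d'_{-2}]v=4d'_0v+\tfrac12(c-1)v=\tfrac12(c-1)v$, hence $c=1$.

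Next I would prove $K=S$, where $K=U(\mh)K_0$. Because $[\Vir',\mh]=0$, every $d'_n$ commutes with every $h_r$, so $d'_nK=d'_n\,U(\mh)K_0=U(\mh)\,d'_nK_0=0$ for all $n\in\bZ$. On the restricted $\mh$-module $S$ each $L_n$ is given by a locally finite expression in the $h_r$ (see (\ref{rep1})--(\ref{rep2})), so $L_n$ maps $K$ into $K$; together with $d_n=L_n+d'_n$ on $S$ this gives $d_nK\subseteq K$, while obviously $h_rK\subseteq K$. Thus $K$ is a nonzero $\DD$-submodule of the simple module $S$, so $K=S$. Since $d'_n$ now annihilates all of $S$, the $\DD$-action on $S$ is precisely the action defined by (\ref{rep1})--(\ref{rep3}) on the restricted $\mh$-module $K$; that is, $S=K^{\DD}$. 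Finally, if $0\neq K'\subseteq K$ is an $\mh$-submodule, then $U(\mh)K'=K'$ and the same computation gives $d_nK'=L_nK'\subseteq U(\mh)K'=K'$, so $K'$ is a $\DD$-submodule of $S=K$; simplicity of $S$ forces $K'=K$, i.e.\ $K$ is simple over $\mh$.

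This is mostly bookkeeping. The two points needing a little care are that $K_0=Y_{-2}$ is killed by all of $\Vir'$ (the $\mathfrak{sl}_2$-type induction above) and that $L_n$ preserves every $\mh$-submodule of a restricted module (since $L_nw$ is a finite $U(\mh)$-combination of $w$); I expect no genuine obstacle beyond these, and in particular ``$c=1$'' falls out of a single Virasoro bracket evaluated on $K_0$.
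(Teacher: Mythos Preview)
Your argument is correct and follows essentially the same route as the paper's proof: show $\Vir'K_0=0$, read off $c=1$ from the Virasoro central term, propagate $\Vir'K=0$ using $[\Vir',\mh]=0$, conclude $d_n=L_n$ on $K$ so that $K$ is a $\DD$-submodule equal to $S$, and deduce simplicity of $K$ over $\mh$. You have in fact been more careful than the paper on two points it leaves implicit: the meaning of $K_0$ when $r_S=-\infty$ (you take $Y_{-2}$ and verify via the bracket induction that this coincides with $\bigcap_n Y_n$), and the explicit reason $L_n$ preserves any $\mh$-submodule of a restricted module.
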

\begin{proof}
Since $r_S=-\infty$, we see that $\Vir' K_0=0$. This together with (\ref{vir-bracket}) implies that $c=1$.  Noting that $[\Vir',\mh]=0$, we further obtain that $\Vir' K=0$, that is,  $d_nv=L_nv\in K$ for any $v\in K$ and $n\in\bZ$. Hence $K^{\DD}$ is a $\DD$-submodule of $S$, yielding that $S= K^{\DD}$. In particular, $K$ is a simple $\mh$-module.
\end{proof}

\begin{pro}\label{prop4.6}
Let $S$ be a simple restricted $\DD$-module  with level $\ell \not=0$. If $r_S\ge 2$, then
$K_0$ is a simple $\DD^{(0,-n_S)}$-module and
$S\cong \Ind_{\DD^{(0,-n_S)}}^{\DD}K_0$.
\end{pro}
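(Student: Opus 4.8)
The plan is to mimic the structure of Proposition~\ref{prop4.3}(ii) and its proof, but now using the subalgebra $\Vir'$ (rather than $\Vir$) and the invariant $K_0$ (rather than $U_0$). First I would observe that by Lemma~\ref{lem4.4}(ii)--(v), when $r_S\ge 2$ the module $K_0$ is a $\DD^{(0,-n_S)}$-module on which $h_{n_S-\frac12}$ acts injectively, $h_{i-\frac12}K_0=0$ for $i>n_S$ (since $K_0\subseteq W_0$), and $d'_{r_S-1}$ acts injectively on $K_0$ while $d'_pK_0=0$ for $p\ge r_S$. The idea is to transport these facts about the primed operators $d'_n$ into facts about the original $d_n$ so that Theorem~\ref{thmmain1.1} applies to $\mathrm{Ind}^{\DD}_{\DD^{(0,-n_S)}}(K_0)$.

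Concretely, the key step is to show that, when restricted to the submodule generated by $K_0$, the operators $d_n$ and $d'_n=d_n-L_n$ differ only by the $\mh$-quadratic correction $L_n$, which lies in $U(\mh)$ and therefore does not change the "$d$-part" of the degree of a vector; more precisely, I would argue that $\mathrm{Ind}^{\DD}_{\DD^{(0,-n_S)}}(K_0)$ is, as a $\DD'=\Vir'\oplus\mh$-module, naturally identified with $\mathrm{Ind}^{\DD'}_{\DD'^{(0,-n_S)}}(K_0)$, because $\UU(\DD)=\UU(\DD')$ and the PBW filtrations match up to the lower-order $L_n$ terms. Under this identification, Lemma~\ref{main3} and Lemma~\ref{main4}, applied with $d'_n$ in place of $d_n$ and with $k=n=n_S$, $l=r_S-1\ge 1$ (and the hypotheses $k+n=2n_S\ge 2$, $l\ge 1$), show that any nonzero $\DD'$-submodule — equivalently any nonzero $\DD$-submodule — of $\mathrm{Ind}^{\DD}_{\DD^{(0,-n_S)}}(K_0)$ meets $K_0$ nontrivially. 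Since $K_0$ is a subspace of the simple module $S$ and the natural surjection $\mathrm{Ind}^{\DD}_{\DD^{(0,-n_S)}}(K_0)\twoheadrightarrow S$ is a $\DD$-module map, its kernel would meet $K_0$ if nonzero, which is impossible; hence the map is an isomorphism, and simplicity of $S$ then forces $K_0$ to be a simple $\DD^{(0,-n_S)}$-module.

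There is a minor wrinkle: Theorem~\ref{thmmain1.1} as stated requires $k\ge n$, $l\in\bN$ and (case (2)) $k>n$ with $l=n+k-1$, whereas here $k=n=n_S$, so I am in the regime of case (1), which requires $l\ge 2n$ and $d_l$ injective on $V$ — conditions phrased for the \emph{unprimed} $d$. I would instead invoke the lemmas directly: Lemma~\ref{main3} only needs $k\ge n$ and $k+n\ge 2$ and handles the $\deg'$-behaviour of $h$-operators, while Lemma~\ref{main4} (case $k>n$) does not literally apply when $k=n$. So the honest route is: either (a) establish the analogue of Theorem~\ref{thmmain1.1}(1) for the primed Virasoro, reusing the $\deg$ (not $\deg'$) machinery of Lemmas~\ref{main1'}--\ref{main1}, where $d_l$-injectivity is replaced by $d'_{r_S-1}$-injectivity from Lemma~\ref{lem4.4}(v); or (b) reduce to the already-proved Proposition~\ref{prop4.3}(ii) by noting that $(K_0,\DD'^{(0,-n_S)})$ satisfies exactly the hypotheses of \ref{prop4.3}(ii) with $n_{S'}=n_S$ and $m_{S'}=r_S>2n_S$ computed inside the auxiliary $\DD'$-module $\UU(\DD')K_0$. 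Route (b) is cleaner if one checks that $\UU(\DD')K_0$ is simple and that its invariants relative to $\Vir'$ are governed by $r_S$; route (a) is more self-contained.

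The main obstacle is precisely this bookkeeping between $d_n$ and $d'_n$: one must be sure that passing to primed operators genuinely lands in the situation covered by the earlier induced-module lemmas, i.e. that the quadratic shift $L_n\in U(\mh^{(-n_S)})$ interacts correctly with the reverse-lexicographic orders $\prec$, $\prec'$ on $\M\times\M$ and does not spoil the "highest-degree term is $d_{q+k+n-1}$ acting cleanly" estimates. Once the correct analogue of Theorem~\ref{thmmain1.1} for $\Vir'\ltimes\mh$ is in hand (or Proposition~\ref{prop4.3}(ii) is invoked for the auxiliary module), the simplicity-of-quotient argument and the deduction that $K_0$ is simple are completely routine and parallel to the proof of Proposition~\ref{prop4.3}(ii).
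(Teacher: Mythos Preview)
Your overall strategy---pass to the primed Virasoro $\Vir'$ and exploit that $[\Vir',\mh]=0$---is the right one, and your identification $\UU(\DD)=\UU(\DD')$ is the key point. However, the two routes you propose both have problems, and the paper resolves your ``main obstacle'' by a different and cleaner device.

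Route (b) does not work as stated: in the regime where $r_S$ is introduced one has $m_S=2n_S$ and Lemma~\ref{lem4.4}(i) gives $r_S\le m_S=2n_S$, so you cannot arrange an auxiliary module with $m_{S'}=r_S>2n_{S'}$ in order to invoke Proposition~\ref{prop4.3}(ii). Route (a) would succeed, but redoing the combinatorial Lemmas~\ref{main1'}--\ref{main4} for $d'_n$ is unnecessary.

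The paper instead \emph{factors the induction} through the intermediate subalgebra $\DD^{(0,-\infty)}$. First, using only the Heisenberg argument of Lemma~\ref{main1'}, one shows
\[
\Ind_{\DD^{(0,-n_S)}}^{\DD^{(0,-\infty)}}K_0\ \cong\ K=\UU(\mh)K_0
\]
as $\DD^{(0,-\infty)}$-modules. Second, since $[\Vir',\mh]=0$, the remaining induction
\[
\Ind_{\DD^{(0,-\infty)}}^{\DD}K\ =\ \Ind_{\Vir'^{(0)}}^{\Vir'}K
\]
is a \emph{pure} Virasoro induction for $\Vir'$ (the Heisenberg generators already act on $K$ and commute with the new generators $d'_{-1},d'_{-2},\ldots$). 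Now Lemma~\ref{lem4.4}(v) says $d'_{r_S-1}$ is injective on $K$ with $r_S-1\ge 1$, so the needed ``any nonzero submodule meets $K$'' property follows directly from the proof of Theorem~2.1 in \cite{MZ2}, with no interaction between $L_n$ and the degree orderings to worry about. The surjection $\pi$ onto $S$ is then an isomorphism and simplicity of $K_0$ follows exactly as you say. This bypasses all the bookkeeping you flagged: the Heisenberg and Virasoro parts are handled in separate, already-established steps.
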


\begin{proof}We first show that  $\Ind_{\DD^{(0,-n_s)}}^{\DD^{(0,-\infty)}}K_0 \cong K$ as  $\DD^{(0,-\infty)}$ modules. For that, let
\begin{eqnarray*}
\phi:\, \Ind_{\DD^{(0,-n_s)}}^{\DD^{(0,-\infty)}}K_0  &\longrightarrow &K\\
\sum_{\bfk\in\mathbb{M}} h^{\bfk}\otimes v_{\bfk}&\mapsto &
\sum_{\bfk\in\mathbb{M}} h^{\bfk} v_{\bfk},
\end{eqnarray*} where $h^{\bfk}=\cdots h^{k_2}_{-2-n_S+\frac{1}{2}}h_{-1-n_S+\frac{1}{2}}^{k_1}$. Then $\phi$ is a  $\DD^{(0,-\infty)}$-module epimorphism and $\phi|_{K_0}$ is one-to-one. By similar arguments in the proof of Lemma \ref{main1'} we see that any nonzero submodule of $\Ind_{\DD^{(0,-n_s)}}^{\DD^{(0,-\infty)}}K_0$ contains nonzero vectors of $K_0$, which forces that the kernel of $\phi$ must be zero and  hence $\phi$ is an isomorphism.

By Lemma \ref{lem4.4}(v), we see that
$d_{r_S-1}'$ acts injectively on   $K$.


As $\DD$-modules,
$$\Ind_{\DD^{(0,-n_S)}}^{\DD}K_0\cong\Ind_{\DD^{(0,-\infty)}}^{\DD}(\Ind_{\DD^{(0,-n_S)}}^{(0,-\infty)}K_0)\cong\Ind_{\DD^{(0,-\infty)}}^{\DD}K.$$
And we further have $\Ind_{\DD^{(0,-\infty)}}^{\DD}K\cong \Ind_{\Vir'^{(0)}}^{\Vir'}K$ as vector spaces. Moreover, we have the following $\DD$-module epimorphism
\begin{eqnarray*}
\pi: \Ind_{\DD^{(0,-\infty)}}^{\DD}K=\Ind_{\Vir'^{(0)}}^{\Vir'}K&\rightarrow& S,\cr
\sum_{\bfl\in\mathbb{M}}d'^{\bfl}\otimes v_{\bfl}&\mapsto& \sum_{\bfl\in\mathbb{M}}d'^{\bfl} v_{\bfl},
\end{eqnarray*}
where $d'^{\bfl}=\cdots (d'_{-2})^{l_2}(d'_{-1})^{l_1}$.
We see that $\pi$ is also a $\Vir'$-module epimorphism. By the proof of Theorem 2.1 in \cite{MZ2} we know that any nonzero $\Vir'$-submodule of $\Ind_{\Vir'^{(0)}}^{\Vir'}K$ contain nonzero vectors of $K$. Note that $\pi|_K$ is one-to-one, we see  that the image of any nonzero $\DD$-submodule ( and hence $\Vir'$-submodule ) of $\Ind_{\DD^{(0,-\infty)}}^{\DD}K$  must be a nonzero $\DD$-submodule of $S$ and hence be the  whole module $S$, which forces that the kernel of $\pi$ must be  $ 0$. Therefore, $\pi$ is an isomorphism. Since $S$ is simple, we see  $K_0$ is a simple $\DD^{(0,-n_S)}$-module.\end{proof}

As a direct consequence of Proposition \ref{prop4.6}, we have
\begin{cor}\label{case 2}
Let $S$ be a simple restricted $\DD$-module  with level $\ell \not=0$. If $m_S\leq 1$ and $n_S=1$,
then $K_0$ is a simple $\DD^{(0,-1)}$-module and $S\cong \Ind_{\DD^{(0,-1)}}^{\DD}K_0$.
\end{cor}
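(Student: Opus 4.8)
The plan is to reduce the statement to Proposition~\ref{prop4.6}. Since $n_S=1$ we have $\DD^{(0,-n_S)}=\DD^{(0,-1)}$, so Proposition~\ref{prop4.6}, whose only hypothesis is $r_S\ge 2$, yields exactly the two assertions of the corollary once we know $r_S\ge 2$ in the remaining case $m_S\le 1$, $n_S=1$. Thus the whole proof comes down to computing $r_S$, and I claim $r_S=2$. Recall that $Y_n=\bigcap_{p\ge n}{\rm Ann}_{U_0}(d_p')$ is non‑decreasing in $n$, with $d_n'=d_n-L_n$, and $r_S=\min\{n:Y_n\ne 0\}$.

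For the bound $r_S\le 2$ I would check $Y_2=U_0$. Fix $p\ge 2$ and $u\in U_0=U(m_S)$. Since $m_S\le 1<2\le p$ we get $d_pu=0$; and since $u\in W_0={\rm Ann}_S(\mh^{(1)})$ we have $h_ru=0$ for all $r\ge\tfrac32$. There is no normal‑ordering ambiguity in $L_p=\tfrac1{2\ell}\sum_{k\in\bZ+\frac12}h_{p-k}h_k$ because $p\ne 0$; any surviving term has $h_ku\ne 0$, hence $k\le\tfrac12$, hence $p-k\ge\tfrac32$, so $h_{p-k}h_ku=h_kh_{p-k}u=0$. Thus $d_p'u=(d_p-L_p)u=0$ for all $p\ge 2$, so $Y_2=U_0\ne 0$ and $r_S\le 2$ (in particular $r_S\ne-\infty$). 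For the bound $r_S\ge 2$ I would check $Y_1=0$; since $Y_1\subseteq{\rm Ann}_{U_0}(d_1')$ it suffices that $d_1'$ act injectively on $U_0$. For $u\in U_0$ one again has $d_1u=0$ (as $1\ge m_S$), while in $L_1u$ only the $k=\tfrac12$ summand survives, so $L_1u=\tfrac1{2\ell}h_{1/2}^2u$ and $d_1'u=-\tfrac1{2\ell}h_{1/2}^2u$. By Lemma~\ref{lem4.2}(i) the operator $h_{1/2}=h_{n_S-1/2}$ acts injectively on $W_0\supseteq U_0$, hence so does $h_{1/2}^2$, hence $d_1'$ is injective on $U_0$ and $Y_1=0$. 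Combining the two bounds gives $r_S=2\ge 2$, and Proposition~\ref{prop4.6} completes the proof.

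The step I expect to need the most care is making sure the machinery behind Proposition~\ref{prop4.6} is genuinely available in this parameter range rather than only in Case~(1) ($m_S=2n_S>0$): that proof rests on $K_0$ being a $\DD^{(0,-n_S)}$-module, which in Case~(1) came from Lemma~\ref{lem4.2}(iv) via $m_S\ge 2n_S$, a condition that fails when $m_S\le 1$ and $n_S=1$. One therefore has to re‑establish the required $\DD^{(0,-1)}$-stability directly, exactly as in the proof of Proposition~\ref{prop4.3}(iii‑2), using $n_S=1$ crucially (and, if necessary, replacing $U_0$ by the larger space $U(2)$, which \emph{is} a $\DD^{(0,-1)}$-submodule of $W_0$ when $n_S=1$, before running the $d_n'$-analysis above). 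Apart from this, the only real content is the two short computations of $L_p$ and $L_1$ on $W_0$.
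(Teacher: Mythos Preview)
Your argument is correct and matches the paper's proof almost line for line: both verify $d_p'u=0$ for $p\ge 2$ and $u\in U_0$, show $d_1'$ acts injectively on $U_0$ via $d_1'u=-\tfrac1{2\ell}h_{1/2}^2u$ together with Lemma~\ref{lem4.2}(i), conclude $r_S=2$, and then invoke Proposition~\ref{prop4.6}.

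Your final paragraph raises a genuine point that the paper passes over. The proof of Proposition~\ref{prop4.6} uses Lemma~\ref{lem4.4}(ii) (that $K_0$ is a $\DD^{(0,-n_S)}$-module), whose proof in turn quotes Lemma~\ref{lem4.2}(iv), and the latter carries the hypothesis $m_S\ge 2n_S$. That hypothesis fails here, and indeed $U_0=K_0$ need \emph{not} be stable under $h_{-1/2}$: for $u\in U_0$ and $m_S=1$ one has $d_1 h_{-1/2}u=\tfrac12 h_{1/2}u\ne 0$. Your proposed fix is the right one. Replacing $U_0$ by $U(2)$ (equivalently, by $\{w\in W_0:d_p'w=0\ \text{for all}\ p\ge 2\}$, which coincides with $U(2)$ here) gives a space that \emph{is} $\DD^{(0,-1)}$-stable when $n_S=1$, since $d_jh_{-1/2}u=\tfrac12 h_{j-1/2}u=0$ once $j\ge 2$; the injectivity of $h_{1/2}$ and of $d_1'$ carry over, and the argument of Proposition~\ref{prop4.6} then runs without change. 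So your diagnosis and remedy are both correct.
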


\begin{proof}
For any nonzero $u\in U_0$, since  $m_S\leq 1$ and $n_S=1$, it follows from the definitions of $m_S, n_S$ and Lemma \ref{lem4.2}(i) that
$$d_1u=0,\, L_1u=\frac{1}{2\ell}\sum_{k\in\bZ+\frac{1}{2}}h_{1-k}h_ku=\frac{1}{2\ell}(h_{\frac{1}{2}})^2u\neq 0.$$
This implies that $d_1^{\prime}u\neq 0$, i.e., $d_1^{\prime}$ acts injectively on $U_0$. Hence $r_S\geq 2$. More precisely, since
$$d_{2+i}v=L_{2+i}v=0,\,\,\forall\,i\in\bN, v\in U_0,$$
we see that $r_S=2$. Now the desired assertion follows directly from Proposition \ref{prop4.6}.
\end{proof}

\begin{rem}
From Corollary \ref{case 2}, we have dealt with the Case (2).
\end{rem}

What remains to consider for Case (1) is  that $ m_S=2n_S\ge2$ and $r_S\le 1$.
In this case we will show that $K$ is a simple $\mh$-module.

For the Verma module $M_{\Vir}(c,h)$ over $\Vir$,
it is well-known from \cite{A, FF} that there exist two homogeneous elements $P_1, P_2\in \UU(\Vir^-)\Vir^-$
such that $ \UU(\Vir^-)P_1w_1+ \UU(\Vir^-)P_2w_1$ is the unique maximal proper $\Vir$-submodule of $M_{\Vir}(c,h)$,
where $P_1, P_2$ are allowed to  be zero and $w_1$ is the
highest weight vector in $M_{\Vir}(c,h)$.

\begin{lem}\label{lem4.5'}
Let $d=0,-1$. Suppose $M$ is a $\Vir^{(d)}$-module on which $d_0$ acts as multiplication by a given scalar $\lambda$. Then there exists a unique maximal submodule $N$ of ${\rm Ind}^{\Vir}_{\Vir^{(d)}}M$ with $N\cap M=0$. More precisely, $N$ is generated by $P_1M$ and $P_2M$, i.e., $N= \UU(\Vir^-)(P_1M+P_2M)$.
\end{lem}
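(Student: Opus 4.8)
\textit{The plan is to exploit the rigidity forced by the hypothesis that $d_0$ acts on $M$ by a single scalar.} First I would observe that for $i\ge 1$ and $m\in M$ one has $d_0(d_im)=[d_0,d_i]m+d_i(\lambda m)=(\lambda-i)d_im$, while $d_im\in M$ forces $d_0(d_im)=\lambda d_im$; hence $d_im=0$, so that $\Vir_{\ge 1}M=0$. When $d=-1$, combining this with the relation $[d_{-1},d_1]=2d_0$ further forces $\lambda=0$ and $d_{-1}M=0$. Since ${\bf c}_1$ is central it acts on $M$ by a scalar $c$, so $\Vir^{(d)}$ acts on $M$ only through the character $\chi$ sending $d_0\mapsto\lambda$, ${\bf c}_1\mapsto c$, and every remaining basis vector to $0$; thus $M\cong\bC_\chi^{\oplus\dim M}$ as $\Vir^{(d)}$-modules, where $\bC_\chi$ is the corresponding one-dimensional module. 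Because induction commutes with direct sums, $V:={\rm Ind}^{\Vir}_{\Vir^{(d)}}(M)\cong (M')^{\oplus\dim M}$ as $\Vir$-modules, where $M':={\rm Ind}^{\Vir}_{\Vir^{(d)}}(\bC_\chi)$; under this identification the weight space $V_\lambda$ is exactly $M$, corresponding to $(\bC w_1)^{\oplus\dim M}$ with $w_1$ the canonical cyclic vector of $M'$.

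Next I would analyse $M'$. For $d=0$ this is the Verma module $M_{\Vir}(c,\lambda)$, while for $d=-1$ (so $\lambda=0$) it is the quotient of $M_{\Vir}(c,0)$ by the submodule generated by the singular vector $d_{-1}w_1$; in both cases $M'$ is a highest weight $\Vir$-module generated by $w_1$, hence a $d_0$-weight module with a unique maximal submodule $J'$. By the description recalled just before the lemma, $J'=\UU(\Vir^-)P_1w_1+\UU(\Vir^-)P_2w_1$ (in the $d=-1$ case one of the $P_iw_1$ may, after rescaling, be taken to be $d_{-1}w_1$, whose image in $M'$ is zero); moreover $J'\cap\bC w_1=0$, since every proper submodule of a highest weight module has all its weights strictly above $\lambda$, and $M'/J'\cong L_{\Vir}(c,\lambda)$ is simple. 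Setting $N:=(J')^{\oplus\dim M}\subseteq V$ and unwinding the isomorphism $V\cong (M')^{\oplus\dim M}$, one checks that $N$ corresponds precisely to $\UU(\Vir^-)(P_1M+P_2M)$ and that $N\cap M=N\cap V_\lambda=0$.

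It then remains to see that $N$ is maximal among submodules meeting $M$ trivially, which simultaneously yields uniqueness. Let $N'$ be any $\Vir$-submodule of $V$ with $N'\cap M=0$, and let $\overline{N'}$ be its image in $V/N\cong L_{\Vir}(c,\lambda)^{\oplus\dim M}$. This quotient is a semisimple $\Vir$-module, being a direct sum of copies of the simple module $L_{\Vir}(c,\lambda)$, so if $\overline{N'}\ne 0$ it would contain a submodule isomorphic to $L_{\Vir}(c,\lambda)$, hence a nonzero vector of weight $\lambda$; lifting this, there would be $v\in N'$ whose weight-$\lambda$ component is a nonzero element of $M$ (using that $N$ has no weight-$\lambda$ component). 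But $V\cong (M')^{\oplus\dim M}$ is a $d_0$-weight module, so $N'$ is one too, whence the weight-$\lambda$ component of $v$ lies in $N'_\lambda=N'\cap M=0$, a contradiction; therefore $\overline{N'}=0$ and $N'\subseteq N$. I expect the main obstacle to be the opening structural observation --- seeing that a single $d_0$-eigenvalue forces $\Vir_{\ge 1}M=0$ (and $\lambda=0$, $d_{-1}M=0$ when $d=-1$) --- together with the bookkeeping needed to identify $(J')^{\oplus\dim M}$ with $\UU(\Vir^-)(P_1M+P_2M)$ in the $d=-1$ case, where one generator $P_iw_1$ is already zero in $M'$; beyond that, everything is a routine application of highest weight module theory for $\Vir$.
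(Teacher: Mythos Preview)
Your approach is correct and genuinely different from the paper's. You first extract structural consequences from the hypothesis that $d_0$ acts by a scalar --- namely $d_iM=0$ for $i\ge 1$ and, when $d=-1$, also $d_{-1}M=0$ and $\lambda=0$ --- which reduces $M$ to a direct sum of copies of a single one-dimensional $\Vir^{(d)}$-module. Induction then identifies $V$ with a direct sum of copies of the Verma module $M_{\Vir}(c,\lambda)$ (or of its quotient by $d_{-1}w_1$ when $d=-1$), so that $N=(J')^{\oplus\dim M}$ and $V/N\cong L_{\Vir}(c,\lambda)^{\oplus\dim M}$ is semisimple; maximality then follows cleanly from a weight argument in this semisimple quotient. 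The paper, by contrast, never decomposes $M$: it defines $N$ abstractly as the sum of all submodules missing $M$, and shows $N=\UU(\Vir^-)(P_1M+P_2M)$ by a direct raising argument --- given a homogeneous $v=\sum_i u_iv_i$ in a submodule $U$ with $U\cap M=0$ but $v\notin N'$, one picks $u_1v_1\notin N'$, uses that the cyclic module on $v_1$ is a copy of the Verma module to find $u\in\UU(\Vir)$ with $uu_1v_1=v_1$, and observes $uv\in M\setminus\{0\}$ since each $uu_iv_i\in\bC v_i$, contradicting $U\cap M=0$. Your route is more structural and makes the direct-sum-of-Vermas picture explicit; the paper's is shorter and sidesteps the preliminary reductions on $M$.

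One small caveat: your sentence ``since ${\bf c}_1$ is central it acts on $M$ by a scalar $c$'' is not valid as stated, because $M$ is not assumed simple. However, the assumption that ${\bf c}_1$ acts by a scalar is already implicit in the lemma (the elements $P_1,P_2$ are defined only once a central charge $c$ is fixed), and it holds in every application in the paper; so this is an omission in the lemma's hypotheses rather than a genuine gap in your argument.
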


\begin{proof}
Note that $d_0$ acts diagonalizably on ${\rm Ind}^{\Vir}_{\Vir^{(d)}}M$ and its submodules, and  $$M=\{u\in{\rm Ind}^{\Vir}_{\Vir^{(d)}}M\mid d_0u=\lambda u\},$$
i.e., $M$ is the highest weight space of ${\rm Ind}^{\Vir}_{\Vir^{(d)}}M$.
 Let $N$ be the sum of all $\Vir$-submodules of ${\rm Ind}^{\Vir}_{\Vir^{(d)}}M$ which intersect with $M$ trivially. Then $N$ is the desired unique maximal $\Vir$-submodule of ${\rm Ind}^{\Vir}_{\Vir^{(d)}}M$ with $N\cap M=0$.

Let $N^{\prime}$ be the $\Vir$-submodule generated by $P_1M$ and $P_2M$, i.e., $N^{\prime}= \UU(\Vir^-)(P_1M+P_2M)$. Then $N^{\prime}\cap M=0$. Hence, $N^{\prime}\subseteq N$. Suppose there is a proper submodule $U$ of ${\rm Ind}^{\Vir}_{\Vir^{(d)}}M$ that is not contained in $N^{\prime}$. There is a nonzero homogeneous $v=\sum _{i=1}^ru_iv_i\in U\setminus N^{\prime}$ where $u_i\in  \UU(\Vir^-)$ and $v_1,...v_r\in M$ are linearly independent. Note that all $u_i$ have the same weight. Then some $u_iv_i\notin N^{\prime}$, say $u_1v_1\notin N^{\prime}$.
There is a  homogeneous $u\in  \UU(\Vir)$ such that $uu_1v_1=v_1$.
Noting that all $uu_i$ has weight $0$, so $uu_iv_i\in \bC v_i$. Thus $uv\in M\setminus\{0\}.$ This implies that $N\subseteq N^{\prime}$. Hence, $N=N^{\prime}$, as desired.
\end{proof}

\begin{pro}\label{pro4.10}
Let $S$ be a simple restricted $\DD$-module  with level $\ell \not=0$.   If  $   m_S=2n_s\ge2$,  and $r_S=0$ or $ -1$, then
$K$ is a simple $\mh$-module and
$S\cong U^{\DD}\otimes K^{\DD}$ for some  simple $U\in \mathcal{R}_{\Vir}$.
\end{pro}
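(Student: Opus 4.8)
The plan is to reduce the problem to the structure of a tensor product of a Virasoro-type factor (built from the $d'_n$'s) and a Heisenberg factor, by showing that $K = U(\mh)K_0$ is already a simple $\mh$-module on which the whole of $\DD$ acts through $\Vir' \oplus \mh$ modulo the Virasoro piece acting on the complement. Recall that $\DD' = \Vir' \oplus \mh$ with $[\Vir',\mh]=0$ and $S = U(\DD')v$ for any nonzero $v$. First I would use Lemma \ref{lem4.4} to record what we know: $K_0$ is a $\DD^{(0,-n_S)}$-module, $h_{n_S-\frac12}$ acts injectively on $K_0$, $K = U(\mh)K_0$ is a $\DD^{(0,-\infty)}$-module, $K^{\DD}$ carries a $\DD$-module structure via \eqref{rep1}--\eqref{rep3}, and (since $r_S = 0$ or $-1$, in particular $r_S \ne -\infty$) $d'_{r_S-1}$ acts injectively on $K$.

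The key step is to show $S \cong \Ind^{\Vir'}_{\Vir'^{(r_S)}}(K)$ as $\Vir'$-modules, or rather to extract a simple $\Vir'$-quotient. As in the proof of Proposition \ref{prop4.6}, one has $\Ind^{\DD^{(0,-\infty)}}_{\DD^{(0,-n_S)}}K_0 \cong K$ (by the argument patterned on Lemma \ref{main1'}), hence $\Ind^{\DD}_{\DD^{(0,-n_S)}}K_0 \cong \Ind^{\DD}_{\DD^{(0,-\infty)}}K \cong \Ind^{\Vir'}_{\Vir'^{(r_S)}}K$ as vector spaces, giving a $\Vir'$-module epimorphism $\pi$ onto $S$. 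But now, unlike in Proposition \ref{prop4.6}, we are in the case $r_S = 0$ or $-1$: this is exactly the highest-weight situation covered by Lemma \ref{lem4.5'} (with $d = -r_S \in \{0,-1\}$ and $M = K$, on which $d'_0$ acts as the scalar $c - $ (the $d_0$-weight), diagonalizably because $S$ is a weight module for $d'_0$). So $\Ind^{\Vir'}_{\Vir'^{(r_S)}}K$ has a unique maximal $\Vir'$-submodule $N$ with $N \cap K = 0$, generated by $P_1 K$ and $P_2 K$. The image $\pi(N)$ is a $\DD$-submodule of $S$ (since $N$ is $\DD$-stable: $\mh$ commutes with $\Vir'$ and preserves $K$), so $\pi(N) = 0$ or $\pi(N) = S$; the latter is impossible because $\pi|_K$ is injective, so $\ker\pi \supseteq N$ forces $S$ to be a quotient of $\Ind^{\Vir'}_{\Vir'^{(r_S)}}K / N$. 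Since $S$ is simple this quotient-mod-$N$ module must itself be simple as a $\DD$-module; I would then argue that its $K$-part, namely $K$ itself embedded as the top, must be a simple $\mh$-module — for any proper nonzero $\mh$-submodule $K'$ of $K$ would generate (via the commuting $\Vir'$-action and $N$) a proper nonzero $\DD$-submodule of $S$, contradicting simplicity. This yields that $K$ is a simple $\mh$-module, hence a simple restricted $\mh$-module, i.e. $K \in \mathcal R_{\mh}$.

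Having established that $K$ is simple over $\mh$, I would invoke Corollary \ref{tensor}: $S$ is a simple $\DD$-module with nonzero action of ${\bf c}_2$ which contains the simple $\mh$-submodule $K \in \mathcal R_{\mh}$ (namely $K$ sits inside $S$ as $U(\mh)K_0$), so $S \cong H^{\DD} \otimes U^{\DD}$ for some simple $H \in \mathcal R_{\mh}$ and some simple $\Vir$-module $U$. Finally, comparing $\mh$-submodule structures forces $H \cong K$ up to isomorphism (the unique, up to iso, simple $\mh$-constituent), and $U$ is restricted as a $\Vir$-module because $S$ is a restricted $\DD$-module and $\mh$ acts via $M(1)$-type fields of bounded order; this gives $S \cong K^{\DD} \otimes U^{\DD}$ with $U \in \mathcal R_{\Vir}$, as claimed. (Note that, up to the harmless swap $U^{\DD}\otimes K^{\DD} \cong K^{\DD}\otimes U^{\DD}$, this matches the statement.)

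**Main obstacle.** The delicate point is the interface between the $\Vir'$-module induction picture of Lemma \ref{lem4.5'} and the full $\DD$-module structure: one must check carefully that the $\Vir'$-submodule $N$ generated by $P_1K, P_2K$ is actually a $\DD$-submodule (this uses $[\Vir',\mh]=0$ and $\mh K \subseteq K$ crucially, together with the fact that $P_1,P_2 \in U(\Vir^-)\Vir^-$ so that $P_iK$ lies in the "strictly lower" part), and that pushing down through $\pi$ the simplicity of $S$ forces simplicity of $K$ as an $\mh$-module rather than merely some weaker indecomposability. A secondary technical nuisance is verifying that $d'_0$ acts as a single scalar on $K$ (not just locally finitely), which follows from $S$ being a weight module for $d_0$ together with $S = U(\mh)K_0$ and $[d'_0,\mh]=0$, but should be spelled out. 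I expect the argument identifying the simple $\Vir$-factor $U$ with the simple $\Vir'$-quotient of $\Ind^{\Vir'}_{\Vir'^{(r_S)}}K/N$ to be essentially formal once the submodule correspondence is set up.
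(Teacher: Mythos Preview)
Your overall strategy is essentially the paper's: pass to the operator algebra $\DD'=\Vir'\oplus\mh$, form the induced $\Vir'$-module from $K$ (on which $d'_0$ acts as the scalar $0$, since $r_S\le 0$ gives $d'_0K_0=0$ and $[d'_0,\mh]=0$), invoke Lemma~\ref{lem4.5'} to identify the unique maximal $\Vir'$-submodule $N$ missing $K$, check that $N$ is $\mh$-stable (hence $\DD'$-stable), and then deduce simplicity of $K$ before applying Corollary~\ref{tensor}. The paper packages this slightly differently: it tensors $K^{\DD'}$ with a one-dimensional module $\bC v_0$ and uses \cite[Lemma~8]{LZ3} to write $\Ind_{\DD'^{(r_S,-\infty)}}^{\DD'}(\bC v_0\otimes K^{\DD'})\cong V^{\DD'}\otimes K^{\DD'}$ with $V=\Ind_{\Vir'^{(r_S)}}^{\Vir'}\bC v_0$; this makes the tensor structure manifest and gives directly $S\cong (V/V')^{\DD'}\otimes K^{\DD'}$, from which simplicity of $K$ is immediate.

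Two points in your write-up need tightening. First, your justification that $\pi(N)\ne S$ ``because $\pi|_K$ is injective'' is not sufficient: injectivity on $K$ does not by itself prevent $N$ from surjecting onto $S$. The correct argument (which the paper uses, and which you have the ingredients for) is maximality: $\ker\pi$ is a $\Vir'$-submodule with $\ker\pi\cap K=0$, hence $\ker\pi\subseteq N$; since $N$ is a proper $\DD'$-submodule and $\ker\pi$ is a \emph{maximal} proper $\DD'$-submodule (as $S$ is simple), one gets $\ker\pi=N$. Alternatively, use the $d'_0$-grading: $N$ lies in strictly positive $d'_0$-weight, so $\pi(N)$ cannot contain the weight-$0$ vector space $K\subseteq S$. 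Second, two small slips: it is $d=r_S\in\{0,-1\}$ in Lemma~\ref{lem4.5'} (not $-r_S$), and $d'_0$ acts as the scalar $0$ on $K$ (not ``$c-{}$(the $d_0$-weight)''); you do not need to assume $S$ is a $d'_0$-weight module a priori --- this follows once you know $d'_0K=0$ and $S=U(\Vir'_{<r_S})K$.
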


\begin{proof} By Lemma \ref{lem4.4} (iii), we see that $K^{\DD}$ is a $\DD$-module, and hence $K^{\DD'}$ is a $\DD'$-module with $d_n' K=0$ for any $n\in\bZ$. Let $\bC v_0$ be a one-dimensional $\DD'^{(r_S,-\infty)}$-module with module structure defining by
$d'_nv_0=h_{k+\frac{1}{2}}v_0={\bf c}_2v_0=0, n\ge r_S, k\in\bZ, {\bf c}'_1v_0=(c-2)v_0.$ Then $\bC v_0\otimes K^{\DD'} $ is a $\DD'^{(r_S,-\infty)}$-module with central charge $c-1$ and level $\ell$. It is easy to see that we have the following $\DD'^{(r_S,-\infty)}$-module homomorphism
\begin{eqnarray*}
\tau_{K}: \bC v_0\otimes K^{\DD'}& \longrightarrow& S,\cr
 v_0\otimes u&\mapsto& u, \forall u\in K.
\end{eqnarray*}
Clearly, $\tau_K$ is an injective map and can be extended to a $\DD'$-module epimorphism
\begin{eqnarray*}
\tau:\Ind_{{\DD'}^{(r_S,-\infty)}}^{\DD'}(\bC v_0\otimes K^{\DD'})&\longrightarrow& S,\cr
 x(v_0\otimes u)&\mapsto& xu, x\in  \UU(\DD'), u\in K.
\end{eqnarray*}
By Lemma 8 in \cite{LZ3} we know that
$$\Ind_{\DD'^{(r_S,-\infty)}}^{\DD'}(\bC v_0\otimes K^{\DD'})\cong (\Ind_{\DD'^{(r_S,-\infty)}}^{\DD'}\bC v_0)\otimes K^{\DD'}
=(\Ind_{\Vir'^{(r_S)}}^{\Vir'}\bC v_0)^{\DD'}\otimes K^{\DD'}.
$$Then we have the following $\DD'$-module epimorphism
\begin{eqnarray*}
\tau':(\Ind_{\Vir'^{(r_S)}}^{\Vir'}\bC v_0)^{\DD'}\otimes K^{\DD'}&\longrightarrow& S,\cr
  xv_0\otimes u&\mapsto& xu, x\in  \UU(\Vir'), u\in K.
\end{eqnarray*}
Note that $(\Ind_{\Vir'^{(r_S)}}^{\Vir'}\bC v_0)^{\DD'}\otimes K^{\DD'}\cong\Ind_{\Vir'^{(r_S)}}^{\Vir'}(\bC v_0\otimes K^{\DD'})$ as $\Vir'$-modules, and $\tau'$ is also a $\Vir'$-module epimorphism, $\tau'|_{\bC v_0\otimes K^{\DD'}}$ is one-to-one, and $(\Ind_{\Vir'^{(r_S)}}^{\Vir'}\bC v_0)^{\DD'}\otimes K^{\DD'}$ is a highest weight $\Vir'$-module.

Let $V=\Ind_{\Vir'^{(r_S)}}^{\Vir'}\bC v_0$ and  $\mathfrak{K}=\text{Ker}(\tau')$. It should be noted that
$$\bC v_0\otimes K^{\DD'}=\{u\in V^{\DD'}\otimes K^{\DD'}\mid d_0^{\prime}u=0\}.$$
We see that $(\bC v_0\otimes K^{\DD'})\cap \mathfrak{K}=0$. Let $\mathfrak{K}^{\prime}$  be the sum of all $\Vir'$-submodules $W$ of $V^{\DD'}\otimes K^{\DD'}$ with $(\bC v_0\otimes K^{\DD'})\cap W=0$, that is, the unique maximal $\Vir'$-submodule of $V^{\DD'}\otimes K^{\DD'}$ with trivial intersection with  $(\bC v_0\otimes K^{\DD'})$. It is obvious that $\mathfrak{K}\subseteq\mathfrak{K}'$. Next we further show that $\mathfrak{K}=\mathfrak{K}'$. For that, take any $\Vir'$-submodule $W$ of $V^{\DD'}\otimes K^{\DD'}$ such that $(\bC v_0\otimes K^{\DD'})\cap W=0$. Then for any weight vector $w=\sum_{\bfl\in \mathbb{M}}d'^{\bfl}v_0\otimes u_{\bfl}\in W$, where $u_{\bfl}\in K^{\DD'}, d'^{\bfl}=\cdots (d'_{-2})^{l_2}(d'_{-1})^{l_1}$ if $r_S=0$, or $d'^{\bfl}=\cdots (d'_{-2})^{l_2}$ if $r_S=-1$, and all ${\rm{w}}(\bfl)\ge 1$  are equal. Note that
$h_{k+\frac{1}{2}}w=\sum_{\bfl\in \mathbb{M}}d'^{\bfl}v_0\otimes h_{k+\frac{1}{2}}u_{\bfl}$  either equals to $0$ or has the same weight as $w$ under the action of $d_0^{\prime}$. So $U(\DD')\mathfrak{K}'\cap  (\bC v_0\otimes K^{\DD'})=0$. The maximality of $\mathfrak{K}'$ forces that $\mathfrak{K}'=U(\DD')\mathfrak{K}'$ is a  proper $\DD'$-submodule of $V^{\DD'}\otimes K^{\DD'}$. Since $\mathfrak{K}$ is a maximal proper $\DD'$-submodule of $V^{\DD'}\otimes K^{\DD'}$, it follows that $\mathfrak{K}=\mathfrak{K}'$.

By Lemma \ref{lem4.5'} we know that $\mathfrak{K}$ is generated by
$P_1( \bC v_0\otimes K^{\DD'})=\bC P_1 v_0\otimes K^{\DD'}$ and $P_2 (\bC v_0\otimes K^{\DD'})=\bC P_2 v_0\otimes K^{\DD'}$. Let  $V'$ be the  maximal  submodule of $V$ generated by $P_1v_0$ and $P_2v_0$, then $\mathfrak{K}=V'^{\DD'}\otimes K^{\DD'}$. Therefore,
$$S\cong (V^{\DD'}\otimes K^{\DD'})/(V'^{\DD'}\otimes K^{\DD'})\cong (V/V')^{\DD'}\otimes K^{\DD'},$$
which forces  that $K^{\DD'}$ is a simple $\DD'$-module and hence a simple $\mh$-module. So $S$ contains a simple $\mh$-module $K$. By Corollary \ref{tensor} we know there exists a simple $\Vir$-module $U\in {\mathcal{R}}_{\Vir}$ such that $S\cong U^{\DD}\otimes K^{\DD}$, as desired.
\end{proof}

\begin{lem}\label{lem4.11}
Let $M$ be a $\Vir^{(0)}$-module on which  $\Vir^{(1)}$ acts  trivially. If any finitely generated $\bC[d_0]$-submodule of $M$ is a free $\bC[d_0]$-module, then any nonzero submodule  of ${\rm Ind}^{\Vir}_{\Vir^{(0)}}M$  intersects with $M$ non-trivially.
\end{lem}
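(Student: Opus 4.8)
The plan is to reduce the statement to the classical fact about induced $\Vir$-modules from a one-dimensional highest weight space via a "lowest-degree vector" argument adapted to the module $M$. First I would fix the $\bZ$-gradation on ${\rm Ind}^{\Vir}_{\Vir^{(0)}}M$ coming from the $d_0$-action together with the PBW filtration: by PBW every element of ${\rm Ind}^{\Vir}_{\Vir^{(0)}}M$ is uniquely $\sum_{\bfl\in\M} d^{\bfl}\otimes m_{\bfl}$ with $d^{\bfl}=\cdots (d_{-2})^{l_2}(d_{-1})^{l_1}$ and $m_{\bfl}\in M$. Assign to each monomial $d^{\bfl}$ the weight ${\rm w}(\bfl)=\sum_n l_n\cdot n$ and order the supports by the reverse lexicographic order $\prec$ on $\M$ already introduced in the paper. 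For a nonzero element $v$ not in $M$, let $\bfl$ be the $\prec$-maximal index in $\supp(v)$; then ${\rm w}(\bfl)>0$, and the key computation is that, letting $q=\min\{s:l_s\neq 0\}$, the action of $d_q$ (which lies in $\Vir^{(1)}$ when $q\geq 1$, so kills $M$) produces
\[
d_q\,v = \sum_{\bfl'}[d_q,d^{\bfl'}]\otimes m_{\bfl'},
\]
and the leading term is $(\,\text{scalar}\,)\,d^{\bfl-\epsilon_q}\otimes m_{\bfl}$ with the scalar equal to $q\,l_q$ (up to the Virasoro structure constants), hence nonzero. All other terms are strictly $\prec$-lower, exactly as in Lemma~\ref{main1'} and in the proof of Theorem~2.1 of \cite{MZ2}. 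Thus $\deg(d_q v)=\bfl-\epsilon_q\prec\bfl$, and iterating lowers the degree until we land in $M$.

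Next I would handle the one genuinely new feature compared to \cite{MZ2}: here the highest weight space $M$ is not one-dimensional, so $d_0$ need not act as a scalar, and "lowering to degree $\bf0$" only gives a nonzero vector of the form $1\otimes m$ with $m\in M$ provided no cancellation occurs at the last step. This is where the freeness hypothesis enters. When we reach an element $w$ whose $\prec$-maximal index has weight $1$, i.e. $w = d_{-1}\otimes m' + (\text{lower-degree terms in }M)$, applying $d_1$ gives $d_1 w = [d_1,d_{-1}]\otimes m' + (\text{terms in }M) = 2 d_0\otimes m' + (\text{something in }M)$; a priori $d_0 m'$ could be expressible through the lower-degree $M$-part. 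To control this, I would restrict to the finitely generated $\bC[d_0]$-submodule $M_0\subseteq M$ spanned by all the coefficients $m_{\bfl}$ appearing in $v$ and their $d_0$-translates; by hypothesis $M_0$ is $\bC[d_0]$-free, so $d_0$ acts injectively on $M_0$, and more importantly one can choose a $\bC[d_0]$-basis and argue on the "leading" basis component. Concretely, fix a $\bC[d_0]$-basis $\{e_1,\ldots,e_r\}$ of $M_0$, expand every $m_{\bfl}=\sum_j f_{\bfl,j}(d_0) e_j$, and among all $(\bfl,j)$ with $f_{\bfl,j}\neq 0$ pick the one maximal for the order (${\rm w}(\bfl)$, then $\prec$, then $j$); the lowering operators $d_q$ act on this leading data by the same Virasoro commutator computation with $\bC[d_0]$-coefficients, and freeness guarantees the leading coefficient stays nonzero since $\bC[d_0]$ is a domain. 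Hence the descent terminates at a nonzero element of $M$.

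The main obstacle I expect is precisely the bookkeeping in this last paragraph: making rigorous that the "leading term" survives each application of $d_q$ when the coefficients live in the polynomial ring $\bC[d_0]$ rather than in $\bC$, and that the relevant structure constants ($q l_q$ from $[d_q, d_{-q}^{\,\cdot}\cdots]$, and the $2d_0$ from $[d_1,d_{-1}]$) never vanish for the indices that occur. The cleanest route is to set up a single total order on $\M\times\{1,\ldots,r\}$ refining weight, then $\prec$, then basis index, prove once and for all a "leading-term lemma" of the shape: if $v\notin M$ with leading data $(\bfl,j)$ and $\bfl\neq\bf0$ then $d_{q}v$ has strictly smaller leading data but is still nonzero, with the same component $e_j$; and if $\bfl=\bf0$ but $v\notin\bC[d_0]e_{\text{small}}$-something, a further application of a suitable $d_q$ or just the simplicity input finishes it. Everything else — existence and uniqueness of PBW expansions, the shape of $[d_i,d^{\bfl}]$, the fact that $\Vir^{(1)}M=0$ kills unwanted terms — is routine and parallels Lemma~\ref{main1'} and \cite[Theorem~2.1]{MZ2}, which we may invoke.
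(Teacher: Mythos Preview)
Your approach --- iteratively lowering the $\prec$-degree by applying $d_q$ with $q=\min\{s:l_s\neq0\}$ --- is genuinely different from the paper's proof, which instead quotients to a simple Verma module and uses its simplicity to produce a raising operator $w\in\UU(\Vir_{\geq 1})$ carrying the image of $u$ back to the highest weight space. The paper's route avoids all of the leading-term bookkeeping you describe.

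More importantly, your degree-lowering step has a real gap. The claim that $d_q v$ is nonzero with $\prec$-leading index $\bfl-\epsilon_q$ is \emph{false} in the present setting, where the injective operator is $d_0$ rather than some $d_l$ with $l\geq1$. The analogy with Lemma~\ref{main1'} and \cite[Theorem~2.1]{MZ2} breaks precisely here: in those results one applies $d_{q+l}$ with $l\geq1$, so the commutators $[d_{q+l},d_{-j}]$ for $q<j\leq q+l$ land in $\Vir^{(0)}$ and the corresponding contributions drop to strictly lower weight; when $l=0$ those same commutators produce negative-index elements at the \emph{same} weight, and they can cancel the putative leading term entirely. A concrete counterexample: for any $0\neq m'\in M$ set
\[
v \;=\; d_{-1}^2\otimes m'\;-\;\tfrac{1}{3}\,d_{-2}\otimes (4d_0+2)m'.
\]
Then $v\notin M$, its $\prec$-maximal index is $2\epsilon_1$, so your rule gives $q=1$; but a direct computation yields $d_1v=d_{-1}\otimes\bigl[(4d_0+2)m'+3\cdot(-\tfrac{1}{3})(4d_0+2)m'\bigr]=0$. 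Thus your procedure returns zero at the very first step. (Here $d_2v\in M\setminus\{0\}$, so the lemma holds for this $v$, but your algorithm does not find the witness.) The freeness hypothesis does not rescue this: the cancellation is between contributions from two different indices of equal weight, over $\bC$, not the vanishing of a polynomial coefficient.

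The paper's argument sidesteps all of this. One picks a $\bC[d_0]$-basis $u_1,\dots,u_n$ of the span $M_1$ of the coefficients of $u$, quotients $M_1$ by $\langle u_2,\dots,u_n,(d_0-\lambda_0)u_1\rangle$ for a generic $\lambda_0$ so that the resulting Verma module $\mathfrak{V}$ is simple, observes that the image $\bar u=a_1\bar u_1$ is nonzero in $\mathfrak{V}$ (this is where freeness enters), and then uses simplicity of $\mathfrak{V}$ to find a homogeneous $w\in\UU(\Vir^+)$ with $wa_1\bar u_1\in\bC^*\bar u_1$; lifting back, $wu=\sum_i f_i(d_0)u_i\in M$ with $f_1\neq0$.
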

\begin{proof} Let $V$ be a nonzero submodule of ${\rm Ind}^{\Vir}_{\Vir^{(0)}}M$.
Take a nonzero $u\in V$. If $u\in M$, there is nothing to do. Now assume $u\in V\backslash M$. Write
$u=\sum_{i=1}^n a_iu_i$ where $a_i\in  \UU(\Vir_{\le 0})$, $u_i\in M$. Since $M_1=\sum_{1\le i\le n}\bC[d_0]u_i$ ( a $\Vir^{(0)}$-submodule of $M$ ) is a finitely generated $\bC[d_0]$-module, we see $M_1$ is a free module over $\bC[d_0]$ by the assumption. Without loss of generality, we may assume that $M_1=\oplus_{1\le i\le n}\bC [d_0]u_i$ with basis  $u_1,\cdots,u_n$ over $\bC[d_0]$. Note that each $a_i$ can be expressed as a sum of eigenvalue subspaces of $\ad\, d_0$ for $1\leq i\leq n$. Assume that $a_1$ has a maximal eigenvalue among all $a_i$ for $1\leq i\leq n$. Then $a_1u_1\notin M$. For any  $\lambda\in\bC$, let $M_1(\lambda)$ be  the $\bC[d_0]$-submodule  of $M_1$  generated by  $u_2, u_3,\cdots, u_n, d_0u_1-\lambda u_1$. Then $M_1/M_1(\lambda)$ is a one-dimensional
$\Vir^{(0)}$-module with $d_0(u_1+M_1(\lambda))=\lambda u_1+M_1(\lambda)$. By the Verma module theory for Virasoro algebra we know that there exists some $0\ne \lambda_0\in \bC$ such that the corresponding Verma module $\mathfrak{V}={\rm Ind}_{\Vir^{(0)}}^{\Vir}(M_1/M_1(\lambda_0))$
is irreducible. We know that $u=a_1u_1\ne0$ in $\mathfrak{V}$.
Hence we can find a  homogeneous $w\in  \UU(\Vir^+)$ such that $wa_1u_1=f_1(d_0)u_1$ in ${\rm Ind}^{\Vir}_{\Vir^{(0)}}M$, where $0\neq f_1(d_0)\in\bC[d_0]$.
So $wu=\sum_{i=1}^n wa_iu_i=\sum_{i=1}^n f_i(d_0)u_i $ for $f_i(d_0)\in\bC[d_0]$, $1\leq i\leq n$. Therefore, $0\ne wu\in V\cap M_1\subset V\cap M,$ as desired.
\end{proof}

\begin{pro}\label{eigenvalue prop} Let $S$ be a
simple restricted $\DD$-module  with level $\ell \not=0$.   If $m_S=2n_s\ge2$, $r_S=1$, then $d_0'$ has an  eigenvector in $K$.
\end{pro}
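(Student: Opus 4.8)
The plan is to argue by contradiction: assume $d_0'$ has no eigenvector in $K$ and show that this forces $S$ to be an induced module $\Ind^{\DD}_{\DD^{(0,-n_S)}}K_0$ with $K_0$ simple over $\DD^{(0,-n_S)}$, after which Schur's lemma pins $d_0'$ down to a scalar on $K_0$, producing eigenvectors and contradicting the assumption. Throughout I use that $d_0'$ preserves $K_0$ and $K$ (Lemma \ref{lem4.4}(iv)) and that $[d_0',h_r]=0$ for all $r$, $[d_0',\bc_1]=[d_0',\bc_2]=0$, $[d_0',d_n]=-nd_n'$ for $n\ge 0$, which all follow from \eqref{rep4}--\eqref{rep4'}; in particular, since $K_0=Y_{r_S}=Y_1$ we have $d_i'K_0=0$ for every $i\ge 1$, and as $d_i'$ commutes with $\mh$ it annihilates all of $K=U(\mh)K_0$.

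First I would translate the assumption into a freeness statement. If some nonzero $v\in K$ were killed by a nonzero $p\in\bC[d_0']$ of minimal degree, then factoring off a root $\lambda$ of $p$ yields a nonzero vector $q(d_0')v$ annihilated by $d_0'-\lambda$, an eigenvector; hence under our assumption $K$ is torsion-free over the PID $\bC[d_0']$, so every finitely generated $\bC[d_0']$-submodule of $K$ is free. Combined with $d_i'K=0$ for $i\ge 1$, this is precisely the hypothesis of Lemma \ref{lem4.11} for $K$ regarded as a module over $\Vir'^{(0)}$.

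Next I would run the argument of Proposition \ref{prop4.6} with this freeness input replacing the injectivity of $d_{r_S-1}'$. As in that proof, $\Ind_{\DD^{(0,-n_S)}}^{\DD}K_0\cong\Ind_{\DD^{(0,-\infty)}}^{\DD}K$ as $\DD$-modules (this step only uses that $h_{n_S-\frac12}$ acts injectively on $K_0$, which is Lemma \ref{lem4.4}(ii)), and $\Ind_{\DD^{(0,-\infty)}}^{\DD}K\cong\Ind_{\Vir'^{(0)}}^{\Vir'}K$ as $\Vir'$-modules. The canonical surjection $\pi\colon\Ind_{\DD^{(0,-n_S)}}^{\DD}K_0\to S$ is a $\DD$-module, hence $\Vir'$-module, epimorphism with $\pi|_{K_0}$ injective, so $\ker\pi$ is a $\Vir'$-submodule of $\Ind_{\Vir'^{(0)}}^{\Vir'}K$ meeting $K$ trivially; by Lemma \ref{lem4.11} it is zero. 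Thus $\pi$ is an isomorphism, and since $S$ is simple, $K_0$ is a simple $\DD^{(0,-n_S)}$-module. Now $d_0'|_{K_0}$ commutes with the generators $d_n\ (n\ge 0)$, $h_r\ (r\ge -n_S+\tfrac12)$, $\bc_1,\bc_2$ of $\DD^{(0,-n_S)}$ (using $d_n'K_0=0$ for $n\ge 1$), so $d_0'|_{K_0}\in\End_{\DD^{(0,-n_S)}}(K_0)$. Since $S$, hence $K_0$, has countable dimension over the uncountable algebraically closed field $\bC$ and $K_0$ is simple, Dixmier's version of Schur's lemma gives $\End_{\DD^{(0,-n_S)}}(K_0)=\bC$, so $d_0'$ acts on $K_0$ as a scalar; every nonzero vector of $K_0\subseteq K$ is then an eigenvector of $d_0'$, a contradiction. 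Therefore $d_0'$ has an eigenvector in $K$.

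The main obstacle is the middle step: one must make the $\Vir'$-module identification $\Ind_{\DD^{(0,-\infty)}}^{\DD}K\cong\Ind_{\Vir'^{(0)}}^{\Vir'}K$ precise (noting that $d_{-i}'=d_{-i}-L_{-i}$ differs from $d_{-i}$ only by an element of $U(\DD^{(0,-\infty)})$) and verify that $\ker\pi$ really is a $\Vir'$-submodule to which Lemma \ref{lem4.11} applies. This reproduces the bookkeeping already done in the proof of Proposition \ref{prop4.6}, the only new ingredient being that here the freeness condition of Lemma \ref{lem4.11} is supplied by the contradiction hypothesis rather than by an injectivity statement.
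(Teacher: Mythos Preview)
Your proof is correct and follows essentially the same strategy as the paper: reduce the absence of an eigenvector to freeness of finitely generated $\bC[d_0']$-submodules of $K$, invoke Lemma \ref{lem4.11} to conclude that the induced module surjects isomorphically onto $S$, and then use Schur's lemma on the resulting simple piece to force $d_0'$ to act as a scalar, contradicting the assumption. The only (inessential) difference is where Schur is applied: the paper observes that $K$ itself becomes a simple module over $\mh\oplus\bC d_0'$ (with $d_0'$ central there) and gets the scalar action on all of $K$, whereas you descend to $K_0$ as a simple $\DD^{(0,-n_S)}$-module and check that $d_0'|_{K_0}$ is a module endomorphism via $[d_0',d_n]v=-nd_n'v=0$ for $n\ge 1$; both routes give the contradiction. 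One small wording fix: you need $\pi|_K$ (not just $\pi|_{K_0}$) injective to conclude $\ker\pi\cap K=0$, but this is exactly what the first paragraph of the proof of Proposition \ref{prop4.6} (the isomorphism $\Ind_{\DD^{(0,-n_S)}}^{\DD^{(0,-\infty)}}K_0\cong K$) supplies, and you already reference that step.
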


\begin{proof}
Suppose first that any finitely generated $\bC[d'_0]$-submodule of $K={\rm Ind}_{\mh^{(-n_S)}}^{\mh}K_0$ is a free $\bC[d'_0]$-module. By Lemma \ref{lem4.11} we see that
the following $\DD'$-module homomorphism
\begin{eqnarray*}
\tau:\Ind_{\DD'^{(0,-\infty)}}^{\DD'}K=\Ind_{\Vir'^{(0)}}^{\Vir'}K&\longrightarrow & S, \cr
 x\otimes u&\mapsto &xu, x\in  \UU(\Vir'), u\in K.
\end{eqnarray*}
is an isomorphism. So
$S={\rm Ind}_{\Vir'^{(0)}}^{\Vir'}K$, and consequently, $K$ is an irreducible
$\DD'^{(0,-\infty)}$-module. Since $\Vir'^{(1)}K=0$, we consider $K$ as an irreducible module over the Lie algebra $\mh\oplus \bC d'_0$. Since $d'_0$ is the center of the Lie algebra $\mh\oplus \bC d'_0$, we see that  the action of $d'_0$ on $K$ is a scalar,  a contradiction. So this case does not occur.

Now   there exists some  finitely generated $\bC[d'_0]$-submodule $M$ of $K$ that is not a free $\bC[d'_0]$-module. Since $\bC[d'_0]$ is a principal ideal domain,  by the structure theorem of finitely generated modules over a principal ideal domain,   there exists a monic polynomial $f(d'_0)\in\bC[d'_0]$  with positive degree  and nonzero element $u\in M$ such that $f(d'_0)u=0$. Furthermore, we can write $f(d'_0)=(d_0'-\lambda_1) (d_0'-\lambda_2)\cdots (d_0'-\lambda_p)$ for some $\lambda_1,\cdots,\lambda_p\in\bC$. Then there exists some $s\leq p$ such that $w:=\prod_{i=s+1}^p(d_0'-\lambda_{j})u\neq 0$ and $d_0'w=\lambda_sw$, where we make convention that $w=u$ if $s=p$. Then $w$ is a desired eigenvector of $d_0'$.
\end{proof}

\begin{pro} \label{pro4.13} Let $S$ be a
simple restricted $\DD$-module  with level $\ell \not=0$. If $m_S=2n_s\ge2$, $r_S=1$, then $K$ is
 a simple $\mh$-module  and
$S\cong U^{\DD}\otimes K^{\DD}$ for some  simple $U\in \mathcal{R}_{\Vir}$.
\end{pro}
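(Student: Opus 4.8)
The plan is to imitate the proof of Proposition~\ref{pro4.10} almost verbatim, the only change being that the auxiliary induced Virasoro module now has highest weight the \emph{nonzero} eigenvalue $\lambda$ supplied by Proposition~\ref{eigenvalue prop}, rather than $0$. First, by Proposition~\ref{eigenvalue prop} there are $0\neq w\in K$ and $\lambda\in\bC$ with $d_0'w=\lambda w$, and $\lambda\neq0$ since $d_{r_S-1}'=d_0'$ acts injectively on $K$ by Lemma~\ref{lem4.4}(v). By the definition of $K_0=Y_{r_S}=Y_1$ together with $[\Vir',\mh]=0$, the operator $d_n'$ annihilates $K=U(\mh)K_0$ for every $n\geq1$, while $d_0'$ preserves $K$ (Lemma~\ref{lem4.4}(iv)) and commutes with $\mh$. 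Hence the $d_0'$-eigenspace $\widetilde K:=\{v\in K:d_0'v=\lambda v\}$ is a nonzero restricted $\mh$-submodule of $S$ on which $\mathbf c_2=\ell$, $d_0'=\lambda\,\id$ and $d_n'=0$ for $n\geq1$; in particular $\widetilde K^{\DD'}$ is a $\DD'$-module.

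Next, let $\bC v_0$ be the one-dimensional $\DD'^{(0,-\infty)}$-module with $d_0'v_0=\lambda v_0$, $d_n'v_0=h_{k+\frac12}v_0=\mathbf c_2v_0=0$ for $n\geq1$, $k\in\bZ$, and $\mathbf c_1'v_0=(c-2)v_0$. Then $\bC v_0\otimes\widetilde K^{\DD'}$ is a $\DD'^{(0,-\infty)}$-module of central charge $c-1$ and level $\ell$, and $v_0\otimes u\mapsto u$ is a $\DD'^{(0,-\infty)}$-module monomorphism into $S$, the one nontrivial check being $d_0'(v_0\otimes u)=\lambda v_0\otimes u\mapsto\lambda u=d_0'u$ (valid since $u\in\widetilde K$) and $d_n'(v_0\otimes u)=0\mapsto0=d_n'u$ for $n\geq1$ (valid since $u\in K$). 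By universality this extends to a $\DD'$-module epimorphism
\[
\tau:\Ind_{\DD'^{(0,-\infty)}}^{\DD'}\!\big(\bC v_0\otimes\widetilde K^{\DD'}\big)\longrightarrow S,
\]
surjective because $w$ generates the simple module $S$. By the tensor identity (Lemma~8 of~\cite{LZ3}) the source is $\DD'$-isomorphic to $\big(\Ind_{\Vir'^{(0)}}^{\Vir'}\bC v_0\big)^{\DD'}\otimes\widetilde K^{\DD'}$, and $V:=\Ind_{\Vir'^{(0)}}^{\Vir'}\bC v_0$ is a highest weight $\Vir'$-module of highest weight $\lambda$; this is the sole place where the eigenvector is used, and it replaces the highest-weight (annihilation) condition available in Proposition~\ref{pro4.10}.

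From here one argues exactly as in Proposition~\ref{pro4.10}: $\mathfrak K:=\ker\tau$ intersects $\bC v_0\otimes\widetilde K^{\DD'}$ trivially; since each $h_{k+\frac12}$ sends a $d_0'$-eigenvector to $0$ or to a $d_0'$-eigenvector with the same eigenvalue, the maximality argument identifies $\mathfrak K$ with the unique maximal $\Vir'$-submodule of $V^{\DD'}\otimes\widetilde K^{\DD'}$ intersecting $\bC v_0\otimes\widetilde K^{\DD'}$ trivially, and Lemma~\ref{lem4.5'} then gives $\mathfrak K=V'^{\DD'}\otimes\widetilde K^{\DD'}$, where $V'$ is the maximal $\Vir'$-submodule of $V$ with $V'\cap\bC v_0=0$. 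Therefore $S\cong(V/V')^{\DD'}\otimes\widetilde K^{\DD'}$, which forces $\widetilde K^{\DD'}$ to be a simple $\DD'$-module, hence $\widetilde K$ is a simple (restricted) $\mh$-module; then $S$ contains the simple $\mh$-submodule $\widetilde K$ and Corollary~\ref{tensor} yields $S\cong U^{\DD}\otimes\widetilde K^{\DD}$ for some simple $U\in\mathcal R_{\Vir}$. The remaining point is to identify $\widetilde K$ with $K$ itself — equivalently, to show that $d_0'$ acts on all of $K$ by the single scalar $\lambda$ — which I expect to follow by comparing the invariants $n_S$, $m_S$, $r_S$ read off from the two sides of this isomorphism. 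This reconciliation of $\widetilde K$ with $K$, together with verifying that the maximality argument for $\mathfrak K$ is unaffected when $0$ is replaced by the nonzero scalar $\lambda$, is the main obstacle.
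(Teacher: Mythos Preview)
Your approach is correct and essentially the same as the paper's; the only cosmetic difference is that the paper first observes that $S$ is a weight $\DD'$-module (being a simple $\DD'$-module with a $d_0'$-eigenvector) and then works with the cyclic $\mh$-submodule $K'=U(\mh)u_0$ rather than your full eigenspace $\widetilde K$. Neither of your flagged concerns is a genuine obstacle: Lemma~\ref{lem4.5'} requires only that $d_0$ act by \emph{some} scalar, not by $0$, and the identification $\widetilde K=K$ (which the paper also needs and dispatches with the single line ``Now it is clear that $K=K'$'') follows because on $S\cong U^{\DD}\otimes\widetilde K^{\DD}$ one has $d_n'=d_n^{U}\otimes\mathrm{id}$, so $r_S=1$ forces $U$ to be a highest weight $\Vir$-module and hence $K=\bC v_0\otimes\widetilde K\cong\widetilde K$.
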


\begin{proof}
We see that  $S$ is a weight $\DD'$-module since  $S$ is a
simple  $\DD'$-module and $d'_0$  has an eigenvector. From Lemma \ref{lem4.4}(iii), $K$ and $K_0$ are weight $\DD'$-modules as well.
We can take  some $0\ne u_0\in K$ such that $d_0'u_0=\lambda u_0$ for some $\lambda \ne 0$ by Proposition \ref{eigenvalue prop}. Set $K'=U(\mh)u_0$, which is an $\mh$ submodule of $K$. Then we have the $\DD'$-module $K'^{\DD'}$, on which $\Vir'$ acts trivially by definition for any $n\in\bZ$. Let $\bC v_0$ be the one-dimensional $\DD'^{(0,-\infty)}$-module defined by $d'_0v_0=\lambda v_0, d_n'v_0=h_{k+\frac{1}{2}}v_0={\bf c}_2v_0=0, n\in\bZ_+,k\in\bZ$, ${\bf c}_1'v_0=(c-2)v_0$. Then $\bC v_0\otimes K'^{\DD'}$ is a $\DD'^{(0,-\infty)}$-module with central charge $c-1$ and level $\ell$.  There is a $\DD'^{(0,-\infty)}$-module homomorphism
\begin{eqnarray*}
\tau_{K'}: \bC v_0\otimes K'^{\DD'}& \longrightarrow & S,\cr
 v_0\otimes u &\mapsto & u, \forall u\in K',
\end{eqnarray*}
which is injective and can be extended to be the following $\DD'$-module homomorphism
\begin{eqnarray*}
\tau:\Ind_{{\DD'}^{(0,-\infty)}}^{\DD'}(\bC v_0\otimes K'^{\DD'})&\longrightarrow & S, \cr
 x(v_0\otimes u)&\mapsto &xu, x\in  \UU(\DD'), u\in K'.
\end{eqnarray*}
Since $S$ is a simple $\DD'$ module and $\tau\ne 0$, we see that $\tau$ is surjective.
By similar arguments in the proof of Proposition \ref{pro4.10}, we can obtain that $K'$ is a simple $\mh$-module.
By Corollary \ref{tensor} we know there exists a simple $\Vir$-module $U\in {\mathcal{R}}_{\Vir}$ such that $S\cong U^{\DD}\otimes K'^{\DD}$, as desired. Now it is clear that $K=K'$.
\end{proof}

We are now in a position to present the following main result on a classification of simple restricted $\DD$-modules with nonzero level.

\begin{theo}\label{mainthm}
Let $S$ be a simple restricted $\DD$-module  with level $\ell \not=0$. The invariants $m_S, n_S, r_S$ of $S$, $U_0, U(2), K_0, K$ are defined as before. Then one of the following cases occurs.

{\rm Case 1}: $n_S=0$.

In this case,  $S\cong H^{\mathfrak{D}}\otimes U^{\mathfrak{D}}$ as $\mathfrak{D}$-modules for some  simple modules $H\in \mathcal{R}_{\mh}$ and  $U\in \mathcal{R}_{\Vir}$.

{\rm Case 2}: $n_S>0$.

In this case, we further have the following three subcases.

{\rm Subcase 2.1}: $m_S>2n_S$.

In this subcase, $S\cong \Ind^{\DD}_{\DD^{(0,-n_S)}}(U_0)$.

{\rm Subcase 2.2}: $m_S=2n_S$.

In this subcase, we have
\begin{equation*}
S\cong\begin{cases} K^{\DD}, &{\text{ if }}r_S=-\infty,\cr
U^{\DD}\otimes K^{\DD}, &{\text{ if }}-1\leq r_S\leq 1, \cr
\Ind_{\DD^{(0,-n_S)}}^{\DD}K_0, &{\text { otherwise,
}}\end{cases}
\end{equation*}
where $U\in \mathcal{R}_{\Vir}$.

{\rm Subcase 2.3}: $m_S<2n_S$.

In this subcase, we have
\begin{equation*}
S\cong\begin{cases}  \Ind^{\DD}_{\DD^{(0,-(m_S-n_S))}}(U_0),\vspace{1.5mm} &{\text{ if }}m_S\geq 2,\cr
\Ind^{\DD}_{\DD^{(0,-(2-n_S))}}(U(2)),\vspace{1.5mm} &{\text{ if }} m_S<2, n_S>1, \cr
\Ind_{\DD^{(0,-1)}}^{\DD}K_0, &{\text { otherwise.
}}\end{cases}
\end{equation*}
\end{theo}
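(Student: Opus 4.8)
The plan is to assemble the statement from the structural results already established: Lemma~\ref{lem4.2}, Lemma~\ref{lem4.4}, Proposition~\ref{prop4.3}, Proposition~\ref{prop for -inf}, Proposition~\ref{prop4.6}, Corollary~\ref{case 2}, Proposition~\ref{pro4.10} and Proposition~\ref{pro4.13}. No new computation is required; the only real work is a careful case division showing that the cases listed in the theorem are mutually exclusive and exhaustive, together with a translation of each established result into the corresponding branch of the displayed formulas.

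First I would record that $n_S,m_S\in\bN$ by Lemma~\ref{lem4.2}(ii), which legitimizes the primary split on $n_S=0$ versus $n_S>0$. Case~1 ($n_S=0$) is exactly Proposition~\ref{prop4.3}(i). For Case~2 ($n_S>0$) I would compare $m_S$ with $2n_S$. Subcase~2.1 ($m_S>2n_S$) is Proposition~\ref{prop4.3}(ii). For Subcase~2.3 ($m_S<2n_S$): Proposition~\ref{prop4.3}(iii) gives $S\cong\Ind^{\DD}_{\DD^{(0,-(m_S-n_S))}}(U_0)$ when $m_S\ge 2$ and $S\cong\Ind^{\DD}_{\DD^{(0,-(2-n_S))}}(U(2))$ when $m_S<2$ and $n_S>1$; the only remaining possibility compatible with $0<n_S$ and $m_S<2n_S$ is $m_S\le 1$ together with $n_S=1$, which is precisely the hypothesis of Corollary~\ref{case 2}, giving $S\cong\Ind_{\DD^{(0,-1)}}^{\DD}K_0$. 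These three alternatives match the three branches of the Subcase~2.3 formula.

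The delicate case is Subcase~2.2 ($m_S=2n_S$), where one passes to the twisted operators $d_n'=d_n-L_n$ and the auxiliary invariant $r_S$. By Lemma~\ref{lem4.4}(i), either $r_S=-\infty$ or $-1\le r_S\le m_S$. I would then read off the four regimes directly from the propositions: $r_S=-\infty$ gives $S=K^{\DD}$ with $K$ a simple $\mh$-module by Proposition~\ref{prop for -inf}; $r_S\in\{-1,0\}$ gives $S\cong U^{\DD}\otimes K^{\DD}$ by Proposition~\ref{pro4.10}; $r_S=1$ gives the same conclusion by Proposition~\ref{pro4.13}; and $2\le r_S\le m_S$ gives $S\cong\Ind_{\DD^{(0,-n_S)}}^{\DD}K_0$ by Proposition~\ref{prop4.6}. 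Collecting the middle three values $r_S\in\{-1,0,1\}$ into a single branch produces exactly the three-way formula of Subcase~2.2.

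The main (and essentially only) obstacle is the bookkeeping of exhaustiveness: one must confirm that in Subcase~2.2 the ranges $r_S=-\infty$, $-1\le r_S\le 1$, and $r_S\ge 2$ together cover every value permitted by Lemma~\ref{lem4.4}(i), and that in Subcase~2.3 the ``otherwise'' branch is forced to be $m_S\le 1$ with $n_S=1$ so that Corollary~\ref{case 2} indeed applies there. Once this verification is in place, the theorem is an immediate consequence of the cited results, and I would present the write-up simply as the corresponding chain of citations organized by the case tree above.
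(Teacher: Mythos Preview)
Your proposal is correct and takes essentially the same approach as the paper: the paper's own proof is the single sentence ``The assertion follows directly from Proposition~\ref{prop4.3}, Proposition~\ref{prop for -inf}, Proposition~\ref{prop4.6}, Corollary~\ref{case 2}, Proposition~\ref{pro4.10} and Proposition~\ref{pro4.13}.'' Your write-up is a faithful (and more detailed) unpacking of exactly this chain of citations, with the exhaustiveness bookkeeping made explicit.
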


\begin{proof}
The assertion follows directly from Proposition \ref{prop4.3}, Proposition \ref{prop for -inf}, Proposition \ref{prop4.6}, Corollary \ref{case 2}, Proposition \ref{pro4.10} and Proposition \ref{pro4.13}.
\end{proof}

\begin{rem}By Theorems \ref{MT} and \ref{mainthm}, we know that  any simple restricted module $S$  is a highest weight $\Vir$-module with trivial action of $\mh$, or a tensor product of a simple restricted $\Vir$-module and a simple restricted $\mh$-module, or an induced module from some simple module $M$ over certain subalgebra of $\DD$.
Moreover, $M$ can be viewed as a simple module over some finite-dimensional solvable
Lie algebra. This reduces the study of such $\DD$-modules to the study of simple modules over the corresponding
finite-dimensional solvable Lie algebras.
\end{rem}

\section{Simple restricted $\bar {\DD}$-modules with nonzero level}\label{char}

In this section we will determine all simple  restricted ${\bar {\DD}}$-modules $M$ of level $\ell\ne0$.  The main method we will use is similar to the one used in  Section 4.

For a given simple restricted $\bar {\DD}$-module $M$ with level $\ell \not=0$, we define the following invariants of $M$ as follows:
$$M(r)={\text{Ann}}_M(\bar {\mh}^{(r)}), n_M=\min\{r\in \bZ:M(r)\ne0\}, M_0=M{(n_M)}.$$

\begin{lem}\label{lem4.2'} Let $M$ be an
irreducible restricted $\bar \DD$-module  with level $\ell \not=0$.
\begin{enumerate}
\item[\rm(i)] $n_M\in\bN$, and $h_{n_M-1}$ acts injectively on $M_0$.
\item[\rm(ii)] $M_0$ is a nonzero $\bar \DD^{(0,-(n_M-1))}$-module, and is invariant under the action of the operators $\bar{L}_n$  defined in (\ref{rep1}) for $n\in\bN$.
\end{enumerate}
\end{lem}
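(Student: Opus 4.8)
The strategy is to transplant the argument of Lemma~\ref{lem4.2} from the mirror algebra $\DD$ to the twisted Heisenberg-Virasoro algebra $\bar\DD$. The only feature absent from the mirror case is the central cocycle term $\delta_{m+r,0}(m^2+m)\bar{\c}_2$ in $[d_m,h_r]$ (see (\ref{thva})) and the analogous $\delta_{m+r,0}(m^2+m)z$ in (\ref{rep3-untw}); these will play no role, since the coefficient $m^2+m=m(m+1)$ vanishes precisely when $m\in\{0,-1\}$, which is exactly the situation in which the relevant index in our computations is forced to equal $0$.

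For part~(i): since $M$ is restricted, every nonzero $v\in M$ is annihilated by $h_i$ for all sufficiently large $i$, so $M(r)\neq 0$ for $r\gg 0$; moreover the spaces $M(r)$ increase with $r$, so $\{r:M(r)\neq 0\}$ is a nonempty up-set in $\bZ$, and once we verify $n_M\geq 0$ it follows that $n_M\in\bN$. To see $n_M\geq 0$, suppose $n_M\leq -1$ and take a nonzero $v\in M_0$; then $h_1v=h_{-1}v=0$ (both indices are $\geq n_M$), whence $\ell v=[h_1,h_{-1}]v=0$, contradicting $\ell\neq 0$. Injectivity of $h_{n_M-1}$ on $M_0$ is then immediate: its kernel is $\{v\in M:h_iv=0\text{ for all }i\geq n_M-1\}=M(n_M-1)$, which is $0$ by minimality of $n_M$.

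For part~(ii): $M_0\neq 0$ by definition, and $\bar\DD^{(0,-(n_M-1))}$ is spanned by the $d_i$ ($i\in\bN$), the $h_r$ ($r\geq 1-n_M$), and the central elements, which act by scalars. Fix $v\in M_0$ and $k\in\bN$, so that $h_{k+n_M}v=0$. Using (\ref{thva}), $h_{k+n_M}d_iv=d_i(h_{k+n_M}v)+[h_{k+n_M},d_i]v=(k+n_M)h_{i+k+n_M}v-\delta_{i+k+n_M,0}(i^2+i)\bar{\c}_2v=0$, because $h_{i+k+n_M}v=0$ (index $\geq n_M$) and $\delta_{i+k+n_M,0}\neq 0$ would force $i=0$, hence $i^2+i=0$; so $d_iv\in M_0$. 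Likewise, for $j\in\bN$ the bracket $[h_{k+n_M},h_{j+1-n_M}]=(k+n_M)\delta_{k+j+1,0}\bar{\c}_3=0$ since $k+j+1\geq 1$, so $h_{k+n_M}h_{j+1-n_M}v=h_{j+1-n_M}h_{k+n_M}v=0$ and $h_{j+1-n_M}v\in M_0$. Hence $M_0$ is a $\bar\DD^{(0,-(n_M-1))}$-module.

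For the $\bar{L}_n$-invariance, take $z$ to be the scalar by which $\bar{\c}_2$ acts on the simple module $M$ and let $\bar{L}_n$ be the operator in (\ref{rep1-untw}); it is well defined on $M$ because the normally ordered sum $\sum_{k\in\bZ}:h_{n-k}h_k:$ truncates when applied to any vector of the restricted module. Then (\ref{rep3-untw}) yields, for $v\in M_0$ and $k,n\in\bN$, $h_{k+n_M}\bar{L}_nv=\bar{L}_n(h_{k+n_M}v)+[h_{k+n_M},\bar{L}_n]v=(k+n_M)h_{n+k+n_M}v-\delta_{n+k+n_M,0}(n^2+n)z\,v=0$ by the same reasoning, so $\bar{L}_nv\in M_0$. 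I do not expect a real obstacle: this is a routine adaptation of Lemma~\ref{lem4.2}, and the only points needing care are checking that each cocycle correction is identically zero on the index ranges that occur (automatic, as observed above) and keeping track of the fact that the appropriate enveloping subalgebra is $\bar\DD^{(0,-(n_M-1))}$ — whose lowest $h$-index $1-n_M$ lies one step below the annihilation threshold $n_M$ — and not $\bar\DD^{(0,-n_M)}$.
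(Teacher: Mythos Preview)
Your proof is correct and follows essentially the same route as the paper's own argument: contradiction via $[h_1,h_{-1}]=\bar\c_3$ for (i), and commutator computations against $h_{k+n_M}$ for (ii). In fact you are slightly more careful than the paper, which silently drops the $\delta_{m+r,0}(m^2+m)\bar\c_2$ and $\delta_{m+r,0}(m^2+m)z$ terms in its computations; your explicit check that these vanish (because the Kronecker delta forces $i=0$ or $n=0$, killing $i^2+i$ or $n^2+n$) is the only extra ingredient, and it is exactly right.
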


\begin{proof}

(i) Assume that $n_M<0$. Take any nonzero $v\in M_0$, we then have
$$h_{1}v=0=h_{-1}v.$$
This implies that $v=\frac{1}{\ell}[h_{1},h_{-1}]v=0$, a contradiction.  Hence, $n_M\in\bN$.

The definition of $n_M$ means that $h_{n_M-1}$ acts injectively on $M_0$.

(ii) It is obvious that $M_0\neq 0$ by definition. For any $w\in M_0$, $i, j, k\in\bN$, we have
$$h_{k+n_M}d_iw=d_ih_{k+n_M}w+(k+n_M)h_{i+k+n_M}w=0,$$
and
$$h_{k+n_M}h_{j-n_M+1}w=h_{j-n_M+1}h_{k+n_M}w=0.$$
Hence,  $d_iw, h_{j-n_M+1}w\in M_0$, i.e., $M_0$ is a nonzero $\DD^{(0,-(n_M-1))}$-module.

For $i, n\in\bN$, $w\in M_0$, noticing $n_M\ge 0$ by (i), it follows from (\ref{rep3}) that
\begin{eqnarray*}
h_{i+n_M}\bar L_nw&=\Big(\bar L_nh_{i+n_M}+(i+n_M)h_{n+i+n_M}\Big)w=0.
\end{eqnarray*}
This implies that $\bar L_nw\in M_0$ for $n\in\bN$, that is, $M_0$ is invariant under the action of the operators $\bar  L_n$ for $n\in\bN$.
\end{proof}

\begin{pro}\label{prop4.3'} Let $M$ be a simple restricted $\bar \DD$-module  with level $\ell \not=0$.
 If $n_M=0, 1$, then  $M\cong H^{\bar \DD}\otimes U^{\bar \DD}$ as $\bar \DD$-modules for some  simple modules $H\in \mathcal{R}_{\bar \mh}$ and  $U\in \mathcal{R}_{\Vir}$.
\end{pro}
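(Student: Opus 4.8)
The plan is to mimic the proof of Proposition \ref{prop4.3}(i), but accounting for the fact that in $\bar{\DD}$ the Heisenberg subalgebra $\bar{\mh}$ is $\bZ$-graded (not $\frac12+\bZ$-graded), so that $h_0$ and $\bar{\c}_2$ are genuinely part of the picture. First I would split into the two cases $n_M=0$ and $n_M=1$, the goal in each being to produce a simple restricted $\bar{\mh}$-submodule of $M$ so that Corollary \ref{tensor} (or rather its $\bar{\DD}$-analogue obtained from Theorem \ref{generalize} applied to $\bar{\DD}=\Vir\ltimes(\bar{\mh}+\bC\bar{\c}_2)$) applies. By Lemma \ref{lem4.2'}(ii), $M_0=M(n_M)$ is a nonzero $\bar{\DD}^{(0,-(n_M-1))}$-module on which $h_{n_M-1}$ acts injectively and $h_iM_0=0$ for all $i\ge n_M$.

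In the case $n_M=1$: here $h_i M_0=0$ for $i\ge1$ while $h_0$ acts injectively on $M_0$. Pick any nonzero weight-type vector; more precisely, take any nonzero $v\in M_0$ and let $H=U(\bar{\mh})v$ be the $\bar{\mh}$-submodule of $M$ generated by $v$. I would show $H$ is simple by exhibiting that the natural surjection $\Ind^{\bar{\mh}}_{\bar{\mh}^{(0)}+\bC h_0}(\bC v)\to H$ is an isomorphism — the relevant fact being that, for a Heisenberg algebra with nonzero central charge, the induced module from the subalgebra spanned by $h_0$ together with the strictly-positive part (with $h_i$ acting by $0$ for $i>0$ and $h_0$ acting injectively) is simple; this is the same degree-lowering argument as in Lemma \ref{main1'}, using the bracket $[h_{-p},h^{\bfk}]=\lambda h^{\bfk-\epsilon_p}$ with $\lambda\ne0$ since $\ell\ne0$, together with injectivity of $h_0$ to handle the lowest-degree term. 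Then $H\in\R_{\bar{\mh}}$ is simple and restricted, and the $\bar{\DD}$-version of Corollary \ref{tensor} gives $M\cong H^{\bar{\DD}}\otimes U^{\bar{\DD}}$ for a simple $U\in\R_{\Vir}$.

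In the case $n_M=0$: now $h_iM_0=0$ for all $i\ge0$, i.e.\ the positive part \emph{and} $h_0$ annihilate $M_0$, so every nonzero $v\in M_0$ spans a trivial $(\bar{\mh}^{(0)}+\bC h_0)$-module. Letting $H=U(\bar{\mh})v$, essentially the same argument as above — but now with the even simpler fact that $\Ind$ from the non-negative part with trivial action is a simple $\bar{\mh}$-module (when $\ell\ne0$) — shows $H$ is a simple restricted $\bar{\mh}$-module, and again Corollary \ref{tensor}'s analogue finishes it. One subtlety is that $\bar{\c}_2$ is not in $\bar{\mh}$ but sits in the ideal $\bar{\mh}+\bC\bar{\c}_2$; since $\bar{\c}_2$ is central in $\bar{\DD}$ and $M$ is simple, it acts by a scalar $z$, and one checks $H$ extends to a module over $\bar{\mh}+\bC\bar{\c}_2$ with $\bar{\c}_2$ acting by $z$, which is what is needed to invoke Theorem \ref{generalize} with $\mathfrak{b}=\bar{\mh}+\bC\bar{\c}_2$.

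The main obstacle I anticipate is verifying that the induced $\bar{\mh}$-module is simple in the $n_M=1$ subcase: unlike the $\frac12+\bZ$-graded twisted Heisenberg algebra of $\DD$, here the degree-zero generator $h_0$ genuinely participates, so the PBW-filtration/degree argument must be set up so that the ``lowest'' monomials (in the $d$- and $h$-variables of negative index) are peeled off correctly, and one must use injectivity of $h_0$ — not just of some $h_{k}$ with $k>0$ — to conclude. Once that lemma is in hand, the passage to the tensor-product decomposition is formal via Theorem \ref{generalize} and its corollary.
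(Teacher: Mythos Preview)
Your approach is correct and essentially the same as the paper's: both produce a simple restricted $\bar{\mh}$-submodule $H=U(\bar{\mh})v$ for some nonzero $v\in M_0$ and then invoke the tensor-decomposition theorem (the paper cites \cite[Theorem~12]{LZ3} directly, which is exactly the $\bar{\DD}$-analogue of Corollary~\ref{tensor} you describe). The obstacle you anticipate is illusory: from $[d_m,h_0]=-0\cdot h_m+\delta_{m,0}(0)\bar{\c}_2=0$ one sees that $h_0$ is central in all of $\bar{\DD}$, so by Schur it acts as a scalar on $M$; thus $\bC v$ is already a one-dimensional $\bar{\mh}^{(0)}$-module in both cases $n_M=0,1$ (no case split needed), and simplicity of $\Ind^{\bar{\mh}}_{\bar{\mh}^{(0)}}(\bC v)$ is the standard Heisenberg Verma-module fact for nonzero level, whose degree-lowering argument uses only $[h_p,h_{-p}]=p\ell$ for $p>0$ and never touches $h_0$.
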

\begin{proof}
Since  $n_M=0, 1$, we  take any nonzero $v\in M_0$. Then $\bC v$ is a   $\bar \mh^{(0)}$-module. Let $H=\UU(\bar \mh)v$, the $\bar \mh$-submodule of $M$ generated by $v$. It follows from representation theory of Heisenberg algebras  that $\Ind^{\bar \mh}_{\bar \mh^{(0)}}(\bC v)$ is a simple $\bar \mh$-module.  Consequently, the following surjective $\bar \mh$-module homomorphism
\begin{eqnarray*}
\varphi:\, \Ind^{\bar \mh}_{\bar \mh^{(0)}}(\bC v) &\longrightarrow & H\\
\sum_{\mi\in\mathbb{M}}a_{\mi} h^{\mi}\otimes v&\mapsto &
\sum_{\mi\in\mathbb{M}} a_{\mi} h^{\mi} v
\end{eqnarray*}
is an isomorphism, that is, $H$ is a simple $\bar \mh$-module, which is certainly restricted. Then the desired assertion follows directly from  \cite[Theorem 12]{LZ3}.
\end{proof}

Next we assume that $n_M\ge 2$.

We define the operators $d_n'=d_n-\bar L_n$ on $M$ for $n\in\bZ$.  Since $M$ is a restricted $\bar  \DD$-module, then $d_n'$ is well-defined for any $n\in\bZ$. By (\ref{rep3}) and (\ref{rep3'}), we have
\begin{equation}\label{vir-bracket'}
[d_m',{\bar \c}'_1]=0,
[d_m',d_n']=(m-n)d_{m+n}'+\frac{m^3-m}{12}\delta_{m+n,0}{\bar \c}'_1, m,n\in\bZ,
\end{equation}where  ${\bar \c}'_1=c-(1-\frac{12z^2}{\ell})\text{id}_M$ and $c$ is the central charge of $M$. So the operator algebra
$$\Vir'=\bigoplus_{n\in\bZ}\bC d_n'\oplus \bC{\bar \c}'_1$$
 is isomorphic to the Virasoro algebra $\Vir$.  Since $[d_n,h_{k}]=[\bar L_n,h_{k}]=-kh_{n+k}+\delta_{n+k,0}(n^2+n){\bar \c}_2,$ we have
 \begin{equation}\label{d'bracket}
 [d'_n,h_{k}]=0, n, k\in\bZ\end{equation}
  and hence  $[\Vir',\bar \mh+\bC {\bar \c}_2]=0$. Clearly, the operator algebra
 $\bar \DD'=\Vir'\oplus  (\bar \mh+\bC {\bar \c}_2)$ is a direct sum, and $M=\UU(\bar \DD)v=\UU(\bar \DD')v$ for any $ v\in M\setminus\{0\}$. Let
$$Y_n=\bigcap_{p\ge n}{\rm Ann}_{M_0}(d_p'),       r_M=\min\{n\in\bZ:Y_n\ne0\}, K_0=Y_{r_M}. $$
Noting that $M$ is a restricted $\bar \DD$-module, we know that $r_M<+\infty$. If $Y_n\ne0$ for any $n\in\bZ$, we define $r_M=-\infty$.
Denote by $K=\UU(\bar \mh)K_0$.

\begin{lem}\label{lem4.4'}
Let $M$ be a simple restricted $\bar \DD$-module  with level $\ell \not=0$. Then the following statements hold.
\begin{enumerate}
\item[\rm(i)] $ r_M\ge -1$ or $r_M=-\infty$.
\item[\rm(ii)] If $r_M\ge -1$, then $K_0$ is a $\bar \DD^{(0,-(n_M-1))}$-module  and $h_{n_M-1}$ acts injectively on $K_0$.
\item[\rm(iii)] $K$ is a $\bar \DD^{(0,-\infty)}$-module and $K(z)^{\bar \DD}$ has a $\bar \DD$-module structure by (\ref{rep1})-(\ref{rep2}).
\item[\rm{(iv)}] $K_0$ and $K$ are invariant under the actions of $L_n$ and $d_n'$ for $n\in\bN$.
\item[\rm(v)] If $r_M\ne -\infty$, then $d'_{r_M-1}$ acts injectively on $K_0$ and $K$.
\end{enumerate}
\end{lem}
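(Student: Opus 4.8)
The plan is to mimic the proof of Lemma~\ref{lem4.4} essentially verbatim. The structural facts I will use repeatedly are: on $M$ the operators $d_n'=d_n-\bar L_n$ commute with $\bar\mh+\bC\bar\c_2$ by (\ref{d'bracket}), they span a copy $\Vir'$ of the Virasoro algebra with central element $\bar\c'_1$ (a scalar operator) by (\ref{vir-bracket'}), and they also satisfy $[d_m',\bar L_n]=[d_m,\bar L_n]-[\bar L_m,\bar L_n]=0$ by (\ref{rep3'}); here $z$ denotes the scalar by which $\bar\c_2$ acts on $M$. Note also that if $r_M=-\infty$ then $\Vir'K_0=0$, which by (\ref{vir-bracket'}) forces $\bar\c'_1=0$.

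For (i): suppose $r_M$ is finite with $r_M\le-2$ and pick $0\ne v\in K_0=Y_{r_M}$, so $d_p'v=0$ for all $p\ge r_M$; in particular $d_{-1}'v=d_{-2}'v=0$ and $d_p'v=0$ for every $p\ge0$. Iterating $[d_{-1}',d_{-j}']=(j-1)d_{-1-j}'$ gives $d_{-n}'v=0$ for all $n\ge1$, hence $d_p'v=0$ for every $p\in\bZ$, i.e., $\Vir'v=0$; but then $v\in Y_n$ for all $n$, contradicting $r_M<+\infty$. So $r_M\ge-1$ or $r_M=-\infty$. For (ii), assume $r_M\ge-1$, fix $0\ne v\in K_0$ and $x\in\bar\DD^{(0,-(n_M-1))}$. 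By Lemma~\ref{lem4.2'}(ii), $xv\in M_0$, so it suffices to show $d_p'(xv)=0$ for $p\ge r_M$. If $x=h_k$ this is $h_kd_p'v=0$ by (\ref{d'bracket}); if $x$ is central it is clear; if $x=d_n$ with $n\ge0$ then $d_p'(d_nv)=(p-n)d_{p+n}'v+\tfrac{p^3-p}{12}\delta_{p+n,0}\bar\c'_1v$, where the first term vanishes since $p+n\ge r_M$, while $p+n=0$ with $p\ge r_M\ge-1$ and $n\ge0$ forces $p\in\{0,-1\}$, for which $p^3-p=0$. Hence $xv\in Y_{r_M}=K_0$, so $K_0$ is a $\bar\DD^{(0,-(n_M-1))}$-submodule of $M_0$, and injectivity of $h_{n_M-1}$ on $K_0\subseteq M_0$ is inherited from Lemma~\ref{lem4.2'}(i). (When $r_M=-\infty$ the same computation applies, the surviving central term being killed by $\bar\c'_1=0$.)

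Part (iii) then follows: $d_iK_0\subseteq K_0$ for $i\ge0$ is a case of (ii), and together with $[d_i,\bar\mh]\subseteq\bar\mh+\bC\bar\c_2$ a monomial computation shows $K=\UU(\bar\mh)K_0$ is stable under the $d_i$ ($i\ge0$) and under the central elements, so $K$ is a $\bar\DD^{(0,-\infty)}$-module; being a restricted $\bar\mh$-submodule of $M$ on which $\bar\c_3$ acts as $\ell\ne0$, it is then equipped with a $\bar\DD$-module structure $K(z)^{\bar\DD}$ via (\ref{rep1-untw})--(\ref{rep2-untw}). For (iv), $[d_p',\bar L_n]=0$ gives $d_p'(\bar L_nv)=\bar L_nd_p'v=0$ for $p\ge r_M$, so $\bar L_nK_0\subseteq K_0$ (using Lemma~\ref{lem4.2'}(ii) for $\bar L_nv\in M_0$), whence $d_n'K_0=d_nK_0-\bar L_nK_0\subseteq K_0$ for $n\in\bN$; the same bookkeeping as for (iii), now using $[\bar L_n,\bar\mh]\subseteq\bar\mh+\bC\bar\c_2$ and $[d_n',\bar\mh]=0$, propagates invariance to $K$. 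Finally (v) is immediate from the definition of $r_M$: if $d_{r_M-1}'v=0$ with $v\in K_0$ then $v\in Y_{r_M-1}=0$; since $d_{r_M-1}'$ commutes with $\bar\mh$ and (when $r_M-1\ge0$) preserves $K_0$, injectivity passes from $K_0$ to $K=\UU(\bar\mh)K_0$ exactly as in the proof of Proposition~\ref{prop4.6}.

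The main obstacle is (i): one must observe that $d_{-1}'$ and $d_{-2}'$ already force all negative modes $d_{-n}'$ to annihilate $v$, so that the hypothesis ``$d_p'v=0$ for $p\ge-2$'' bootstraps to $\Vir'v=0$. Everything else is routine commutator bookkeeping, the only delicate point being the cancellation of the spurious Virasoro central terms in (ii), which is guaranteed by the narrow index range once $r_M\ge-1$ and by $\bar\c'_1=0$ when $r_M=-\infty$.
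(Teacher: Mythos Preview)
Your proof is correct and follows essentially the same route as the paper's. A few minor remarks: your treatment of (ii) is actually more careful than the paper's, which silently drops the central Virasoro term in $[d_p',d_n']$; your observation that $p\in\{0,-1\}$ forces $p^3-p=0$ is exactly the missing check. For (iv) the paper instead computes $\bar L_n$ explicitly as a finite sum on $K_0$ and uses (ii), whereas you use the cleaner identity $[d_p',\bar L_n]=0$; both work. For (v) your forward reference should be to Proposition~\ref{prop4.6'} (the $\bar\DD$-version) rather than Proposition~\ref{prop4.6}, since it is the Claim there (the induced-module isomorphism $K\cong \Ind_{\bar\mh^{(-(n_M-1))}}^{\bar\mh}K_0$) that makes injectivity pass from $K_0$ to $K$; that Claim does not use (v), so there is no circularity.
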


\begin{proof} (i) If $Y_{-2}\ne 0$, then $d'_{p}K_0=0, p\ge -2$. We deduce that $\Vir' K_0=0$ and hence $r_M=-\infty$.

If $Y_{-2}=0$, then $r_M\ge -1$.

(ii) For any $0\ne v\in K_0$ and $x\in {\bar \DD^{(0,-(n_M-1))}}$, it follows from Lemma \ref{lem4.2'}(ii) that $xv\in M_0$. We need to show that $d'_pxv=0, p\ge r_M$. Indeed, $d_p'h_{k}v=h_{k}d_p'v=0$ by (\ref{d'bracket}) for any $k\geq -(n_M-1)$. Moreover, it follows from (\ref{rep3'}) and  (\ref{vir-bracket'}) that
$$d_p'd_nv=d_nd_p'v+[d_p', d_n]v=(n-p)d_{p+n}'v=0, \forall n\in\bN.$$
Hence, $d'_pxv=0, p\ge r_M$, that is, $xv\in K_0$, as desired.

Since $0\ne K_0  \subseteq M_0$, we see that  $h_{n_M-1}$ acts injectively on $K_0$ by Lemma \ref{lem4.2'}(i).

(iii) follows from (ii).

(iv) Note that if $n_M=0$, then $\bar L_nK_0=0$ for any $n\in\bN$. For $n_M>0$ we compute  that  $$\bar L_n=\frac{1}{2\ell}\sum_{k\in\bZ}:h_{n-k}h_k:+\frac{(n+1)z}{\ell}h_n=\frac{1}{2\ell}\sum_{-(n_M-1)\le k\le n_M-1}:h_{n-k}h_k:+\frac{(n+1)z}{\ell}h_n, n\in\bN.$$
We see $\bar L_nK_0\subset K_0$ and $\bar L_nK\subset K$ by (ii), and hence $d'_nK_0\subset K_0$ and $d'_nK\subset K$.


(v) follows directly from the definitions of $r_M$ and $K$.
\end{proof}

We first consider the case $r_M=-\infty$.

\begin{pro}\label{pro3.5'}
Let $M$ be a
simple restricted $\bar \DD$-module with central charge $c$ and level $\ell \not=0$. If $r_M=-\infty$, then $M= K(z)^{\bar \DD}$ for some $z\in\bC$. Hence $c=1-\frac{12z^2}{\ell}$
 and $K$ is a simple $\bar \mh$-module.
\end{pro}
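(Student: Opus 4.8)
The plan is to follow the pattern of the proof of Proposition~\ref{prop for -inf}, treating the extra central element $\bar{\c}_2$ along the way. Since $M$ is simple and $\bar{\c}_2$ is central, $\bar{\c}_2$ acts on $M$ as a scalar $z\in\bC$, and this is the $z$ appearing in the statement. First I would record, exactly as in the proof of Lemma~\ref{lem4.4'}(i), that the hypothesis $r_M=-\infty$ forces $Y_{-2}\neq 0$ and hence $K_0=\bigcap_{p\in\bZ}{\rm Ann}_{M_0}(d_p')=Y_{-2}\neq 0$: the point is that $\{d_p':p\geq -2\}$ generates $\Vir'$ as a Lie algebra (one recovers $d_{-p}'=\tfrac{1}{p-2}[d_{-1}',d_{-(p-1)}']$ for $p\geq 3$ and $\bar{\c}_1'=2[d_2',d_{-2}']-8d_0'$), so if each $d_p'$ with $p\geq -2$ annihilates $K_0$ then so does all of $\Vir'$. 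In particular $\bar{\c}_1'K_0=0$; since $\bar{\c}_1'=c-(1-\tfrac{12z^2}{\ell})\,\mathrm{id}_M$ by \eqref{vir-bracket'} and $K_0\neq 0$, this already gives $c=1-\tfrac{12z^2}{\ell}$.

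Next I would upgrade $\Vir'K_0=0$ to $\Vir'K=0$, where $K=\UU(\bar{\mh})K_0$. This is immediate from $[\Vir',\bar{\mh}+\bC\bar{\c}_2]=0$ (see \eqref{d'bracket}): for $x\in\UU(\bar{\mh})$ and $v\in K_0$ one has $d_n'(xv)=x\,d_n'v+[d_n',x]v=0$. Hence $d_n$ acts on $K$ exactly as $\bar{L}_n=\tfrac{1}{2\ell}\sum_{k\in\bZ}:h_{n-k}h_k:+\tfrac{(n+1)z}{\ell}h_n$ for every $n\in\bZ$. Because $M$ is restricted, for each $v\in K$ only finitely many terms of this expression act nontrivially on $v$, and every surviving term is a product of operators $h_k$, which preserve the $\bar{\mh}$-submodule $K$; thus $\bar{L}_nK\subseteq K$, and together with $\bar{\mh}K\subseteq K$ and $\bar{\c}_iK\subseteq K$ this shows $K$ is a $\bar{\DD}$-submodule of $M$. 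Simplicity of $M$ together with $K\supseteq K_0\neq 0$ gives $K=M$, and matching the actions of $d_n,h_r,\bar{\c}_1,\bar{\c}_2,\bar{\c}_3$ against \eqref{rep1-untw}--\eqref{rep2-untw} identifies $M$ with $K(z)^{\bar{\DD}}$.

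Finally, to conclude that $K$ is a simple $\bar{\mh}$-module I would show that every nonzero $\bar{\mh}$-submodule $N$ of $M$ is in fact a $\bar{\DD}$-submodule: $\bar{\c}_iN\subseteq N$ is trivial, and $d_nN=\bar{L}_nN\subseteq N$ by the same finiteness argument as above, so $N=M=K$ by simplicity of $M$. I do not foresee a real obstacle here; the only points that require some care are the tracking of the central charge through the identity $\bar{\c}_1'=\bar{\c}_1-(1-\tfrac{12z^2}{\ell})\,\mathrm{id}_M$ and the well-definedness of $\bar{L}_n$ as an operator preserving $\bar{\mh}$-submodules of the restricted module $M$ --- both routine, and already implicit in \eqref{rep1-untw}--\eqref{rep3-untw} and Lemma~\ref{lem4.4'}.
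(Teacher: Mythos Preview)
Your proposal is correct and follows essentially the same route as the paper's proof: from $r_M=-\infty$ deduce $\Vir'K_0=0$ (hence the central charge identity via $\bar{\c}_1'K_0=0$), extend to $\Vir'K=0$ using $[\Vir',\bar{\mh}+\bC\bar{\c}_2]=0$, conclude that $K$ is a $\bar{\DD}$-submodule equal to $M$ by simplicity, and identify it with $K(z)^{\bar{\DD}}$. You have simply supplied more detail than the paper does---in particular the explicit generation of $\Vir'$ from $\{d_p':p\geq -2\}$ and the closing argument for simplicity of $K$ as an $\bar{\mh}$-module---but the underlying strategy is identical.
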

\begin{proof}
Since $r_M=-\infty$, we see that $\Vir' K_0=0$. This together with (\ref{vir-bracket'}) implies that $c=1-\frac{12z^2}{\ell}$.  Noting that $[\Vir',\bar  \mh+\bC{ \bar \c}_2]=0$, we further obtain that $\Vir' K=0$, that is,  $d_nv=\bar L_nv\in K$ for any $v\in K$ and $n\in\bZ$. Hence $K(z)^{\bar \DD}$ is a $\bar \DD$-submodule of $M$, yielding that $M= K(z)^{\bar \DD}$. In particular, $K$ is a simple $\bar \mh$-module.
\end{proof}

\begin{pro}\label{prop4.6'}
Let $M$ be a simple restricted $\bar \DD$-module  with level $\ell \not=0$. If $r_M\ge 2$ and $n_M\ge 2$, then
$K_0$ is a simple $\bar \DD^{(0,-(n_M-1))}$-module and
$M\cong \Ind_{\bar \DD^{(0,-(n_M-1))}}^{\bar \DD}K_0$.
\end{pro}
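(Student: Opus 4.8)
## Proof Proposal for Proposition \ref{prop4.6'}

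The plan is to mirror exactly the argument of Proposition \ref{prop4.6} from Section 4, transferring each step to the $\bar\DD$ setting. First I would establish that $\mathrm{Ind}_{\bar\DD^{(0,-(n_M-1))}}^{\bar\DD^{(0,-\infty)}}K_0 \cong K$ as $\bar\DD^{(0,-\infty)}$-modules. Define the natural epimorphism $\phi$ sending $\sum_{\bfk} h^{\bfk}\otimes v_{\bfk} \mapsto \sum_{\bfk} h^{\bfk} v_{\bfk}$, where the $h^{\bfk}$ are ordered monomials in $h_{-1-(n_M-1)}, h_{-2-(n_M-1)},\dots$. This map restricts to an isomorphism on $K_0$. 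The key point is that any nonzero $\bar\DD^{(0,-\infty)}$-submodule of the induced module meets $K_0$ nontrivially; this follows from the analogue of Lemma \ref{main1'} for $\bar\DD$ (using condition that $h_{n_M-1}$ acts injectively on $K_0$, guaranteed by Lemma \ref{lem4.4'}(ii), together with the fact that $h_i K_0 = 0$ for $i \geq n_M$). Hence $\ker\phi = 0$ and $\phi$ is an isomorphism.

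Next, I would pass from $\bar\DD^{(0,-\infty)}$ up to $\bar\DD$ using the direct sum decomposition $\bar\DD' = \Vir' \oplus (\bar\mh + \bC\bar\c_2)$ established just before Lemma \ref{lem4.4'}. As $\bar\DD$-modules (equivalently $\bar\DD'$-modules, since $M = \UU(\bar\DD)v = \UU(\bar\DD')v$), we have
\begin{equation*}
\mathrm{Ind}_{\bar\DD^{(0,-(n_M-1))}}^{\bar\DD}K_0 \cong \mathrm{Ind}_{\bar\DD^{(0,-\infty)}}^{\bar\DD}\big(\mathrm{Ind}_{\bar\DD^{(0,-(n_M-1))}}^{\bar\DD^{(0,-\infty)}}K_0\big) \cong \mathrm{Ind}_{\bar\DD^{(0,-\infty)}}^{\bar\DD}K,
\end{equation*}
and, as vector spaces, $\mathrm{Ind}_{\bar\DD^{(0,-\infty)}}^{\bar\DD}K \cong \mathrm{Ind}_{\Vir'^{(0)}}^{\Vir'}K$ (since the only generators of $\bar\DD'$ not already in $\bar\DD^{(0,-\infty)}$ are the $d'_n$ with $n < 0$, which generate $\Vir'^{-}$). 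Then I construct the canonical $\bar\DD$-module epimorphism $\pi: \mathrm{Ind}_{\Vir'^{(0)}}^{\Vir'}K \to M$, $\sum_{\bfl} d'^{\bfl}\otimes v_{\bfl} \mapsto \sum_{\bfl} d'^{\bfl} v_{\bfl}$, which is also a $\Vir'$-module epimorphism and is one-to-one on $K$.

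The crux is then invoking the proof of Theorem 2.1 in \cite{MZ2}: any nonzero $\Vir'$-submodule of $\mathrm{Ind}_{\Vir'^{(0)}}^{\Vir'}K$ contains nonzero elements of $K$. This requires knowing that $d'_{r_M-1}$ acts injectively on $K$, which is exactly Lemma \ref{lem4.4'}(v), valid since $r_M \geq 2$ means $r_M \ne -\infty$. Given this, the image of any nonzero $\bar\DD$-submodule of $\mathrm{Ind}_{\bar\DD^{(0,-\infty)}}^{\bar\DD}K$ is a nonzero $\bar\DD$-submodule of the simple module $M$, hence all of $M$; so $\ker\pi$ is a proper $\bar\DD$-submodule meeting $K$ trivially, forcing $\ker\pi = 0$. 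Thus $\pi$ is an isomorphism, $M \cong \mathrm{Ind}_{\bar\DD^{(0,-(n_M-1))}}^{\bar\DD}K_0$, and simplicity of $M$ forces $K_0$ to be a simple $\bar\DD^{(0,-(n_M-1))}$-module. The main obstacle I anticipate is verifying carefully that the degree/leading-term estimates of Lemma \ref{main1'} go through verbatim in the $\bar\DD$ setting — in particular that the extra central term $\delta_{m+r,0}(m^2+m)\bar\c_2$ in $[d_m,h_r]$ and the fact that $\bar\mh$ has integer (rather than half-integer) indices do not disturb the argument; but since only the injectivity of $h_{n_M-1}$ and the vanishing $h_iK_0=0$ for $i\geq n_M$ are used, together with the level $\ell \ne 0$ ensuring nondegeneracy of the Heisenberg bracket, this should cause no genuine difficulty. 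The hypothesis $n_M \ge 2$ is used to ensure $\bar\DD^{(0,-(n_M-1))}$ is genuinely a proper subalgebra containing no negative-index $h$'s, keeping the $\mathrm{PBW}$ bookkeeping clean.
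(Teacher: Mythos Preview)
Your proposal is correct and follows essentially the same approach as the paper's proof. The only cosmetic difference is that the paper proves the ``every nonzero submodule of $\mathrm{Ind}_{\bar\DD^{(0,-(n_M-1))}}^{\bar\DD^{(0,-\infty)}}K_0$ meets $K_0$'' claim directly inside the proof (via a short Heisenberg-only degree argument using $[h_i,h_{j_1}\cdots h_{j_t}]=\sum_s \delta_{i+j_s,0}\, i\,\bar\c_3\, h_{j_1}\cdots\widehat{h_{j_s}}\cdots h_{j_t}$) rather than invoking an analogue of Lemma~\ref{main1'}; since no $d$'s appear in this intermediate induction step, the needed estimate is strictly easier than Lemma~\ref{main1'}, so your anticipated obstacle does not arise.
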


\begin{proof}We first show that  $\Ind_{\bar \DD^{(0,-(n_M-1))}}^{\bar \DD^{(0,-\infty)}}K_0 \cong K$ as  $\bar \DD^{(0,-\infty)}$ modules. For that, let
\begin{eqnarray*}
\phi:\, \Ind_{\bar \DD^{(0,-(n_M-1))}}^{\bar \DD^{(0,-\infty)}}K_0  &\longrightarrow &K\\
\sum_{\bfk\in\mathbb{M}} h^{\bfk}\otimes v_{\bfk}&\mapsto &
\sum_{\bfk\in\mathbb{M}} h^{\bfk} v_{\bfk},
\end{eqnarray*} where $h^{\bfk}=\cdots h^{k_2}_{-2-(n_M-1)}h_{-1-(n_M-1)}^{k_1}\in\UU(\bar \mh)$. Then $\phi$ is a  $\bar \DD^{(0,-\infty)}$-module epimorphism and $\phi|_{K_0}$ is one-to-one.

{\bf Claim}. Any nonzero submodule $V$ of $\Ind_{\bar \DD^{(0,-(n_M-1))}}^{\bar \DD^{(0,-\infty)}}K_0$ does not  intersect with $K_0$ trivially.

Assume $V\cap K_0=0$. Let $v=\sum_{\bfk\in\mathbb{M}} h^{\bfk}\otimes v_{\bfk}\in V\backslash K_0 $ with  minimal degree $\bfi$. Then $\bf 0\prec \bfi$.

Let $p=\text{min}\{s:i_s\ne 0\}$. Since  $h_{p+n_M-1}v_{\bfk}=0$, we have $h_{p+n_M-1}h^{\bfk}v_{\bfk}=[h_{p+n_M-1},h^{\bfk}]v_{\bfk}$.
The following equality
\begin{equation*}
[h_{i},h_{j_1}h_{j_2}\cdots h_{j_t}]=\sum_{1\le s\le t}\delta_{i+j_s,0}i {\bar \c}_3 h_{j_1}\cdots \hat{h}_{j_{s}}\cdots h_{j_t}, \,i, j_1\le j_2\le\cdots\le j_t\in\bZ
\end{equation*} implies that if  $k_p=0$ then $h_{p+n_M-1}h^{\bfk}v_{\bfk}=0$; and if $k_p\ne 0$, noticing
the level $\ell\ne 0$,
then $[h_{p+n},h^{\bfk}]=\lambda h^{\bfk-\epsilon_p}$ for
some  $\lambda\in\bC^*$ and hence $$\text{deg}([h_{p+n_M-1},h^{\bfk}]v_{\bfk})=\bfk-\epsilon_p\preceq \bfi-\epsilon_p,$$
where the equality holds if and only if $\bfk=\bfi$. Hence $\deg(h_{p+n_M-1}v)=\bfi-\epsilon_p\prec \bfi$ and $h_{p+n_M-1}v\in V$,   contrary to the choice of $v$. Thus, the claim holds.

From the Claim we know that
the kernel of $\phi$ must be zero and  hence $\phi$ is an isomorphism.

By Lemma \ref{lem4.4'}(v), we see that
$d_{r_M-1}'$ acts injectively on   $K$.

As $\bar \DD$-modules,
$$\Ind_{\bar \DD^{(0,-(n_M-1))}}^{\bar \DD}K_0\cong\Ind_{\bar \DD^{(0,-\infty)}}^{\bar \DD}(\Ind_{\bar \DD^{(0,-(n_M-1))}}^{\bar \DD^{(0,-\infty)}}K_0)\cong\Ind_{\bar \DD^{(0,-\infty)}}^{\bar \DD}K.$$
And we further have $\Ind_{\bar \DD^{(0,-\infty)}}^{\bar \DD}K\cong \Ind_{\Vir'^{(0)}}^{\Vir'}K$ as vector spaces. Moreover, we have the following $\bar \DD$-module epimorphism
\begin{eqnarray*}
\pi: \Ind_{\bar \DD^{(0,-\infty)}}^{\bar \DD}K=\Ind_{\Vir'^{(0)}}^{\Vir'}K&\rightarrow& M,\cr
\sum_{\bfl\in\mathbb{M}}d'^{\bfl}\otimes v_{\bfl}&\mapsto& \sum_{\bfl\in\mathbb{M}}d'^{\bfl} v_{\bfl},
\end{eqnarray*}
where $d'^{\bfl}=\cdots (d'_{-2})^{l_2}(d'_{-1})^{l_1}$.
We see that $\pi$ is also a $\Vir'$-module epimorphism. By the proof of Theorem 2.1 in \cite{MZ2} we know that any nonzero $\Vir'$-submodule of $\Ind_{\Vir'^{(0)}}^{\Vir'}K$ contain nonzero vectors of $K$. Note that $\pi|_K$ is one-to-one, we see  that the image of any nonzero $\bar \DD$-submodule (and hence $\Vir'$-submodule) of $\Ind_{\bar \DD^{(0,-\infty)}}^{\bar \DD}K$  must be a nonzero $\bar \DD$-submodule of $M$ and hence be the  whole module $M$, which forces that the kernel of $\pi$ must be  $ 0$. Therefore, $\pi$ is an isomorphism. Since $M$ is simple, we see  $K_0$ is a simple $\bar \DD^{(0,-(n_M-1))}$-module.\end{proof}

\begin{pro}\label{eigenvalue prop'} Let $M$ be a simple restricted $\bar \DD$-module  with level $\ell \not=0$.   If  $r_M=1$, then $d_0'$ has an  eigenvector in $K$.
\end{pro}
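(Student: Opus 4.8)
The argument will parallel the proof of Proposition \ref{eigenvalue prop}, with $\DD,\DD',n_S,r_S,\mh,S$ replaced throughout by $\bar\DD,\bar\DD',n_M,r_M,\bar\mh,M$; recall that we are in the situation $n_M\ge2$, $r_M=1$. From Lemma \ref{lem4.4'} we know that $K_0$ is a $\bar\DD^{(0,-(n_M-1))}$-module on which $h_{n_M-1}$ acts injectively, that $K_0$ and $K$ are stable under $d_p'$ for $p\in\bN$, and that $r_M=1$ forces $d_p'K_0=0$ for all $p\ge1$; since $d_p'$ commutes with $\bar\mh$ and with $\bar\c_2$, this propagates to $d_p'K=0$ for all $p\ge1$. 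Hence $K$ is a $\Vir'^{(0)}$-module with $d_i'K=0$ for $i\ge1$ and $\bar\c_1'$ acting as the scalar $c-(1-\frac{12z^2}{\ell})$, and (by the argument of the Claim in the proof of Proposition \ref{prop4.6'}, which uses only that $h_{n_M-1}$ acts injectively on $K_0$) $K\cong\Ind_{\bar\mh^{(-(n_M-1))}}^{\bar\mh}K_0$.

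The plan is to split into two cases according to whether every finitely generated $\bC[d_0']$-submodule of $K$ is free over $\bC[d_0']$. Suppose first that it is. Since $\Vir'\cong\Vir$, Lemma \ref{lem4.11} applies to the $\Vir'$-module $\Ind_{\Vir'^{(0)}}^{\Vir'}K$ and shows that every nonzero $\Vir'$-submodule of it meets $K$ nontrivially. The natural $\bar\DD'$-module map
$$\tau:\ \Ind_{\bar\DD'^{(0,-\infty)}}^{\bar\DD'}K=\Ind_{\Vir'^{(0)}}^{\Vir'}K\longrightarrow M,\qquad x\otimes u\mapsto xu,$$
is surjective ($M$ is simple and $\tau\neq0$), and its kernel, being a $\Vir'$-submodule that meets $K$ trivially (as $\tau|_K$ is injective), must be zero. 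So $M\cong\Ind_{\bar\DD'^{(0,-\infty)}}^{\bar\DD'}K$, which forces $K$ to be irreducible as a $\bar\DD'^{(0,-\infty)}$-module, hence, since $d_i'K=0$ for $i\ge1$, irreducible as a module over the countable-dimensional Lie algebra $\bar\mh\oplus\bC d_0'$ (with all central elements acting by scalars). As $d_0'$ is central there, it acts on $K$ as a scalar $\mu$. If $\mu=0$ then $d_0'K_0=0$, so $K_0\subseteq Y_0=0$, contradicting $K_0\neq0$; hence $\mu\neq0$ and every nonzero vector of $K$ is a $d_0'$-eigenvector (so this case does not in fact arise, though we shall not need that).

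Now suppose instead that $K$ has a finitely generated $\bC[d_0']$-submodule $N$ that is not free. Since $\bC[d_0']$ is a principal ideal domain, the structure theorem for finitely generated modules over a PID produces a monic polynomial of positive degree, which we factor as $f(d_0')=(d_0'-\la_1)(d_0'-\la_2)\cdots(d_0'-\la_p)$, together with a nonzero $u\in N$ satisfying $f(d_0')u=0$. Choosing $s$ maximal with $w:=\prod_{i=s+1}^{p}(d_0'-\la_i)u\neq0$ (with $w=u$ if $s=p$) yields $d_0'w=\la_s w$, so $w\in K$ is the desired eigenvector. The point I expect to need the most care is the transfer of Lemma \ref{lem4.11} to the isomorphic copy $\Vir'$ and the verification that $K$, with its induced $\Vir'^{(0)}$-structure, meets that lemma's hypotheses; both are routine once $\Vir'\cong\Vir$ and the consequences of $r_M=1$ (via Lemma \ref{lem4.4'}) are in hand, so there is no genuine obstacle beyond careful bookkeeping of the invariants $K_0,K,Y_n,r_M$.
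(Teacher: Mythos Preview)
Your proof is correct and follows essentially the same route as the paper's: split according to whether every finitely generated $\bC[d_0']$-submodule of $K$ is free, use Lemma \ref{lem4.11} and the map $\tau$ to deduce in the free case that $K$ is simple over $\bar\mh\oplus\bC d_0'$ so that $d_0'$ acts by a scalar, and in the non-free case extract an eigenvector from a torsion element via the structure theorem over the PID $\bC[d_0']$. The only cosmetic difference is that the paper dismisses the free case as a contradiction (a scalar action of $d_0'$ violates the freeness hypothesis), whereas you observe directly that a scalar action already produces eigenvectors and then additionally rule out $\mu=0$ via $Y_0=0$; either exit from that case is valid, and your mention of $K\cong\Ind_{\bar\mh^{(-(n_M-1))}}^{\bar\mh}K_0$ is harmless but not needed for the argument.
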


\begin{proof}Lemma \ref{lem4.4'} (iv) means that $K$ is a $\bar \DD'^{(0,-\infty)}$-module.
Assume that any finitely generated $\bC[d'_0]$-submodule of $K$ is a free $\bC[d'_0]$-module. By Lemma \ref{lem4.11} we see that
the following $\bar \DD'$-module homomorphism
\begin{eqnarray*}
\tau:\Ind_{\bar \DD'^{(0,-\infty)}}^{{\bar \DD}'}K=\Ind_{\Vir'^{(0)}}^{\Vir'}K&\longrightarrow & M, \cr
 x\otimes u&\mapsto &xu, x\in \UU(\Vir'), u\in K.
\end{eqnarray*}
is an isomorphism. So
$M={\rm Ind}_{\Vir'^{(0)}}^{\Vir'}K$, and consequently, $K$ is a simple
$\bar \DD'^{(0,-\infty)}$-module. Since $r_M=1$ and $\Vir'^{(1)}K=0$,  $K$ can be seen as a simple module over the Lie algebra $\mh\oplus \bC\c_2\oplus\bC d'_0$ where $\bC d'_0$ lies in the center of the Lie algebra. Schur's lemma tells us that $d'_0$ acts as a scalar  on $K$,  a contradiction. So this case will not occur.

Therefore,   there exists some  finitely generated $\bC[d'_0]$-submodule $W$ of $K$ that is not a free $\bC[d'_0]$-module. Since $\bC[d'_0]$ is a principal ideal domain,  by the structure theorem of finitely generated modules over a principal ideal domain,   there exists a monic polynomial $f(d'_0)\in\bC[d'_0]$  with minimal positive degree  and nonzero element $u\in W$ such that $f(d'_0)u=0$. Write $f(d'_0)=\Pi_{1\le i\le s}(d_0'-\lambda_i)$, $\lambda_1,\cdots,\lambda_s\in\bC$. Denote $w:=\prod_{i=1}^{s-1}(d_0'-\lambda_{i})u\neq 0$, we see $(d_0'-\lambda_s)w=0$  where we make convention that $w=u$ if $s=1$. Then $w$ is a desired eigenvector of $d_0'$.
\end{proof}

\begin{pro} \label{pro3.8} Let $M$ be a simple restricted $\bar \DD$-module  with level $\ell \not=0$.   If $r_M=0,\pm 1$, then $K$ is
 a simple $\mh$-module  and
$M \cong  K(z)^{\bar \DD}\otimes U^{\bar \DD}$ for some simple module $U\in \mathcal{R}_{\Vir}$ and    some $z\in\bC$.
\end{pro}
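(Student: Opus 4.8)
The plan is to mimic closely the structure of the proof of Proposition \ref{pro4.10} and Proposition \ref{pro4.13} for the mirror algebra $\DD$, now working inside the Virasoro copy $\Vir'$ sitting inside the operator algebra $\bar\DD'=\Vir'\oplus(\bar\mh+\bC\bar\c_2)$. The hypothesis $r_M=0,\pm1$ will in all three cases force $K$ to contain a weight vector for $d_0'$, and then we set up an induced-module comparison to squeeze out the tensor-product decomposition. First I would dispose of the case $r_M=1$ using Proposition \ref{eigenvalue prop'}: it produces a $d_0'$-eigenvector $u_0\in K$ with eigenvalue $\lambda\ne0$ (nonzero because, as in the $\DD$ case, $d_0'u_0 = (d_0-\bar L_0)u_0$ and the relevant positivity forces nonvanishing). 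For $r_M=0$ or $r_M=-1$ the module $K$ is already a highest-weight $\Vir'$-module in the sense that $d_n'K=0$ for $n\ge r_M$, so any nonzero vector $u_0\in K_0$ (on which even $d_0'$ vanishes, giving $\lambda=0$ is allowed here since we don't need injectivity of $d_0'$) plays the same role; the two subcases $r_M=0$ and $r_M=-1$ differ only in whether $d_0'$ itself annihilates $K_0$, exactly as in Proposition \ref{pro4.10}.

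Next I would carry out the induced-module argument. Form the one-dimensional $\bar\DD'^{(0,-\infty)}$-module $\bC v_0$ on which $d_0'v_0=\lambda v_0$ (with $\lambda=0$ when $r_M\le0$), $d_n'v_0=h_kv_0=\bar\c_2v_0=0$ for $n\in\bZ_+$ and all $k\in\bZ$, and $\bar\c_1'v_0=(c-(1-12z^2/\ell))v_0$ appropriately shifted; here the level $\ell$ and the parameter $z$ are the scalars by which $\bar\c_3$ and $\bar\c_2$ act on $M$. Let $K'=\UU(\bar\mh)u_0$, the $\bar\mh$-submodule of $K$ generated by $u_0$; since $[\Vir',\bar\mh+\bC\bar\c_2]=0$, the $\bar\DD'$-module $K'^{\bar\DD'}$ with trivial $\Vir'$-action makes sense. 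Then $\bC v_0\otimes K'^{\bar\DD'}$ is a $\bar\DD'^{(0,-\infty)}$-module, and the obvious map $v_0\otimes u\mapsto u$ extends to a $\bar\DD'$-module epimorphism
$$\tau:\Ind_{\bar\DD'^{(0,-\infty)}}^{\bar\DD'}(\bC v_0\otimes K'^{\bar\DD'})\longrightarrow M,$$
which is surjective because $M$ is simple and $\tau\ne0$. By Lemma 8 in \cite{LZ3}, the domain splits as $(\Ind_{\Vir'^{(r_M)}}^{\Vir'}\bC v_0)^{\bar\DD'}\otimes K'^{\bar\DD'}$, i.e. a highest-weight $\Vir'$-module tensored with $K'$.

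Then I would identify the kernel $\mathfrak K$ of $\tau$: it has trivial intersection with the highest-weight space $\bC v_0\otimes K'^{\bar\DD'}$, and the key point (same as in Proposition \ref{pro4.10}) is that since $h_k$ commutes with every $d_n'$, the action of $\bar\mh$ preserves $d_0'$-weight spaces, so the maximal $\Vir'$-submodule of the domain with trivial intersection with the highest-weight space is automatically a $\bar\DD'$-submodule; hence it equals $\mathfrak K$. Applying Lemma \ref{lem4.5'}, $\mathfrak K=V'^{\bar\DD'}\otimes K'^{\bar\DD'}$ where $V'$ is generated inside $V=\Ind_{\Vir'^{(r_M)}}^{\Vir'}\bC v_0$ by the two singular-vector elements $P_1v_0,P_2v_0$. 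Therefore $M\cong(V/V')^{\bar\DD'}\otimes K'^{\bar\DD'}$, which forces $K'^{\bar\DD'}$ to be a simple $\bar\DD'$-module and hence $K'$ a simple $\bar\mh$-module; in particular $K=K'$. Finally, $M$ contains the simple $\bar\mh$-submodule $K$, so by \cite[Theorem 12]{LZ3} (equivalently Theorem \ref{generalize}) there is a simple $U\in\mathcal R_{\Vir}$ with $M\cong K(z)^{\bar\DD}\otimes U^{\bar\DD}$, where the $K(z)^{\bar\DD}$ structure is exactly the one from (\ref{rep1-untw})--(\ref{rep2-untw}) guaranteed by Lemma \ref{lem4.4'}(iii). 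The main obstacle is the verification that $\mathfrak K=\mathfrak K'$ — i.e. that the maximal $\Vir'$-submodule avoiding the highest-weight space is actually $\bar\DD'$-stable — which requires the commutation $[\Vir',\bar\mh+\bC\bar\c_2]=0$ of (\ref{d'bracket}) together with the diagonalizable $d_0'$-weight decomposition; everything else is bookkeeping parallel to Section 4. When $r_M=-1$ one must additionally note that $d_{-1}'$ does not annihilate $v_0$-free part issues are vacuous because the induced $\Vir'$-module uses $\Vir'^{(-1)}$, matching Lemma \ref{lem4.5'} with $d=-1$.
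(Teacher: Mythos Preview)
Your proposal is correct and follows essentially the same route as the paper's proof: locate a $d_0'$-eigenvector in $K$ (via Proposition \ref{eigenvalue prop'} when $r_M=1$, trivially otherwise), set up the induced $\bar\DD'$-module from a one-dimensional module tensored with $K'^{\bar\DD'}$, split it as a Virasoro Verma tensor $K'$ using \cite[Lemma 8]{LZ3}, identify the kernel via Lemma \ref{lem4.5'} and the $\mathfrak K=\mathfrak K'$ argument, and finish with \cite[Theorem 12]{LZ3}. The only cosmetic discrepancy is that the paper induces from $\bar\DD'^{(r_M-\delta_{r_M,1},-\infty)}$ uniformly (so from $\Vir'^{(-1)}$ when $r_M=-1$), whereas you write $\bar\DD'^{(0,-\infty)}$ in the setup and only switch to $\Vir'^{(-1)}$ in your closing remark; aligning these from the start would remove the apparent inconsistency.
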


\begin{proof} If $r_M=1$, then by Proposition \ref{eigenvalue prop'} we know that there exists  $0\ne u\in K$ such that $d_0'u=\lambda u$ for some $\lambda \ne 0$; if $r_M=0,-1$, then $d_0'K=0$. In summary, for all  the three cases,  $d_0'$ has an eigenvector in $ K$.
Since $M$ is a simple $\bar \DD'$-module,  Schur's lemma implies that $h_0, {\bar \c}'_1, {\bar \c}_2, {\bar \c}_3$ act as scalars on $M$. So $M$ is a weight $\bar  \DD'$-module, and $K$ is a  weight module for $\bar \DD'^{(r_M-\delta_{r_{_M}, 1},-\infty)}$. Take a weight vector $u_0\in K$ with $d'_0u_0=\lambda_0u_0$ for some $\lambda_0\in\bC$.

Set $K'=\UU(\bar \mh)u_0$, which is an $\bar \mh$ submodule of $K$. Now we define  the $\bar \DD'$-module $K'^{{\bar \DD}'}$ with trivial action of  $\Vir'$. Let $\bC v_0$ be the one-dimensional $\bar \DD'^{(r_M-\delta_{r_{_M}, 1},-\infty)}$-module defined by $${\bar \c}_1'v_0=(c-1+\frac{12z^2}{\ell})v_0,\,\,\,
d'_0 v_0=\lambda_0v_0, \,\,\,d_n'v_0=h_{k}v_0={\bar\c}_2v_0={\bar\c}_3v_0=0, \,\,\, 0\ne n\ge r_M,k\in\bZ.$$ Then $\bC v_0\otimes K'^{{\bar \DD}'}$ is a $\bar \DD'^{(r_M-\delta_{r_{_M}, 1},-\infty)}$-module with central charge $c-1+\frac{12z^2}{\ell}$ and level $\ell$.  There is a $\bar \DD'^{(r_M-\delta_{r_{_M}, 1},-\infty)}$-module homomorphism
\begin{eqnarray*}
\tau_{K'}: \bC v_0\otimes K'^{{\bar \DD}'}& \longrightarrow & M,\cr
 v_0\otimes u &\mapsto & u, \forall u\in K',
\end{eqnarray*}
which is injective and can be extended to be the following $\bar \DD'$-module epimomorphism
\begin{eqnarray*}
\tau:\Ind_{{\bar \DD}'^{(r_M-\delta_{r_{_M}, 1},-\infty)}}^{\bar \DD'}(\bC v_0\otimes K'^{{\bar \DD}'})&\longrightarrow & M, \cr
 x(v_0\otimes u)&\mapsto &xu, x\in \UU(\bar \DD'), u\in K'.
\end{eqnarray*}
By Lemma 8 in \cite{LZ3} we know that
$$\Ind_{\bar \DD'^{(r_M-\delta_{r_{_M}, 1},-\infty)}}^{{\bar \DD}'}(\bC v_0\otimes K'^{{\bar \DD}'})\cong (\Ind_{\bar \DD'^{(r_M-\delta_{r_{_M}, 1},-\infty)}}^{{\bar \DD}'}\bC v_0)\otimes K'^{{\bar \DD}'}
=(\Ind_{\Vir'^{(r_M-\delta_{r_{_M}, 1})}}^{\Vir'}\bC v_0)^{{\bar \DD}'}\otimes K'^{{\bar \DD}'}.
$$Then we have the following $\bar \DD'$-module epimorphism
\begin{eqnarray*}
\tau':(\Ind_{\Vir'^{(r_M-\delta_{r_{_M}, 1})}}^{\Vir'}\bC v_0)^{{\bar \DD}'}\otimes K'^{{\bar \DD}'}&\longrightarrow& M,\cr
  xv_0\otimes u&\mapsto& xu, x\in \UU(\Vir'), u\in K'.
\end{eqnarray*}

Note that $(\Ind_{\Vir'^{(r_M-\delta_{r_{_M}, 1})}}^{\Vir'}\bC v_0)^{{\bar \DD}'}\otimes K'^{{\bar \DD}'}\cong\Ind_{\Vir'^{(r_M-\delta_{r_{_M}, 1})}}^{\Vir'}(\bC v_0\otimes K'^{{\bar \DD}'})$ as $\Vir'$-modules, and $\tau'$ is also a $\Vir'$-module epimorphism, $\tau'|_{\bC v_0\otimes K'^{{\bar \DD}'}}$ is one-to-one, and $(\Ind_{\Vir'^{(r_M-\delta_{r_{_M}, 1})}}^{\Vir'}\bC v_0)^{{\bar \DD}'}\otimes K'^{{\bar \DD}'}$ is a highest weight $\Vir'$-module.
Let $V=\Ind_{\Vir'^{(r_M-\delta_{r_{_M}, 1})}}^{\Vir'}\bC v_0$ and  $\mathfrak{K}=\text{Ker}(\tau')$. It should be noted that
$$\bC v_0\otimes K'^{{\bar \DD}'}=\{u\in V^{{\bar \DD}'}\otimes K'^{{\bar \DD}'}\mid d_0^{\prime}u=\lambda_0u\}.$$
We see that $(\bC v_0\otimes K'^{{\bar \DD}'})\cap \mathfrak{K}=0$. Let $\mathfrak{K}^{\prime}$  be the sum of all $\Vir'$-submodules $W$ of $V^{{\bar \DD}'}\otimes K'^{{\bar \DD}'}$
with $(\bC v_0\otimes K'^{{\bar \DD}'})\cap W=0$, that is, the unique maximal (weight) $\Vir'$-submodule of $V^{{\bar \DD}'}\otimes K'^{{\bar \DD}'}$ with trivial intersection with  $(\bC v_0\otimes K'^{{\bar \DD}'})$. It is obvious that $\mathfrak{K}\subseteq\mathfrak{K}'$.
Next we further show that $\mathfrak{K}=\mathfrak{K}'$. For that, take any $\Vir'$-
submodule $W$ of $V^{{\bar \DD}'}\otimes K'^{{\bar \DD}'}$ such that $(\bC v_0\otimes K'^{{\bar \DD}'})\cap W=0$. Then for any weight vector $w=\sum_{\bfl\in \mathbb{M}}
d'^{\bfl}v_0\otimes u_{\bfl}\in W$, where $u_{\bfl}\in {K'}^{{\bar \DD}'}, d'^{\bfl}=\cdots (d'_{-2})^{l_2}(d'_{-1})^{l_1}$ if $r_M=1,0$, or $d'^{\bfl}=\cdots (d'_{-2})^{l_2}$ if $r_M=-1$, and all ${\rm{w}}(\bfl)\ge 1$  are equal. Note that
$h_{k}w=\sum_{\bfl\in \mathbb{M}}d'^{\bfl}v_0\otimes h_{k}u_{\bfl}$  either
equals to $0$ or has the same weight as $w$ under the action of $d_0^{\prime}$.
So $\UU(\bar \DD')W\cap  (\bC v_0\otimes K'^{{\bar \DD}'})=0$, i.e., $\UU(\bar \DD')W\subset \mathfrak{K}'$. Hence $\UU(\bar \DD')\mathfrak{K}'\cap  (\bC v_0\otimes K'^{{\bar \DD}'})=0$. The maximality of $\mathfrak{K}'$ forces that $\mathfrak{K}'=\UU(\bar \DD')\mathfrak{K}'$ is a  proper $\bar \DD'$-submodule of $V^{{\bar \DD}'}\otimes K'^{{\bar \DD}'}$. Since $\mathfrak{K}$ is a maximal proper $\bar \DD'$-submodule of $V^{{\bar \DD}'}\otimes K'^{{\bar \DD}'}$, it follows that $\mathfrak{K}=\mathfrak{K}'$.

By Lemma \ref{lem4.5'} we know that $\mathfrak{K}$ is generated by
$P_1(\bC v_0\otimes K^{{\bar \DD}'})=\bC P_1 v_0\otimes K'^{{\bar \DD}'}$ and $P_2 (\bC v_0\otimes K'^{{\bar \DD}'})=\bC P_2 v_0\otimes K'^{{\bar \DD}'}$. Let  $V'$ be the  maximal  submodule of $V$ generated by $P_1v_0$ and $P_2v_0$, then $\mathfrak{K}=V'^{{\bar \DD}'}\otimes K'^{{\bar \DD}'}$. Therefore,
\begin{equation}\label{cong}
M\cong (V^{{\bar \DD}'}\otimes K'^{{\bar \DD}'})/(V'^{{\bar \DD}'}\otimes K'^{{\bar \DD}'})\cong (V/V')^{{\bar \DD}'}\otimes K'^{{\bar \DD}'},\end{equation}
which forces  that $K'^{{\bar \DD}'}$ is a simple $\bar \DD'$-module and hence a simple $\bar \mh$-module. So   $K'$ is  a simple $\bar \mh$-module. By  \cite[Theorem 12]{LZ3} we know there exists a simple $\Vir$-module $U\in {\mathcal{R}}_{\Vir}$ such that $M\cong K'^{\bar \DD} \otimes U^{\bar \DD} $. From this isomorphism and some computations we see that $K_0\subseteq K'^{\bar \DD}\otimes v_0$ where $v_0$ is a highest weight vector. So $K=K'$.\end{proof}

We are now in a position to present the following main result on characterization of simple restricted $\bar \DD$-modules with nonzero level.

\begin{theo}\label{mainthm'}
Let $M$ be a simple restricted $\bar \DD$-module  with level $\ell \not=0$. The invariants $ n_M, r_M$ of $M$, $ K_0, K$ are defined as before. Then
\begin{equation*}
M\cong\begin{cases} K(z)^{\bar \DD}, &{\text{ if }}r_M=-\infty,\cr
K(z)^{\bar \DD}\otimes U^{\bar \DD}, &{\text{ if }}-1\leq r_M\leq 1 \text{ or } n_M=0, 1, \cr
\Ind_{\bar \DD^{(0,-(n_M-1))}}^{\bar \DD}K_0, &{\text { otherwise,
}}\end{cases}
\end{equation*}
for  some $U\in \mathcal{R}_{\Vir}$ and    some $z\in\bC$.
\end{theo}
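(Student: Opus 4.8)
The plan is to assemble the classification from the structural results already established in this section, organising the argument by the invariant $n_M$ and, when it is defined (i.e.\ $n_M \ge 2$), by the invariant $r_M$. First I would dispose of the case $n_M \in \{0,1\}$ directly: by Lemma~\ref{lem4.2'} the operator $h_{n_M-1}$ acts injectively on $M_0 \ne 0$, and when $n_M \le 1$ any nonzero element of $M_0$ generates a simple $\bar\mh$-submodule of $M$ (as in the proof of Proposition~\ref{prop4.3'}); Proposition~\ref{prop4.3'} then yields $M \cong H^{\bar\DD} \otimes U^{\bar\DD}$ for some simple $H \in \mathcal R_{\bar\mh}$ and simple $U \in \mathcal R_{\Vir}$. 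To match the statement I would observe that $\bar\c_2$ acts on the simple module $M$ by a scalar $z$ (Schur), hence also on the factor $H^{\bar\DD}$, so that the $\bar\DD$-module structure $H^{\bar\DD}$ is the one built from \eqref{rep1-untw}--\eqref{rep2-untw} with that value of $z$; thus $M \cong K(z)^{\bar\DD} \otimes U^{\bar\DD}$ with $K = H$.

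For $n_M \ge 2$ the operators $d_n' = d_n - \bar L_n$, the Virasoro copy $\Vir'$, and the invariant $r_M$ are available, and by Lemma~\ref{lem4.4'}(i) either $r_M = -\infty$ or $r_M \ge -1$. I would then split into three subcases. If $r_M = -\infty$, Proposition~\ref{pro3.5'} gives $M = K(z)^{\bar\DD}$ for a suitable $z \in \bC$ (with $c = 1 - 12z^2/\ell$ and $K$ simple over $\bar\mh$). If $-1 \le r_M \le 1$, then $d_0'$ has an eigenvector in $K$ — automatically when $r_M \le 0$, since $d_0'$ commutes with $\bar\mh + \bC\bar\c_2$ by \eqref{d'bracket} and $d_0' K_0 = 0$, and by Proposition~\ref{eigenvalue prop'} when $r_M = 1$ — so Proposition~\ref{pro3.8} applies and gives $M \cong K(z)^{\bar\DD} \otimes U^{\bar\DD}$ for some simple $U \in \mathcal R_{\Vir}$ and $z \in \bC$. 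If $r_M \ge 2$, then $n_M \ge 2$ as well, so Proposition~\ref{prop4.6'} gives $M \cong \Ind_{\bar\DD^{(0,-(n_M-1))}}^{\bar\DD} K_0$ with $K_0$ a simple $\bar\DD^{(0,-(n_M-1))}$-module. Finally I would check that these subcases together with $n_M \in \{0,1\}$ exhaust all cases and reproduce exactly the three branches displayed in the statement, which is immediate from the definitions of $n_M$ and $r_M$: the condition ``$-1 \le r_M \le 1$ or $n_M = 0,1$'' is precisely the complement of ``$r_M = -\infty$'' and ``$r_M \ge 2$ with $n_M \ge 2$''.

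Since every feeding statement has already been proved above, the theorem itself demands no new computation — the argument is a case-by-case citation. I expect the only real gluing point to be the identification, in the $n_M \le 1$ branch, of the abstract extension $H^{\bar\DD}$ produced by Proposition~\ref{prop4.3'} with the concrete module $K(z)^{\bar\DD}$; this is a short argument using \eqref{d'bracket} (the operators $d_n - \bar L_n$ commute with $\bar\mh + \bC\bar\c_2$, hence act by scalars on the relevant simple module, forcing the extension to be the standard Sugawara one). The heavier analytic content — the degree and injectivity estimates underpinning Propositions~\ref{prop4.6'}, \ref{eigenvalue prop'}, \ref{pro3.8}, which mirror Lemmas~\ref{main1'}--\ref{main4} and the Virasoro arguments of Section~4 — is already in place, so what remains here is bookkeeping.
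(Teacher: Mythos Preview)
Your proposal is correct and follows exactly the paper's approach: the paper's own proof consists of a single sentence citing Propositions~\ref{prop4.3'}, \ref{pro3.5'}, \ref{prop4.6'}, and \ref{pro3.8}, which is precisely the case-by-case assembly you outline. Your additional remark on identifying $H^{\bar\DD}$ from Proposition~\ref{prop4.3'} with $K(z)^{\bar\DD}$ via the scalar action of $\bar\c_2$ is a useful clarification that the paper leaves implicit.
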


\begin{proof}
The assertion follows directly from Proposition \ref{prop4.3'}, Proposition \ref{pro3.5'}, Proposition \ref{prop4.6'}, Proposition \ref{pro3.8}.
\end{proof}


The following result characterizes simple Whittaker modules over the twisted Heisenberg-Virasoro algebra $\bar \DD$.

\begin{theo}\label{thmmain3.10}
Let $M$ be a $\bar \DD$-module
(not necessarily weight) on which the algebra ${\bar \DD}^{+}$ acts
locally finitely. Then the following statements hold.
\begin{enumerate}
\item[\rm(i)] The module $M$ contains a nonzero vector $v$ such that
${\bar \DD}^{+}\, v\subseteq\bC v$.
\item[\rm(ii)] If $M$ is simple, then $M$ is a Whittaker module or a highest weight module.
\end{enumerate}
\end{theo}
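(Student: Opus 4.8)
The plan is to prove Theorem~\ref{thmmain3.10} in two stages, first establishing the existence of a common eigenvector for $\bar\DD^+$ and then identifying the resulting module structure.

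\textbf{Step 1: producing a Whittaker vector.} For part (i), I would start from the observation that $\bar\DD^+$ is generated by finitely many elements modulo lower-degree pieces; more precisely, $\bar\DD^+$ is generated as a Lie algebra by $d_1, d_2, h_1, h_2$ (together with the fact that $\bar\DD^+$ is $\bZ_+$-graded and $[\bar\DD_i,\bar\DD_j]\subseteq\bar\DD_{i+j}$). Fix any nonzero $w\in M$. Since $\bar\DD^+$ acts locally finitely, $V:=\UU(\bar\DD^+)w$ is a finite-dimensional $\bar\DD^+$-module. Now $\bar\DD^+$ is a \emph{solvable} (in fact, its derived series terminates quickly because of the grading: $(\bar\DD^+)^{(1)}\subseteq\bigoplus_{i\ge 2}\bar\DD_i$ eventually, or at least it is contained in $\bigoplus_{i\ge 3}$ after two steps, hence nilpotent modulo finite truncations)—in any case $\bar\DD^+$ acts on the finite-dimensional space $V$ through a finite-dimensional quotient, and that quotient is a solvable Lie algebra. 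By Lie's theorem there is a common eigenvector $v\in V$ for the image of $\bar\DD^+$, i.e. there is a Lie algebra homomorphism $\psi:\bar\DD^+\to\bC$ with $xv=\psi(x)v$ for all $x\in\bar\DD^+$. This proves (i). The only care needed is to justify that the relevant quotient of $\bar\DD^+$ acting on $V$ is finite-dimensional solvable; this follows because $V$ is finite-dimensional, so $\bar\DD_i$ acts as $0$ on $V$ for $i$ large (the grading elements of high degree shift weights out of a finite set, or simply because $\bar\DD^+/\mathrm{Ann}_{\bar\DD^+}(V)$ is finite-dimensional), and any $\bZ_+$-graded Lie algebra with only finitely many nonzero graded pieces is nilpotent.

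\textbf{Step 2: identifying the module.} For part (ii), assume $M$ is simple and let $v$ and $\psi$ be as in Step~1. Since $M=\UU(\bar\DD)v$, the module $M$ is a quotient of the induced (universal Whittaker) module $\UU(\bar\DD)\otimes_{\UU(\bar\DD^+)}\bC_\psi$. I distinguish two cases according to $\psi$. If $\psi$ vanishes on $\bar\DD^+\cap[\bar\DD,\bar\DD]$ in a way making $v$ a genuine highest-weight vector—concretely, if $\psi(d_i)=0$ and $\psi(h_i)=0$ for all $i\ge 1$—then $\bar\DD^+v=0$, so $v$ is a highest-weight vector for the triangular decomposition $\bar\DD=\bar\DD^+\oplus\h\oplus\bar\DD^-$, and simplicity forces $M$ to be the simple highest-weight module $L(\lambda)$ for the appropriate $\lambda\in\h^*$. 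Otherwise $\psi$ is a nonzero (nonsingular in the appropriate sense) Whittaker function on $\bar\DD^+$, and $M$ is by definition a simple Whittaker module of type $\psi$. The dichotomy $\psi=0$ versus $\psi\ne0$ on the generators $d_1,d_2,h_1,h_2$ exhausts all cases because $\psi$ is determined by its values on a generating set and $[\bar\DD^+,\bar\DD^+]$ lies in strictly higher degree, forcing $\psi([\bar\DD^+,\bar\DD^+])=0$ automatically, so the constraints $\psi(d_1),\psi(d_2),\psi(h_1),\psi(h_2)$ are the only free parameters (with $\psi$ of all higher-degree elements then forced to $0$ as well, since e.g. $d_3=\tfrac12[d_1,d_2]$ up to scalar).

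\textbf{Main obstacle.} I expect the delicate point to be Step~1: verifying carefully that local finiteness of $\bar\DD^+$ on $M$ plus the grading really does let one invoke Lie's theorem. One must check that $\UU(\bar\DD^+)w$ is finite-dimensional (immediate from local finiteness of the \emph{algebra} action, clause (4) of the definition of locally finite), and then that the image of $\bar\DD^+$ in $\mathfrak{gl}(V)$ is solvable; the cleanest route is to note that this image is finite-dimensional and $\bZ_+$-graded, hence nilpotent, hence solvable, so Lie's theorem applies over $\bC$. A minor subtlety: one should also confirm that the eigenvector $v$ can be taken so that the induced map to the universal Whittaker/Verma module is nonzero, which is automatic since $v\ne0$ generates $M$ by simplicity. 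The identification in Step~2 is then essentially definitional once the vector is in hand, paralleling the classical arguments for the Virasoro and twisted Heisenberg-Virasoro algebras.
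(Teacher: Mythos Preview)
Your overall strategy matches the paper's exactly: pass to a finite-dimensional $\bar\DD^+$-submodule $V$, argue that $\bar\DD^+$ acts through a finite-dimensional solvable quotient, and invoke Lie's theorem. Part~(ii) is then immediate, as you say.

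The gap is in your justification that the quotient is solvable. You assert that ``$\bar\DD_i$ acts as $0$ on $V$ for $i$ large,'' offering two reasons: (a) high-degree elements ``shift weights out of a finite set,'' and (b) the quotient $\bar\DD^+/\mathrm{Ann}_{\bar\DD^+}(V)$ is finite-dimensional, hence $\bZ_+$-graded with finitely many pieces. Neither works. Reason~(a) presupposes a grading on $V$ compatible with the $\bar\DD^+$-action, but $V$ carries no such grading a~priori (the module $M$ is explicitly \emph{not} assumed to be a weight module, so neither is $V$). Reason~(b) is a non-sequitur: the annihilator ideal need not be graded, so the quotient need not inherit a $\bZ_+$-grading. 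For instance, the ideal of $\Vir_{\ge 1}$ generated by $d_1+d_2$ is not graded. And note that $\Vir_{\ge 1}$ itself is \emph{not} solvable---its derived series $\Vir_{\ge 1}\supset\Vir_{\ge 3}\supset\cdots$ never terminates---so solvability of the quotient genuinely requires an argument.

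The paper fills this gap with a short combinatorial trick. Restricting to $\Vir_{\ge 1}$, the kernel $\mathfrak{a}=\ker(\rho|_{\Vir_{\ge 1}})$ has finite codimension, hence is nonzero. Take a nonzero element $\sum_{j=1}^{m+1} a_{i_j}d_{i_j}\in\mathfrak{a}$ with $m$ minimal; bracketing with $d_{i_1}$ kills the first term and yields a nonzero element of $\mathfrak{a}$ with strictly fewer summands, forcing $m=0$, i.e.\ some $d_n\in\mathfrak{a}$. Then $\widetilde\Vir_{\ge n}:=\bC d_n+[d_n,\Vir_{\ge 1}]\subseteq\mathfrak{a}$, and bracketing with $\bar\mh_{\ge 1}$ gives $\bar\mh_{\ge n+1}\subseteq\ker\rho$ as well. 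Now $V$ is a module over the finite-dimensional, genuinely $\bZ_+$-graded (hence nilpotent) quotient $\bar\DD^+/(\widetilde\Vir_{\ge n}+\bar\mh_{\ge n+1})$, and Lie's theorem applies. This support-reduction step is the one nontrivial ingredient, and it is exactly what your sketch is missing.
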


\begin{proof}
(i) Let $(M_1,\rho)$ be a finite dimensional ${\bar \DD}^{+}$-submodule of $M$. Then $M_1$ is also a finite dimensional $\Vir_{\geq 1}$-module. Let $\mathfrak{a}:=\ker(\rho|_{\Vir_{\geq 1}})$ be the kernel of the representation map of $\Vir_{\geq 1}$ on $M_1$. Then $\mathfrak{a}$ is an ideal of $\Vir_{\geq 1}$ of finite codimension. We claim that $d_n\in\mathfrak{a}$ for some $n\in\bZ_+$. If this is not true, then there exists a minimal $m\in\bZ_+$ such that $\mathfrak{a}$ contains an element of the form $a_{i_1}d_{i_1}+a_{i_2}d_{i_2}+\cdots+a_{i_{m+1}}d_{i_{m+1}}$ for positive integers $i_1<i_2<\cdots<i_{m+1}$ and nonzero complex numbers $a_{i_1}, a_{i_2},\cdots, a_{i_{m+1}}$. We further see that $\mathfrak{a}$ contains
$$[d_{i_1},a_{i_1}d_{i_1}+a_{i_2}d_{i_2}+\cdots+a_{i_{m+1}}d_{i_{m+1}}]=a_{i_2}(i_2-i_1)d_{i_1+i_2}+a_{i_3}(i_3-i_1)d_{i_1+i_3}+
\cdots+a_{i_{m+1}}(i_{m+1}-i_1)d_{i_1+i_{m+1}},$$
which contradicts with the minimality of $m$. Hence the claim follows. Consequently,
$$\widetilde{\Vir}_{\geq n}:=\sum_{i\geq n,\, i\neq 2n}\bC d_i=\bC d_n+ [d_n, \Vir_{\geq 1}]\subseteq\mathfrak{a}.$$
Then
$$\widetilde{\Vir}_{\geq n}+ {\bar \mh_{\geq n+1}}=\widetilde{\Vir}_{\geq n}+[ {\bar \mh_{\geq 1}},\widetilde{\Vir}_{\geq n}]\subseteq\ker(\rho).$$
This implies that $M_1$ is a finite dimensional module over a finite dimensional solvable Lie algebra $\bar {\mathfrak{D}}^{+}/(\widetilde{\Vir}_{\geq n}+ {\bar \mh_{\geq n+1}})$. The desired assertion follows directly from Lie Theorem.

(ii) follows directly from (i) and \cite{MZ2}.
\end{proof}

\begin{rem}From Theorem \ref{thmmain3.10} we know that if $M$ is a simple Whittaker module over $\bar \DD$ with nonzero level, and $\bar \DD^+ v\subset \bC v$ for some nonzero vector $v\in M$, then
$K=\UU(\bar \mh)v=\UU(\oplus_{r\in-\bZ_+}\bC h_r) v$ is a simple Whittaker module over $\bar \mh$. Therefore, \cite[Theorem 12]{LZ3} implies that $M\cong U^{\bar \DD}\otimes K(z)^{\bar \DD}$ for some  $U\in\R_{\Vir}$. Clearly, $U$ is a simple Whittaker module  or a simple highest weight module over $\Vir$.
\end{rem}

\section{Application one: characterization of simple highest weight modules and Whittaker modules over the mirror Heisenberg-Virasoro algebra}

Based on the results on structure of simple restricted modules over the mirror Heisenberg-Virasoro algebra $\DD$ given in Theorem \ref{MT} and Theorem \ref{mainthm}, we give characterization of simple highest weight $\DD$-modules and  simple Whittaker $\DD$-modules in this section.

We first have the following result characterizing simple highest weight modules over the mirror Heisenberg-Virasoro algebra.

\begin{theo}\label{thmmain}
Let $\DD$ be the mirror Heisenberg-Virasoro algebra  with the triangular decomposition
${{\mathfrak{D}}}={\mathfrak{D}}^{+}\oplus {\mathfrak{D}}^{0}\oplus {\mathfrak{D}}^{-}$.
Let $S$ be a $\DD$-module
(not necessarily weight) on which every
element in the algebra ${\mathfrak{D}}^{+}$ acts
locally nilpotently. Then the following statements hold.
\begin{enumerate}
\item[\rm(i)] The module $S$ contains a nonzero vector $v$ such that
${\mathfrak{D}}^{+}\, v=0$.
\item[\rm(ii)] If $S$ is simple, then $S$ is a highest weight module.
\end{enumerate}
\end{theo}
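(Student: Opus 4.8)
The plan is to mimic the structure of the proof of Theorem~\ref{thmmain3.10} for $\bar\DD$, adapting it to the $\frac12\bZ$-graded setting of $\DD$. For part (i), I would first pass from local nilpotency to local finiteness: if every element of $\DD^+$ acts locally nilpotently on $S$, then in particular $d_i$ and $h_{i+\frac12}$ act locally nilpotently for each $i\in\bZ_+$, and since $\DD^+$ is generated (as a Lie algebra) by finitely many such elements together with brackets, a standard Engel-type argument shows the action of $\DD^+$ on $S$ is locally finite; hence $S$ contains a nonzero finite-dimensional $\DD^+$-submodule $S_1$. The heart of the argument is then to show $S_1$ is a module over a finite-dimensional \emph{nilpotent} Lie algebra quotient of $\DD^+$ on which all generators act nilpotently, so that by Engel's theorem there is a common annihilated vector $v$ with $\DD^+ v = 0$.

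To carry this out I would restrict $\rho\colon \DD^+\to\End(S_1)$ to the Virasoro part $\Vir_{\ge 1}=\sum_{i\ge 1}\bC d_i$ and set $\mathfrak a=\ker(\rho|_{\Vir_{\ge1}})$, an ideal of finite codimension. The same commutator computation as in Theorem~\ref{thmmain3.10}(i) — taking iterated brackets $[d_{i_1},\,\cdot\,]$ of a minimal-length relation $\sum a_{i_s}d_{i_s}\in\mathfrak a$ — forces $d_n\in\mathfrak a$ for some $n\in\bZ_+$, and then $\widetilde\Vir_{\ge n}:=\sum_{i\ge n,\,i\ne 2n}\bC d_i=\bC d_n+[d_n,\Vir_{\ge1}]\subseteq\mathfrak a$. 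Using $[d_m,h_r]=-rh_{m+r}$ one gets $[h_{\ge 1+\frac12},\widetilde\Vir_{\ge n}]\supseteq h_{\ge n+1+\frac12}$ modulo the $d$'s already killed, hence $\widetilde\Vir_{\ge n}+\mh_{\ge n+\frac12}\subseteq\ker\rho$ (here I must be a little careful with the half-integer indexing of $h_r$, but it is entirely parallel). Thus $S_1$ is a module over the finite-dimensional solvable Lie algebra $\DD^+/(\widetilde\Vir_{\ge n}+\mh_{\ge n+\frac12})$. Since every element of $\DD^+$ acts \emph{locally nilpotently} on $S$, every element of this finite-dimensional quotient acts nilpotently on $S_1$; a Lie algebra all of whose elements act nilpotently on a finite-dimensional module has, by Engel's theorem, a common zero vector, so there is $0\ne v\in S_1$ with $\DD^+v=0$. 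Note this is genuinely stronger than the conclusion $\bar\DD^+v\subseteq\bC v$ of Theorem~\ref{thmmain3.10}, and it is exactly the local \emph{nilpotency} hypothesis (rather than local finiteness) that buys the improvement via Engel instead of Lie.

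For part (ii), given $S$ simple and a vector $v$ with $\DD^+v=0$, I would argue that we may further arrange $v$ to be an eigenvector for $\DD^0=\bC d_0\oplus\bC\c_1\oplus\bC\c_2$: the subspace $\mathrm{Ann}_S(\DD^+)$ is $\DD^0$-stable (since $[\DD^0,\DD^+]\subseteq\DD^+$), $\c_1,\c_2$ act as scalars by Schur's lemma on the simple module $S$, and $d_0$ acts locally finitely on $\mathrm{Ann}_S(\DD^+)$ — indeed it is locally nilpotent-minus-a-scalar is not automatic, so instead I would take a generalized eigenvector of $d_0$ in $\mathrm{Ann}_S(\DD^+)$, which exists because $d_0$ preserves this subspace and, being in the image of $\DD^+$-brackets... more precisely, $d_0$ acts locally finitely on $S$ since $S=\UU(\DD^-)v$ and $d_0$ has integer weights bounded above on this cyclic span, so a true eigenvector $v'$ of $d_0$ exists in $\mathrm{Ann}_S(\DD^+)$. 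Then $v'$ is a highest weight vector, $\UU(\DD)v'=S$ by simplicity, and $\UU(\DD^+)v'=\bC v'$, $\UU(\DD^0)v'=\bC v'$ force $S=\UU(\DD^-)v'$; the quotient of the Verma module $M_\DD(\lambda)$ by its maximal proper submodule maps onto $S$, and simplicity identifies $S$ with the unique simple highest weight quotient. The main obstacle is the bookkeeping in the Engel step of part~(i) — verifying that the half-integer-indexed generators $h_r$ of $\DD^+$ also land in the finite-dimensional nilpotent quotient and act nilpotently there — and, in part~(ii), pinning down that $d_0$ genuinely has an honest eigenvector (not merely a generalized one) inside $\mathrm{Ann}_S(\DD^+)$; both are handled by the graded structure ($S$ cyclic over $\UU(\DD^-)$ forces $d_0$-weights bounded above), but they need to be stated cleanly.
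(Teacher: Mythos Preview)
Your proof of part~(i) contains a genuine gap at the very first step. You assert that local nilpotency of each element of $\DD^+$ implies local finiteness of the $\DD^+$-action by ``a standard Engel-type argument'', but this is not standard: Engel's theorem applies to a finite-dimensional Lie algebra acting on a finite-dimensional space, and you need a finite-dimensional $\DD^+$-stable subspace \emph{before} you can invoke it. The subalgebra $\Vir_{\ge 1}$ generated by $d_1,d_2$ is not nilpotent (its lower central series does not terminate), so no locally-nilpotent-Lie-algebra version of Engel applies either. The passage from ``each generator locally nilpotent'' to ``there exists a finite-dimensional invariant subspace'' is precisely the nontrivial content of \cite[Theorem~1]{MZ1} for the Virasoro part, and even that result yields a common annihilator for the $d_i$'s rather than local finiteness of all of $\DD^+$. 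Once you have such a $v$ with $d_iv=0$, the subspace $\UU(\DD^+)v=\bC[h_{1/2},h_{3/2},\dots]v$ is still not obviously finite-dimensional (infinitely many commuting locally nilpotent operators need not have a common finite-dimensional invariant subspace).

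The paper's proof of~(i) avoids this issue entirely and is much more elementary: it first invokes \cite[Theorem~1]{MZ1} to obtain $v$ with $d_iv=0$ for all $i\ge1$, then computes directly that $d_1^j(h_{1/2}v)$ is a nonzero multiple of $h_{j+1/2}v$, so local nilpotency of $d_1$ forces $h_{j+1/2}v=0$ for large $j$. A backward induction on $m$ then uses local nilpotency of $h_{m-1/2}$ itself (replace $u$ by $h_{m-1/2}^{\,l}u$ for suitable $l$) to step down to $m=0$. No finite-dimensional quotient or Engel step is needed.

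For part~(ii) your direct approach (find a $d_0$-eigenvector in $\mathrm{Ann}_S(\DD^+)$) is plausible but, as you yourself note, the existence of an honest eigenvector is not cleanly justified: your claim that ``$d_0$-weights are bounded above on $\UU(\DD^-)v$'' presupposes that $d_0$ already acts semisimply somewhere, which is circular. The paper takes a completely different route: once (i) gives $\DD^+v=0$, one has $n_S=0$, so Theorem~\ref{mainthm} (Case~1) and Theorem~\ref{MT} yield $S\cong H^{\DD}\otimes U^{\DD}$ with $H$ a highest weight $\mh$-module; then local nilpotency of $\Vir^{(1)}$ on $\bC v\otimes U\cong U$ together with \cite[Theorem~1]{MZ1} forces $U$ to be a highest weight $\Vir$-module. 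This leverages the paper's main classification theorems rather than arguing directly, and sidesteps the $d_0$-eigenvector issue entirely.
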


\begin{proof}
(i) It follows from \cite[Theorem 1]{MZ1} that there exists a nonzero vector $v\in S$ such that $d_iv=0$ for any $i\in\bZ_+$. If $h_{\frac{1}{2}}v=0$, then ${\mathfrak{D}}^{+}\, v=0$ as $d_1, d_2$ and $h_{\frac{1}{2}}$ generate ${\mathfrak{D}}^{+}$. Assume that $w:=h_{\frac{1}{2}}v\neq 0$. Then
$$d_1w=d_1h_{\frac{1}{2}}v=h_{\frac{1}{2}}d_1v+[d_1, h_{\frac{1}{2}}]v=-\frac{1}{2}h_{\frac{3}{2}}v.$$
Similar arguments yield that the element $d_1^jw=\lambda h_{j+\frac{1}{2}}v$ for some $\lambda\in\bC^*$ and $j\in\bZ_+$. As $d_1$ acts locally nilpotently on $S$, it follow that there exists some $n\in\bZ_+$ such that $h_{j+\frac{1}{2}}v=0$ for $j\geq n$.

We now show that for every $m\in\bN$ there exists some nonzero element $u\in S$ such that $d_iu=h_{k+\frac{1}{2}}u=0$ for $i\in\bZ_+$ and $k\geq m$ by a backward induction on $m$. The above arguments imply that the assertion is true for $m\geq n$. Assume that $0\neq u\in S$ satisfies that $d_iu=h_{k+\frac{1}{2}}u=0$ for $i\in\bZ_+$ and $k\geq m>0$. If $h_{m-\frac{1}{2}}u=0$, then the induction step is proved. Otherwise, $h_{m-\frac{1}{2}}u\neq 0$, and there exists some $l\in\bN$ such that $u^{\prime}:=h_{m-\frac{1}{2}}^lu\neq 0$ and $h_{m-\frac{1}{2}}u^{\prime}=h_{m-\frac{1}{2}}^{l+1}u=0$. Moreover,
$d_iu^{\prime}=h_{k+\frac{1}{2}}u^{\prime}=0$ for $i\in\bZ_+$ and $k\geq m-1$. The induction step follows.

(ii) By (i), we know that $S$ is a simple restricted $\DD$-module with $n_S=0$ and $ m_S\le 1$. From Theorem \ref{MT} and Case 1 of Theorem \ref{mainthm} we know that $S\cong H^{\mathfrak{D}}\otimes U^{\mathfrak{D}}$ as $\mathfrak{D}$-modules for some  simple modules $H\in \mathcal{R}_{\mh}$ and  $U\in \mathcal{R}_{\Vir}$.  Moreover,  $H=\Ind^{\mh}_{\mh^{(0)}}(\bC v)$ is a simple highest weight module over $\DD$. Note that every
element in the algebra $\Vir^{(1)}$  acts
locally nilpotently on $\bC v\otimes U$ by the assumption. This implies that the same property also holds on $U$. From \cite[Theorem 1]{MZ1} we know that $U$ is a simple highest weight $\Vir$-module. This completes the proof.
\end{proof}

As a direct consequence of Theorem \ref{thmmain}, we have
\begin{cor}
Let $S$ be an simple restricted $\DD$-module with $m_S\leq 1$ and $n_S=0$. Then $S$ is a highest weight module.
\end{cor}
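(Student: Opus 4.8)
The plan is to reduce everything to the classification in Theorem \ref{mainthm} and the classification of simple restricted Virasoro modules in Theorem \ref{Vir-modules}, running the same scheme as in the proof of Theorem \ref{thmmain}(ii) but with the local-nilpotency hypothesis there replaced by the weaker assumption $m_S\le 1$.

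First I would invoke Case 1 of Theorem \ref{mainthm}: since $n_S=0$, we get $S\cong H^{\mathfrak{D}}\otimes U^{\mathfrak{D}}$ for a simple $H\in\R_{\mh}$ and a simple $U\in\R_{\Vir}$, and, reading off the proof of Proposition \ref{prop4.3}(i), $H=\Ind^{\mh}_{\mh^{(0)}}(\bC v)$ where $h_rv=0$ for all $r\in\frac12+\bN$; then, via \eqref{rep1}--\eqref{rep3}, $v$ is killed by $h_r$ for $r\ge\frac12$ and by $L_i=d_i$ for $i\ge 1$, while $L_0v=\frac1{16}v$. The key bookkeeping step is to identify $W_0=S(n_S)$ with $\bC v\otimes U$: an element of $S$ lies in $W_0$ iff its $H$-component lies in the common kernel of the annihilation operators $h_{i+\frac12}$, $i\ge 0$, on the Fock module $H$, and that kernel is $\bC v$ (exactly the argument at the end of the proof of Lemma \ref{main1'}). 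Consequently, for $i\ge 1$ the operator $d_i$ restricted to $W_0=\bC v\otimes U$ acts as $1\otimes d_i$, because $L_iv=0$.

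Next I would bring in the hypothesis $m_S\le 1$ (by Lemma \ref{lem4.2}(ii), $m_S\in\bN$, so in fact $m_S\in\{0,1\}$). Since $U_0=U(m_S)\ne 0$, there is a nonzero $v\otimes u\in W_0$ annihilated by $d_i$ for all $i\ge 1$; by the previous step $d_iu=0$ for all $i\ge 1$, so the simple restricted $\Vir$-module $U$ has a nonzero vector killed by $\Vir_{\geq 1}$. By Theorem \ref{Vir-modules}, $U$ is then either a highest weight module or of the form $\Ind_{\Vir_+}^{\Vir}V$ with $d_k$ ($k\in\bZ_+$) acting injectively on $V$; the latter is impossible, since the degree argument of \cite{MZ2} (the same mechanism as in Lemma \ref{main1'}) then forces $d_k$ to act injectively on all of $\Ind_{\Vir_+}^{\Vir}V$, contradicting $d_ku=0$ with $k\ge 1$. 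Hence $U$ is a highest weight $\Vir$-module with a highest weight vector $u_0$. Finally I would check that $v\otimes u_0$ is a highest weight vector for $S$: $h_r(v\otimes u_0)=(h_rv)\otimes u_0=0$ for $r\ge\frac12$, $d_i(v\otimes u_0)=(L_iv)\otimes u_0+v\otimes d_iu_0=0$ for $i\ge 1$, and $d_0,{\bf c}_1,{\bf c}_2$ act by scalars; since $\mathfrak{D}^+$ is generated by $d_1,d_2,h_{1/2}$, this gives $\mathfrak{D}^+(v\otimes u_0)=0$, and simplicity of $S$ yields $S=\UU(\mathfrak{D}^-)(v\otimes u_0)$, i.e. $S$ is a highest weight module.

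The hard part is not deep: it is the careful verification that $\mathfrak{D}$ acts on $H^{\mathfrak{D}}\otimes U^{\mathfrak{D}}$ diagonally with $d_n\mapsto L_n\otimes 1+1\otimes d_n$, that $W_0=\bC v\otimes U$, and that $L_iv=0$ for $i\ge 1$ and $L_0v=\frac1{16}v$ — all short computations using $\mh^{(0)}v=0$ and the normal-ordered form of the $L_i$ in \eqref{rep1}. Everything else is an immediate appeal to Theorem \ref{mainthm}, Lemma \ref{lem4.2}, and Theorem \ref{Vir-modules}.
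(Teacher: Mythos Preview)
Your argument is correct. The paper's own proof is a short three-line reduction to Theorem \ref{thmmain}: from $m_S\le 1$ and $n_S=0$ one immediately has a nonzero $v$ with $\mathfrak{D}^+v=0$, hence $S=\UU(\mathfrak{D}^-+\mathfrak{D}^0)v$ and every element of $\mathfrak{D}^+$ acts locally nilpotently, so Theorem \ref{thmmain}(ii) applies. You instead unwind Theorem \ref{thmmain}(ii) directly: you use the tensor decomposition from Theorem \ref{mainthm}, identify $W_0=\bC v\otimes U$, and extract a nonzero $u\in U$ killed by $\Vir_{\ge 1}$; then you invoke the classification Theorem \ref{Vir-modules} (i.e.\ \cite{MZ2}) rather than the local-nilpotency criterion of \cite{MZ1} to conclude that $U$ is highest weight. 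The two routes are essentially equivalent in content; yours is more explicit about the bookkeeping, the paper's is shorter because that bookkeeping is already packaged in Theorem \ref{thmmain}.

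One phrasing point: the degree argument from \cite{MZ2} does \emph{not} show that $d_k$ acts injectively on all of $\Ind_{\Vir_+}^{\Vir}V$; what it shows is that for any $u\notin V$ with $\deg(u)=\bfj$ and $q=\min\{s:j_s\ne 0\}$, one has $d_{q+k}u\ne 0$. Since $q+k\ge 2$, this already contradicts $d_iu=0$ for all $i\ge 1$, forcing $u\in V$, and then injectivity of $d_k$ on $V$ finishes. So your conclusion stands, but you should state the mechanism this way rather than claim global injectivity of $d_k$.
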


\begin{proof}
The assumption that $m_S\leq 1$ and $n_S=0$ implies that there exists a nonzero vector $v\in M$ such that ${\mathfrak{D}}^{+}v=0$. Then $M=\UU({\mathfrak{D}}^{-}+{\mathfrak{D}}^{0})v$. It follows that each element in ${\mathfrak{D}}^{+}$ acts
locally nilpotently on $M$. Consequently, the desired assertion follows directly from Theorem \ref{thmmain}.
\end{proof}


The following result characterizes simple Whittaker modules over the mirror Heisenberg-Virasoro algebra.

\begin{theo}\label{thmmain17}
Let $M$ be a $\DD$-module
(not necessarily weight) on which the algebra ${\mathfrak{D}}^{+}$ acts
locally finitely. Then the following statements hold.
\begin{enumerate}
\item[\rm(i)] The module $M$ contains a nonzero vector $v$ such that
${\mathfrak{D}}^{+}\, v\subseteq\bC v$.
\item[\rm(ii)] If $M$ is simple, then $M$ is a Whittaker module or a highest weight module.
\end{enumerate}
\end{theo}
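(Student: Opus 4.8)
The assertion is the exact analogue for $\DD$ of Theorem~\ref{thmmain3.10} (its counterpart for $\bar\DD$), and the plan is to reproduce that proof, adjusting the indices to the $\tfrac12\bZ$-grading of $\DD$. For part~(i), given an arbitrary nonzero $v_0\in M$, local finiteness of the $\DD^{+}$-action provides a finite-dimensional $\DD^{+}$-submodule $(M_1,\rho)$ with $v_0\in M_1$. Restrict $\rho$ to $\Vir_{\geq 1}=\sum_{i\in\bZ_+}\bC d_i\subseteq\DD^{+}$; its kernel $\mathfrak a$ is an ideal of $\Vir_{\geq 1}$ of finite codimension, so $\mathfrak a\neq 0$. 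The main Lie-theoretic point is that $d_n\in\mathfrak a$ for some $n\in\bZ_+$: otherwise, take a nonzero element of $\mathfrak a$ that is a combination of the fewest possible $d_i$'s, say involving $d_{i_1},\dots,d_{i_{m+1}}$ with $i_1<\cdots<i_{m+1}$ and $m\geq 1$; bracketing it with $d_{i_1}$ produces a nonzero element of $\mathfrak a$ involving only the $m$ vectors $d_{i_1+i_2},\dots,d_{i_1+i_{m+1}}$, contradicting minimality. From $d_n\in\mathfrak a$ and the ideal property we get $\widetilde{\Vir}_{\geq n}:=\sum_{i\geq n,\,i\neq 2n}\bC d_i\subseteq\mathfrak a$; since the full annihilator $\ker\rho$ of $M_1$ in $\DD^{+}$ is an ideal of $\DD^{+}$ and $[d_n,h_{j+\frac12}]=-(j+\tfrac12)h_{n+j+\frac12}$, it follows that $\mh_{\geq n}\subseteq\ker\rho$ as well. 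Hence $\rho$ factors through $\DD^{+}/(\widetilde{\Vir}_{\geq n}+\mh_{\geq n})$, a finite-dimensional Lie algebra (spanned by the images of $d_1,\dots,d_{n-1},d_{2n}$ and $h_{\frac12},\dots,h_{n-\frac12}$) which is nilpotent, hence solvable, by inspection of its brackets. Lie's theorem now yields a common eigenvector $v\in M_1$ for $\DD^{+}$, i.e.\ $\DD^{+}v\subseteq\bC v$.

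For part~(ii), assume $M$ simple and take $v$ as in~(i). The map $\chi\colon\DD^{+}\to\bC$ with $xv=\chi(x)v$ is a Lie homomorphism, since $\chi([x,y])v=x(yv)-y(xv)=0$, hence vanishes on $[\DD^{+},\DD^{+}]$; iterating brackets with $d_1$ shows $d_i\in[\DD^{+},\DD^{+}]$ for $i\geq 3$ and $h_r\in[\DD^{+},\DD^{+}]$ for $r\geq\tfrac32$, so $\chi$ is supported on $\{d_1,d_2,h_{\frac12}\}$. In particular $v$ is annihilated by $\mh_{\geq 1}$ and by $\Vir_{\geq 3}$, and applying~(i) at every vector shows $M$ is a restricted $\DD$-module. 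If $\chi\equiv 0$ on $\DD^{+}$, then $\DD^{+}v=0$, $M=\UU(\DD^{-}\oplus\DD^{0})v$, and a degree argument shows that every element of $\DD^{+}$ acts locally nilpotently on $M$, so $M$ is a highest weight module by Theorem~\ref{thmmain}(ii). If $\chi\not\equiv 0$, then $M=\UU(\DD)v$ is a Whittaker module of type $\chi$; moreover, since $\mh_{\geq 1}v=0$, Theorems~\ref{MT} and~\ref{mainthm} place $M$ in the tensor-product case, giving $M\cong H^{\DD}\otimes U^{\DD}$ with $H=\UU(\mh)v$ a simple (Whittaker) $\mh$-module and $U$ a simple $\Vir$-module on which $\Vir^{(1)}$ acts locally finitely, hence a simple Whittaker or highest weight $\Vir$-module by \cite{MZ1,MZ2}. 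Either way $M$ is a Whittaker module or a highest weight module, exactly as in the Remark after Theorem~\ref{thmmain3.10}.

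The main obstacle is entirely in part~(i): making the minimal-length bracket argument precise, and then verifying that once the Heisenberg generators $h_{j+\frac12}$ are absorbed into $\ker\rho$ via the adjoint action of the $d_i$, the finite-dimensional quotient of $\DD^{+}$ really is solvable, so that Lie's theorem applies. Part~(ii) is then essentially formal, given the classification (Theorems~\ref{MT},~\ref{mainthm}) and the Virasoro results of \cite{MZ1,MZ2}.
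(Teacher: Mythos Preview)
Your proof is correct and follows essentially the same approach as the paper. For part~(i) the paper's argument is identical in structure: finite-dimensional $\DD^{+}$-submodule $M_1$, kernel $\mathfrak a$ of $\rho|_{\Vir_{\geq 1}}$, the minimal-support bracket trick to get $d_n\in\mathfrak a$, then $\widetilde{\Vir}_{\geq n}\subseteq\mathfrak a$, then $\mh_{\geq n}\subseteq\ker\rho$, and Lie's theorem on the solvable quotient. (The paper obtains $\mh_{\geq n}$ via $[\bC h_{1/2}+\bC h_{3/2},\widetilde{\Vir}_{\geq n}]$ rather than your $[d_n,\mh_{\geq 0}]$, but both work.) For part~(ii) the paper simply writes ``follows directly from~(i)'': once $\DD^{+}v\subseteq\bC v$, the module is by definition Whittaker (if $\chi\neq 0$) or highest weight (if $\chi=0$, using Schur's lemma for $d_0,\c_1,\c_2$). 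Your extra analysis of the tensor-product structure via Theorems~\ref{MT}, \ref{mainthm} and \cite{MZ1,MZ2} is correct but goes beyond what is needed; in the paper that content appears not in the proof but in the Remark following the $\bar\DD$ analogue (Theorem~\ref{thmmain3.10}).
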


\begin{proof}
(i) Let $(M_1,\rho)$ be a finite dimensional ${\mathfrak{D}}^{+}$-submodule of $M$. Then $M_1$ is also a finite dimensional $\Vir_{\geq 1}$-module. Let $\mathfrak{a}:=\ker(\rho|_{\Vir_{\geq 1}})$ be the kernel of the representation map of $\Vir_{\geq 1}$ on $M_1$. Then $\mathfrak{a}$ is an ideal of $\Vir_{\geq 1}$ of finite codimension. We claim that $d_n\in\mathfrak{a}$ for some $n\in\bZ_+$. If this is not true, then there exists a minimal $m\in\bZ_+$ such that $\mathfrak{a}$ contains an element of the form $a_{i_1}d_{i_1}+a_{i_2}d_{i_2}+\cdots+a_{i_{m+1}}d_{i_{m+1}}$ for positive integers $i_1<i_2<\cdots<i_{m+1}$ and nonzero complex numbers $a_{i_1}, a_{i_2},\cdots, a_{i_{m+1}}$. We further see that $\mathfrak{a}$ contains
$$[d_{i_1},a_{i_1}d_{i_1}+a_{i_2}d_{i_2}+\cdots+a_{i_{m+1}}d_{i_{m+1}}]=a_{i_2}(i_1-i_2)d_{i_1+i_2}+a_{i_3}(i_1-i_3)d_{i_1+i_3}+
\cdots+a_{i_{m+1}}(i_1-i_{m+1})d_{i_1+i_{m+1}},$$
which contradicts with the minimality of $m$. Hence the claim follows. Consequently,
$$\widetilde{\Vir}_{\geq n}:=\sum_{i\geq n,\, i\neq 2n}\bC d_i=\bC d_n+ [d_n, \Vir_{\geq 1}]\subseteq\mathfrak{a}.$$
Then
$$\widetilde{\Vir}_{\geq n}+\mh_{\geq n}=\widetilde{\Vir}_{\geq n}+[\bC h_{\frac{1}{2}}+\bC h_{\frac{3}{2}},\widetilde{\Vir}_{\geq n}]\subseteq\ker(\rho).$$
This implies that $M_1$ is a finite dimensional module over a finite dimensional solvable Lie algebra ${\mathfrak{D}}^{+}/(\widetilde{\Vir}_{\geq n}+\mh_{\geq n})$. The desired assertion follows directly from Lie Theorem.

(ii) follows directly from (i).
\end{proof}

\section{Examples}
\label{examples}

In this section, we will give a few  examples of simple restricted $\bar  \DD$- and $\DD$-modules, which are also weak (simple) untwisted and twisted $\mathcal{V}^{c}$-modules.

\begin{exa} For any  $n\in\bZ_+$, let $\mathcal{W}_0=\bC[x_1,\cdots,x_n]$ be the polynomial algebra in indeterminates $x_1,\cdots,x_n$. Define the $\mh^{(-n)}$-module structure on $\mathcal{W}_0$ by
\begin{equation}\label{1stexample}
\begin{aligned}
&h_{i-\frac{1}{2}}\cdot f(x_1,\cdots,x_i,\cdots,x_n)=\lambda_if(x_1,\cdots,x_i-1,\cdots,x_n),\\
&h_{-i+\frac{1}{2}}\cdot f(x_1,\cdots,x_i,\cdots,x_n)=-\frac{\ell(i-\frac{1}{2})}{\lambda_i}(x_i+a_i)f(x_1,\cdots,x_i+1,\cdots,x_n),\\
&h_{n+j+\frac{1}{2}}\cdot f(x_1,\cdots,x_i,\cdots,x_n)=0, \\
&\c_2\cdot f(x_1,\cdots,x_n)=\ell f(x_1,\cdots,x_n)\end{aligned}
\end{equation}
where $\ell, \lambda_i\in\bC^*, a_i\in\bC, j\in\bN, 1\le i\le n$. It is not hard to  check that $\mathcal{W}_0$ is a simple $\mh^{(-n)}$-module. Then the induced $\mh$-module $K=\Ind_{\mh^{(-n)}}^{\mh}\mathcal{W}_0$ is a simple restricted $\mh$-module. So $K^{\DD}$ is a simple restricted $\DD$-module with central charge $1$ and level $\ell$. We may denote  $K^{\DD}=K^{\DD}(\ell, \Lambda_n,\mathfrak{a}_n)$ for any $\ell\in\bC^*,$
$\Lambda_n=(\lambda_1,\cdots,\lambda_n)\in (\bC^*)^n,$
$\mathfrak{a}_n=(a_1,\cdots,a_n)\in\bC^n$.

Let $U$ be a simple restricted $\Vir$-module (Theorem \ref{Vir-modules} classified all simple restricted $\Vir$-modules). From Corollary \ref{tensor}, then  $S=U^{\DD}\otimes K^{\DD}(\ell, \Lambda_n,\mathfrak{a}_n)$ is a simple restricted $\DD$-module.

If we replace (\ref{1stexample}) by
\begin{equation*}
\begin{aligned}
&h_{i}\cdot f(x_1,\cdots,x_i,\cdots,x_n)=\lambda_if(x_1,\cdots,x_i-1,\cdots,x_n),\\
&h_{-i}\cdot f(x_1,\cdots,x_i,\cdots,x_n)=-\frac{\ell i}{\lambda_i}(x_i+a_i)f(x_1,\cdots,x_i+1,\cdots,x_n),\\
&h_{n+j+1}\cdot f(x_1,\cdots,x_i,\cdots,x_n)=0, \\
&\bar\c_3\cdot f(x_1,\cdots,x_n)=\ell f(x_1,\cdots,x_n)\end{aligned}
\end{equation*} for $\ell, \lambda_i\in\bC^*, a_i\in\bC, j\in\bN, 1\le i\le n$, then $\mathcal{W}_0$ is a simple $\bar \mh^{(-n)}$-module, and the induced $\bar \mh$-module $\bar  K=\Ind_{\bar \mh^{(-n)}}^{\bar \mh}\mathcal{W}_0$ is a simple restricted $\bar \mh$-module. Hence, for any $z\in\bC$, we have the  simple $\bar \DD$-module $\bar  K(z)^{\bar \DD}=\bar  K(z)^{\bar \DD}(\ell, \Lambda_n,\mathfrak{a}_n)$ for any $\ell\in\bC^*,$
$\Lambda_n=(\lambda_1,\cdots,\lambda_n)\in (\bC^*)^n,$
$\mathfrak{a}_n=(a_1,\cdots,a_n)\in\bC^n$. For any simple $\Vir$-module $U\in\RR_{\Vir}$, the tensor product  $M=U^{\bar \DD}\otimes \bar  K(z)^{\bar \DD}(\ell, \Lambda_n,\mathfrak{a}_n)$ is a simple restricted $\bar \DD$-module.
\end{exa}

For characterizing simple induced restricted $\DD$-and $\bar {\DD}$-module which are not tensor product modules, we need the following

\begin{lem}\label{restricted submodule}
Let $S=U^{\DD}\otimes V^{\DD}$ be a simple restricted $\DD$-module with $n_S>0$ and nonzero level, where $U\in\R_{\Vir}$ and $V\in\R_{\mh}$. Let  $V_0=\text{Ann}_V(\mh^{(n_S)})$ and $W_0=\text{Ann}_S(\mh^{(n_S)})$. Then    $V_0$ is a simple $\DD^{(0,-n_S)}$-module, and
$W_0=U\otimes V_0$. Hence $W_0$ contains a simple $  \mh^{(-n_S)}$ submodule.
\end{lem}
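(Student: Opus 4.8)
The plan is to analyze the tensor product structure $S = U^{\DD}\otimes V^{\DD}$ explicitly using the action of $\mh^{(n_S)}$. Since $\mh$ acts trivially on $U^{\DD}$ and via the given $\mh$-module structure on $V^{\DD}$, for any $u\otimes v \in S$ we have $h_r(u\otimes v) = u\otimes h_r v$ for all $r\in \frac12+\bZ$ with $r>0$. Hence $\mh^{(n_S)}$ annihilates $u\otimes v$ if and only if $\mh^{(n_S)} v = 0$, which shows immediately that $W_0 = \mathrm{Ann}_S(\mh^{(n_S)}) = U\otimes V_0$ where $V_0 = \mathrm{Ann}_V(\mh^{(n_S)})$. (One should note here that $V_0\neq 0$: since $S$ is restricted and $n_S$ is defined as the minimal $r$ with $S(r)\neq 0$, and $W_0 = S(n_S)\neq 0$, we get $V_0 \neq 0$; alternatively this is the content of Lemma \ref{lem4.2} applied to $S$.)

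Next I would check that $V_0$ is a $\DD^{(0,-n_S)}$-module. This is essentially the same computation as in Lemma \ref{lem4.2}(iii), now carried out inside $V$ rather than inside $S$: for $v\in V_0$ and $i,j,k\in\bN$ one computes $h_{k+n_S+\frac12}d_i v = d_i h_{k+n_S+\frac12}v + (k+n_S+\frac12)h_{i+k+n_S+\frac12}v = 0$ and $h_{k+n_S+\frac12}h_{j-n_S+\frac12}v = 0$, so $d_i v, h_{j-n_S+\frac12}v \in V_0$; together with the central elements this gives $\DD^{(0,-n_S)}V_0\subseteq V_0$. Here $d_i$ acts on $V$ through the operators $L_i$ of \eqref{rep1}--\eqref{rep3}, which is legitimate since $V\in\R_{\mh}$ has nonzero level, and the bracket relations \eqref{rep4} are exactly what make the computation go through.

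The main point, and the step I expect to require the most care, is the simplicity of $V_0$ as a $\DD^{(0,-n_S)}$-module. The idea is: any nonzero $\DD^{(0,-n_S)}$-submodule $V_0'\subseteq V_0$ generates, under the action of all of $\mh$ (i.e.\ also the lowering operators $h_{-r}$ with $r>0$ large), a nonzero $\mh$-submodule of $V$; since $V$ is simple as an $\mh$-module, $\UU(\mh)V_0' = V$. One then wants to recover $V_0' = V_0$ from this. Concretely, apply the degree/filtration argument of Lemma \ref{main1'} (or the Heisenberg-algebra argument used in the proof of Proposition \ref{prop4.3}(i)): writing elements of $V = \UU(\mh^-_{\le -n_S})V_0$ via PBW with leading term in $V_0$, one shows that any nonzero $\mh$-submodule containing a generating set of $V$ and closed under the raising operators $h_{n_S - \frac12 + i}$ must already contain all of $V_0$; but $V_0'$ is $\mh^{(n_S-1)}$-stable precisely because it is $\DD^{(0,-n_S)}$-stable and lies in $V_0 = \mathrm{Ann}_V(\mh^{(n_S)})$. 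This forces $V_0' = V_0$. Alternatively — and this is cleaner — I would invoke Proposition \ref{prop4.3}(ii) or the relevant case of Theorem \ref{mainthm} applied to $S$ itself: since $S\cong \Ind^{\DD}_{\DD^{(0,-n_S)}}(U_0)$-type statements there already identify the relevant annihilator spaces as simple $\DD^{(0,-n_S)}$-modules; but because $S = U^{\DD}\otimes V^{\DD}$ is already a tensor product, the appropriate invariant $W_0 = U\otimes V_0$ is not itself simple, so one reduces to the Heisenberg factor $V_0$ and uses that $V$ is $\mh$-simple.

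Finally, $W_0 = U\otimes V_0$ contains a simple $\mh^{(-n_S)}$-submodule: take any nonzero $u\in U$ and any simple $\mh^{(-n_S)}$-submodule $V_0''\subseteq V_0$ (which exists since $V_0$ is a nonzero module over $\mh^{(-n_S)}$, and in fact $V_0$ is a highest-weight-type $\mh^{(-n_S)}$-module on which $\mh^{(n_S)}$ acts trivially, so it has a simple submodule — e.g.\ the $\mh^{(n_S)}$-invariants of $V_0$ viewed one level up, or simply any submodule generated by a single vector killed by $\mh_{\ge n_S}$); then $u\otimes V_0''$ is a simple $\mh^{(-n_S)}$-submodule of $W_0$, because $\mh^{(-n_S)}$ acts on $u\otimes V_0''$ only through the second tensor factor. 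This completes the argument.
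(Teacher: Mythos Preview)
Your proof is correct and supplies all the details that the paper omits; the paper's own proof is the single sentence ``This is clear.'' Two minor cleanups worth making: (i) you invoke the $\mh$-simplicity of $V$ without comment --- this follows because any proper $\mh$-submodule $V'\subset V$ is automatically $L_n$-stable (as $L_n\in\UU(\mh)$), so $U^{\DD}\otimes (V')^{\DD}$ would be a proper $\DD$-submodule of $S$; (ii) your simplicity argument for $V_0$ uses only the $\mh^{(-n_S)}$-stability of $V_0'$, so it already proves the stronger fact that $V_0$ is simple as an $\mh^{(-n_S)}$-module. Consequently in the final step there is no need for the auxiliary $V_0''$ or the hand-wavy existence argument --- just take $u\otimes V_0$ for any nonzero $u\in U$.
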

\begin{proof} This is clear.
\end{proof}
We also have the $\bar \DD$-module version of Lemma \ref{restricted submodule}:
\begin{lem}\label{restricted submodule'}
Let $M=H(z)^{\bar \DD}\otimes U^{\bar \DD}$ be a simple restricted $\bar \DD$-module with $n_M>1$ and nonzero level, where   $z\in\bC$, $H\in\R_{\bar \mh}$ and $U\in\R_{\Vir}$.
Let  $H_0=\text{Ann}_H(\bar \mh^{(n_M)})$ and $M_0=\text{Ann}_M(\bar \mh^{(n_M)})$. Then    $H_0$ is a simple $\bar \DD^{(0,-n_M+1)}$-module, and
$M_0=H_0\otimes U$. Hence $M_0$ contains a simple $\bar \mh^{(-n_M+1)}$ submodule.
\end{lem}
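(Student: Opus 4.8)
The plan is to prove Lemma~\ref{restricted submodule'} by transporting the structure of $M=H(z)^{\bar\DD}\otimes U^{\bar\DD}$ through the tensor product and reading off the annihilator of $\bar\mh^{(n_M)}$ factor by factor. First I would recall that in the $\bar\DD$-module $H(z)^{\bar\DD}\otimes U^{\bar\DD}$ the subalgebra $\bar\mh+\bC\bar\c_2$ acts only on the first factor (by construction, $(\bar\mh+\bC\bar\c_2)U^{\bar\DD}=0$), so for $r\ge1$ the operator $h_r$ acts as $h_r\otimes\mathrm{id}$. Consequently, for a pure tensor $h\otimes u$ with $u\neq0$ we have $h_r(h\otimes u)=(h_rh)\otimes u$, and since the $u$'s can be chosen linearly independent, $\bar\mh^{(n_M)}(h\otimes u)=0$ for all such $u$ iff $\bar\mh^{(n_M)}h=0$. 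Spelling this out with a basis argument gives $M_0=\mathrm{Ann}_M(\bar\mh^{(n_M)})=H_0\otimes U$, where $H_0=\mathrm{Ann}_H(\bar\mh^{(n_M)})$; this is exactly the computation already done implicitly in Lemma~\ref{lem4.2'}(ii), so I would cite the bracket relations $[d_m,h_r]=-rh_{m+r}+\cdots$ and $[h_r,h_s]=r\delta_{r+s,0}\bar\c_3$ to check $\bar\mh^{(n_M)}$-stability is preserved under $d_i$ and $h_{j-(n_M-1)+1}$.

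Next I would argue that $H_0$ is a simple $\bar\DD^{(0,-(n_M-1))}$-module. By Lemma~\ref{lem4.2'}(ii) applied to the simple module $M$ (with $n_M>1$, so $n_M-1\ge1$), $M_0$ is a nonzero $\bar\DD^{(0,-(n_M-1))}$-module and is invariant under the operators $\bar L_n$, $n\in\bN$; moreover $M$ being simple forces $M_0$ to be a simple $\bar\DD^{(0,-(n_M-1))}$-module (this is the standard fact underlying the induced-module description: any nonzero $\bar\DD^{(0,-(n_M-1))}$-submodule of $M_0$ would generate a proper submodule, contradicting simplicity — here one needs that $M\cong\Ind(M_0)$ or the weaker statement that $M_0$ generates $M$ and intersects every nonzero submodule, which follows from the main results in Section~5). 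Since $M_0=H_0\otimes U$ and $U$ is a fixed simple $\Vir$-module on which $\bar\mh+\bC\bar\c_2$ acts trivially, the $\bar\DD^{(0,-(n_M-1))}$-module structure on $M_0$ is $H_0\otimes U^{\bar\DD^{(0,-(n_M-1))}}$, and by Theorem~\ref{generalize} (or directly, since $U$ is simple and the $d_i$-action on $M_0$ is determined), simplicity of $H_0\otimes U$ forces simplicity of $H_0$ as an $\bar\mh^{(-(n_M-1))}$-module and hence as a $\bar\DD^{(0,-(n_M-1))}$-module.

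Finally, the last assertion that $M_0$ contains a simple $\bar\mh^{(-(n_M-1))}$-submodule is immediate once we know $M_0=H_0\otimes U$ with $H_0$ a simple $\bar\mh^{(-(n_M-1))}$-module: for any nonzero $u\in U$, the subspace $H_0\otimes u$ is an $\bar\mh^{(-(n_M-1))}$-submodule of $M_0$ isomorphic to $H_0$, hence simple. The main obstacle I anticipate is not any single computation but getting the bookkeeping of the shifted index $n_M-1$ exactly right — in the $\bar\DD$ setting the Heisenberg grading is by $\bZ$ rather than $\tfrac12+\bZ$, so one must be careful that ``$H_0=\mathrm{Ann}_H(\bar\mh^{(n_M)})$ is a $\bar\DD^{(0,-(n_M-1))}$-module'' is the correct analog of the $\DD$-statement, and that the injectivity of $h_{n_M-1}$ on $H_0$ (needed to invoke the results of Section~5) is inherited from Lemma~\ref{lem4.2'}(i). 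Apart from that the proof is, as the authors say, clear, and a one-line reference to Lemmas~\ref{lem4.2'} and~\ref{restricted submodule} together with Theorem~\ref{generalize} suffices.
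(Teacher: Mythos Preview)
Your computation $M_0=H_0\otimes U$ is correct, and so is the last step exhibiting $H_0\otimes u$ as a simple $\bar\mh^{(-(n_M-1))}$-submodule. But your middle step contains a genuine error: you assert that ``$M$ being simple forces $M_0$ to be a simple $\bar\DD^{(0,-(n_M-1))}$-module,'' and then deduce simplicity of $H_0$ from that. This is false in general. If $U$ is a nontrivial simple highest weight $\Vir$-module with highest weight vector $u_0$, then $H_0\otimes\bC u_0$ is a proper nonzero $\bar\DD^{(0,-(n_M-1))}$-submodule of $M_0=H_0\otimes U$: for $i>0$ one has $d_i(h\otimes u_0)=(\bar L_ih)\otimes u_0+h\otimes(d_iu_0)=(\bar L_ih)\otimes u_0$, and $d_0(h\otimes u_0)\in H_0\otimes\bC u_0$ as well. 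Your appeal to ``$M\cong\Ind(M_0)$ \dots\ which follows from the main results in Section~5'' is not available here: Proposition~\ref{prop4.6'} gives $M\cong\Ind K_0$ only when $r_M\ge2$, whereas by Theorem~\ref{mainthm'} the tensor-product form with $n_M>1$ occurs precisely when $r_M\in\{-\infty\}\cup\{-1,0,1\}$, and then $K_0\subsetneq M_0$ in general.

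The correct argument bypasses $M_0$ entirely and works directly in $H$. Since $h_r$ acts on $M$ as $h_r\otimes\mathrm{id}$, one has $M(r)=H(r)\otimes U$, so $n_H=n_M$. Now $H$ is a simple restricted $\bar\mh$-module of level $\ell\ne0$; the Claim in the proof of Proposition~\ref{prop4.6'} (with $K_0$ replaced by $H_0$) shows that $H\cong\Ind_{\bar\mh^{(-(n_M-1))}}^{\bar\mh}H_0$, and simplicity of $H$ then forces $H_0$ to be simple as a $\bar\mh^{(-(n_M-1))}$-module (a proper submodule would induce to a proper submodule by PBW). Since $\bar\mh^{(-(n_M-1))}\subset\bar\DD^{(0,-(n_M-1))}$, this immediately gives simplicity of $H_0$ as a $\bar\DD^{(0,-(n_M-1))}$-module. (The paper itself gives no argument beyond declaring the analogous Lemma~\ref{restricted submodule} ``clear''.)
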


Lemma \ref{restricted submodule} (resp. Lemma \ref{restricted submodule'})  means that if $S\in\R_{\DD}$ (resp. $M\in\R_{\bar \DD}$) is not a tensor product module, then $W_0$ (resp. $M_0$) contains no simple $\mh^{(-n_S)}$-submodule (resp. $\bar \mh^{(-n_M+1)}$-submodules).

Here we will first consider the case $n_S=1$ (resp. $n_M=2$).
Let $\mathfrak{b}=\bC h+\bC e$ be the 2-dimensional solvable Lie algebra with basis $h,e$ and subject to Lie bracket $[h,e]=e$.
%
The following concrete example using \cite[Example 13]{LMZ} tells us
 how to construct induced restricted $\DD$-module (resp. $\bar \DD$-module) from a $\bC[e]$-torsion-free simple $\mathfrak{b}$-module.

\begin{exa}[Simple induced restricted module, $n_S=1/n_M=2$] \label{ex-7.4}
Let $c_1,c_2\in\bC$ with $c_2\ne0$. Let $W'=(t-1)^{-1}\bC[t,t^{-1}]$. From \cite[Example 13]{LMZ} we know that  $W'$ is a simple $\mathfrak{b}$-module  whose structure  is given by
\begin{equation*}
h\cdot f(t)=t\frac{d}{dt}(f(t))+\frac{f(t)}{t^2(t-1)},\,\,
e\cdot f(t)=tf(t),\forall f(t)\in W'.
\end{equation*}
We can make $W'$ into a $\DD^{(0,0)}$-module by
$$\aligned  {\bf c}_1\cdot f(t)&=c_1  f(t),\,\,\, {\bf c}_2\cdot f(t)=c_2  f(t),\\
d_0\cdot f(t)&=-\frac{1}{2}h\cdot f(t), \,h_{\frac{1}{2}}\cdot f(t)=e\cdot f(t), \,d_i\cdot f(t)=h_{\frac{1}{2}+i}\cdot f(t)=0,\,\,\,i\in\bZ_+.\endaligned
$$
Then $W'$ is a simple $\DD^{(0,0)}$-module. Clearly, the action of $h_{\frac{1}{2}}$ on $W'$ implies that  $W'$ contains no simple $\mh^{(0)}$-module. Then $W_0=\Ind_{\DD^{(0,0)}}^{\DD^{(0,-1)}}W'$ is a simple $\DD^{(0,-1)}$-module and contains no simple $\mh^{(-1)}$-module. So $W_0$ is not a tensor product $\DD^{(0,-1)}$-module. Let $S=\Ind_{\DD^{(0,-1)}}^{\DD}W_0$. It is easy to see $n_S=1,m_S=2=r_S$, and $W_0=U_0=K_0$. The proof of  Proposition \ref{prop4.6} implies that $S$ is a simple restricted $\DD$-module. And Lemma \ref{restricted submodule}  means that  $S$ is not a tensor product $\DD$-module.

For $c,z,z'\in\bC, \ell\in\bC^*$, we also can make $W'$ into a $\bar \DD^{(0,0)}$-module by
$$\begin{aligned}&d_0\cdot f(t)=h\cdot f(t), \,h_{1}\cdot f(t)=e\cdot f(t), \\
&h_0\cdot f(t)=z'f(t), h_{1+i}\cdot f(t)=d_i\cdot f(t)=0,\,\,\,i\in\bZ_+,\\
& \bar\c_1\cdot f(t)=cf(t),\, \bar\c_2\cdot f(t)=zf(t),\, \bar\c_3\cdot f(t)=\ell f(t),\\
\end{aligned}
$$where $f(t)\in W'$.
Then $W'$ is a simple $\bar \DD^{(0,0)}$-module. Clearly, the action of $h_{1}$ on $W'$ implies that  $W'$ contains no simple $\bar \mh^{(0)}$-module. Then $M_0=\Ind_{\bar \DD^{(0,0)}}^{\bar \DD^{(0,-1)}}W'$ is a simple $\bar \DD^{(0,-1)}$-module and contains no simple $\bar \mh^{(-1)}$-module.  Let $M=\Ind_{\bar \DD^{(0,-1)}}^{\bar \DD}M_0$. It is easy to see $n_M=2,$$r_M= 3$. The proof of  Proposition \ref{prop4.6'} implies that $M$ is a simple restricted $\bar \DD$-module. And Lemma \ref{restricted submodule'}  means that  $M$ is not a tensor product $\bar \DD$-module.

\end{exa}

\begin{exa}[Simple induced modules of semi-Whittaker type, $n_S\ge 2, n_M\ge 3$]\footnote{This example is a modified version of the one provided by Drazen Adamovic.} \label{ex-7.6}   Take $p,q\in\bZ_+, {\bf a} =(a_1, \dots, a_{q}) \in  ({\bC}^*)^{q}$,  ${\bf b} = (b_1, \dots, b_p) \in  ({\bC}^*)^p$, $c,\ell\in\bC$ with $\ell\ne0$.  Define the $1$-dimensional
$\DD^{(p,q)}$-module ${\bC}_{{\bf a}, {\bf b}} = {\bC} v_0$ with
\begin{equation}\label{induced-modules} \aligned  {\bf c}_1\cdot v_0&=c  v_0,\,\,\, {\bf c}_2\cdot v_0=\ell  v_0,\\
 d_{p} v_0 &= a_1 v_0, \cdots, d_{p+q-1} v_0 = a_q v_0, \ d_i v_0 = 0 \ \mbox{for} \ i >  p+q-1, \\
 h_{q+\tfrac{1}{2}} v_0 &= b_1 v_0, \cdots   h_{p+q- \tfrac{1}{2}} v_0 = b_p v_0, \   h_{i-\tfrac{1}{2}} v_0 = 0 \ \mbox{for} \  i > p+q.\endaligned\end{equation}
It is not hard to show that  $U({\bf a},{\bf b}) :=  \Ind_{\DD^{(p,q)}}^{\DD^{(0,-1)}}{\bC}_{{\bf a},{\bf b}}$ is a  simple $\DD^{(0,-1)}$-module.
Then in  Theorem \ref{thmmain1.1} (2) we have $V=U({\bf a},{\bf b}), n=1, k=p+q=l$, and so $S=  \widehat  U({\bf a},{\bf b}) :=  \Ind_{\DD^{(0,-1)}}^{\DD } U({\bf a},{\bf b})$
is a simple restricted  $\DD$-module. In  Lemma \ref{restricted submodule},  $n_S=p+q$, and $W_0=\Ind_{\mh^{(q)}}^{\mh^{(-(p+q))}}\left(\Ind_{\DD^{(p,q)}}^{\DD^{(0,q)}} \bC_{{\bf a},{\bf b}}\right)$  does not contain any simple $\mh^{(-(p+q))}$-module ( for $h_{\pm 1/2}$ acts freely on $W_0$). Hence, by  Lemma \ref{restricted submodule},  $\widehat  U({\bf a},{\bf b})$ is not a  tensor product $\DD$-module.


If we, in the above example,  replace (\ref{induced-modules}) by
\begin{equation*}\label{induced module} \aligned & {\bar \c}_1\cdot v_0=c  v_0,\,{\bar \c}_2\cdot v_0=z  v_0,\,\bar\c_3 v_0=\ell v_0,\\
&d_{p} v_0 = a_1 v_0, \cdots, d_{p+q-1} v_0 = a_q v_0, \ d_i v_0 = 0 \ \mbox{for} \ i >  p+q-1, \\
& h_{q+1} v_0 = b_1 v_0, \cdots   h_{p+q} v_0 = b_p v_0, \   h_{i} v_0 = 0 \ \mbox{for} \  i > p+q,
\endaligned\end{equation*}where $z\in\bC$
 and leave other parts invariant, then for any $z'\in\bC$, the induced
 $\bar \DD^{(0,-(p+q))}$-module
$$\bar  V=\Ind_{\bar \DD^{(p,q+1)}}^{\bar \DD^{(0,-(p+q))}}  {\bC}_{\bf a,\bf b}/\Big(\UU( \bar \DD^{(0,-(p+q))})(h_0-z')(1\otimes v_0)\Big) $$
 is a  simple $\bar \DD^{(0,-(p+q))}$-module. Let $M=\Ind_{\bar \DD^{(0,-(p+q))}}^{\bar \DD}\bar  V$. The proof of Theorem \ref{prop4.6'} implies  that $M$ is a simple restricted  $\bar \DD$-module where
$n_M=p+q+1,r_M=2(p+q)+1$, and $K_0=\bar  V=M_0$. Since $\bar  V$ contains no simple $\bar \mh^{(-n_M+1)}$-module, we see, by Lemma \ref{restricted submodule'}, that $M$ is not a tensor product $\bar \DD$-module.
\end{exa}

 \begin{rem}From Theorem \ref{mainthm} (resp. Theorem \ref{prop4.3'}) we know that if $n_S=0$ (resp. $n_M=0,  1$), then  simple restricted $\DD$-modules(resp. $\bar \DD$-modules) must be tensor product modules. And Examples \ref{ex-7.4}-\ref{ex-7.6} mean that for any $n_S>0$ (resp. $n_M>1$), there do exist simple restricted $\DD$-modules (resp. $\bar \DD$-modules) which are not tensor product modules. Clearly, the $\bar \DD$-modules here are simple restricted $\widetilde\DD$-modules for $z=0$. \end{rem}

 \appendix

 \section{Application two: simple modules for Heisenberg-Virasoro vertex operator algebras $\mathcal V^{c}$ (by Drazen Adamovic)}\label{voasection}

\label{voa-interp}

A connection between restricted modules over the Heisenberg-Virasoro algebra and VOA modules in untwisted cases was considered by Guo and Wang in \cite{GW}. In this appendix we extend this correspondence for restricted modules for the mirror Heisenberg-Virasoro algebra. Restricted modules of nonzero level for the mirror Heisenberg-Virasoro   algebra can be treated as  weak twisted modules for the Heisenberg-Virasoro vertex algebras, and restricted modules  of nonzero level  for the twisted  Heisenberg-Virasoro   algebra can be treated as  weak modules for the Heisenberg-Virasoro vertex algebras. For convenience,  in the Heisenberg-Virasoro  algebra
 $\widetilde \DD$, we  denote $\c_1=\bar\c_1, \c_2=\bar \c_3$.

	Let $\mathcal P$ be the subalgebra of $\widetilde {\mathfrak{D}}$ spanned by
	$$ \c_1, \c_2, d_m, h_m, \quad (m \in {\bZ}_{\ge 0}). $$
	Let $(\ell_1, \ell_2) \in {\bC} ^2$. Consider the $1$-dimensional $\mathcal P$--module ${\bC} v_{\ell_1,\ell_2}$ such that
	$$ \c_1  v_{\ell_1,\ell_2} = \ell_1 v_{\ell_1,\ell_2},   \  \c_2  v_{\ell_1,\ell_2} = \ell_2 v_{\ell_1,\ell_2}, \  d_m v_{\ell_1,\ell_2}  = h_m v_{\ell_1,\ell_2} = 0 \quad (m \in {\bZ}_{\ge 0}). $$
	Let $\mathcal V^{\ell_1, \ell_2}$ be the following induced $\widetilde {\mathfrak{D}}$-module:
$$\mathcal V^{\ell_1, \ell_2} = U( \widetilde {\mathfrak{D}}) \otimes_{ U(\mathcal P)} {\bC} v_{\ell_1,\ell_2}. $$	
Then $\mathcal V^{\ell_1, \ell_2}$ is a highest weight $\widetilde {\mathfrak{D}}$-module, with the highest weight vector ${\bf 1}_{\ell_1, \ell_2}  =1 \otimes v_{\ell_1, \ell_2}$.
	
	Define the following fields acting on $\mathcal V^{\ell_1, \ell_2}$:
	$$ h(z) = \sum_{m \in {\bZ}} h_m z^{-m-1}, \quad d(z) = \sum_{m \in {\bZ}} d_m z^{-n-2}.$$
	\begin{itemize}
	\item Let $V_{Vir} ^c$ denotes the universal Virasoro vertex algebra of central charge $c$ generated by the Virasoro  field $L^{(1)} (z) = \sum_{m \in \bZ} L^{(1)}_n  z^{-n-2}$ (cf. \cite{LL}, \cite{FZ}).
	\item Let $ M(\ell) $ denotes the Heisenberg vertex algebra of level $\ell$, with the vertex operator $Y_{M(\ell)}( \cdot, z)$ which is uniquely  generated with the Heisenberg field $$ Y_{M(\ell)}(h(-1) {\bf 1}, z) =h(z) = \sum_{n \in {\Z}} h_n z^{-n-1}. $$ If $\ell \ne 0$, then $M(\ell)$ is simple and always isomorphic to $M(1)$ (cf. \cite{LL}).
	\item $M(\ell)$  contains a Virasoro vertex subalgebra $V_{Vir} ^{c=1}$ generated by the Virasoro field
	$$ L^{Heis} (z) := \frac{1}{2\ell} :h(z) h(z): = \sum_{n \in {\bZ}} L^{Heis} _n z^{-n-2}. $$
	Moreover, we have
	$$ L^{Heis}_n  = \frac{1}{2\ell} \sum_{k \in \bZ} :h_k h_{n-k}:$$
	For details see \cite{FLM}, \cite{LL}.
	
	\item $M(\ell)$ contains the Virasoro vector
	$$\omega_{\lambda }  = \frac{1}{2\ell} (h_{-1} ^2){\bf 1} - \frac{\lambda}{\ell} h_{-2} {\bf 1}$$
	of the central charge $c = 1 - 12 {\lambda} ^2 / \ell$. The components of the vertex operator
	\begin{align} Y_{M(\ell)} (\omega_{\lambda}) = \bar L(z) = L^{Heis}(z) - \frac{\lambda}{\ell} \partial_z  h(z) =  \sum _{n \in {\bZ}} \bar L_n z^{-n-2} \label{rep1-voa}\end{align}
	represent the operators $\bar L_n$ defined directly by (\ref{rep1}).
	\item Note that the field $\bar L(z)$ and operators $\bar L_n$ acts on any weak $M(\ell)$-module, and in particular on any restricted module for the Heisenberg algebra.
	\end{itemize}
	
	Note that $M(\ell_2)$ is naturally a vertex subalgebra of $\mathcal V^{\ell_1, \ell_2}$.
	
	\begin{pro} Assume that $\ell_2 \ne 0$. We have the following isomorphism of  vertex algebras
	$$  V_{Vir}^{ c } \otimes M(\ell_2) \cong   \mathcal V^{\ell_1, \ell_2},$$
	such that $c= \ell_1 -1$ and
	$$ L^{(1)}   (z) \mapsto d(z) - L^{Heis}(z), \quad h(z) \mapsto h(z). $$
	In particular, $\mathcal V^{\ell_1, \ell_2} \cong V_{Vir}^{ c } \otimes M(1)$.
	\end{pro}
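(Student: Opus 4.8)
The plan is to exhibit an explicit vertex algebra homomorphism $\varphi\colon V_{Vir}^{c}\otimes M(\ell_2)\to \mathcal V^{\ell_1,\ell_2}$ and prove it is an isomorphism by comparing generators, commutation relations, and graded dimensions. First I would set $\omega^{Heis}:=\frac{1}{2\ell_2}h(-1)^2\mathbf 1\in M(\ell_2)$ and $\tilde\omega:=d(-2)\mathbf 1-\omega^{Heis}\in\mathcal V^{\ell_1,\ell_2}$, so that the corresponding field is $\tilde L(z)=d(z)-L^{Heis}(z)$. Using the bracket relations $(\ref{rep4})$ and $(\ref{rep4'})$ — which say exactly that $[d_m,L^{Heis}_n]=[L^{Heis}_m,L^{Heis}_n]$ and $[L^{Heis}_n,h_r]=-rh_{n+r}$ — one checks that the modes $\tilde L_n:=d_n-L^{Heis}_n$ satisfy the Virasoro relations with central charge $\ell_1-1$ and, crucially, that $[\tilde L_m,h_r]=[d_m,h_r]-[L^{Heis}_m,h_r]=-rh_{m+r}-(-rh_{m+r})=0$. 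Hence $\tilde L(z)$ and $h(z)$ are mutually local (indeed commuting) fields on $\mathcal V^{\ell_1,\ell_2}$, and $\tilde L(z)$ generates a copy of $V_{Vir}^{c}$ with $c=\ell_1-1$ while $h(z)$ generates $M(\ell_2)$; since these two subalgebras commute, the universal property of the tensor product of vertex algebras gives a homomorphism $\varphi$ with $L^{(1)}(z)\mapsto d(z)-L^{Heis}(z)$ and $h(z)\mapsto h(z)$.

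Next I would show $\varphi$ is surjective: its image is a vertex subalgebra containing $h(z)$ and $\tilde L(z)=d(z)-L^{Heis}(z)$, hence also $d(z)=\tilde L(z)+L^{Heis}(z)$ (because $L^{Heis}(z)$ lies in the Heisenberg part), and since $\mathcal V^{\ell_1,\ell_2}$ is generated by the fields $d(z)$ and $h(z)$ together with the central scalars $\ell_1,\ell_2$, the image is everything. For injectivity I would compare graded dimensions. Both sides are $\mathbb Z_{\ge0}$-graded by the conformal weight (on the left by $L^{(1)}_0+L^{Heis}_0$, on the right by $d_0$ — note these agree under $\varphi$ since $d_0=\tilde L_0+L^{Heis}_0$), and both are free modules on which PBW-type bases can be written: $\mathcal V^{\ell_1,\ell_2}$ has a basis of ordered monomials in $\{d_{-n}:n\ge 2\}\cup\{h_{-n}:n\ge 1\}$ applied to $\mathbf 1_{\ell_1,\ell_2}$ (using $d_{-1}\mathbf 1=h_0\mathbf 1=0$ and the relation expressing... actually by PBW for $\widetilde{\mathfrak D}$ modulo the annihilator of the highest weight vector), while $V_{Vir}^{c}\otimes M(\ell_2)$ has the tensor-product basis of Virasoro monomials in $\{L^{(1)}_{-n}:n\ge2\}$ times Heisenberg monomials in $\{h_{-n}:n\ge1\}$. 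A character count shows the graded dimensions coincide, so the surjection $\varphi$ is an isomorphism. The final clause $\mathcal V^{\ell_1,\ell_2}\cong V_{Vir}^{c}\otimes M(1)$ then follows from the quoted fact that $M(\ell_2)\cong M(1)$ whenever $\ell_2\ne0$ (cf. \cite{LL}).

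I expect the main obstacle to be the injectivity/character-matching step rather than the construction of $\varphi$. Verifying that $\tilde L(z)$ is a Virasoro field commuting with $h(z)$ is a direct computation from $(\ref{rep4})$--$(\ref{rep4'})$, and invoking the universal property of $V_{Vir}^{c}\otimes M(\ell_2)$ is formal. But to conclude $\ker\varphi=0$ one must be careful that the apparent PBW basis of $\mathcal V^{\ell_1,\ell_2}$ is genuinely a basis — i.e., that no further relations among the $d_{-n},h_{-n}$ are forced in the induced module — which amounts to the standard fact that $\mathcal V^{\ell_1,\ell_2}=U(\widetilde{\mathfrak D})\otimes_{U(\mathcal P)}\mathbb C v_{\ell_1,\ell_2}$ is free over $U(\widetilde{\mathfrak D}^-)$ where $\widetilde{\mathfrak D}^-=\mathrm{span}\{d_{-n},h_{-n}:n\ge1\}$; combined with $d_{-1}\mathbf 1=0$ this gives the grading dimension on the nose. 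Alternatively, one avoids the character count altogether: since $L^{(1)}(z)$ and $h(z)$ generate commuting copies of $V_{Vir}^c$ and $M(\ell_2)$ inside $\mathcal V^{\ell_1,\ell_2}$, and $\mathcal V^{\ell_1,\ell_2}$ is generated by $d(z)=L^{(1)}(z)+L^{Heis}(z)$ and $h(z)$, the natural map from the tensor product is onto, and one checks it is also into by noting both are freely generated of the same size — so the two approaches converge to the same bookkeeping, which is the only delicate point.
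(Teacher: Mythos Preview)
The paper states this proposition without proof (it appears in the appendix as a standard fact, with the text moving immediately to ``Since $M(\ell)\cong M(1)$\ldots'' afterward), so there is no paper-proof to compare against. Your approach is the standard one and is correct in outline: verify via (\ref{rep4})--(\ref{rep4'}) that the shifted field $\tilde L(z)=d(z)-L^{Heis}(z)$ is a Virasoro field of central charge $\ell_1-1$ commuting with $h(z)$, invoke the universal property of the tensor product to obtain $\varphi$, check surjectivity by recovering $d(z)=\tilde L(z)+L^{Heis}(z)$, and conclude injectivity by a graded-dimension count.

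One point to sharpen: your basis claim for $\mathcal V^{\ell_1,\ell_2}$ quietly uses $d_{-1}\mathbf 1_{\ell_1,\ell_2}=0$, which does \emph{not} follow from the induced-module description as written in the paper (the parabolic $\mathcal P$ there only contains $d_m$ for $m\ge 0$). For $\mathcal V^{\ell_1,\ell_2}$ to carry its vertex-algebra structure with vacuum $\mathbf 1_{\ell_1,\ell_2}$ one needs the translation-covariance relation $d_{-1}\mathbf 1_{\ell_1,\ell_2}=0$, so in a careful write-up you should either (a) take $\mathcal P$ to include $d_{-1}$ from the start, or (b) note that the vertex-algebra $\mathcal V^{\ell_1,\ell_2}$ is really the quotient of the induced module by the submodule generated by $d_{-1}\mathbf 1_{\ell_1,\ell_2}$. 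Once this is made explicit, your character comparison (monomials in $d_{-n}$, $n\ge 2$, and $h_{-n}$, $n\ge 1$, versus monomials in $L^{(1)}_{-n}$, $n\ge 2$, and $h_{-n}$, $n\ge 1$) goes through cleanly and the argument is complete.
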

	Since $M(\ell) \cong M(1)$, without loss of  generality we can assume that $\ell =1$.
	In what follows we set $\mathcal V^{c} := V_{Vir}^{ c } \otimes M(1)$.

	The following theorem relates restricted  $\bar \DD$-modules as (untwisted) modules for the vertex operator algebra  $\mathcal V^{c}$.
	
\begin{theo}\label{5.3-untw}(cf. \cite{GW})
The following statements holds.
\item[\rm(1)] Assume that $W$ is a (simple) restricted $\bar \DD$-module of central charge $\ell_1$ and level one. Then $W$ has the unique structure of a weak (simple)  $\mathcal V^{c=\ell_1-1}$-module generated by the fields:
$$ h (z) = \sum_{r \in   {\bZ}} h_r z^{-r-1},   \quad L^{(1)}(z) =   d(z) - L^{Heis}(z)  = \sum_{n \in {\bZ}} L^{(1)} _n z^{-n-2}. $$
 \item[\rm(2)]  Assume that $W$ is a  weak (simple)  $\mathcal V^{c}$-module. Then $W$  has the structure of a (simple) restricted $\bar \DD$-module of level one such that
 $$ h_n \mapsto h_n, \quad d_n = L^{(1)}_n + L^{Heis} _n   $$
 \end{theo}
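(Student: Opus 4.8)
Theorem \ref{5.3-untw} is the vertex-algebraic translation of the purely Lie-theoretic correspondence between restricted $\bar\DD$-modules of level $1$ and weak $\mathcal V^c$-modules, so the proof is a standard application of the associated-vertex-algebra machinery (as in \cite{LL}), combined with the explicit identification $\mathcal V^c\cong V_{Vir}^{c}\otimes M(1)$ from the preceding Proposition. First I would recall that for the $\bZ$-graded Lie algebra $\widetilde\DD$ one forms the local system of fields $h(z)=\sum_r h_r z^{-r-1}$ and $d(z)=\sum_n d_n z^{-n-2}$ acting on any restricted module, and that these fields are mutually local by the commutation relations \eqref{thva} (with $\bar\c_2$ sent to $z$, but here level one and $\bar\c_2$ acting as the appropriate scalar --- in the untwisted VOA setting $z$ corresponds to the operator coming from $\bar L$). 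The key move is to replace the field $d(z)$ by $L^{(1)}(z):=d(z)-L^{Heis}(z)$, where $L^{Heis}(z)=\frac1{2}:h(z)h(z):$; the point of \eqref{rep4'}/\eqref{rep3'} (i.e. $[d_m,\bar L_n]=[\bar L_m,\bar L_n]$ and the analogous Heisenberg computation) is precisely that $L^{(1)}(z)$ is a Virasoro field of central charge $\ell_1-1$ that commutes with $h(z)$, so the pair $\{L^{(1)}(z),h(z)\}$ generates, under Dong's lemma and the normal ordered product, a vertex algebra which is a homomorphic image of the universal object $V_{Vir}^{\ell_1-1}\otimes M(1)=\mathcal V^c$.

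\textbf{Step-by-step.} (1) Given a restricted $\bar\DD$-module $W$ of central charge $\ell_1$ and level one, check that $h(z)$ and $L^{(1)}(z)=d(z)-L^{Heis}(z)$ are fields on $W$ (restrictedness guarantees $L^{Heis}_n$ is well defined and lower-truncated), that they are mutually local of the right orders, and that $L^{(1)}(z)$ satisfies the Virasoro OPE of central charge $\ell_1-1$ while $[L^{(1)}_m,h_r]=0$ --- all of which are repackagings of \eqref{rep3-untw}, \eqref{rep3'} and the Heisenberg relations. (2) Invoke the general existence theorem for weak modules over the associated vertex algebra (the standard argument of \cite{LL}, or the one already used in \cite{GW,ALZ}): the $n$-th products of these fields close up and produce a weak $\mathcal V^c$-module structure on $W$ whose vertex operators act as prescribed; uniqueness follows because $\mathcal V^c$ is generated by $\omega$ and $h(-1)\mathbf 1$, whose corresponding fields are forced to be $L^{(1)}(z)$ and $h(z)$. (3) For simplicity: a $\mathcal V^c$-submodule is in particular invariant under all the $L^{(1)}_n,L^{Heis}_n,h_r$, hence under all $d_n=L^{(1)}_n+L^{Heis}_n$, so it is a $\bar\DD$-submodule; thus simplicity transfers. (4) For the converse direction, given a weak $\mathcal V^c$-module $W$, define $h_n,L^{(1)}_n$ as the components of $Y(h(-1)\mathbf 1,z)$ and $Y(\omega^{(1)},z)$, set $L^{Heis}_n=\frac1{2}\sum_{k}:h_kh_{n-k}:$ (well defined by the weak-module truncation condition), and put $d_n:=L^{(1)}_n+L^{Heis}_n$; the commutator formula for vertex operators then yields exactly the relations \eqref{thva} defining a restricted $\bar\DD$-module of level one and central charge $c+1=\ell_1$, and restrictedness is immediate since $h_r$ and $L^{(1)}_n$ act by lower-truncated fields. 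Again simplicity passes back because the two module structures have the same submodules.

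\textbf{Main obstacle.} The genuinely substantive point is not the algebra of OPEs --- that is forced by the bracket relations collected in \eqref{rep3-untw}, \eqref{rep3'} --- but verifying that the collection of fields $\{h(z),L^{(1)}(z)\}$ really generates a weak module \emph{over} $\mathcal V^c$ rather than just a compatible family of operators, i.e.\ that the assignment respects all iterate/normal-ordered relations holding in $\mathcal V^c$. This is exactly where one must cite the general machinery identifying $\mathcal V^{\ell_1,\ell_2}$ with the universal vertex algebra whose weak modules are the restricted $\widetilde\DD$-modules, together with the Proposition giving $\mathcal V^{\ell_1,\ell_2}\cong V_{Vir}^{\ell_1-1}\otimes M(1)$; the bookkeeping subtlety is keeping the two Virasoro fields $d(z)$ and $L^{(1)}(z)=d(z)-L^{Heis}(z)$ straight and checking that the change of generators is an isomorphism, not merely a surjection, which follows from the fact that $d(z)$ is recovered as $L^{(1)}(z)+L^{Heis}(z)$. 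I expect the write-up to be short, essentially a citation of \cite{GW} for part (1) together with the explicit tensor decomposition and the observation that $d_n=L^{(1)}_n+L^{Heis}_n$ intertwines the two notions of submodule.
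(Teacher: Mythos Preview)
Your proposal is correct and in fact considerably more detailed than the paper's own treatment: the paper does not give a proof of this theorem at all, but simply cites \cite{GW} via the ``(cf.~\cite{GW})'' in the theorem header and moves directly on to the twisted analogue (Theorem~\ref{5.3}). Your anticipated outcome---``essentially a citation of \cite{GW}''---is exactly what the paper does, and your sketch correctly identifies the mechanism (the change of Virasoro generator $d(z)\mapsto L^{(1)}(z)=d(z)-L^{Heis}(z)$ yielding a field that commutes with $h(z)$, together with the universal property of $\mathcal V^{\ell_1,\ell_2}\cong V_{Vir}^{\ell_1-1}\otimes M(1)$) that underlies the result; indeed, the paper's proof of the twisted Theorem~\ref{5.3} follows the same pattern you outline.
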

	
	The vertex--algebraic interpretation of the restricted $\DD$--modules is via the twisted $\mathcal V^{c}$-modules.
	There is an automorphism $\theta_1$  of order two of $M(1)$ such that
$ \theta_{1} (h) = -h$ (cf. \cite{FLM}).  We extend this automorphism to the automorphism $\theta = \mbox{Id} \otimes \theta_1$ of  $\mathcal V^{c}$.

We have the following theorem.

\begin{theo}\label{5.3}
The following statements holds.
\item[\rm(1)] Assume that $W$ is a (simple) restricted $\DD$-module of central charge $\ell_1$ and level one. Then $W$ has the unique structure of a weak (simple) $\theta$-twisted $\mathcal V^{c=\ell_1-1}$-module generated by the fields:
$$ h^{tw}(z) = \sum_{r \in \tfrac{1}{2} + {\bZ}} h_r z^{-r-1},   L^{(1)} (z) =  d(z) -  L_{tw} ^{Heis} (z), $$
where
 \begin{align} L_{tw} ^{Heis} (z):=\tfrac{1}{2} : h^{tw} (z) ^2 : + \frac{1}{16} z^{-2} = \sum _{n\in {\bZ}} L _n z^{-n-2}.  \label{heis-tw-vir} \end{align}
 \item[\rm(2)]  Assume that $W$ is a  weak (simple) $\theta$-twisted $\mathcal V^{c}$-module. Then $W$ has the structure of    a (simple) restricted $\DD$-module of level one such that
 $$ h_r \mapsto h_r, \quad d_n = L^{(1)}_n + L_n. $$
 \end{theo}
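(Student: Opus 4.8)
\textbf{Proof proposal for Theorem \ref{5.3}.} The plan is to mirror the untwisted case (Theorem \ref{5.3-untw}), replacing ordinary modules by $\theta$-twisted ones, and to use the fact that the twisted theory of the Heisenberg vertex operator algebra $M(1)$ is governed exactly by the \emph{twisted} Heisenberg algebra $\mh$ with half-integer indices — which is precisely the one sitting inside $\DD$. First I would recall the general machinery of $\theta$-twisted modules: for a vertex operator algebra $V$ with a finite-order automorphism $\theta$, a weak $\theta$-twisted $V$-module is determined by twisted fields $Y^{tw}(a,z)$ whose mode expansions are shifted according to the $\theta$-eigenspace decomposition of $a$. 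Since $\theta = \mathrm{Id}\otimes\theta_1$ acts as $-1$ on the Heisenberg generator $h(-1)\mathbf 1$ and trivially on the Virasoro part $L^{(1)}(z)$, the twisted field for $h$ has half-integer-indexed modes $h^{tw}(z)=\sum_{r\in\frac12+\bZ}h_r z^{-r-1}$, while $L^{(1)}(z)$ keeps integer modes. The Virasoro element of $M(1)$ (resp. of $\mathcal V^c$) then acts in a $\theta$-twisted module via the standard twisted normal ordering, which introduces the anomaly term $\frac{1}{16}z^{-2}$; this is exactly $L_{tw}^{Heis}(z)$ of \eqref{heis-tw-vir}, and its modes $L_n$ satisfy the relations \eqref{rep4}–\eqref{rep4'} (this is the content of (9.4.13)/(9.4.15) in \cite{FLM}, already cited in the excerpt). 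The conformal vector of $\mathcal V^c=V_{Vir}^c\otimes M(1)$ is $L^{(1)}+\omega_{Heis}$, so its twisted action has modes $d_n:=L^{(1)}_n+L_n$, giving the bijective dictionary between the two structures.

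The argument then splits into the two directions. For (1): given a simple restricted $\DD$-module $W$ of central charge $\ell_1$ and level one, I would first produce the twisted Heisenberg action by using the formulas \eqref{rep1}–\eqref{rep3} from \cite{LPXZ} (with $\ell=1$): the operators $h_r$ ($r\in\frac12+\bZ$) together with the commutation relation $[h_r,h_s]=r\delta_{r+s,0}$ give $W$ the structure of a weak $\theta$-twisted $M(1)$-module (here one invokes the universal property / the standard correspondence between restricted modules for the twisted Heisenberg algebra and weak $\theta$-twisted $M(1)$-modules, valid since $\ell\ne0$ forces $M(\ell)\cong M(1)$). Next, set $L^{(1)}_n:=d_n-L_n$, where $L_n$ are the modes of $L_{tw}^{Heis}(z)$; relation \eqref{rep4'} (precisely $[d_m,L_n]=[L_m,L_n]$) combined with \eqref{rep4} shows that the $L^{(1)}_n$ commute with all $h_r$ and satisfy the Virasoro relations of central charge $\ell_1-1$, so $W$ is a weak module for $V_{Vir}^{c}$ with $c=\ell_1-1$ that commutes with the Heisenberg part; hence $W$ becomes a weak $\theta$-twisted $\mathcal V^c$-module. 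Uniqueness of the twisted module structure follows because the generating fields $h^{tw}(z)$ and $L^{(1)}(z)$ of $\mathcal V^c$ are pinned down by the $\DD$-action, and a twisted module structure is determined by the action of a generating set (associativity and the twisted Jacobi identity propagate it to all of $\mathcal V^c$). Simplicity is preserved since $\UU(\DD)$ and the twisted vertex operators generate the same subalgebra of $\End W$.

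For (2): conversely, given a weak $\theta$-twisted $\mathcal V^c$-module $W$, the twisted vertex operator of the Heisenberg generator supplies half-integer modes $h_r$ with $[h_r,h_s]=r\delta_{r+s,0}$ (level one), the twisted Virasoro field of the $V_{Vir}^c$ factor supplies integer modes $L^{(1)}_n$ satisfying Virasoro of central charge $\ell_1-1$ and commuting with the $h_r$, and the twisted modes of the conformal vector of $M(1)$ are the $L_n$ with $[L_m,h_r]=-rh_{m+r}$. Defining $d_n:=L^{(1)}_n+L_n$ one checks, using the bracket relations just listed, that the $d_n$ and $h_r$ verify exactly the defining relations of $\DD$ (Definition \ref{def.2}) with $\mathbf c_1\mapsto\ell_1$, $\mathbf c_2\mapsto 1$; restrictedness of the resulting $\DD$-module is automatic from the lower-truncation property of twisted fields. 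Simplicity again transfers both ways.

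The main obstacle I expect is bookkeeping the anomaly in the twisted Virasoro field: one must verify carefully that the twisted normal-ordered square $\frac12\!:\!h^{tw}(z)^2\!:+\frac{1}{16}z^{-2}$ produces modes satisfying $[L_m,h_r]=-rh_{m+r}$ and the Virasoro relations with the correct central term $1$ on a general weak $\theta$-twisted $M(1)$-module (not merely on the canonical Fock-type twisted module), and that $L^{(1)}_n=d_n-L_n$ really does decouple from the Heisenberg part. Both points reduce to the $\Delta$-operator / twisted-$\Delta$ identities of \cite{FLM} and the commutator formula for twisted vertex operators, so there is no genuinely new input — the work is to assemble these standard twisted-VOA facts in the right order, exactly as the untwisted analog in \cite{GW} and Theorem \ref{5.3-untw} does, but with half-integer Heisenberg modes and the $\tfrac{1}{16}$ shift.
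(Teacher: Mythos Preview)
Your proposal is correct and follows essentially the same route as the paper's proof: in both, one uses the standard correspondence (from \cite{FLM}, also \cite{Tanabe,HY}) between restricted modules for the twisted Heisenberg algebra and weak $\theta_1$-twisted $M(1)$-modules to handle the Heisenberg side, then defines $L^{(1)}_n=d_n-L_n$ and uses the commutation relations \eqref{rep4}--\eqref{rep4'} to decouple the Virasoro part, with the converse direction unwound via the twisted Jacobi identity. Your identification of the anomaly bookkeeping (the $\tfrac{1}{16}$ shift and the validity of the twisted normal-ordered formula on arbitrary weak twisted $M(1)$-modules) as the only delicate point is accurate, and the paper handles it exactly as you suggest, by invoking \cite{FLM}.
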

 \begin{proof}
 As in \cite{FLM} (see also \cite{Tanabe}, \cite{HY}) we see that the field $ h^{tw}(z)$ defines on $W$ the unique structure of a $\theta_1$-twisted $M(1)$-module with the Virasoro field (\ref{heis-tw-vir}).
 Then we define $L^{(1)} (z) = d(z) - L _{tw}^{Heis}(z) = \sum_{n \in {\bZ}} L^{(1)}_n z^{-n-2}$.   The field $L^{(1)}(z)$ defines on $W$ the structure of a restricted  module for the Virasoro algebra of central charge $c= \ell_1 -1$. Since
 $$[L^{(1)} _n, h_r] = 0, \quad \forall n \in {\bZ}, \ r \in \tfrac{1}{2} + {\bZ}, $$
 we have that the action of $L^{(1)}(z)$ commutes with $h^{tw}(z)$. Therefore
 $W$ is a $\theta$-twisted $\mathcal V_{Vir}^{c}$-module.
 Since all components of the vertex operators are obtained from the action of $d_n, h_r$, $n \in {\bZ}, r \in \tfrac{1}{2} + {\bZ}$, we see that $W$ is an irreducible $\DD$-module if and only if $W$ is an irreducible module for the vertex algebra $\mathcal V^{c}$. This proves the assertion (1).

 Assume that $W$ is $\theta$-twisted $\mathcal V^{c}$-module with the vertex operator $Y^{tw} _W(v, z)$, $v \in \mathcal V^{c}$. Then the twisted Jacobi identity (cf. \cite{FLM}) shows that:
 \begin{itemize}
\item  The components of the field $$ L^{(1)} (z) = Y_{W} ^{tw} (L^{(1)} _{-2}{\bf 1}, z)= \sum_{n \in \bZ } L^{(1)} _n z^{-n-2}$$ define on $W$ structure of a restricted module for the Virasoro algebra with central charge $c$;
\item  The components of the field  $$h^{tw}(z) = Y_{W} ^{tw} (h_{-1} {\bf 1}, z)= \sum_{r  \in \tfrac{1}{2} + \bZ }  h_{r} z^{-r-1}$$ define on $W$ the structure of a restricted module for the Heisenberg algebra of level one.
 \item The fields $h^{tw}(z)$ and $L^{(1)}(z)$ commute.
 \item The field $$L_{tw}^{Heis}   (z) =Y(\tfrac{1}{2} h_{-1} ^2 {\bf 1}, z) = \sum_{n \in \bZ } L_n z^{-n-2}$$ is a Virasoro field commuting with $L^{(1)}(z)$ such that
 $$[L_n, h_{r}] = - r h_{n+r}, \quad n \in \bZ , r \in \tfrac{1}{2} + \bZ. $$
 Define $d(z) = L^{(1)}(z) + L_{tw}^{Heis}(z) = \sum_{n \in \bZ}  d_n z^{-n-2}$. Then the components of the fields $d(z)$ and $h^{tw}(z)$ define on $W$ the structure of a restricted $\DD$-module of central charge $\ell_1 = c+1$ and level one.
\end{itemize}

Arguments for the irreducibility are the same as in (1). This proves the assertion (2).
\end{proof}

\begin{rem} The simple modules in Examples \ref{ex-7.4}-\ref{ex-7.6} show that the vertex operator algebra $V_{Vir} ^{c} \otimes M(1)$ has   simple weak (untwisted and twisted) modules which are not isomorphic to any tensor product $S_1 \otimes S_2$ for any     simple weak  $V_{Vir} ^{c}$-module $S_1$ and any simple weak (untwisted and twisted) $M(1)$-module  $S_2$.  It would be interesting to find analogs of these non-tensor product  modules for other  types of  vertex operator algebras.\end{rem}

\begin{center}{\bf Acknowledgments}
\end{center}

The authors would like to thank Drazen Adamovic to provide Appendix A and Example 7.5, and help modifying the introduction. The last two authors  would  like to thank H. Chen and D. Gao for helpful discussions at the early stage  on the restricted modules over the twisted Heisenberg-Virasor algebra.

H.T. is partially supported by the Fundamental Research Funds for the Central Universities (135120008).
Y.Y. is partially supported by the National Natural Science Foundation of China (11771279, 12071136, 11671138).
K.Z. is partially supported by the  National Natural Science Foundation of China (11871190) and NSERC (311907-2020).

\end{document}